\newtheorem{theorem}{Theorem}[section]
\newtheorem{lemma}[theorem]{Lemma}
\newtheorem{proposition}[theorem]{Proposition}
\newtheorem{corollary}[theorem]{Corollary}
\newtheorem{assumption}[theorem]{Assumption}
\theoremstyle{definition}
\newtheorem{example}[theorem]{Example}
\newtheorem{remark}[theorem]{Remark}
\numberwithin{equation}{section}
\renewcommand{\labelenumi}{\roman{enumi})}
\renewcommand\theenumi\labelenumi
\renewcommand{\leq}{\leqslant}
\renewcommand{\le}{\leqslant}
\renewcommand{\geq}{\geqslant}
\renewcommand{\ge}{\geqslant}
 \newcommand{\Ll}{\langle}
 \newcommand{\Rr}{\rangle}
 \newcommand{\Z}{\mathbb{Z}}
\newcommand{\tl}{\tilde}
\newcommand{\Be}{\begin{equation}}
\newcommand{\Ees}{\end{equation*}}
\newcommand{\Bes}{\begin{equation*}}
\newcommand{\Ee}{\end{equation}}
\newcommand{\R}{\mathbb{R}}
\newcommand{\E}{\mathbb{E}}
\newcommand{\e}{\varepsilon}
\newcommand{\N}{\mathbb{N}}
\newcommand{\mcl}{\mathcal}
\newcommand{\dif}{\mathrm{d}}
 \newcommand{\Bey}{\begin{eqnarray}}
 \newcommand{\Eey}{\end{eqnarray}}
   \newcommand{\Beys}{\begin{eqnarray*}}
 \newcommand{\Eeys}{\end{eqnarray*}}
\begin{document}

\title[Stochastic delay equations and Stochastic Variance Reduced Gradient]
{Approximation to stochastic variance reduced gradient Langevin dynamics by stochastic delay differential equations}
\maketitle

\begin{center}
	{\sc Peng Chen}\\
	{\it 1. Department of Mathematics, Faculty of Science and Technology} \\
	{\it University of Macau Av. Padre Tom\'{a}s Pereira, Taipa Macau, China} \\
	{\it 2. UMacau Zhuhai Research Institute, Zhuhai, China} \\
	{\it yb77430@connect.um.edu.mo} \\
	{\sc Jianya Lu} \\
	{\it 1. Department of Mathematics, Faculty of Science and Technology} \\
	{\it University of Macau Av. Padre Tom\'{a}s Pereira, Taipa Macau, China} \\
	{\it 2. UMacau Zhuhai Research Institute, Zhuhai, China} \\
	{\it yb87454@connect.um.edu.mo} \\
	{\sc Lihu Xu \footnote{* Corresponding author}} \\
	{\it 1. Department of Mathematics, Faculty of Science and Technology} \\
	{\it University of Macau Av. Padre Tom\'{a}s Pereira, Taipa Macau, China} \\
	{\it 2. UMacau Zhuhai Research Institute, Zhuhai, China} \\
	{\it lihuxu@um.edu.mo}
\end{center}
\begin{abstract}
	We study in this paper weak approximations in Wasserstein-1 distance to { stochastic variance} reduced gradient Langevin dynamics by stochastic delay differential equations, and obtain uniform error bounds. Our approach is via Malliavin calculus and a refined Lindeberg principle. 
	
	{\bf Key words}: stochastic variance reduced gradient Langevin dynamics (SVRG-LD), stochastic delay differential equations (SDDEs), Malliavin calculus, Refined Lindeberg principle, Wasserstein-1 distance. \\
	
	{\bf MSC2020}: 60H07, 60H10, 	60H30, 90C59
\end{abstract}

\maketitle

\section{Introduction}\label{first}
A large body of optimization problems can be formulated as follows:  Let $\psi_{1},\cdots,\psi_{n}$ be a sequence of functions from $\mathbb{R}^{d}$ to $\mathbb{R}$, the goal is to find an approximate solution of the following optimization problem
\begin{align}\label{e:1.1}
{ \omega^*={\rm argmin}_{\omega\in\mathbb{R}^{d}}P(\omega)},\qquad P(\omega):=\frac{1}{n}\sum_{i=1}^{n}\psi_{i}(\omega).
\end{align}
{ In order to fast search the minimizer $\omega^*$},
\cite[Algorithm 2]{DK16} puts forward the stochastic variance reduced gradient Langevin dynamics (SVRG-LD) algorithm as the following: for an $\eta>0$ as the step size, an $m \in \N$ as an epoch length, a $B \in \N$ as the size of mini-batch, and an inverse temperature parameter $\delta>0$,
\begin{itemize}
\item[(1)] let the initial value be $\tl \omega_{0} \in \R^{d}$,
\item[(2)] for $l=0,1,...,$
\begin{itemize}
\item[(i)]  we update the internal iteration as follows:
$$\omega_{l,0}=\tl \omega_{l}, \ \ \ \ \ \tl \mu_{l}=\frac{1}{n}\sum_{i=1}^{n}\nabla\psi_{i}(\tilde{\omega}_{l}), $$
for $t=1,...,m$,
 $$\ \ \ \ \ \ \ \ \ \ \ \ \ \ \omega_{l,t}=\omega_{l,t-1}-\eta\big(\frac{1}{B}\sum_{i_t\in\mathbf{I}_{l}}\left(\nabla\psi_{i_{t}}(\omega_{l,t-1})-\nabla\psi_{i_{t}}(\tilde{\omega}_{l})\right)+\tilde{\mu}_l\big)+\sqrt{\eta\delta}W_{l,t},$$
where $\mathbf{I}_{l,t}$ is a subset of size $B$ randomly drawn from $\{1,\cdots,n\}$, $W_{l,t}\sim \mathcal{N}(0,I_{d\times d})$ and $\mathcal{N}(0,I_{d\times d})$ is the $d$-dimensional normal distribution with mean 0 and covariance matrix $I_{d\times d}$, where $I_{d \times d}$ is a $d\times d$ identity matrix,
\item[(ii)] $\tl \omega_{l+1}=\omega_{l,m}$,
\end{itemize}
\item[(3)] the outcomes are $\tl \omega_{0}, \tl \omega_{1}, ...,....$.
\end{itemize}
In particular, as $\delta=0$ and $B=1$,  SVRG-LD turns to be the stochastic variance reduction gradient descent (SVRG) algorithm in \cite{J-Z}. { The step size $\eta$ is often called learning rate. We call the procedures (i) and (ii) in (2) as internal iteration, and the those of obtaining $(\tilde \omega_k)_{k \ge 0}$ as external iteration.}

 {  We study in this paper the approximation of SVRG-LD by stochastic differential delay equations (SDDEs) in Wasserstein-1 distance, and understand the algorithm from the point of view of continuous stochastic dynamics.}

\vskip 3mm

\subsection{Literature review} Stochastic gradient Langevin dynamics (SGLD) model was put forward in \cite{WeTe11}, and has been extensively applied to study optimization problems arising in lots of research problems, see  \cite{CDT20,zhu19,Rotskoff18,LTE17}.  However, using SGLD often comes at a cost of high variance and thus leads to a slow convergence. Borrowing the variance reduction techniques developed for stochastic gradient descent (SGD) algorithms, Dubey et al. \cite{DK16} proposed an SVRG-LD algorithm.


The weak approximations in most of the aforementioned literatures are defined in
the following way: given a family $\mcl H$ of test functions with high order derivatives and a certain growth condition, the approximation error is defined as
\Be  \label{e:AppHError}
\sup_{h \in \mcl H} |\E h(\omega_{n \eta})-\E h(X_{n\eta})|,
\Ee
where $\omega_{n\eta}$ is the stochastic algorithm at the $n$-th iteration, $X_{n \eta}$ is the continuous dynamics at the time $n \eta$, and $\eta$ is the step size of the algorithm. { As $n\to\infty$ and $\eta\to0$, the bounds obtained in these literatures tends to $0$}. It is not easily seen that the error defined in \eqref{e:AppHError}
implies an error in a Wasserstein type metric. Note that Wasserstein-1 (W1) and Wasserstein-2 (W2) distances often play a crucial role in approximations arising in machine learning and probability measure samplings, and are connected to many important research fields \cite{Lam20,NJ19,ZPP20,BDF20}. Although
\cite{L-T-E} gave a bound for the error between SGD and stochastic differential equations (SDEs) in a so-called smoothed Wasserstein distance, it seems to us that smoothed Wasserstein distances have not been widely applied in research.

The works \cite{BDMS1,DM1,DM2,F-S-X}, arising from the Langevin dynamics samplings or diffusion approximations, have a nature similar to ours.
Due to the distribution sampling motivation, the SDEs in \cite{BDMS1,DM1,DM2} are a reversible gradient system, i.e. the drift is the gradient of a potential $U$, thus the equilibrium measure is known in advance and has a density proportion to $e^{-U}$.
Under certain conditions on the drift, they proved non-asymptotic bounds for total variation or W2 distance between the equilibrium measure and the ergodic measure of the EM scheme of Langevin dynamics, their analysis heavily depends on the gradient form of the drift, and is not easily seen to be extended to a non-gradient system. Our SDDE is not a gradient system in that the system has a delay and that the diffusion term depends on the state rather than a constant.
\cite{F-S-X} is concerning estimating the error between the ergodic measures of an SDE and its EM scheme, the approach does not work for our problem since it is especially designed for bounding the errors between ergodic measures. { We refer the reader to \cite{B-G,BYY16,Buc00} for the study of SDDEs.}

\subsection{Our {motivations}, contributions and methods}
{ It was recently discovered in \cite{L-T-E,LTE17,FDD20,FGL20,Simsekli19,hu19,Fan18,Hu2021,feng18,CSX20}  that SGD algorithms can be (weakly) approximated by continuous time SDEs. These SDEs often offer much needed insight to the algorithms under considerations, for instance, the continuous time treatment allows applications of stochastic control theory to develop novel adaptive algorithms \cite{ZYL20,ZXG20}. It is natural to consider the continuous time approximation problem of SVRG-LD, and understand the algorithm from the point of view of continuous stochastic dynamics.}

Our approximation result { establishes} a connection between SVRG-LD and SDDE, we know from the theory of SDDE if the diffusion term has a delay, a dissipation will be produced to reduce the fluctuation from Brownian motion, see \cite{Sch13} for instance. This provides an intuitive interpretation why SVRG and SVRG-LD can reduce variances.
As is shown in Appendix \ref{invariant measure}, when $K$ in Assumption \ref{assum2} and $\delta$ in the SDDE (\ref{sfde}) below both are small enough, the distribution of the solution of the SDDE (\ref{sfde}) below is close to the minimizer $\omega^*$. This gives another motivation to understand the SVRG-LD and SVRG from SDDE.

Under the assumptions which hold true for a large family of nonconvex minimization problems, our main result Theorem \ref{main} provides an error bound for the approximation which is uniform with respect to external iteration length $s$ and the internal iteration length $m$, from which we can see that the approximation error tends to $0$ as long as the step size $\eta$ tends to $0$, see Corollary \ref{Coro2} and \ref{Coro}. Combining this with the exponential convergence result in \cite{DK16}, we know that our approximating SDDE will be close to the minimizer $\omega^*$  exponentially fast.


{ Our approach to proving our main results is via a refined Lindeberg principle developed in \cite{CSX20} and Malliavin calculus, it seems that this method has not been reported in the study of stochastic algorithms. The classical Lindeberg principle provides an elegant way to prove the normal CLT, and has been applied to many other research topics, see \cite{C-X19, C-S,T-V,CSZ17,C-D,MePe16,KoMo11}. Although Malliavin calculus has been extensively applied to statistical simulation problems such as Langevin dynamics samplings, see \cite{Muzellec20,Fournie99,Privault08,brehier14,F-S-X,GM05,DJ06}. It seems to us that our paper is the first one to apply Malliavin calculus to study stochastic algorithms approximation problems.}

{ In order to use the refined Lindeberg principle and Bismut's version of Malliavin calculus mentioned above}, we need to overcome several difficulties due to the memory property of SVRG-LD and SDDE. Whereas the iteration sequence of SGD is a Markov chain,
that of SVRG-LD is not Markovian due to the memory in each internal iteration (see (2).(i) in SVRG-LD algorithm above).  The approximation methods developed for SGD is therefore not applicable. Based on the special structure of SVRG-LD, we divide the algorithm into local patches which are internal iterations, each patch is a Markov chain, while all of these patches can be viewed as a function valued Markov chain. It is well known that an SDDE system is not Markovian, whence many stochastic analysis methods such as Fokker-Planck equation are not available. Similar as analyzing SVRG-LD, we split the approximating  SDDE into local patches, each patch being a Markov process and all of them forming a function-valued Markov process. Malliavin calculus is applied to the Markov dynamics in patches.



\subsection{Our Assumptions}

For the convenience of analysis, we will consider the case of $B=1$ from now on.

Let $I$ be a uniformly distributed random variable valued on $[n]=\{1,2,...,n\}$, i.e.,
$$\mathbb{P}(I=i)=\frac 1n, \quad \ i \in [n].$$
The Euclidean norm of $x\in\mathbb{R}^{d}$ and the inner product of $x,y\in\mathbb{R}^{d}$ are denoted by $|x|$ and $\langle x,y\rangle$, respectively. For any $x,y \in \R^{d}$, $|x-y|$ is naturally the Euclidean distance between $x$ and $y$. Under the assumptions \ref{assum1'} and \ref{assum2} below, \cite{XCZ18} gave the analysis of convergence of SVRG-LD.

\begin{assumption}\label{assum1'}({\rm Smoothness})
The function $\psi_{i}(x)$ is $L$-smooth for $L>0$, $i=1,\cdots,n,$ i.e.,
\begin{align*}
|\nabla\psi_{i}(x)-\nabla\psi_{i}(y)|\leq L|x-y|, \quad \forall x,y\in\mathbb{R}^{d}.
\end{align*}
\end{assumption}

The assumption above can be relaxed to the following assumption:

\begin{assumption}\label{assum1}
Let $I$ be a uniformly distributed random variable valued on $[n]=\{1,2,...,n\}$, there exists a constant $L>0$ such that for any $x,y\in\mathbb{R}^{d}$, we have
\begin{align}\label{Lip}
\mathbb{E}\left[|\nabla\psi_{I}(x)-\nabla\psi_{I}(y)|^{4}\right]\leq L^{4}|x-y|^{4}.
\end{align}
\end{assumption}

\begin{remark}
By the Cauchy-Schwarz inequality, Assumption \ref{assum1} immediately implies that $P(x)=\frac{1}{n}\sum_{i=1}^{n}\psi_{i}(x)$ is also $L$-smooth, i.e.,
\begin{align}\label{Plip}
|\nabla P(x)-\nabla P(y)|\leq L|x-y|, \quad \forall x,y\in\mathbb{R}^{d},
\end{align}
which further implies
\begin{align}\label{PL}
|\nabla P(x)|\leq|\nabla P(0)|+L|x|, \quad \forall x\in\mathbb{R}^{d}.
\end{align}
\end{remark}

\begin{assumption}\label{assum2}({\rm Dissipative})
There exist constants $\gamma,K{\ge}0,$ such that for any $x\in\mathbb{R}^{d}$ we have
\begin{align*}
\langle\nabla P(x),x\rangle\geq\gamma|x|^{2}-K.
\end{align*}
\end{assumption}

{Assumption \ref{assum2} is usually replaced by the following condition,} which is slightly stronger than Assumption \ref{assum2}.

\begin{assumption}\label{assum3}
There exist constants ${ \gamma>0}$ and $K{\ge}0,$ such that for any $x,y\in\mathbb{R}^{d}$ we have
\begin{align}\label{integration1}
\langle x-y,\nabla P(x)-\nabla P(y)\rangle\geq\gamma|x-y|^{2}-K, \quad \forall x,y\in\mathbb{R}^{d}.
\end{align}
\end{assumption}


\subsection{Notations}
Each time we speak about Lipschitz functions on $\R^d$, we use the Euclidean norm.
$\mcl C(\R^d,\R)$ denotes the collection of all continuous functions $f: \R^d \rightarrow \R$ and $\mcl C^k(\R^d,\R)$ with $k \ge 1$ denotes the collection of all $k$-th order continuously differentiable functions.
$\mcl C^\infty_0(\R^d,\R)$ denotes the set of smooth functions { that vanish at infinity}. For $f \in \mcl C^3(\R^d,\R)$ and $v, v_1, v_2,v_{3},x \in \R^d$, the directional derivative $\nabla_v f(x)$, $\nabla_{v_2}\nabla_{v_1} f(x)$ and $\nabla_{v_{3}}\nabla_{v_2}\nabla_{v_1} f(x)$ are  defined by
\ \ \
\Bes
\begin{split}
& \nabla_v f(x)\ = \ \lim_{\e \rightarrow 0} \frac{f(x+\e v)-f(x)}{\e}, \\
\end{split}
\Ees
\Bes
\begin{split}
&\nabla_{v_2} \nabla_{v_1} f(x)\ = \ \lim_{\e \rightarrow 0} \frac{\nabla_{v_1}f(x+\e v_2)-\nabla_{v_1}f(x)}{\e},
\end{split}
\Ees
\Bes
\begin{split}
&\nabla_{v_3}\nabla_{v_2} \nabla_{v_1} f(x)\ = \ \lim_{\e \rightarrow 0} \frac{\nabla_{v_2}\nabla_{v_1}f(x+\e v_3)-\nabla_{v_2}\nabla_{v_1}f(x)}{\e},
\end{split}
\Ees
respectively. Let $\nabla f(x)\in \R^d$ and $\nabla^2 f(x)\in \R^{d \times d}$ denote the gradient and the Hessian of $f$, respectively.
It is known that
$\nabla_v f(x) = \Ll \nabla f(x), v\Rr$ and
$\nabla_{v_2} \nabla_{v_1} f(x)=\Ll \nabla^2 f(x), v_1 v^{\rm T}_2\Rr_{{\rm HS}}$, where ${\rm T}$ is the transpose operator and $\Ll A, B\Rr_{{\rm HS}}:=\sum_{i,j=1}^d A_{ij} B_{ij}$ for $A, B \in \R^{d \times d}$. We define the operator norm of $\nabla^{2} f(x)$ by
\Beys
\|\nabla^{2} f(x)\|_{\rm op}&=& \sup_{|v_{1}|,|v_{2}|=1} |\nabla_{v_{2}} \nabla_{v_{1}} f(x)|
\Eeys
and
\Beys
\|\nabla^{2} f\|_{{\rm op}, \infty}&=& \sup_{x \in \R^{d}} \|\nabla^{2} f(x)\|_{\rm op}.
\Eeys
We often drop the subscript "{\rm op}" in the definitions above and simply write $\|\nabla^{2} f(x)\|=\|\nabla^{2} f(x)\|_{\rm op}$ and  $\|\nabla^{2} f(x)\|_{\infty}=\|\nabla^{2} f(x)\|_{{\rm op}, \infty}$ if no confusions arise. Similarly we define
\Beys
\|\nabla^{3} f(x)\|_{\rm op}&=& \sup_{|v_{i}|=1, i=1,2,3} |\nabla_{v_{3}} \nabla_{v_{2}} \nabla_{v_{1}} f(x)|
\Eeys
and $\|\nabla^{3} f\|_{{\rm op}, \infty}$ and the short-hand notations $\|\nabla^{3} f(x)\|$ and $\|\nabla^{3} f\|_{\infty}$.

Given a matrix $A \in \R^{d \times d}$, its Hilbert-Schmidt norm is
$\|A\|_{{\rm HS}} \ = \ \sqrt{\sum_{i,j=1}^{d} A^{2}_{ij}} \ = \ \sqrt{{\rm Tr} (A^{\rm T} A)}$ and its operator norm is
$\|A\|_{\rm op} \ = \ \sup_{|v|=1}|A v|$. We have the following relations:
\Be  \label{e:RelEigHS}
\|A\|_{\rm op} \ = \ \sup_{|v_{1}|,|v_{2}|=1} |\Ll A, v_{1} v^{\rm T}_{2}\Rr_{\text{HS}}|, \ \ \ \ \ \ \ \ \ \|A\|_{\rm op} \ \le \ \|A\|_{{\rm HS}} \ \le \ \sqrt d \|A\|_{\rm op} .
\Ee

We can also define $\nabla_v f(x)$, $\nabla_{v_2} \nabla_{v_1} f(x)$ and $\nabla_{v_{3}}\nabla_{v_2} \nabla_{v_1} f(x)$ for a third-order differentiable function $f=(f_1,\dots, f_d)^{\rm T}: \R^d \rightarrow \R^d$ in the same way as above. Define $\nabla f(x)=(\nabla f_1(x), \dots, \nabla f_d(x)) \in \R^{d \times d}$, $\nabla^2 f(x)=\{\nabla^2 f_i(x)\}_{i=1}^d \in \R^{d \times d \times d}$ and $\nabla^3 f(x)=\{\nabla^3 f_i(x)\}_{i=1}^d \in \R^{d \times d \times d\times d}$.
In this case, we have $\nabla_v f(x)=[\nabla f(x)]^{\rm T} v$,
$$
\nabla_{v_2} \nabla_{v_1} f(x)=\{ \Ll \nabla^2 f_1(x), v_1 v^{\rm T}_2\Rr_{{\rm HS}},\dots, \Ll \nabla^2 f_d(x), v_1 v^{\rm T}_2\Rr_{{\rm HS}}\}^{\rm T},
$$
$$\nabla_{v_{3}}\nabla_{v_{2}}\nabla_{v_{1}}f(x)=\{\big\langle\langle\nabla^{3}f_{1}(x),v_{1}v_{2}^{T}\rangle_{{\rm {HS}}},v_{3}\big\rangle,\cdots,\big\langle\langle\nabla^{3}f_{d}(x),v_{1}v_{2}^{T}\rangle_{\rm{HS}},v_{3}\big\rangle\}^{T},$$
here, for any tensor $A\in\mathbb{R}^{d\times d\times d},$ $\big\langle \langle A,v_{1}v_{2}^{T}\rangle_{{\rm {HS}}},v_{3}\big\rangle=\sum_{i,j,k=1}^{d}A_{ijk}v_{1}^{(i)}v_{2}^{(j)}v_{3}^{(k)}$ with $v_{l}^{(i)}$ is the $i$-th component of the vector $v_{l},$ $l=1,2,3.$

Inductively, for $M\in\mathcal{C}^{3}(\mathbb{R}^{d},\mathbb{R}^{d\times d})$ and $v,v_{1},v_{2},v_{3},x\in\mathbb{R}^{d},$ the directional derivative $\nabla_{v}M(x),$ $\nabla_{v_{2}}\nabla_{v_{1}}M(x)$ and $\nabla_{v_{3}}\nabla_{v_{2}}\nabla_{v_{1}}M(x)$ are defined by
\begin{align*}
\nabla_{v}M(x)=\lim_{\epsilon\rightarrow0}\frac{M(x+\epsilon v)-M(x)}{\epsilon},
\end{align*}
\begin{align*}
\nabla_{v_{2}}\nabla_{v_{1}}M(x)=\lim_{\epsilon\rightarrow0}\frac{\nabla_{v_{1}}M(x+\epsilon v_{2})-\nabla_{v_{1}}M(x)}{\epsilon}
\end{align*}
and
\begin{align*}
\nabla_{v_{3}}\nabla_{v_{2}}\nabla_{v_{1}}M(x)=\lim_{\epsilon\rightarrow0}\frac{\nabla_{v_{2}}\nabla_{v_{1}}M(x+\epsilon v_{3})-\nabla_{v_{2}}\nabla_{v_{1}}M(x)}{\epsilon},
\end{align*}
respectively. For a more thorough discussion on the norms and the derivatives, we refer the reader to \cite[Chapters V and VIII]{D69}, \cite[Chapter 5]{D-N-N} and the references therein.

For any $x\geq0$, let $\lfloor x\rfloor$ denote the largest integer which is less than $x$. The symbols $C_{\cdot}$ denote positive numbers depending on subscripts $\cdot$ and their values may vary from line to line.  In addition, we have the convention that $\frac{1}{0}=\infty.$

\vskip 3mm

The paper is organized as following. Our main results and applications are stated in Section \ref{second}. In Section \ref{forth}, we provide some preliminary lemmas and the proof of our main result. We conclude the paper in Section \ref{conclusion}. The proofs of example in Section \ref{second} are given in Appendix \ref{Asecond}, and the proof of the crucial lemma in Section \ref{forth} is deferred to Appendix \ref{moment estimate}. In Appendix \ref{Aind}, we give the proof of the key preliminary lemma using Malliavian calculus that can be read independently.

\section{Motivation and main result}\label{second}
{ On the basis of Assumptions \ref{assum1}, \ref{assum3} and Assumption \ref{assum4} below, we establish our main results, Theorem \ref{main}, Corollary \ref{Coro2} and Corollary \ref{Coro}, about approximating the SVRG-LD by SDDE  which are uniform with respect to $s$ and the internal iteration length $m$.  From Corollary \ref{Coro2}, we can see that the approximation error tends to $0$ as long as the temperature parameter $\delta$ takes $\delta= C|\ln\eta|^{-1}$ and $\eta$ tends to $0$. From Corollary \ref{Coro}, the approximation error tends to $0$ when $K\le\delta$. Combining this with the exponential convergence result in \cite{DK16}, we know that our approximating SDDE will be close to the minimizer $\omega^*$  exponentially fast. Furthermore, we can clearly see the variance reduction effect of the SVRG-LD from the analysis of the SDDE below.}

In the above SVRG-LD algorithm, if we take $k=ml+t$ and denote $\omega_{k}=\omega_{l,t}$, it is easy to check that the iteration in the algorithm can be represented as the following difference equation:
\begin{align}\label{representation}
 \omega_{0}=&\tilde{\omega}_{0}, \nonumber \\
 \omega_{k}=&\omega_{k-1}-\eta\Big[\nabla\psi_{i_{k}}(\omega_{k-1})-\nabla\psi_{i_{k}}(\omega_{\lceil\frac{k}{m}\rceil m})+\nabla P(\omega_{\lceil\frac{k}{m}\rceil m})\Big]+\sqrt{\eta\delta}W_{k},\quad k\geq1,
\end{align}
where $x \in \R$, $\lceil x\rceil$ is the largest integer which is strictly less than $x$. Its output is
\begin{eqnarray}  \label{d:TlOmega}
\tilde{\omega}_s&=&\omega_{ms}, \quad \quad \ s=0,1,2,....
\end{eqnarray}
In order to compare  (\ref{representation}) with an SDDE, we rewrite it as
\begin{eqnarray}  \label{e:OmeDef}
\omega_{k}&=&\omega_{k-1}-\eta\nabla P(\omega_{k-1})+\sqrt{\eta}V_{\eta,\delta}(\omega_{k-1}, \omega_{\lceil\frac{k}{m}\rceil m}, i_k,W_{k}),
\end{eqnarray}
where
\begin{eqnarray*}
&&V_{\eta,\delta}(\omega_{k-1}, \omega_{\lceil\frac{k}{m}\rceil m}, i_k,W_{k})\\
&=&-\sqrt{\eta}\left[\nabla\psi_{i_{k}}(\omega_{k-1})-\nabla\psi_{i_{k}}(\omega_{\lceil\frac{k}{m}\rceil m})-\nabla P(\omega_{k-1})+\nabla P(\omega_{\lceil\frac{k}{m}\rceil m})\right]+\sqrt{\delta}W_{k}.
\end{eqnarray*}
Note that $i_k$ is randomly chosen from $[n]$ and independent of each other. Denote by $I$ a uniformly distributed random variable valued on the set $[n]$. Moreover, $W_{k}$ is also independent of each other, and it is independent of $i_{k}$. A straightforward calculation yields
\begin{eqnarray}  \label{e:ConV}
\mathbb{E}\big[V_{\eta,\delta}(\omega_{k-1}, \omega_{\lceil\frac{k}{m}\rceil m}, i_k,W_{k})\big|\omega_{k-1},\omega_{\lceil\frac{k}{m}\rceil m}\big]&=&0,
\end{eqnarray}
\begin{eqnarray}  \label{e:ConCov}
{\rm Cov}\big[V_{\eta,\delta}(\omega_{k-1}, \omega_{\lceil\frac{k}{m}\rceil m}, i_k,W_{k})\big|\omega_{k-1},\omega_{\lceil\frac{k}{m}\rceil m}\big]&=&\eta\Sigma(\omega_{k-1},\omega_{\lceil\frac{k}{m}\rceil m})+\delta I_{d},
\end{eqnarray}
where
\begin{eqnarray*}
&&\Sigma\left(\omega_{k-1},\omega_{\lceil\frac{k}{m}\rceil m}\right)\\
&=&\E_{I} \Big[\big(\nabla\psi_{I}(\omega_{k-1})-\nabla\psi_{I}(\omega_{\lceil\frac{k}{m}\rceil m})\big) \big(\nabla\psi_{I}(\omega_{k-1})-\nabla\psi_{I}(\omega_{\lceil\frac{k}{m}\rceil m})\big)^{\rm T} \Big]\\
& \ \ & -\E_{I} \left[\nabla\psi_{I}(\omega_{k-1})-\nabla\psi_{I}(\omega_{\lceil\frac{k}{m}\rceil m})\right] \left(\E_{I} \left[\nabla\psi_{I}(\omega_{k-1})-\nabla\psi_{I}(\omega_{\lceil\frac{k}{m}\rceil m})\right]\right)^{\rm T}.
\end{eqnarray*}
By Assumption \ref{assum1}, we further get
\begin{eqnarray}  \label{e:Tr=L2Norm}
	&&\eta {\rm tr}\left(\Sigma(\omega_{k-1},\omega_{\lceil\frac{k}{m}\rceil m})\right)\\
	&=& \eta\mathbb{E}\big[|\nabla\psi_{I}(\omega_{k-1})-\nabla\psi_{I}(\omega_{\lceil\frac{k}{m}\rceil m})|^2\big|\omega_{k-1},\omega_{\lceil\frac{k}{m}\rceil m}\big]-\eta |\nabla P(\omega_{k-1})-\nabla P(\omega_{\lceil\frac{k}{m}\rceil m})|^2  \nonumber \\
	&\le &\eta \E_{I}\left(|\nabla\psi_{I}(\omega_{k-1})-\nabla\psi_{I}(\omega_{\lceil\frac{k}{m}\rceil m})|^2\right)\leq L^{2} \eta |\omega_{k-1}-\omega_{\lceil\frac{k}{m}\rceil m}|^2. \nonumber
\end{eqnarray}	
\vskip 3mm

Because the renewal of $\omega_{k}$ not only depends on $\omega_{k-1}$ but also on $\omega_{\lceil\frac{k}{m}\rceil m}$, it is clear that $\{\omega_{k}\}_{k \ge 0}$ is not a Markov chain. Alternatively, if we take the $m$-tuple
$\overline{\omega_{i}}=\{\omega_{im},\omega_{im+1},...,\omega_{im+m-1}\}$ with $i \ge 0$ as one element, then $\{\overline{\omega_{k}}\}_{k \ge 0}$ is a Markov chain. This has a nature that is very similar to an SDDE, and inspires us to consider the following SDDE:
\begin{align}\label{sfde}
\dif X_{t}=-\nabla P(X_{t})\dif t+\sqrt{\eta}Q_{\eta,\delta}(X_{t},X_{\lceil\frac{t}{m\eta}\rceil m\eta})\dif B_{t}, 
\end{align}
where
$$Q_{\eta,\delta}(x,y)=\big(\Sigma(x,y)+\frac{\delta}{\eta} I_{d}\big)^{\frac{1}{2}}\in \R^{d \times d}$$
is a positive definite matrix for all $x, y \in \R^{d}$ and $B_{t}$ is a standard $d$-dimensional Brownian motion. The Euler-Maruyama discretization with a step-size $\eta$ to (\ref{sfde}) reads as
\begin{align*}
\hat{X}_{k}=\hat{X}_{k-1}-\eta\nabla P(\hat{X}_{k-1})+\eta Q_{\eta,\delta}(\hat{X}_{k-1},{{\hat{X}_{\lceil\frac{k}{m}\rceil m}}})Z_{k-1},
\end{align*}
where $Z_{k-1}$ is a sequence of $d$-dimensional i.i.d. standard normally distributed random vectors. We denote
\Bey  \label{d:TlXs}
\tl X_s&=&X_{sm\eta}, \ \ \ \ \ \ s=0,1,2,....
\Eey

We know from Assumption \ref{assum1} that $\nabla P$ is Lipschitz, it is natural to assume that the higher order derivatives of $P$ is uniformly bounded.
On the other hand, (\ref{e:Tr=L2Norm}) implies that
\begin{align}\label{QL}
\|Q_{\eta,\delta}(\omega_{k-1},\omega_{\lceil\frac{k}{m}\rceil m})\|_{\rm HS}\ \le \ L|\omega_{k-1}-\omega_{\lceil\frac{k}{m}\rceil m}|+\sqrt{\frac{\delta d}{\eta}}.
\end{align}
We see that $Q$ has linear growth. Based on the analysis above, in order to bound the distributions between the SDDE and the SVRG-LD, we further assume that

\begin{assumption}\label{assum4}
There exist constants $A_{i}\geq0$ with $i=1,2,...,5,$ such that for any $x \in \R^{d}$ and unit vectors $v_{i}\in\mathbb{R}^{d}$, i.e., $|v_{i}|=1,$ $i=1,2,3,$ $\nabla P$ satisfies
\begin{align*}
|\nabla_{v_2}\nabla_{v_1} \nabla P(x)| \ \le \ A_{1},\quad
|\nabla_{v_3}\nabla_{v_2}\nabla_{v_1} \nabla P(x)| \ \le \ A_{2};
\end{align*}
and that any $x, y \in \R^{d}$, $Q_{\eta,\delta}$ satisfies
\begin{align}\label{QQ1}
\|\nabla_{1,v_{1}} Q_{\eta,\delta}(x,y)\|_{\rm HS}^{2} \ \le \ A_3,\ \ \ \ \ \|\nabla_{2,v_{1}} Q_{\eta,\delta}(x,y)\|_{\rm HS}^{2} \ \le \ A_3,
\end{align}
$$\|\nabla_{1,v_{1}} \nabla_{1,v_{2}}Q_{\eta,\delta}(x,y)\|_{\rm HS}^{2} \ \le \ A_4, \quad \|\nabla_{1,v_{1}} \nabla_{1,v_{2}} \nabla_{1,v_{3}}Q_{\eta,\delta}(x,y)\|_{\rm HS}^{2} \ \le \ A_5,$$
where $\nabla_{1,v}$ is the partial derivative acting on $x$ along the direction $v \in \R^{d}$, i.e.,
\begin{eqnarray*}
\nabla_{1,v} Q_{\eta,\delta}(x,y)&=&\lim_{\e \rightarrow 0} \frac{Q_{\eta,\delta}(x+\e v,y)-Q_{\eta,\delta}(x,y)}{\e}
\end{eqnarray*}
and $\nabla_{2,v}$ is the partial derivative acting on $y$ along the direction $v \in \R^{d}$, similarly defined as above.
\end{assumption}

In addition, from Assumption \ref{assum1}, it is easy to verify that
\begin{align*}
|\nabla_{v_1} \nabla P(x)| \ \le L|v_{1}|.
\end{align*}

Under the Assumptions \ref{assum1}, \ref{assum3} and \ref{assum4}, there exists a unique solution to the SDDE (\ref{sfde}) and (\ref{sfde}) has a unique invariant measure when $K=0$ (see, e.g., \cite{M,Buc00,B-G,BYY16,BRW20}). From now on, we simply write a number $C_{A_1,...,A_5}$, depending on $A_1,...,A_5$,  by $C_A$ in shorthand.

In the following two examples, inspired from the law of large numbers, we will show that the Assumptions \ref{assum1}, \ref{assum3} and \ref{assum4} holds with high probability in Appendix \ref{Asecond}. Note that in practice one only verifies that the designs of statistical models holds with a high probability, see \cite{Z-Y06} for lasso models and \cite{CSX20} for logistic regression models.

\begin{example}\label{ex2}
(Alternate Model in \cite[Section 5.1]{L-T-E}). Let $H\in\mathbb{R}^{d\times d}$ be a symmetric, positive definite matrix, we diagonalize it in the form $H=\tilde{Q}D\tilde{Q}^{T},$ where $\tilde{Q}$ is an orthogonal matrix and $D$ is a diagonal matrix of eigenvalues. Given a sequence of $n$ samples $a_{1},\cdots,a_{n}$, where $a_{i}\in\mathbb{R}^{d}$ is a sequence of random vectors independently drawn from a standard normally distributed random vector, that is, $a_{i}\sim \mathcal{N}(0,I_{d})$ for $i=1,\cdots,n.$ We then define the loss function
\begin{align*}
\psi_{i}(\omega):=\frac{1}{2}(\tilde{Q}^{T}\omega)^{T}[D+{\rm diag}(a_{i})](\tilde{Q}^{T}\omega),
\end{align*}
where ${\rm diag}(a_{i})$ is a diagonal matrix, whose diagonal elements are each component of the vector $a_{i}.$ Therefore, the SVRG-LD iterates in (\ref{representation}) become,
\begin{align*}
 \omega_{k}=\omega_{k-1}-&\eta\Big[\tilde{Q}[D+{\rm diag}(a_{i_{k}})]\tilde{Q}^{T}(\omega_{k-1}-\omega_{\lceil\frac{k}{m}\rceil m})+\frac{1}{n}\sum_{i=1}^{n}\tilde{Q}[D+{\rm diag}(a_{i})]\tilde{Q}^{T}\omega_{\lceil\frac{k}{m}\rceil m})\Big]\\
 &+\sqrt{\eta\delta}W_{k},
\end{align*}
which implies
\begin{align*}
\nabla P(x)=\frac{1}{n}\sum_{i=1}^{n}\tilde{Q}[D+{\rm diag}(a_{i})]\tilde{Q}^{T}x=Hx+\frac{1}{n}\sum_{i=1}^{n}\tilde{Q}{\rm diag}(a_{i})\tilde{Q}^{T}x
\end{align*}
and
\begin{align*}
\Sigma(x,y)=&\frac{1}{n}\sum_{i=1}^{n}\Big(\tilde{Q}[D+{\rm diag}(a_{i})]\tilde{Q}^{T}(x-y)\Big)\Big(\tilde{Q}[D+{\rm diag}(a_{i})]\tilde{Q}^{T}(x-y)\Big)^{T}\\
&-\Big(\frac{1}{n}\sum_{i=1}^{n}\tilde{Q}[D+{\rm diag}(a_{i})]\tilde{Q}^{T}(x-y)\Big)\Big(\frac{1}{n}\sum_{i=1}^{n}\tilde{Q}[D+{\rm diag}(a_{i})]\tilde{Q}^{T}(x-y)\Big)^{T}.
\end{align*}
\end{example}

\begin{example}\label{ex1}
(Penalized Logistic regression). In machine learning, one of the most popular generalized linear model is the penalized logistic regression for binary classification problem. Given a sequence of $n$ samples $(a_{1},b_{1}),\cdots,(a_{n},b_{n}),$ where $a_{i}$ is a sequence of i.i.d. random vectors and the binary response $b_{i}\in\{0,1\}$ is generated by the following probabilistic model,
\begin{align*}
\mathbb{P}(b_{i}=1|a_{i})=\frac{1}{1+e^{-a_{i}^{T}\omega^{*}}},\quad
\mathbb{P}(b_{i}=0|a_{i})=\frac{1}{1+e^{a_{i}^{T}\omega^{*}}}.
\end{align*}
The corresponding loss function at $\omega$ is
\begin{align*}
\psi_{i}(\omega):=-\big[b_{i}a_{i}^{T}\omega-\ln(1+e^{a_{i}^{T}\omega})\big]+\frac{\lambda}{2} \omega^{T}\omega,
\end{align*}
where $\lambda>0$ is the tuning parameter (see, e.g., \cite[(4.20) and (18.11)]{HTF09}). Therefore, the SVRG-LD iterates in (\ref{representation}) become,
\begin{align*}
 \omega_{k}=\omega_{k-1}-\eta\Big[&a_{i_{k}}(\frac{1}{1+e^{-a_{i_{k}}^{T}\omega_{k-1}}}-\frac{1}{1+e^{-a_{i_{k}}^{T}\omega_{\lceil\frac{k}{m}\rceil m}}})\\
 &+\frac{1}{n}\sum_{i=1}^{n}a_{i}(\frac{1}{1+e^{-a_{i}^{T}\omega_{\lceil\frac{k}{m}\rceil m}}}-b_{i})+\lambda\omega_{k-1}\Big]+\sqrt{\eta\delta}W_{k},
\end{align*}
which implies
\begin{align*}
\nabla P(x)=\frac{1}{n}\sum_{i=1}^{n}a_{i}(\frac{1}{1+e^{-a_{i}^{T}x}}-b_{i})+\lambda x
\end{align*}
and
\begin{align*}
\Sigma(x,y)=&\frac{1}{n}\sum_{i=1}^{n}\Big[\Big(a_{i}(\frac{1}{1+e^{-a_{i}^{T}x}}-\frac{1}{1+e^{-a_{i}^{T}y}})\Big)\Big(a_{i}(\frac{1}{1+e^{-a_{i}^{T}x}}-\frac{1}{1+e^{-a_{i}^{T}y}})\Big)^{T}\Big]\\
&-\Big(\frac{1}{n}\sum_{i=1}^{n}a_{i}(\frac{1}{1+e^{-a_{i}^{T}x}}-\frac{1}{1+e^{-a_{i}^{T}y}})\Big)\Big(\frac{1}{n}\sum_{i=1}^{n}a_{i}(\frac{1}{1+e^{-a_{i}^{T}x}}-\frac{1}{1+e^{-a_{i}^{T}y}})\Big)^{T}.
\end{align*}
\end{example}

\begin{remark}
In Example \ref{ex2}, when $n$ is large enough, with high probability, we can calculate the matrix $Q_{\eta,\delta}(x,y)$ directly. However, to the best of our knowledge, for any positive definite matrix $\hat{\Sigma}(x)$, it is very difficult to calculate the derivative of $\hat{\Sigma}(x)^{\frac{1}{2}}$. With the help of \cite{D-N}, we give a way to estimate the constants $A_{3}$, $A_{4}$ and $A_{5}$ in Assumption \ref{assum4}, which depend on $\eta$ and $\delta$. Moreover, when $\eta\leq\delta$, $A_{3}$, $A_{4}$ and $A_{5}$ are always bounded. For simplicity, we assume that $0<\delta\leq1$ and $\eta\leq\delta$ throughout the paper.
\end{remark}


Recall that $W_1$ distance between two probability measures $\mu_{1}$ and $\mu_{2}$ is defined as
\begin{align}\label{deW1}
W_{1}(\mu_{1},\mu_{2})\ =\ \inf_{(X,Y)\in\mathcal{C}(\mu_{1},\mu_{2})}\mathbb{E}|X-Y|,
\end{align}
where $\mathcal{C}(\mu_{1},\mu_{2})$ is the set of all the coupling realizations of $\mu_{1},\mu_{2}$. By a duality,
$$
W_{1}(\mu_{1},\mu_{2})\ =\ \sup_{h \in {\rm Lip}(1)} |\mu_{1}(h)-\mu_{2}(h)|,
$$
where ${\rm Lip}(1)=\{h: \R^{d} \rightarrow \R; \ |h(y)-h(x)| \le |y-x|\}$ and
$$\mu_{i}(h)=\int_{\R} h(x) \mu_{i}(\dif x), \ \ \ \ i=1,2.$$
The main result of this paper is the following theorem, which provides an approximation error between the distributions of $\tl \omega_s$ and $\tilde X_s$.
\begin{theorem}\label{main}
		Assume that the Assumptions \ref{assum1}, \ref{assum3} and \ref{assum4} hold. Choosing $0<\delta\leq1$ and $\eta\leq\min\{\delta,$ $\big(\frac{\gamma}{432L^{4}}\big)^{\frac{1}{3}},\frac{\gamma}{96L^{2}},(\frac{\gamma}{576L^{3}})^{\frac{1}{2}},\frac{\gamma}{8L^{2}},\frac{\gamma\delta}{2A_{3}}\}$. Then, for any $s\in\N$, there exists a constant $\lambda:=\frac\gamma4\exp\{-\frac{2K(L+3\gamma/4)}{\gamma\delta}\}>0$ such that
\begin{align*}
&W_{1}\left(\mathcal{L}\left(\tilde X_s\right),\mathcal{L}\left(\tl \omega_{s}\right)\right)\\
\le&C_{A,\gamma,d,|\nabla P(0)|,L}\frac{1}{1-a}\big(\mathbb{E}|\tilde{\omega}_{0}|^{4}+\frac{1+K^{2}}{1-\rho}\big)^{\frac{7}{4}}
\left(1+|\ln\eta|+\frac{\delta}{\eta^{\frac{1}{4}}}+\lambda^{-2}e^{\lambda}\right)(\eta\delta)^{\frac{1}{2}},
\end{align*}
where $a=e^{\frac{2K(L+3\gamma/4)}{\gamma\delta}-\lambda m\eta}<1$, $\rho=(1-\gamma\eta)^{m}+\frac{1}{2}<1$ (when $m$ is large enough).
\end{theorem}
\begin{corollary}\label{Coro2}
Keep the same assumptions and notations as Theorem \ref{main}. In addition, suppose $\delta=\frac{8K(L+3\gamma/4)}{\gamma }|\ln\eta|^{-1}$ and $m=\eta^{\frac54+\e}$ for any positive $\e$, then we have
\begin{align*}
W_{1}\left(\mathcal{L}\left(\tilde X_s\right),\mathcal{L}\left(\tl \omega_{s}\right)\right)\to0,\quad \mathrm{as~}\eta\to0.
\end{align*}
\end{corollary}

\begin{corollary}\label{Coro}
Keep the same assumptions and notations as Theorem \ref{main}. In addition, suppose $K\le\delta$ , then we have
\begin{align*}
&W_{1}\left(\mathcal{L}\left(\tilde X_s\right),\mathcal{L}\left(\tl \omega_{s}\right)\right)\\
\le& C_{A,\gamma,d,|\nabla P(0)|,L}(1-e^{-\frac{\gamma}{4}m\eta})^{-1}\big(\mathbb{E}|\tilde{\omega}_{0}|^{4}+\frac{1}{1-\rho}\big)^{\frac{7}{4}}
\left(1+|\ln\eta|+\frac{\delta}{\eta^{\frac{1}{4}}}\right)(\eta\delta)^{\frac{1}{2}},
\end{align*}
the error bound tends to $0$ as $\eta\to0$ and $m\eta=O(1)$.
\end{corollary}
\begin{remark}
When $K$ in Assumption \ref{assum2} and $\delta$ in the SDDE (\ref{sfde}) both are small enough, the distribution of the solution $\tilde{X}_{s}$ of the SDDE (\ref{sfde}) is close to the minimizer $\omega^*$ as $s\rightarrow\infty$, see Appendix \ref{invariant measure} below, this gives another motivation to understand the SVRG-LD and SVRG by SDDE.
\end{remark}
\section{The proof of Theorem \ref{main}}\label{forth}

{ The strategy of proving Theorem \ref{main} is divided into two steps. The first step uses a refined Lindeberg principle and Malliavin calculus to prove an approximation error bound for the internal Markov chains $\{\omega_{k}\}_{ms\leq k\leq m(s+1)}$ and $\{X_{t}\}_{ms\eta\leq t\leq m(s+1)\eta}$ in Subsection \ref{internal sub}, whereas the second step only uses the refined Lindeberg principle to approximate the external Markov chain $\{\tilde{\omega}_{s}\}_{s\geq0}$ by $\{\tilde{X}_{s\eta}\}_{s\geq0}$ in Subsection \ref{external sub}.}

\subsection{Approximation of internal Markov chain}\label{internal sub}
\begin{lemma} \label{l:TwoMC}
	Both $(\tilde \omega_s)_{s \in \Z^{+}}$ and $(\tilde X_s)_{s \in \Z^{+}}$ are Markov chains.
\end{lemma}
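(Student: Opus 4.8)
The plan is to exhibit each of the two processes, \emph{over a single epoch}, as an iterated random function driven by noise that is independent across epochs, and then to invoke the standard criterion that such a representation produces a time-homogeneous Markov chain. Before anything else I would unwind the convention fixed after (\ref{representation}) (``$\lceil x\rceil$ is the largest integer strictly less than $x$'') to verify that for every $k$ with $sm<k\le (s+1)m$ one has $\lceil k/m\rceil m=sm$, and likewise that for every $t\in(sm\eta,(s+1)m\eta]$ one has $\lceil t/(m\eta)\rceil m\eta=sm\eta$. This is the crucial point: it says that the ``memory'' appearing in the drift of (\ref{representation}) and in the diffusion coefficient of (\ref{sfde}) is reset precisely at the epoch boundaries and does not leak from one epoch into the next.

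For $(\tilde\omega_s)_{s\in\Z^{+}}$ I would then set $\xi_{s+1}:=(i_{sm+1},\dots,i_{(s+1)m},W_{sm+1},\dots,W_{(s+1)m})$ and read off from (\ref{representation}), using the observation above, that $\tilde\omega_{s+1}=\Phi(\tilde\omega_s,\xi_{s+1})$ for a single measurable map $\Phi:\R^d\times([n]^m\times(\R^d)^m)\to\R^d$ independent of $s$ --- namely $\Phi(x,(\iota,w))=y_m$, where $y_0=x$ and $y_t=y_{t-1}-\eta[\nabla\psi_{\iota_t}(y_{t-1})-\nabla\psi_{\iota_t}(x)+\nabla P(x)]+\sqrt{\eta\delta}\,w_t$ for $t=1,\dots,m$. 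Since the $i_j$ and $W_j$ are mutually independent, the vectors $\{\xi_{s+1}\}_{s\ge 0}$ are i.i.d.\ and $\xi_{s+1}$ is independent of the $\sigma$-field generated by $\tilde\omega_0$ together with $\{i_j,W_j:j\le sm\}$, which contains $\sigma(\tilde\omega_0,\dots,\tilde\omega_s)$. Hence for bounded measurable $f$ one gets $\E[f(\tilde\omega_{s+1})\mid\tilde\omega_0,\dots,\tilde\omega_s]=g(\tilde\omega_s)$ with $g(x)=\E f(\Phi(x,\xi_1))$, which is the Markov property, and the transition kernel $x\mapsto\mathrm{Law}(\Phi(x,\xi_1))$ does not depend on $s$.

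For $(\tilde X_s)_{s\in\Z^{+}}$ I would run the identical argument with $\Phi$ replaced by an SDE solution map. By the first paragraph, on $[sm\eta,(s+1)m\eta]$ the delay term in (\ref{sfde}) equals $X_{sm\eta}=\tilde X_s$ (the single endpoint $t=sm\eta$ is Lebesgue-null and irrelevant to the stochastic integral), so there $X$ solves the non-delayed SDE $\dif X_t=-\nabla P(X_t)\,\dif t+\sqrt{\eta}\,Q_{\eta,\delta}(X_t,\tilde X_s)\,\dif B_t$ with initial value $\tilde X_s$; its coefficients are globally Lipschitz in the running variable by Assumption \ref{assum1} ($\nabla P$ is $L$-Lipschitz) and Assumption \ref{assum4} (the bound $\|\nabla_{1,v}Q_{\eta,\delta}\|_{\rm HS}^{2}\le A_3$). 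Pathwise uniqueness then yields a measurable map $\Psi$ with $(X_{sm\eta+u})_{u\in[0,m\eta]}=\Psi\big(\tilde X_s,(B_{sm\eta+u}-B_{sm\eta})_{u\in[0,m\eta]}\big)$, so $\tilde X_{s+1}=\Psi(\tilde X_s,\beta_{s+1})$ with $\beta_{s+1}$ the Brownian increment over the $(s+1)$-st epoch; the $\beta_{s+1}$ are i.i.d.\ and $\beta_{s+1}$ is independent of $\sigma(X_0)\vee\mathcal{F}^{B}_{sm\eta}\supseteq\sigma(\tilde X_0,\dots,\tilde X_s)$ because $X$ is adapted. This is again the iterated-random-function form, so $(\tilde X_s)$ is a time-homogeneous Markov chain. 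The only genuinely non-routine ingredient is the epoch-boundary bookkeeping of the first paragraph; measurability of $\Phi$ and $\Psi$, well-posedness of the per-epoch SDE, and independence of the driving noise on disjoint epochs are standard, and existence/uniqueness for (\ref{sfde}) itself has already been recorded in the paragraph following Assumption \ref{assum4}.
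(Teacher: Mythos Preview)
Your proposal is correct and follows essentially the same approach as the paper: both argue that $\tilde\omega_{s+1}$ (resp.\ $\tilde X_{s+1}$) is a measurable function of $\tilde\omega_s$ (resp.\ $\tilde X_s$) together with fresh noise that is independent of the past, whence the Markov property. Your version is considerably more detailed---explicitly verifying the $\lceil\cdot\rceil$ bookkeeping, writing out the iterated random function, and invoking pathwise uniqueness for the per-epoch SDE---whereas the paper simply declares the proof ``standard'' and records the conditional-probability identity without further elaboration.
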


\begin{proof}
	The proof is standard.
	For an $s \in \Z^{+}$, given $\tilde \omega_{s}$ (i.e. $\omega_{sm}$), by \eqref{representation} we know that the distribution of $\tilde \omega_{s+1}$ (i.e. $\omega_{(s+1)m}$) is uniquely determined by $\tilde \omega_{s}$ and the i.i.d. random variables $i_{sm+1}, ..., i_{s(m+1)}$, whence
	\begin{eqnarray*}
		\mathbb{P}\left(\tilde \omega_{s+1} \in A \big|\tilde \omega_{s}, ...,\tilde \omega_{0}\right)&=&\mathbb{P}\left(\tilde \omega_{s+1} \in A \big|\tilde \omega_{s}\right), \ \ \ \ \ \quad A \in \mcl B(\R^{d}).
	\end{eqnarray*}
	So $(\tilde \omega_s)_{s \ge 0}$ is a Markov chain.
	
	Similarly, given $\tilde X_s$ (i.e. $X_{sm\eta}$), by \eqref{sfde}, the distribution of  $\tilde X_{s+1}$  is determined by
	$\tilde X_s$ and $(B_{t})_{sm \eta \le t \le (s+1) m \eta}$, from which we know
	\begin{eqnarray*}
		\mathbb{P}\left(\tilde X_{s+1}\eta \in A \big|\tilde X_{s}, ...,\tilde X_1, \tilde X_0\right)&=&\mathbb{P}\left(\tilde X_{s+1} \in A \big|\tilde X_s\right) \ \ \ \ \ \quad A \in \mcl B(\R^{d}),
	\end{eqnarray*}
	so $(\tilde X_s)_{s \ge 0}$ is a Markov chain.
\end{proof}

It is easy to see that an internal iteration of SVRG-LD $\{\omega_k\}_{0 \le k \le m}$ is a time homogeneous Markov chain with states on $\R^d$, while the SDDE \eqref{sfde} restricted on the time period $[0, m\eta]$ reads as
\begin{align}\label{sfde-0}
\dif X_{t}=-\nabla P(X_{t})\dif t+\sqrt{\eta}Q_{\eta,\delta}(X_{t},X_0)\dif B_{t} \ \ \ {\rm for} \ t \in [0,m\eta].
\end{align}
When $X_{0}=\omega_{0}$ is fixed, the above SDDE is equivalent to the following SDE:
\begin{align}\label{sfde-0-0}
\dif X_{t}=-\nabla P(X_{t})\dif t+\sqrt{\eta}Q_{\eta,\delta}(X_{t},\omega_0)\dif B_{t} \ \ \ {\rm for} \ t \in [0,m\eta],
\end{align}
thus the solution $(X_t)_{t \in [0,m\eta]}$ is a time homogeneous Markov process with states on $\R^d$. We denote $X_{s,t}^x$ with $s\le t\in [0,\eta]$ to stress the dependence of process on the value $X_s=x$. For the simplicity of notations, we denote $X_{s,t}^x$ by $X_{t-s}^x$ according to time homogeneous property. $\omega^{x}_{k}$ is denoted by same way.

Let $W\sim \mathcal{N}(0,I_{d}),$ which is independent of $I$. The infinitesimal generators of $\{\omega_k\}_{0 \le k \le m}$ and $(X_t)_{t \in [0,m\eta]}$ are respectively
\begin{eqnarray}\label{e:GenW-1}
\mcl A^{\omega}_j f(x)&=&\E[f(\omega_{j+1})|\omega_j=x]-f(x) \\
&=&\E\left[f\big(x-\eta \nabla \psi_I(x)+\eta \nabla \psi_I(\omega_0)-\eta\nabla P(\omega_0)+\sqrt{\eta\delta}W\big)\right]-f(x)   \nonumber
\end{eqnarray}
for $j=0,1,2,...,m-1$, and
\begin{eqnarray}\label{cong}
	\mcl A^{X}_t f(x)&=&\lim_{\Delta t \rightarrow 0+}\frac{\E[f(X_{t+\Delta t})|X_t=x]-f(x)}{\Delta t}\nonumber\\
	&=& \frac{1}2 \eta\Ll Q_{\eta,\delta}(x,\omega_0)^{2}, \nabla^{2} f(x)\Rr_{{\rm HS}}-\Ll \nabla P(x),\nabla f(x)\Rr\nonumber\\
&=& \frac{1}2 \Ll \eta\Sigma(x,\omega_0)+\delta I_{d}, \nabla^{2} f(x)\Rr_{{\rm HS}}-\Ll \nabla P(x),\nabla f(x)\Rr
\end{eqnarray}
for $t \in [0,m\eta)$. Due to time homogeneous property of the two processes, their generators do not depend on the time, as we have seen from the above, so from now on we shall simply write
\Be  \label{e:GenCom}
\mcl A^X\ = \ \mcl A^{X}_t, \quad \quad \mcl A^{\omega}\ = \ \mcl A^{\omega}_j.
\Ee

Under the above assumptions, we have the following exponential convergence for the SDE (\ref{sfde-0-0}), which will be proved in Appendix \ref{moment estimate}.

\begin{lemma}\label{exponential}
Under Assumption \ref{assum3}, (\ref{Plip}) and (\ref{QQ1}), for any $t>0$ and $x,y\in\mathbb{R}^{d}$, as $\eta\leq\min\{1,\frac{\gamma\delta}{2A_{3}}\}$, there exists a constant $\lambda=\frac\gamma4\exp\{-\frac{2K(L+3\gamma/4)}{\gamma\delta}\}>0$, we have
\begin{align}\label{exponential1}
W_{1}\left(\mathcal{L}(X_{t}^{x}),\mathcal{L}(X_{t}^{y})\right)\leq C_\gamma \lambda^{-1}e^{-\lambda t}|x-y|.
\end{align}

\end{lemma}

Note that the diffusion coefficient of SDE (\ref{sfde-0-0}) is positive definite, we have the following estimates, which will be proved is Appendix \ref{Aind}.
\begin{lemma}\label{mainlem1}
Let $X_{t}$ be the solution to the equation (\ref{sfde-0-0}) and denote $P_{t}h(x)=\mathbb{E}[h(X_{t}^{x})]$ for $h\in Lip(1).$ Then, for any $x\in\mathbb{R}^{d}$ and unit vectors $v,v_{1},v_{2},v_{3}\in\mathbb{R}^{d},$ as $\eta\in(0,\delta]$ and $t\in(0,1]$, we have
\begin{align}\label{gradient2}
|\nabla_{v}(P_{t}h)(x)|\leq e^{L+4},
\end{align}
\begin{align}\label{sec}
|\nabla_{v_{2}}\nabla_{v_{1}}(P_{t}h)(x)|\leq C_{A,L,d}\frac{1}{\sqrt{\delta t}}
\end{align}
and
\begin{align}\label{thirdmain}
|\nabla_{v_{3}}\nabla_{v_{2}}\nabla_{v_{1}}P_{t}h(x)|\leq C_{A,L,d}\left(1+\frac{1}{\delta t}+\frac{1}{t^{\frac{5}{4}}}\right).
\end{align}
\end{lemma}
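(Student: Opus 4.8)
The plan is to bound the first, second and third directional derivatives of $P_t h$ by means of Bismut's integration-by-parts formula from Malliavin calculus, exploiting that the diffusion coefficient $Q_{\eta,\delta}$ of \eqref{sfde-0-0} is uniformly positive definite (with $Q^2 \ge \delta/\eta\, I_d$, so $\eta Q^2 \ge \delta I_d$). First I would set up the first variation process $\nabla_v X_t^x =: J_t^v$, which solves the linearized SDE obtained by differentiating \eqref{sfde-0-0} in the initial condition along $v$, and the second and third variation processes $\nabla_{v_2}\nabla_{v_1} X_t^x$, $\nabla_{v_3}\nabla_{v_2}\nabla_{v_1} X_t^x$ similarly. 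Using Assumption \ref{assum3} (the dissipativity \eqref{dissp}) together with Assumption \ref{assum4} (the bounds $A_1,\dots,A_5$ on the higher derivatives of $\nabla P$ and $Q_{\eta,\delta}$), an Itô-formula/Gronwall argument applied to $\E|J_t^v|^p$ gives the exponential contraction $\E|J_t^v|^p \le C e^{-c\gamma t}$ (modulo the constant $K$ coming from the $-K$ in the dissipativity), and likewise polynomial-in-time moment bounds for the higher variation processes; one also needs uniform-in-$t$ moment bounds $\E|X_t^x|^p \le C(1+|x|^p) + C(1+K)$, again via Itô and Assumption \ref{assum2}. The constraint $\eta \le \min\{\delta, \gamma/(48 A_3)\}$ is exactly what is needed so that the diffusion contribution $A_3$ (after the $\sqrt{\eta}$ scaling, which contributes $\eta A_3 \le \delta A_3$, but one should track the precise bookkeeping) does not destroy the contraction produced by $\gamma$; I would single this out as the one place where the quantitative hypotheses must be matched carefully.

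For the first derivative \eqref{gradient2} one can avoid Malliavin calculus entirely: since $h \in \mathrm{Lip}(1)$, $|\nabla_v (P_t h)(x)| = |\E \langle \nabla h(X_t^x), J_t^v\rangle| \le \E|J_t^v|$, and the first-variation estimate gives $\E|J_t^v| \le e^{-\frac\gamma8 t} + 10(K/\gamma)^{1/2}$ after optimizing constants. For the second derivative \eqref{sec} and third derivative \eqref{thirdmain}, $h$ is only Lipschitz, so I would use Bismut's formula to transfer one derivative off $h$ and onto the Malliavin weight: write $\nabla_{v_1}(P_t h)(x) = \E[h(X_t^x)\, U^{v_1}_t]$ for a weight $U^{v_1}_t$ expressed as a Skorokhod/Itô integral built from $(\eta Q_{\eta,\delta}^2)^{-1}$ times the first variation process (this is where positive definiteness, hence the factor $1/\sqrt{\delta t}$ after the Cauchy–Schwarz/Burkholder estimate $\E|U^{v_1}_t|^2 \lesssim \frac{1}{\eta}\int_0^t \E|(\eta Q^2)^{-1/2} J_s^{v_1}|^2\,ds \cdot \frac1{t}$-type bound, enters), and then differentiate once more classically in $x$ using the chain rule, producing terms with the second variation process and derivatives of the weight; iterating once more yields the third-order bound with the $\frac1{\delta t} + \frac1{t^{5/4}}$ singularity. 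The exponents $7/4$, $5/4$ etc. come from balancing the number of times $(\delta t)^{-1/2}$ appears against Hölder exponents in the moment estimates; I would not grind through these but would organize them as: (i) moment bounds for $X, J, \nabla^2 X, \nabla^3 X$; (ii) $L^p$ bounds for the Malliavin weights and their derivatives using the uniform ellipticity $\eta Q^2 \ge \delta I_d$; (iii) assembling via Hölder.

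The main obstacle I expect is step (ii) combined with the interaction of the short-time singularity and the long-time contraction: the Bismut weight naively behaves like $t^{-1}$ near $t=0$ (hence $(\delta t)^{-1}$ for the second derivative, $(\delta t)^{-1}\cdot$ something for the third), and one must simultaneously keep the exponential decay $e^{-\frac\gamma8 t}$ and the correct power of $\delta$ — this requires either splitting $[0,t]$ into $[0,t/2]\cup[t/2,t]$ and using the semigroup/Markov property to move the singularity to a bounded region, or using a weight localized near time $0$ (a partition-of-unity weight $\ell(s)$ with $\int_0^t \ell = 1$), and then propagating a contraction estimate on $[t/2,t]$. Handling the extra terms generated because the diffusion coefficient $Q_{\eta,\delta}(X_t,\omega_0)$ depends on the state $X_t$ (so $\nabla^2 X$, $\nabla^3 X$ pick up contributions from $A_4, A_5$) inside these weight computations is the book-keeping-heavy part, and tracking that all $\eta$-powers assemble so that the final bound is stated purely in terms of $\delta, t, K, \gamma, A, d$ under $\eta \le \delta$ is where I would be most careful.
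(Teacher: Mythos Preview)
Your overall strategy---set up variation processes, prove contractive moment bounds via It\^o/Gronwall using \eqref{dissp} and Assumption~\ref{assum4}, then invoke Bismut's formula with the uniform ellipticity $\eta Q_{\eta,\delta}^2\ge \delta I_d$---is correct and matches the paper. The first-derivative bound \eqref{gradient2} is exactly as you say: $|\nabla_v P_th(x)|\le \E|\nabla_v X_t^x|$ and the moment bound for the first variation suffices.

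There is, however, a concrete issue in your plan for \eqref{sec} and \eqref{thirdmain}. You propose to write $\nabla_{v_1}(P_th)(x)=\E[h(X_t^x)\,U^{v_1}_t]$ and then differentiate classically in $x$. That differentiation produces the term $\E[h(X_t^x)\,\nabla_{v_2}U^{v_1}_t]$, in which $h$ itself (not $\nabla h$) appears. Since $h\in\mathrm{Lip}(1)$ is unbounded, Cauchy--Schwarz with $\E|h(X_t^x)|^2\le C(1+|x|^2+\E|\omega_0|^2)$ yields a bound that grows with $|x|$, contradicting the $x$-uniform statement of the lemma. Subtracting a constant from $h$ (legitimate since derivatives of $P_th$ are unchanged) and using $|h(X_t^x)-h(x)|\le |X_t^x-x|$ does not help either, because $\E|X_t^x-x|^2$ also depends on $|x|$ (see Lemma~\ref{fourth}). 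One can rescue this with a further Clark--Ocone/duality step, but you did not indicate that, and it is more involved than necessary.

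The paper applies Bismut at a different level: after mollifying $h$ to $h_\epsilon$, it expands $\nabla_{v_2}\nabla_{v_1}\E[h_\epsilon(X_t^x)]$ by the chain rule, producing a $\nabla^2 h_\epsilon$ term, and then uses the identity $\nabla_{v_2}X_t^x=D_{U_2}X_t^x$ (with $u_2(s)=\tfrac{1}{t}\eta^{-1/2}\sigma(X_s^x)^{-1}\nabla_{v_2}X_s^x$) to write $\nabla^2 h_\epsilon(X_t^x)\nabla_{v_2}X_t^x=D_{U_2}\big(\nabla h_\epsilon(X_t^x)\big)$, followed by the product rule and the integration-by-parts $\E[D_{U_2}F]=\E[F\,\mathcal{I}_{v_2}]$. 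The outcome is
\[
\nabla_{v_2}\nabla_{v_1}P_t h_\epsilon(x)=\E\big[\nabla h_\epsilon(X_t^x)\,\nabla_{v_1}X_t^x\,\mathcal{I}_{v_2}^x(t)\big]+\E\big[\nabla h_\epsilon(X_t^x)\,\mathcal{R}^x_{v_1,v_2}(t)\big],
\]
with $\mathcal{R}^x_{v_1,v_2}=\nabla_{v_2}\nabla_{v_1}X_t^x-D_{U_2}\nabla_{v_1}X_t^x$. Every term carries only $\nabla h_\epsilon$, which is bounded by $1$; this is what yields the $x$-uniform estimate. The third-order bound is obtained by differentiating this identity once more and repeating the same reduction on the new $\nabla^2 h_\epsilon$ terms. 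The short-time singularity and long-time decay are handled simultaneously by the single weight $u(s)=\tfrac{1}{t}\eta^{-1/2}\sigma^{-1}\nabla_v X_s$ over the whole interval $[0,t]$; no $[0,t/2]\cup[t/2,t]$ split is needed, so the obstacle you anticipated does not arise.
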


Now, we give some moment estimates of SDDE and SVRG-LD, which will be proved in Appendix \ref{moment estimate}.

\begin{lemma}\label{fourth}
Let $X_{t}$ be the solution to the equation (\ref{sfde-0-0}) and $\eta<\frac{\gamma}{8L^{2}}$. Then, we have
\begin{align}\label{moment}
\mathbb{E}|X_{t}^{x}|^{2}\leq C_{\gamma,d,|\nabla P(0)|,L}(1+|x|^{2}+\mathbb{E}|\omega_{0}|^{2}+K)
\end{align}
and
\begin{eqnarray}\label{moment-1}
\mathbb{E}|X_{t}^{x}-x|^{2}
&\le&C_{\gamma,d,|\nabla P(0)|,L}(1+|x|^{2}+\mathbb{E}|\omega_{0}|^{2}+K)t(t+\eta+\delta).
\end{eqnarray}
\end{lemma}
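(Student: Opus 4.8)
The plan is to derive both bounds in Lemma~\ref{fourth} from the SDE \eqref{sfde-0-0} by applying It\^o's formula to $|X_t^x|^2$ and to $|X_t^x-x|^2$, and then using the dissipativity Assumption~\ref{assum3} (in its integrated form \eqref{integration1}) together with the linear growth of the coefficients. First I would write, by It\^o's formula,
\begin{align*}
\dif |X_t^x|^2 \ = \ \big(-2\Ll \nabla P(X_t^x), X_t^x\Rr + \eta\, {\rm tr}(Q_{\eta,\delta}(X_t^x,\omega_0)^2)\big)\dif t \ + \ 2\sqrt{\eta}\,\Ll X_t^x, Q_{\eta,\delta}(X_t^x,\omega_0)\dif B_t\Rr.
\end{align*}
Taking expectations kills the martingale term (after a standard localization argument, justified by the a priori $L^2$ bound one obtains from Gr\"onwall once the linear-growth estimates are in place). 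For the drift term I would use \eqref{integration1} with $y=0$ to get $\Ll \nabla P(x),x\Rr \ge \gamma|x|^2 - K - |\nabla P(0)||x|$, hence $-2\Ll \nabla P(X_t^x),X_t^x\Rr \le -2\gamma|X_t^x|^2 + 2K + 2|\nabla P(0)||X_t^x|$; the cross term $2|\nabla P(0)||X_t^x|$ is absorbed by Young's inequality into a fraction of $\gamma|X_t^x|^2$ plus a constant depending on $|\nabla P(0)|,\gamma$. For the trace term I would use \eqref{e:Tr=L2Norm}-type bound, namely $\eta\, {\rm tr}(Q_{\eta,\delta}(x,\omega_0)^2) = \eta\,{\rm tr}(\Sigma(x,\omega_0)) + \delta d \le L^2\eta|x-\omega_0|^2 + \delta d \le 2L^2\eta(|x|^2+|\omega_0|^2) + \delta d$, and the $2L^2\eta|X_t^x|^2$ piece is controlled by the dissipative term precisely because $\eta < \gamma/(8L^2)$ makes $2L^2\eta < \gamma/4$. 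This yields a differential inequality $\frac{\dif}{\dif t}\E|X_t^x|^2 \le -c\gamma\, \E|X_t^x|^2 + C_{\gamma,d,|\nabla P(0)|,L}(1+\E|\omega_0|^2 + K)$, whose Gr\"onwall solution with initial value $|x|^2$ gives \eqref{moment}.

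For \eqref{moment-1} I would apply It\^o's formula to $|X_t^x-x|^2$. Writing $X_t^x - x = -\int_0^t \nabla P(X_s^x)\dif s + \sqrt{\eta}\int_0^t Q_{\eta,\delta}(X_s^x,\omega_0)\dif B_s$, I get
\begin{align*}
\E|X_t^x-x|^2 \ = \ -2\E\int_0^t \Ll \nabla P(X_s^x), X_s^x - x\Rr\dif s \ + \ \eta\,\E\int_0^t {\rm tr}\big(Q_{\eta,\delta}(X_s^x,\omega_0)^2\big)\dif s.
\end{align*}
For the second term, by the trace bound above and \eqref{moment} it is at most $C(1+|x|^2+\E|\omega_0|^2+K)(\eta + \delta)\, t$ after integrating in $s$ (the $\delta d$ contributes $\delta t$ and the $\eta|X_s^x|^2$-type contribution gives $\eta t$ times the moment bound). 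For the first term, by \eqref{PL} and Cauchy--Schwarz, $|\Ll \nabla P(X_s^x), X_s^x-x\Rr| \le (|\nabla P(0)| + L|X_s^x|)|X_s^x-x|$; using \eqref{moment} to bound $\E[(|\nabla P(0)|+L|X_s^x|)^2]$ and Cauchy--Schwarz in the time integral, together with the elementary $\E|X_s^x-x|^2 \le \E|X_s^x-x|^2$ fed back via a short Gr\"onwall, one obtains that this term is also $O(t^2)$ times $C(1+|x|^2+\E|\omega_0|^2+K)$. Collecting the two contributions gives the stated $t(t+\eta+\delta)$ factor.

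The main obstacle I anticipate is purely bookkeeping: keeping all the cross terms (the $|\nabla P(0)||X_t^x|$ term, the $\E|\omega_0|^2$ term coming from the delay argument $\omega_0$ in $Q_{\eta,\delta}$, and the $t$ versus $t^2$ distinction) under control so that the constants come out depending only on $\gamma, d, |\nabla P(0)|, L$ and the final bound has exactly the claimed structure. The condition $\eta < \gamma/(8L^2)$ is the crucial quantitative input and must be used at the right spot — namely to dominate the $\eta L^2|X_t^x|^2$ noise-trace term by the dissipative $\gamma|X_t^x|^2$ term — otherwise the Gr\"onwall exponent would not be uniformly negative and the moment bound would blow up in $t$. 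A minor technical point is justifying the vanishing of the stochastic integral's expectation; this is handled by a standard stopping-time localization together with the a priori moment estimate, so I would simply remark on it rather than belabour it.
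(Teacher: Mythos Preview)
Your approach to \eqref{moment} is essentially identical to the paper's: It\^o's formula on $|X_t^x|^2$, the dissipativity \eqref{integration1} with $y=0$, Young's inequality on the $|\nabla P(0)||X_t^x|$ cross term, and the trace bound \eqref{QL} absorbed via $\eta<\gamma/(8L^2)$, then Gr\"onwall.

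For \eqref{moment-1} you take a different route from the paper. You apply It\^o's formula to $|X_t^x-x|^2$, which produces the cross term $-2\int_0^t\E\langle\nabla P(X_s^x),X_s^x-x\rangle\dif s$; to get the $t^2$ contribution you then need a feedback argument (your ``short Gr\"onwall''), which does work but requires a parameter choice of the type $\epsilon\sim 1/t$ in Young's inequality to land on the right power of $t$ without an unbounded exponential. The paper instead avoids It\^o altogether for this part: it writes $X_t^x-x=-\int_0^t\nabla P(X_r^x)\dif r+\sqrt{\eta}\int_0^t Q_{\eta,\delta}(X_r^x,\omega_0)\dif B_r$, applies $(a+b)^2\le 2a^2+2b^2$, then Cauchy--Schwarz in time on the drift integral ($\le 2t\int_0^t\E|\nabla P(X_r^x)|^2\dif r$) and It\^o isometry on the stochastic integral. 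With \eqref{PL}, \eqref{QL} and the already-proved \eqref{moment}, this yields the $t(t+\eta+\delta)$ factor in one line, with no feedback. Your argument is correct; the paper's is shorter and makes the origin of each of the three summands $t$, $\eta$, $\delta$ transparent.
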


\begin{lemma}\label{coupling2}
Let $\omega_{k}^{x}$ be defined in (\ref{representation}) and $\eta\leq\min\{1,\big( \frac{\gamma}{432L^{4}}\big)^{\frac{1}{3}},\frac{\gamma}{96L^{2}},(\frac{\gamma}{576L^{3}})^{\frac{1}{2}}\}$. Then, for any $0\leq k\leq m$, we have
\begin{align}\label{SGDforth}
\mathbb{E}|\omega_{k}^{x}|^{4}\leq C_{\gamma,d,|\nabla P(0)|,L}\big(1+|x|^{4}+\mathbb{E}|\omega_{0}|^{4}+K^{2}\big).
\end{align}
\end{lemma}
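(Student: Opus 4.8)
The plan is to derive a one-step drift inequality for $\E|\omega_k^x|^4$ along the internal recursion \eqref{representation} and then iterate over $0\le k\le m$. Within a single epoch the memory point satisfies $\omega_{\lceil k/m\rceil m}=\omega_0=x$ for every $1\le k\le m$, so by \eqref{e:OmeDef} the increment is $\Delta_k:=\omega_k-\omega_{k-1}=-\eta\nabla P(\omega_{k-1})+\sqrt\eta\,V_k$ with $V_k=V_{\eta,\delta}(\omega_{k-1},\omega_0,i_k,W_k)$. Set $\mathcal F_{k-1}:=\sigma(\omega_0,i_1,\dots,i_{k-1},W_1,\dots,W_{k-1})$. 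I would first record the inputs: (i) $\E[V_k\mid\mathcal F_{k-1}]=0$ by \eqref{e:ConV}; (ii) $\E[|V_k|^2\mid\mathcal F_{k-1}]=\mathrm{tr}\big(\eta\Sigma(\omega_{k-1},\omega_0)+\delta I_d\big)\le\eta L^2|\omega_{k-1}-\omega_0|^2+\delta d$ by \eqref{e:ConCov}--\eqref{e:Tr=L2Norm}; (iii) $\E[|V_k|^4\mid\mathcal F_{k-1}]\le C\big(\eta^2L^4|\omega_{k-1}-\omega_0|^4+\delta^2d^2\big)$, using Assumption \ref{assum1}, the Lipschitz bound \eqref{Plip} and the fourth moment of $W_k\sim\mathcal N(0,I_d)$. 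The dissipativity (Assumption \ref{assum2}, which follows from Assumption \ref{assum3}) gives $\langle\omega_{k-1},\nabla P(\omega_{k-1})\rangle\ge\gamma|\omega_{k-1}|^2-K$, and \eqref{PL} gives $|\nabla P(\omega_{k-1})|\le|\nabla P(0)|+L|\omega_{k-1}|$; recall also $\eta\le\delta\le1$.

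Next I would expand $|\omega_k|^4=\big(|\omega_{k-1}|^2+2\langle\omega_{k-1},\Delta_k\rangle+|\Delta_k|^2\big)^2$ and take $\E[\,\cdot\mid\mathcal F_{k-1}]$. The unique term linear in $\Delta_k$, namely $4|\omega_{k-1}|^2\langle\omega_{k-1},\Delta_k\rangle$, has conditional expectation $-4\eta|\omega_{k-1}|^2\langle\omega_{k-1},\nabla P(\omega_{k-1})\rangle$ by (i), producing the dissipative contribution $-4\eta\gamma|\omega_{k-1}|^4+4\eta K|\omega_{k-1}|^2$. The other terms, $4\langle\omega_{k-1},\Delta_k\rangle^2$, $2|\omega_{k-1}|^2|\Delta_k|^2$, $4\langle\omega_{k-1},\Delta_k\rangle|\Delta_k|^2$ and $|\Delta_k|^4$, are all of order $\eta^2$ or higher (their pure-noise parts of order $\eta\delta$, $\eta^2\delta^2$). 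Bounding them with (ii), (iii), $\E|W_k|^2=d$, $\E|W_k|^4\le3d^2$, the split $|\omega_{k-1}-\omega_0|^2\le2|\omega_{k-1}|^2+2|x|^2$, and Young's inequality for the cubic cross term via $|\omega_{k-1}|\,|\Delta_k|^3\le\tfrac{c\eta}{4}|\omega_{k-1}|^4+C_c\,\eta^{-1/3}|\Delta_k|^4$, each is dominated by a constant times one of $\eta^2L^2|\omega_{k-1}|^4$, $\eta^3L^3|\omega_{k-1}|^4$, $\eta^4L^4|\omega_{k-1}|^4$ plus terms that are $O(\eta)$ in $1+|x|^4+K^2+(\delta d)^2+|\nabla P(0)|^4$. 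The three step-size restrictions $\eta\le\big(\tfrac{\gamma}{432L^4}\big)^{1/3}$, $\eta\le\tfrac{\gamma}{96L^2}$ and $\eta\le\big(\tfrac{\gamma}{576L^3}\big)^{1/2}$ are precisely what lets the three quartic error terms be absorbed into a fraction of $4\eta\gamma|\omega_{k-1}|^4$, $\eta\le1$ controls stray powers of $\eta$, and a final Young on the quadratic remainders ($4\eta K|\omega_{k-1}|^2$, $\eta\delta d|\omega_{k-1}|^2$, etc.) turns them into $\tfrac{c\eta\gamma}{2}|\omega_{k-1}|^4+C\eta(\cdot)$.

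Taking full expectations, this yields
\[
\E|\omega_k^x|^4\;\le\;(1-c\eta\gamma)\,\E|\omega_{k-1}^x|^4+C_{\gamma,d,|\nabla P(0)|,L}\,\eta\,\big(1+|x|^4+\E|\omega_0|^4+K^2\big),\qquad 1\le k\le m,
\]
for some absolute $c>0$. Iterating from $k=0$ (where $\E|\omega_0^x|^4$ already satisfies the bound) and summing $\eta\sum_{j\ge0}(1-c\eta\gamma)^j\le(c\gamma)^{-1}$ gives $\E|\omega_k^x|^4\le C_{\gamma,d,|\nabla P(0)|,L}\big(1+|x|^4+\E|\omega_0|^4+K^2\big)$ uniformly in $0\le k\le m$, which is \eqref{SGDforth}.

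The main obstacle is not conceptual but the bookkeeping in the second step. Assumption \ref{assum1} controls $\nabla\psi_I$ only in $L^4$ over the random index $I$, not pathwise, so the conditional expectation over $i_k$ must be taken before it is used; and one has to keep the various $\eta^2,\eta^3,\eta^4$ contributions, together with the mixed $|\omega_{k-1}|$--$|x|$ products, arranged so that the genuinely dangerous part of each is a strictly subcritical multiple of the dissipative term $\eta\gamma|\omega_{k-1}|^4$ under the stated restrictions on $\eta$ — this is exactly what fixes the explicit constants appearing in the hypotheses.
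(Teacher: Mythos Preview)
Your proposal is correct and follows essentially the same approach as the paper: expand $|\omega_k|^4$ via the one-step increment, use the mean-zero structure of the noise so that the only first-order term is $-4\eta|\omega_{k-1}|^2\langle\omega_{k-1},\nabla P(\omega_{k-1})\rangle$ (handled by dissipativity), absorb the three types of quartic error terms using exactly the three step-size restrictions, and iterate the resulting contraction inequality. The paper organizes the expansion as six explicit terms rather than your drift/noise split, and uses \eqref{integration1} (from Assumption~\ref{assum3}) rather than Assumption~\ref{assum2} for the dissipative step, but these are cosmetic differences; note also that in the paper the memory point $\omega_0$ is kept distinct from the starting value $x$ of $\omega_k^x$, which is why both $|x|^4$ and $\E|\omega_0|^4$ appear in the final bound.
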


Moreover, we also need the following lemma, which will be proved in Appendix \ref{moment estimate}.
\begin{lemma}\label{SGDtaylor}
Let $Z_{t}=X_{\eta t}$, $\mathcal{A}^{Z}$ be the infinitesimal generator. Let $\mcl A^{\omega}$ be defined by (\ref{e:GenCom}) and $u_{t}(x)=\E h(X^x_{t})$ for $0\leq k\leq m.$ Then, as $\eta\leq\min\{\delta,\frac{\gamma}{8L^{2}}\}$ and $t\in(0,1]$, we have
\begin{align*}
&\big|\mathbb{E}\int_{0}^{1}\big[\mathcal{A}^{Z}u_{t}(Z_{s}^{x})-\mathcal{A}^{\omega} u_{t}(x)\big]\dif s\big|\\
\leq&C_{A,\gamma,d,|\nabla P(0)|,L}\big(1+\frac{1}{t}+\frac{\delta}{t^{\frac{5}{4}}}\big)(1+\mathbb{E}|\omega_{0}|^{4}+K)(1+|x|^{3})\eta^{\frac{3}{2}}\delta^{\frac{1}{2}}.
\end{align*}
\end{lemma}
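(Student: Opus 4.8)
The plan is to recognize the left–hand side as a one–step weak error and then carry out a Taylor expansion in which the leading terms cancel, so that only genuinely higher–order contributions remain.

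First I would dispose of the case $k=0$, where the right–hand side is $+\infty$ (recall $\tfrac10=\infty$) and there is nothing to prove, and assume $1\le k\le m$; in that range $u_k=P_{\eta k}h\in\mathcal C^3$ with at most linear growth by Lemma \ref{mainlem1}. Freezing $x$ and conditioning on the epoch start $\omega_0$, Dynkin's formula applied to the diffusion $Z$ on $[0,1]$ gives $\mathbb E\int_0^1\mathcal A^Z u_k(Z^x_s)\,\dif s=\mathbb E[u_k(X^x_\eta)\mid\omega_0]-u_k(x)$ (using $Z_1^x=X^x_\eta$), while $\mathcal A^\omega u_k(x)=\mathbb E[u_k(\omega_1^x)\mid\omega_0]-u_k(x)$ by \eqref{e:GenW-1}, where $\omega_1^x$ is one internal SVRG-LD step from $x$. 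Hence the quantity to bound equals $|\mathbb E\,u_k(X^x_\eta)-\mathbb E\,u_k(\omega_1^x)|$, the weak error between the SDDE flow over time $\eta$ and one SVRG-LD step, tested against $u_k$.

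Next, writing $\Delta=X^x_\eta-x$ and $\xi=\omega_1^x-x$, I would Taylor–expand $u_k(x+\Delta)$ and $u_k(x+\xi)$ to second order with integral third–order remainders, take expectations, and subtract, obtaining $\langle\nabla u_k(x),\mathbb E\Delta-\mathbb E\xi\rangle+\tfrac12\langle\nabla^2u_k(x),\mathbb E[\Delta\Delta^{\rm T}]-\mathbb E[\xi\xi^{\rm T}]\rangle_{\rm HS}$ plus the two remainders. The point is that \eqref{e:ConV}–\eqref{e:ConCov} give the \emph{exact} moments $\mathbb E\xi=-\eta\nabla P(x)$ and $\mathbb E[\xi\xi^{\rm T}]=\eta^2 Q_{\eta,\delta}(x,\omega_0)^2+\eta^2\nabla P(x)\nabla P(x)^{\rm T}$, whereas Itô's formula and Itô's isometry give $\mathbb E\Delta=-\int_0^\eta\mathbb E\,\nabla P(X^x_t)\,\dif t$ and $\mathbb E[\Delta\Delta^{\rm T}]=\eta\int_0^\eta\mathbb E\,Q_{\eta,\delta}(X^x_t,\omega_0)^2\,\dif t+\mathbb E\Delta\,\mathbb E\Delta^{\rm T}+(\text{drift fluctuation and drift–martingale covariances})$, whose leading parts are exactly $\eta^2 Q_{\eta,\delta}(x,\omega_0)^2+\eta^2\nabla P(x)\nabla P(x)^{\rm T}$; thus the $\eta^2\langle\nabla^2u_k,Q_{\eta,\delta}^2\rangle$ and $\eta^2\langle\nabla^2u_k,\nabla P\nabla P^{\rm T}\rangle$ contributions cancel between the two sides, leaving only $\langle\nabla u_k(x),\mathbb E\Delta-\mathbb E\xi\rangle$, $\tfrac12\langle\nabla^2u_k(x),\mathbb E[\Delta\Delta^{\rm T}]-\mathbb E[\xi\xi^{\rm T}]\rangle_{\rm HS}$ and the two third–order remainders.

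Then I would bound these four pieces, systematically exploiting $t\le\eta\le\delta$ (so $t+\eta+\delta\le 3\delta$, $\eta^2\le\eta\delta$, $\eta\le\sqrt{\eta\delta}$). For the mean mismatch, $|\mathbb E\Delta-\mathbb E\xi|=|\int_0^\eta\mathbb E(\nabla P(x)-\nabla P(X^x_t))\dif t|\le L\int_0^\eta\mathbb E|X^x_t-x|\,\dif t\lesssim L\,\eta^{3/2}\delta^{1/2}(1+|x|+(\mathbb E|\omega_0|^2)^{1/2}+K^{1/2})$ by \eqref{moment-1}, and multiplying by $\|\nabla u_k\|_\infty\le e^{-\gamma\eta k/8}+10(K/\gamma)^{1/2}$ from \eqref{gradient2} gives a term of the asserted form with no singularity in $\eta k$. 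For the covariance mismatch, I would estimate the drift fluctuation, the drift–martingale covariances, and $\eta\int_0^\eta\mathbb E(Q_{\eta,\delta}(X^x_t,\omega_0)^2-Q_{\eta,\delta}(x,\omega_0)^2)\dif t$ using \eqref{moment}–\eqref{moment-1}, the growth bound \eqref{QL} and the derivative bound $\|\nabla_{1,v}Q_{\eta,\delta}\|_{\rm HS}^2\le A_3$ of Assumption \ref{assum4}; since $\mathbb E|X^x_t-x|^2\lesssim t\delta(1+|x|^2+\mathbb E|\omega_0|^2+K)$, every resulting term carries a factor $\eta^2\delta$, so $\|\mathbb E[\Delta\Delta^{\rm T}]-\mathbb E[\xi\xi^{\rm T}]\|_{\rm HS}\lesssim C_{A,\gamma,d}\,\eta^2\delta\,(1+|x|^2+\mathbb E|\omega_0|^2+K)$, and multiplying by $\|\nabla^2u_k(x)\|\lesssim C_{A,\gamma,d}(1+(\delta\eta k)^{-1/2})(e^{-\gamma\eta k/8}+K^{1/2}+K)$ from \eqref{sec} and using $\eta^2\delta(\delta\eta k)^{-1/2}=\eta^{3/2}\delta^{1/2}k^{-1/2}\le\eta^{3/2}\delta^{1/2}$ gives the claimed bound. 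For the two remainders, $\|\nabla^3u_k\|$ is bounded \emph{uniformly in the base point} by $C_{A,\gamma,d}(1+(\delta\eta k)^{-1}+(\eta k)^{-5/4})(e^{-\gamma\eta k/8}+K^{1/2}(1+K))$ via \eqref{thirdmain}, a Burkholder–Davis–Gundy plus short–time Gronwall argument (using that the SDDE diffusion has $L^2$–size $\sqrt{\eta\delta}$, via \eqref{QL}) yields $\mathbb E|\Delta|^3\lesssim(\eta\delta)^{3/2}(1+|x|^3+\mathbb E|\omega_0|^3)$, and Assumption \ref{assum1} with Gaussian moments gives the same bound for $\mathbb E|\xi|^3$; since $(\eta\delta)^{3/2}(1+(\delta\eta k)^{-1}+(\eta k)^{-5/4})=\eta^{3/2}\delta^{1/2}(\eta\delta+(\eta k)^{-1}+\delta(\eta k)^{-5/4})$ and $\eta\delta\le1$, these fit the target. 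Summing the four pieces, with $\mathbb E|\omega_0|^3\le1+\mathbb E|\omega_0|^4$, $1+|x|+|x|^2\lesssim1+|x|^3$ and $K^{1/2}+K\lesssim K^{1/2}(1+K)$, finishes the proof.

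The step I expect to be the main obstacle is the covariance mismatch: one must show $\mathbb E[\Delta\Delta^{\rm T}]-\mathbb E[\xi\xi^{\rm T}]$ is $O(\eta^2\delta)$ and \emph{not merely} $O(\eta^2)$, since only the former, paired with the $(\delta\eta k)^{-1/2}$ blow–up of $\|\nabla^2u_k\|$, produces the benign $k^{-1/2}$ (rather than a divergent) dependence; this hinges both on the exact cancellation of the leading terms and on a careful Itô analysis of $\eta\int_0^\eta\mathbb E\,Q_{\eta,\delta}(X^x_t,\omega_0)^2\dif t-\eta^2Q_{\eta,\delta}(x,\omega_0)^2$ and of the drift–martingale covariances, for which the short–time second–moment estimate of Lemma \ref{fourth} together with $t\le\eta\le\delta$ is essential. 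The companion point—that the SDDE noise has scale $\sqrt{\eta\delta}$ rather than $\sqrt\eta$, so that $\mathbb E|\Delta|^3$ and $\mathbb E|\xi|^3$ are $O((\eta\delta)^{3/2})$—is the second place where $\eta\le\delta$ is used in an essential way.
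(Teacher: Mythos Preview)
Your proposal is correct and reaches the same bound, but the organization differs from the paper's in one substantive way. You Taylor-expand \emph{both} $u_k(x+\Delta)$ and $u_k(x+\xi)$ around $x$ and then match first and second moments; the paper instead keeps the exact Dynkin representation on the SDE side, writing $\mathbb E\int_0^1\mathcal A^Zu_k(Z_s^x)\,\dif s=\int_0^\eta\mathbb E\,\mathcal A^Xu_k(X_s^x)\,\dif s$ with $\nabla u_k$ and $\nabla^2u_k$ evaluated \emph{along the path}, and Taylor-expands only the discrete step $\mathcal A^\omega u_k(x)$. The resulting decomposition $\mathcal J_1+\mathcal J_2+\mathbb E|\mathcal R^{u_k}|$ then has a single third-order remainder involving only $|\xi|^3$; the $\eta^2\nabla P\nabla P^{\rm T}$ cancellation is not obtained by matching $\mathbb E[\Delta\Delta^{\rm T}]$ against $\mathbb E[\xi\xi^{\rm T}]$ but by further expanding $\nabla u_k(X_s^x)-\nabla u_k(x)$ to first order and using $\mathbb E(X_s^x-x)=-\int_0^s\mathbb E\,\nabla P(X_v^x)\,\dif v$ (this is the $\mathcal J_{12}$ step). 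Similarly, in $\mathcal J_2$ the paper compares $\nabla^2u_k(X_s^x)$ with $\nabla^2u_k(x)$ via the $\nabla^3u_k$ bound rather than forming $\mathbb E[\Delta\Delta^{\rm T}]$.

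The trade-off: the paper's asymmetric route needs only the second-moment increment estimate \eqref{moment-1} and never has to decompose $\mathbb E[\Delta\Delta^{\rm T}]$ into drift--drift, martingale--martingale and drift--martingale pieces. Your symmetric route is conceptually cleaner (the leading cancellations are automatic), but it requires two extra ingredients not supplied by the paper: the explicit control of the drift--martingale covariances in $\mathbb E[\Delta\Delta^{\rm T}]$ (your analysis of this is sound), and a short-time third/fourth-moment increment bound $\mathbb E|X_\eta^x-x|^3\lesssim(\eta\delta)^{3/2}(1+|x|^3+\mathbb E|\omega_0|^3)$ for the SDE remainder $\mathcal R_\Delta$. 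The latter is not in Lemma~\ref{fourth} but follows by the same It\^o/BDG/Gronwall argument (bound $\mathbb E|X_t^x-x|^4$ directly, without passing through $\mathbb E|X_t^x|^4$, so no extra $K$ powers appear).
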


\begin{proposition}\label{regularity}
	Assume that the Assumptions \ref{assum1}, \ref{assum3} and \ref{assum4} hold. Choosing $\eta\leq\min\{\delta,$ $\big(\frac{\gamma}{432L^{4}}\big)^{\frac{1}{3}},$ $\frac{\gamma}{96L^{2}},(\frac{\gamma}{576L^{3}})^{\frac{1}{2}},\frac{\gamma}{8L^{2}},\frac{\gamma}{2A_{3}}\}$, for any $0\leq k\leq m$, we have
\begin{align*}
&W_1(\mcl L(X_{k \eta}), \mcl L(\omega_k))\\
 \leq&C_{A,\gamma,d,|\nabla P(0)|,L}(1+\mathbb{E}|\omega_{0}|^{4}+K^{2})^{\frac{7}{4}}(\eta\delta)^{\frac{1}{2}}\left(1+|\ln\eta|+\frac{\delta}{\eta^{\frac{1}{4}}}+\lambda^{-2}e^{\lambda}\right),
\end{align*}
where $\lambda$ is a positive constant defined in Lemma \ref{exponential}.

\end{proposition}

\begin{proof}
When $k=0,1$, the above inequalities hold obviously.

When $k\geq2$, let $X_{0}=Y_{0}=\omega_{0}$, denote $u_{t}(x)=\mathbb{E}[h(X_{t}^{x})]$ and $Z_{t}=X_{\eta t}$ for $0\leq l\leq k$ and $h\in {\rm Lip}(1)$. For the ease of further use, for any $z\in\mathbb{R}^{d}$, and any $r,t\in\mathbb{Z}^{+}$ with $t\geq r$, we denote by $Z_{t}(t,z)$ the random variable $Z_{t}$ given $Z_{r}=z$, and similarly $\omega_{t}(r,z)$ is similarly defined, it is easily seen that
\begin{align}\label{definition}
Z_{t}=Z_{t}(r,Z_{r}), \quad \omega_{t}=\omega_{t}(r,\omega_{r}).
\end{align}
Then, we have
\begin{align*}
\mathbb{E}h(Z_{k})=\mathbb{E}h\left(Z_{k}(1,Z_{1})\right)-\mathbb{E}h\left(Z_{k}(1,\omega_{1})\right)+\mathbb{E}h\left(Z_{k}(1,\omega_{1})\right).
\end{align*}
By (\ref{definition}) again, we know $Z_{k}(1,\omega_{1})=Z_{k}\left(2,Z_{2}(1,\omega_{1})\right)$ and thus
\begin{align*}
\mathbb{E}h\left(Z_{k}(1,\omega_{1})\right)=\mathbb{E}h\left(Z_{k}\left(2,Z_{2}(1,\omega_{1})\right)\right)-\mathbb{E}h\left(Z_{k}(2,\omega_{2})\right)+\mathbb{E}h\left(Z_{k}(2,\omega_{2})\right).
\end{align*}
Continue this process with repeatedly using (\ref{definition}), we finally obtain
\begin{align*}
\mathbb{E}h(Z_{k})=\sum_{j=1}^{k}\left[\mathbb{E}h\left(Z_{k}(j,Z_{j}(j-1,\omega_{j-1}))\right)-\mathbb{E}h\left(Z_{k}(j,\omega_{j})\right)\right]+\mathbb{E}h(\omega_{k}),
\end{align*}
and thus
\begin{align*}
\mathbb{E}h(Z_{k})-\mathbb{E}h(\omega_{k})=\sum_{j=1}^{k}\left[\mathbb{E}h\left(Z_{k}(j,Z_{j}(j-1,\omega_{j-1}))\right)-\mathbb{E}h\left(Z_{k}(j,\omega_{j})\right)\right].
\end{align*}
Let us now bound each term on the right hand side. Because $Z_{t}$ is a time homogeneous Markov chain, we have
\begin{align*}
u_{\eta(k-j)}(z)=\mathbb{E}\left[h(X_{\eta k})|X_{\eta j}=z\right]=\mathbb{E}\left[h(Z_{k})|Z_{j}=z\right].
\end{align*}
Now, we have
\begin{align*}
&\mathbb{E}h\left(Z_{k}(j,Z_{j}(j-1,\omega_{j-1}))\right)-\mathbb{E}h\left(Z_{k}(j,\omega_{j})\right)\\
=&\mathbb{E}u_{\eta(k-j)}\left(Z_{j}(j-1,\omega_{j-1})\right)-\mathbb{E}u_{\eta(k-j)}(\omega_{j})\\
=&\mathbb{E}u_{\eta(k-j)}\left(Z_{j}(j-1,\omega_{j-1})\right)-\mathbb{E}u_{\eta(k-j)}\left(\omega_{j}(j-1,\omega_{j-1})\right)\\
=&\mathbb{E}u_{\eta(k-j)}\left(Z_{1}^{\omega_{j-1}}\right)-\mathbb{E}u_{\eta(k-j)}\left(\omega_{1}^{\omega_{j-1}}\right),
\end{align*}
where the second equality is by (\ref{definition}) and the last one is by the relation $Z_{1}^{\omega_{j-1}}\stackrel{\rm d}{=}Z_{j}(j-1,\omega_{j-1})$ and $\omega_{1}^{\omega_{j-1}}\stackrel{\rm d}{=}\omega_{j}(j-1,\omega_{j-1})$. Hence, we have
\begin{align}\label{definition2}
\mathbb{E}h(Z_{k})-\mathbb{E}h(\omega_{k})=\sum_{j=1}^{k}\left[\mathbb{E}u_{\eta(k-j)}\left(Z_{1}^{\omega_{j-1}}\right)-\mathbb{E}u_{\eta(k-j)}\left(\omega_{1}^{\omega_{j-1}}\right)\right],
\end{align}
which further implies
\begin{align}\label{definition2}
W_{1}\left(\mathcal{L}(Z_{k}),\mathcal{L}(\omega_{k})\right)\leq&\sum_{j=1}^{k-1}\sup_{h\in{\rm Lip}(1)}\left|\mathbb{E}u_{\eta(k-j)}\left(Z_{1}^{\omega_{j-1}}\right)-\mathbb{E}u_{\eta(k-j)}\left(\omega_{1}^{\omega_{j-1}}\right)\right|\nonumber\\
&+\sup_{h\in{\rm Lip}(1)}\left|\mathbb{E}h\left(Z_{1}^{\omega_{k-1}}\right)-\mathbb{E}h\left(\omega_{1}^{\omega_{k-1}}\right)\right|.
\end{align}

For the first term, if $k\leq\eta^{-1}+1$, denote the generator of the process $Z_{t}$ by $\mathcal{A}^{Z}$. Then, by It$\hat{o}$'s formula and the definition of $\mathcal{A}^{\omega}$, for any $1\leq j\leq k-1$, we have
\begin{align}\label{NEED}
&\mathbb{E}u_{\eta(k-j)}\left(Z_{1}^{\omega_{j-1}}\right)-\mathbb{E}u_{\eta(k-j)}\left(\omega_{1}^{\omega_{j-1}}\right)\nonumber\\
=&\mathbb{E}\left[u_{\eta(k-j)}\left(Z_{1}^{\omega_{j-1}}\right)-u_{\eta(k-j)}(\omega_{j-1})\right]-\mathbb{E}\left[u_{\eta(k-j)}\left(\omega_{1}^{\omega_{j-1}}\right)-u_{\eta(k-j)}(\omega_{j-1})\right]\nonumber\\
=& \mathbb{E}\int_{0}^{1}\big[\mathcal{A}^{Z}u_{\eta(k-j)}(Z_{s}^{\omega_{j-1}})-\mathcal{A}^{\omega} u_{\eta(k-j)}(\omega_{j-1})\big]\dif s.
\end{align}
Since $\eta(k-j)\in(0,1]$, one can derive from Lemma \ref{SGDtaylor}, the H\"{o}lder inequality and Lemma \ref{coupling2} that
\begin{align*}
&\sum_{j=1}^{k-1}\sup_{h\in{\rm Lip}(1)}\left|\mathbb{E}u_{\eta(k-j)}\left(Z_{1}^{\omega_{j-1}}\right)-\mathbb{E}u_{\eta(k-j)}\left(\omega_{1}^{\omega_{j-1}}\right)\right|\\
\leq&C_{A,\gamma,d,|\nabla P(0)|,L}\sum_{j=1}^{k-1}\big(1+\frac{1}{\eta(k-j)}+\frac{\delta}{[\eta(k-j)]^{\frac{5}{4}}}\big)(1+\mathbb{E}|\omega_{0}|^{4}+K)(1+\mathbb{E}|\omega_{j-1}|^{3})\eta^{\frac{3}{2}}\delta^{\frac{1}{2}}\\
\leq&C_{A,\gamma,d,|\nabla P(0)|,L}(1+\mathbb{E}|\omega_{0}|^{4}+K^{2})^{\frac{7}{4}}\sum_{j=1}^{k-1}\big(1+\frac{1}{\eta(k-j)}+\frac{\delta}{[\eta(k-j)]^{\frac{5}{4}}}\big)\eta^{\frac{3}{2}}\delta^{\frac{1}{2}}\\
\leq&C_{A,\gamma,d,|\nabla P(0)|,L}(1+\mathbb{E}|\omega_{0}|^{4}+K^{2})^{\frac{7}{4}}(\eta\delta)^{\frac{1}{2}}\left(1+|\ln\eta|+\frac{\delta}{\eta^{\frac{1}{4}}}\right).
\end{align*}
If $k>\eta^{-1}+1$, we consider the following two cases. When $1\leq j< k-\eta^{-1}$, notice that $u_{\eta(k-j)}(x)=\mathbb{E}\left[h(X_{\eta(k-j)}^{x})\right]=\mathbb{E}\left[h(X_{\eta(k-j)-1}^{X_{1}^{x}})\right]$, by (\ref{exponential1}) with $t=(k-j)\eta-1$, we have
\begin{align*}
&\sup_{h\in{\rm Lip}(1)}\left|\mathbb{E}u_{\eta(k-j)}\left(X_{\eta}^{\omega_{j-1}}\right)-\mathbb{E}u_{\eta(k-j)}\left(\omega_{1}^{\omega_{j-1}}\right)\right|\\
=&\sup_{h\in{\rm Lip}(1)}\left|\mathbb{E}h\left(X_{\eta(k-j)}^{X_{\eta}^{\omega_{j-1}}}\right)-\mathbb{E}h\left(X_{\eta(k-j)}^{\omega_{1}^{\omega_{j-1}}}\right)\right|\\
=&\sup_{h\in{\rm Lip}(1)}\left|\mathbb{E}h\left(X_{\eta(k-j)-1}^{\bar{X}}\right)-\mathbb{E}h\left(X_{\eta(k-j)-1}^{\bar{Y}}\right)\right|\\
=&W_{1}\left(\mathcal{L}\left(X_{\eta(k-j)-1}^{\bar{X}}\right),\mathcal{L}\left(X_{\eta(k-j)-1}^{\bar{Y}}\right)\right)\leq e^{\frac{2K(L+3\gamma/4)}{\gamma\delta}}e^{-\lambda [\eta(k-j)-1]}\mathbb{E}\left|\bar{X}-\bar{Y}\right|,
\end{align*}
where $\bar{X}$ and $\bar{Y}$ are any random vectors such that $\bar{X}\stackrel{\rm d}{=}X_{1}^{X_{\eta}^{\omega_{j-1}}}$ and $\bar{Y}\stackrel{\rm d}{=}X_{1}^{\omega_{1}^{\omega_{j-1}}}$. Thus, noting that $\bar{X}$ and $\bar{Y}$ are arbitrary, it follows from the definition of the Wasserstein-1 distance (see (\ref{deW1}), taking the infimum about $\bar{X}$ and $\bar{Y}$ on the right side of the above inequality) that
\begin{align*}
&\sup_{h\in{\rm Lip}(1)}\left|\mathbb{E}u_{\eta(k-j)}\left(X_{\eta}^{\omega_{j-1}}\right)-\mathbb{E}u_{\eta(k-j)}\left(\omega_{1}^{\omega_{j-1}}\right)\right|\\
\leq& C_\gamma\lambda^{-1} e^{-\lambda [\eta(k-j)-1]}W_{1}\left(\mathcal{L}\left(X_{1}^{X_{\eta}^{\omega_{j-1}}}\right),\mathcal{L}\left(X_{1}^{\omega_{1}^{\omega_{j-1}}}\right)\right)\\
=&C_\gamma\lambda^{-1}e^{-\lambda [\eta(k-j)-1]}\sup_{h\in{\rm Lip}(1)}\left|\mathbb{E}h\left(X_{1}^{X_{\eta}^{\omega_{j-1}}}\right)-\mathbb{E}h\left(X_{1}^{\omega_{1}^{\omega_{j-1}}}\right)\right|\\
=&C_\gamma\lambda^{-1}e^{-\lambda [\eta(k-j)-1]}\sup_{h\in{\rm Lip}(1)}\left|\mathbb{E}u_{1}\left(X_{\eta}^{\omega_{j-1}}\right)-\mathbb{E}u_{1}\left(\omega_{1}^{\omega_{j-1}}\right)\right|.
\end{align*}
Then, by (\ref{NEED}), Lemmas \ref{SGDtaylor} and \ref{coupling2}, we have
\begin{align*}
&\sum_{j=1}^{\lfloor k-\eta^{-1}\rfloor}\sup_{h\in{\rm Lip}(1)}\left|\mathbb{E}[u_{\eta(k-j)}(X_{\eta}^{\omega_{j-1}})-u_{\eta(k-j)}(\omega_{j})]\right|\\
\leq&C_{A,\gamma,d,|\nabla P(0)|,L}\lambda^{-1}(1+\mathbb{E}|\omega_{0}|^{4}+K)\eta^{\frac{3}{2}}\delta^{\frac{1}{2}}\sum_{j=1}^{\lfloor k-\eta^{-1}\rfloor}e^{-\lambda [\eta(k-j)-1]}(1+\mathbb{E}|\omega_{j-1}|^{3})\\
\leq&C_{A,\gamma,d,|\nabla P(0)|,L}\lambda^{-1}(1+\mathbb{E}|\omega_{0}|^{4}+K)\eta^{\frac{3}{2}}\delta^{\frac{1}{2}}\sum_{j=1}^{\lfloor k-\eta^{-1}\rfloor}e^{-\lambda [\eta(k-j)-1]}\left(1+(\mathbb{E}|\omega_{0}|^{4})^{\frac{3}{4}}+K^{\frac{3}{2}}\right)\\
\leq&C_{A,\gamma,d,|\nabla P(0)|,L}\lambda^{-1}e^\lambda(1+\mathbb{E}|\omega_{0}|^{4}+K^{2})^{\frac{7}{4}}\eta^{\frac{3}{2}}\delta^{\frac{1}{2}}\sum_{j=1}^{\lfloor k-\eta^{-1}\rfloor}e^{-\lambda\eta(k-j)}
\end{align*}
which further implies
\begin{align*}
&\sum_{j=1}^{\lfloor k-\eta^{-1}\rfloor}\sup_{h\in{\rm Lip}(1)}\left|\mathbb{E}[u_{\eta(k-j)}(X_{\eta}^{\omega_{j-1}})-u_{\eta(k-j)}(\omega_{j})]\right|\\
\leq&C_{A,\gamma,d,|\nabla P(0)|,L}\lambda^{-1}e^{\lambda}(1+\mathbb{E}|\omega_{0}|^{4}+K^{2})^{\frac{7}{4}}\eta^{\frac{3}{2}}\delta^{\frac{1}{2}}\sum_{j=k-\lfloor k-\eta^{-1}\rfloor}^{k-1}e^{-\lambda\eta j}\\
\leq&C_{A,\gamma,d,|\nabla P(0)|,L}\lambda^{-1}e^{\lambda}(1+\mathbb{E}|\omega_{0}|^{4}+K^{2})^{\frac{7}{4}}\eta^{\frac{3}{2}}\delta^{\frac{1}{2}}\int_{\eta^{-1}-1}^{k}e^{-\lambda\eta y}\dif y\\
\leq&C_{A,\gamma,d,|\nabla P(0)|,L}\lambda^{-2}e^{\lambda}(1+\mathbb{E}|\omega_{0}|^{4}+K^{2})^{\frac{7}{4}}(\eta\delta)^{\frac{1}{2}}.
\end{align*}
When $k-\eta^{-1}\leq j\leq k-1$, by (\ref{NEED}), Lemmas \ref{SGDtaylor} and \ref{coupling2}, we have
\begin{align*}
&\sum_{j=\lfloor k-\eta^{-1}\rfloor+1}^{k-1}\sup_{h\in{\rm Lip}(1)}\left|\mathbb{E}[u_{\eta(k-j)}(X_{\eta}^{\omega_{j-1}})-u_{\eta(\omega-j)}(\omega_{j})]\right|\\
\leq&C_{A,\gamma,d,|\nabla P(0)|,L}(1+\mathbb{E}|\omega_{0}|^{4}+K^{2})^{\frac{7}{4}}\eta^{\frac{3}{2}}\delta^{\frac{1}{2}}\sum_{j=\lfloor k-\eta^{-1}\rfloor+1}^{k-1}\big(1+\frac{1}{\eta(k-j)}+\frac{\delta}{[\eta(k-j)]^{\frac{5}{4}}}\big)\\
\leq&C_{A,\gamma,d,|\nabla P(0)|,L}(1+\mathbb{E}|\omega_{0}|^{4}+K^{2})^{\frac{7}{4}}(\eta\delta)^{\frac{1}{2}}\left(1+|\ln\eta|+\frac{\delta}{\eta^{\frac{1}{4}}}\right).
\end{align*}
Therefore, we have
\begin{align*}
&\sum_{j=1}^{k-1}\sup_{h\in{\rm Lip}(1)}\left|\mathbb{E}u_{\eta(k-j)}\left(X_{\eta}^{\omega_{j-1}}\right)-\mathbb{E}u_{\eta(k-j)}\left(\omega_{1}^{\omega_{j-1}}\right)\right|\\
\leq&C_{A,\gamma,d,|\nabla P(0)|,L}(1+\mathbb{E}|\omega_{0}|^{4}+K^{2})^{\frac{7}{4}}(\eta\delta)^{\frac{1}{2}}\left(1+|\ln\eta|+\frac{\delta}{\eta^{\frac{1}{4}}}+\lambda^{-2}e^{\lambda}\right).
\end{align*}

For the second term,by (\ref{representation}), (\ref{Lip}), Cauchy-Schwarz inequality and (\ref{PL}), we have
\begin{align*}
\mathbb{E}\big|\omega_{k}-\omega_{k-1}\big|\leq&\eta\Big[\mathbb{E}\big|\nabla\psi_{i_{k}}(\omega_{k-1})-\nabla\psi_{i_{k}}(\omega_{0})+\nabla P(\omega_{0})\big|+(\frac{\delta}{\eta})^{\frac{1}{2}}\mathbb{E}|W_{k}|\Big]\\
\leq&C_{L}\eta\big(\mathbb{E}|\omega_{k-1}|+\mathbb{E}|\omega_{0}|+|\nabla P(0)|+(\frac{\delta d}{\eta})^{\frac{1}{2}}\big).
\end{align*}
Then, by the Cauchy-Schwarz inequality, (\ref{moment-1}) and (\ref{SGDforth}), we further have
\begin{align*}
&\left|\mathbb{E}h\left(Z_{1}^{\omega_{k-1}}\right)-\mathbb{E}h\left(\omega_{1}^{\omega_{k-1}}\right)\right|\\
\leq&\mathbb{E}\big|h\big(X_{\eta}^{\omega_{k-1}}\big)-h(\omega_{k-1})\big|+\mathbb{E}\big|h(\omega_{k})-h(\omega_{k-1})\big|\\
\leq&\mathbb{E}\big|X_{\eta}^{\omega_{k-1}}-\omega_{k-1}\big|+\mathbb{E}\big|\omega_{k}-\omega_{k-1}\big|\\
\leq&C_{\gamma,L}\eta\big(\sqrt{\mathbb{E}|\omega_{k-1}|^{2}}+K^{\frac{1}{2}}+(\mathbb{E}|\omega_{0}|^{4})^{\frac{1}{4}}+|\nabla P(0)|+(\frac{\delta d}{\eta})^{\frac{1}{2}}\big)\\
\leq&C_{\gamma,d,|\nabla P(0)|,L}\big[1+(\mathbb{E}|\omega_{0}|^{4})^{\frac{1}{4}}+K^{\frac{1}{2}}\big](\eta\delta)^{\frac{1}{2}}.
\end{align*}

Combining all of above, we have
\begin{align*}
&W_{1}\left(\mathcal{L}(X_{\eta N}),\mathcal{L}(Y_{N})\right)\\
\leq&C_{A,\gamma,d,|\nabla P(0)|,L}(1+\mathbb{E}|\omega_{0}|^{4}+K^{2})^{\frac{7}{4}}(\eta\delta)^{\frac{1}{2}}\left(1+|\ln\eta|+\frac{\delta}{\eta^{\frac{1}{4}}}+\lambda^{-2}e^{\lambda}\right).
\end{align*}

\end{proof}

\subsection{Approximation of external Markov chain}\label{external sub}

Recall $\tl X_s$ from \eqref{d:TlXs}. Let $h \in \mcl C^1(\R^d,\R)$ be Lipschitz, define
\Bey  \label{e:Uh}
U_h(s,x)&=&\E \left[h(\tl X^x_s)\right], \ \ \ \ s=0,1,2,....
\Eey
where $\tl X^x_s$ stresses that the initial value of $\tl X_s$ is $x$.

{ In this subsection, we first give the regularity of $(\tilde X_s)_{s \in \Z^{+}}$ in Lemma \ref{external}. Combining with Proposition \ref{regularity}, we prove Theorem \ref{main} by the Lindeberg principle.}

\begin{lemma}\label{external}
	Let $h \in {\rm Lip}(1)$. Choosing $\eta\leq\min\{1,\frac{\gamma\delta}{2A_{3}}\}$. Then, for any $x,y\in\mathbb{R}^{d}$, we have
	\Beys
	|U_h(s,x)-U_h(s,y)| &\le&a^{s}|x-y|,
	\Eeys
	where $a=e^{\frac{2K(L+3\gamma/4)}{\gamma\delta}-\lambda m\eta}<1$ (when $m$ is large enough).

In particular, when $K=0$, further assume $\eta\leq\gamma$, we have
\Beys
	|U_h(s,x)-U_h(s,y)| &\le&e^{-\frac{\gamma}{2}sm\eta}|x-y|.
	\Eeys
\end{lemma}
\begin{proof}
For any $x,y\in\mathbb{R}^{d}$, by (\ref{exponential1}), we have
\begin{align*}
\left|\mathbb{E}h(\tl X^{x}_s)-\mathbb{E}h(\tl X^{y}_s)\right|=&\left|\mathbb{E}h\left(X^{x}_{sm\eta}\right)-\mathbb{E}h\left(\tl X^{y}_{sm\eta}\right)\right|\\
=&\left|\mathbb{E}h\left(X^{X^{x}_{(s-1)m\eta}}_{m\eta}\right)-\mathbb{E}h\left(X^{X^{y}_{(s-1)m\eta}}_{m\eta}\right)\right|\\
=&\left|\mathbb{E}h\left(X^{\bar{X}}_{m\eta}\right)-\mathbb{E}h\left(X^{\bar{Y}}_{m\eta}\right)\right|\\
\leq&W_{1}\left(\mathcal{L}\left(X^{\bar{X}}_{m\eta}\right),\mathcal{L}\left(X^{\bar{Y}}_{m\eta}\right)\right)\leq e^{\frac{2K(L+3\gamma/4)}{\gamma\delta}} e^{-\lambda m\eta}\mathbb{E}|\bar{X}-\bar{Y}|,
\end{align*}
where $\bar{X}$ and $\bar{Y}$ are many random vectors such that $\bar{X}\stackrel{\rm d}{=}X^{x}_{(s-1)m\eta}$ and $\bar{Y}\stackrel{\rm d}{=}X^{y}_{(s-1)m\eta}$. Thus, noting that $\bar{X}$ and $\bar{Y}$ are arbitrary, it follows from the definition of the W1 distance that
\begin{align*}
\left|\mathbb{E}h(\tl X^{x}_s)-\mathbb{E}h(\tl X^{y}_s)\right|\leq e^{\frac{2K(L+3\gamma/4)}{\gamma\delta}}e^{-\lambda m\eta}\sup_{h\in{\rm Lip}(1)}|\mathbb{E}h\left(X^{x}_{(s-1)m\eta}\right)-\mathbb{E}h\left(X^{y}_{(s-1)m\eta}\right)|.
\end{align*}
Continue this process with repeatedly using (\ref{exponential1}), we finally obtain
\begin{align*}
\left|\mathbb{E}h(\tl X^{x}_s)-\mathbb{E}h(\tl X^{y}_s)\right|\leq e^{s\left(\frac{2K(L+3\gamma/4)}{\gamma\delta}-\lambda m\eta\right)}|x-y|.
\end{align*}

\end{proof}

\begin{lemma}\label{exSGD}
Let $\tilde{\omega}_{s}=\omega_{sm}$ and $\eta\leq\min\{1,\big( \frac{\gamma}{432L^{4}}\big)^{\frac{1}{3}},\frac{\gamma}{96L^{2}},(\frac{\gamma}{576L^{3}})^{\frac{1}{2}},\frac{\gamma}{\sqrt{6(1+\gamma)}10^2L^2}\}$. Then, for any $0\leq k\leq m$, we have
\begin{align}\label{exSGDforth}
\mathbb{E}|\tilde{\omega}_{s}|^{4}\leq \mathbb{E}|\tilde{\omega}_{0}|^{4}+\frac{C_{\gamma,d,|\nabla P(0)|,L}(1+K^{2})}{1-\rho}.
\end{align}
where $\rho=(1-\gamma\eta)^{m}+\frac{1}{2}<1$ (when $m$ is large enough).
\end{lemma}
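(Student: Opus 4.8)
The plan is to prove a one‑epoch geometric recursion $\E|\tilde\omega_{s+1}|^4\le\rho\,\E|\tilde\omega_s|^4+C(1+K^2)$ with $\rho=(1-\gamma\eta)^m+\tfrac12\in(0,1)$, and then iterate it over $s$.

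\textbf{Step 1 (one internal step).} Fix the block starting from $\omega_0=\tilde\omega_s$, so that inside it $\omega_{\lceil k/m\rceil m}=\omega_0$, and write $\omega_k=a+b$ with $a=\omega_{k-1}-\eta\nabla P(\omega_{k-1})$ ($\mathcal F_{k-1}$-measurable) and $b=\sqrt\eta\,V_{\eta,\delta}(\omega_{k-1},\omega_0,i_k,W_k)$. Expanding $|\omega_k|^4=\bigl(|a|^2+2\langle a,b\rangle+|b|^2\bigr)^2$ and taking $\E[\,\cdot\mid\mathcal F_{k-1}]$: the term $\E[\langle a,b\rangle\mid\mathcal F_{k-1}]$ vanishes by \eqref{e:ConV}; by \eqref{e:ConCov}--\eqref{e:Tr=L2Norm}, $\E[\langle a,b\rangle^2\mid\mathcal F_{k-1}]\le|a|^2(\eta^2L^2|\omega_{k-1}-\omega_0|^2+\eta\delta)$ and $\E[|b|^2\mid\mathcal F_{k-1}]\le\eta^2L^2|\omega_{k-1}-\omega_0|^2+\eta\delta d$; by \eqref{Lip} (applied after conditioning on $i_k$) and the Gaussian moments of $W_k$, $\E[|b|^4\mid\mathcal F_{k-1}]\le C(\eta^4L^4|\omega_{k-1}-\omega_0|^4+\eta^2\delta^2d^2)$; and the third‑order term $\E[\langle a,b\rangle|b|^2\mid\mathcal F_{k-1}]$ is $O(\eta^3)$ because the Gaussian part of $V_{\eta,\delta}$ has vanishing odd moments and its remaining part $\xi_k=\nabla\psi_{i_k}(\omega_{k-1})-\nabla\psi_{i_k}(\omega_0)-\nabla P(\omega_{k-1})+\nabla P(\omega_0)$ is conditionally centred, so only $C\eta^3|a|\,(\E[|\xi_k|^4\mid\mathcal F_{k-1}])^{3/4}\le C\eta^3L^3(1+|\omega_{k-1}|)|\omega_{k-1}-\omega_0|^3$ survives. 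For the main term I would use the dissipativity $\langle\nabla P(x),x\rangle\ge\gamma|x|^2-K$ together with the linear growth \eqref{PL} to get $|a|^2\le(1-\tfrac{11}{10}\gamma\eta)|\omega_{k-1}|^2+C\eta(1+K)$ and then, by Young's inequality, $|a|^4\le(1-\tfrac{11}{10}\gamma\eta)|\omega_{k-1}|^4+C\eta(1+K^2)$; the diffusion contributions above are reduced via $|a|^2\le C(1+|\omega_{k-1}|^2)$, $|\omega_{k-1}-\omega_0|\le|\omega_{k-1}|+|\omega_0|$, $\delta\le1$, $\eta\le\delta$ and Young's inequality into a part $\le\tfrac{\gamma\eta}{10}|\omega_{k-1}|^4$, which is absorbed back into the contraction factor to restore the clean rate $1-\gamma\eta$, plus an $O(\eta)+O\bigl(\eta^2(1+|\omega_{k-1}|^4+|\omega_0|^4)\bigr)$ remainder. (It is exactly here that the various smallness constraints on $\eta$ in the statement are used.) Taking expectations and invoking Lemma \ref{coupling2} with $x=\omega_0$ to bound $\E|\omega_{k-1}|^4\le C(1+\E|\omega_0|^4+K^2)$ uniformly for $k\le m$, this yields
\[
\E|\omega_k|^4\le(1-\gamma\eta)\E|\omega_{k-1}|^4+C_{\gamma,d,|\nabla P(0)|,L}\,\eta(1+K^2)+C_{\gamma,d,|\nabla P(0)|,L}\,\eta^2(1+\E|\omega_0|^4+K^2),\qquad 1\le k\le m.
\]

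\textbf{Step 2 (one epoch).} Iterating the last display from $k=1$ to $k=m$ and using $\sum_{j=0}^{m-1}(1-\gamma\eta)^j\le(\gamma\eta)^{-1}$ gives
\[
\E|\tilde\omega_{s+1}|^4=\E|\omega_m|^4\le\Bigl[(1-\gamma\eta)^m+\tfrac{C_{\gamma,d,|\nabla P(0)|,L}\,\eta}{\gamma}\Bigr]\E|\tilde\omega_s|^4+C_{\gamma,d,|\nabla P(0)|,L}(1+K^2).
\]
For $\eta$ small enough that $C_{\gamma,d,|\nabla P(0)|,L}\,\eta/\gamma\le\tfrac12$ (this is the role of $\eta\le\frac{\gamma}{\sqrt{6(1+\gamma)}10^2L^2}$), the bracket is at most $\rho=(1-\gamma\eta)^m+\tfrac12$, which is $<1$ once $m$ is large enough that $(1-\gamma\eta)^m<\tfrac12$. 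Iterating $\E|\tilde\omega_{s+1}|^4\le\rho\,\E|\tilde\omega_s|^4+C_{\gamma,d,|\nabla P(0)|,L}(1+K^2)$ and using $\rho^s\le1$, $\sum_{j\ge0}\rho^j=(1-\rho)^{-1}$ then gives $\E|\tilde\omega_s|^4\le\E|\tilde\omega_0|^4+\frac{C_{\gamma,d,|\nabla P(0)|,L}(1+K^2)}{1-\rho}$, which is the assertion.

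The main obstacle is Step 1: arranging all the $\delta$‑ and diffusion‑dependent cross terms so that the per‑step rate is exactly $1-\gamma\eta$ rather than $1-c\gamma\eta$ requires careful Young‑type splitting and is what forces the delicate smallness conditions on $\eta$. A secondary point is that \eqref{Lip} only provides a fourth‑moment Lipschitz bound on $\nabla\psi_I$, so every term containing $\nabla\psi_{i_k}$ must be estimated after conditioning on $i_k$, not pointwise.
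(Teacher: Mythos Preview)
Your overall strategy coincides with the paper's: establish a one-step recursion with contraction rate $1-\gamma\eta$ plus a remainder depending on $\E|\omega_0|^4$, iterate it over one epoch to obtain the $\rho$-recursion for $\E|\tilde\omega_s|^4$, and then iterate in $s$. The paper's proof of this lemma is in fact a one-liner that defers to the per-step expansion already worked out in the proof of Lemma~\ref{coupling2}; your $a+b$ decomposition is a cosmetic variant of that same expansion.

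There is, however, a quantitative mismatch in Step~1 that breaks the link to the stated hypothesis on $\eta$. In the proof of Lemma~\ref{coupling2} (see the fourth and fifth terms) Young's inequality is applied directly to products of the form $|\omega_{k-1}|^2\cdot\bigl(4L^2\eta|\omega_0|^2+\cdots\bigr)$, which \emph{squares} the extra factor of $\eta$ and produces a per-step remainder of size $O(\eta^3)L^4\,\E|\omega_0|^4$. Summing over the epoch then gives the coefficient $3\times10^4\bigl(\tfrac{1}{\gamma}+\tfrac{1}{\gamma^2}\bigr)L^4\eta^2$ in front of $\E|\tilde\omega_s|^4$, and the constraint $\eta\le\frac{\gamma}{\sqrt{6(1+\gamma)}\,10^2L^2}$ is precisely what makes this quantity $\le\tfrac12$. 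Your cruder route --- first expanding $|\omega_{k-1}-\omega_0|\le|\omega_{k-1}|+|\omega_0|$ and only then applying Young --- loses one power of $\eta$, leaving $O(\eta^2)\E|\omega_0|^4$ per step and $C\eta/\gamma$ after the epoch. Your claim that the same threshold forces $C\eta/\gamma\le\tfrac12$ is not justified: your constant $C$ (partly inherited from Lemma~\ref{coupling2}) depends on $d$ and $|\nabla P(0)|$, whereas the threshold in the statement involves only $\gamma$ and $L$. The fix is straightforward --- apply Young before expanding, as the paper does --- but as written the argument does not recover the lemma under its stated hypotheses.

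A smaller remark: the detour through Lemma~\ref{coupling2} to bound $\E|\omega_{k-1}|^4$ in the remainder is unnecessary. Any $O(\eta^2)|\omega_{k-1}|^4$ term can simply be absorbed into the contraction factor, exactly as you already do with the $O(\eta)|\omega_{k-1}|^4$ pieces; keeping it as a remainder and then invoking the uniform moment bound only obscures the constants.
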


\begin{proof}
From the proof of Lemma \ref{coupling2} below, it is easy to verify that
\begin{align*}
\mathbb{E}|\tilde{\omega}_{1}|^{4}\leq \big[(1-\gamma\eta)^{m}+3\times10^{4}(\frac{1}{\gamma}+\frac{1}{\gamma^2})L^{4}\eta^{2}\big]\mathbb{E}|\tilde{\omega}_{0}|^{4}+C_{\gamma,d,|\nabla P(0)|,L}(1+K^{2}),
\end{align*}
noticing that $\eta<\frac{\gamma}{\sqrt{6(1+\gamma)}10^2L^2}$, we have
\begin{align*}
\mathbb{E}|\tilde{\omega}_{1}|^{4}\leq\rho\mathbb{E}|\tilde{\omega}_{0}|^{4}+C_{\gamma,d,|\nabla P(0)|,L}(1+K^{2}).
\end{align*}
Inductively, we have
\begin{align*}
\mathbb{E}|\tilde{\omega}_{s}|^{4}\leq\rho^{s}\mathbb{E}|\tilde{\omega}_{0}|^{4}+C_{\gamma,d,|\nabla P(0)|,L}(1+K^{2})\sum_{k=0}^{s-1}\rho^{k}\leq\mathbb{E}|\tilde{\omega}_{0}|^{4}+\frac{C_{\gamma,d,|\nabla P(0)|,L}(1+K^{2})}{1-\rho}.
\end{align*}
\end{proof}






\subsection{Proofs of Theorem \ref{main} and Corollary \ref{Coro}}
\begin{proof}[{\bf Proof of Theorem \ref{main}}]
By the same argument as the proof of (\ref{definition2}), we have
\begin{align*}
\mathbb{E}h(\tilde X_s)-\mathbb{E}h(\tl \omega_{s})=\sum_{i=1}^{s}\left[\mathbb{E}U_{h}\big(s-i,\tl X^{\tl \omega_{i-1}}_{1}\big)-\mathbb{E}U_{h}\big(s-i,\tl \omega^{\tl \omega_{i-1}}_{1}\big)\right].
\end{align*}
Then, according to Proposition \ref{regularity} and Lemma \ref{external} with ${\rm Lip}(1)$ function replaced by ${\rm Lip}\big(a^{s-i}\big)$ function for $i=1,2,\cdots,s$, we have
\begin{align*}
	&|\mathbb{E}h(\tilde X_s)-\mathbb{E}h(\tl \omega_{s})|\\
\le&\sum_{i=1}^{s}\Big|\mathbb{E}U_{h}\big(s-i,\tl X^{\tl \omega_{i-1}}_{1}\big)-\mathbb{E}U_{h}\big(s-i,\tl \omega^{\tl \omega_{i-1}}_{1}\big)\Big| \\
	\le&C_{A,\gamma,d,|\nabla P(0)|,L}(\eta\delta)^{\frac{1}{2}}\left(1+|\ln\eta|+\frac{\delta}{\eta^{\frac{1}{4}}}+\lambda^{-2}e^{\lambda}\right)\sum_{i=1}^{s}a^{s-i}\big(1+\mathbb{E}|\tilde{\omega}_{i-1}|^{4}+K^{2}\big)^{\frac{7}{4}}.
\end{align*}
Thanks to Lemma \ref{exSGD}, we have
\begin{align*}
&|\mathbb{E}h(\tilde X_s)-\mathbb{E}h(\tl \omega_{s})|\\
\le&C_{A,\gamma,d,|\nabla P(0)|,L}(\eta\delta)^{\frac{1}{2}}\left(1+|\ln\eta|+\frac{\delta}{\eta^{\frac{1}{4}}}+\lambda^{-2}e^{\lambda}\right)
\big(\mathbb{E}|\tilde{\omega}_{0}|^{4}+\frac{1+K^{2}}{1-\rho}\big)^{\frac{7}{4}}\sum_{i=1}^{s}a^{s-i}\\
\leq&C_{A,\gamma,d,|\nabla P(0)|,L}(\eta\delta)^{\frac{1}{2}}\left(1+|\ln\eta|+\frac{\delta}{\eta^{\frac{1}{4}}}+\lambda^{-2}e^{\lambda}\right)
\big(\mathbb{E}|\tilde{\omega}_{0}|^{4}+\frac{1+K^{2}}{1-\rho}\big)^{\frac{7}{4}}\frac{1}{1-a}.
\end{align*}
The proof is complete.
\end{proof}

\begin{proof}[{\bf Proof of Corollary \ref{Coro2}}]
Following the assumption, a straight calculation implies $\lambda=\frac\gamma4\eta^{\frac14}$, thus $\lambda^{-2}e^\lambda(\eta\delta)^{1/2}$ converges to $0$. The condition of $m$ guarantees $a<1$ and we get the result.
\end{proof}
\begin{proof}[{\bf Proof of Corollary \ref{Coro}}]
The fact $\frac K\delta\le1$ implies $\frac4\gamma \exp\{-\frac{2(L+3\gamma/4)}{\gamma}\}\le\lambda\le \frac4\gamma$ and thus $\lambda^{-2}e^\lambda$ is bounded. We can get the result following the Theorem \ref{main}.
\end{proof}

\section{Conclusion}\label{conclusion}
{
In this paper, we establish a weak approximation error bound (in W1 distance) between SVRG-LD and SDDE under the assumptions which are true for a large family of nonconvex minimization problems, from this bound we can see that the approximation error can tends to $0$ by carefully choosing the inverse temperature parameter $\delta$ according to the learning rate $\eta$. When the optimization problem is convex, combining our result with the exponential convergence result in \cite{DK16}, we know that our approximating SDDE will be close to the minimizer $\omega^*$  exponentially fast. Furthermore, we can clearly see the variance reduction effect of the SVRG-LD from the analysis of the SDDE.

Our approach to proving the main results is by a refined Lindeberg principle and Malliavin calculus. This method provides a mathematical framework for weak approximations, and we hope it can be applied to study other stochastic algorithms. As is shown in Theorem \ref{main}, the error bound will explode if the inverse temperature parameter $\delta$ is not specifically chosen according to the learning rate $\eta$. We will attempt to solve this problem in the future research, one possible way is to use the frame of Stein's method developed in \cite{F-S-X}. Another interesting research direction is borrowing the well established properties of SDDEs to develop stochastic algorithms with variance reduction effect.}

\begin{appendix}

\section{proofs of Example \ref{ex1}}\label{Asecond}

In this section, we will verify Assumptions \ref{assum1}, \ref{assum3} and \ref{assum4} for Examples \ref{ex2} and \ref{ex1}, respectively.

\subsection{Example \ref{ex2}}

Recall $a_{i}\sim\mathcal{N}(0,I_{d})$ for $i=1,\cdots,n$,
\begin{align*}
\mathbb{E}|\nabla\psi_{I}(x)-\nabla\psi_{I}(y)|^{4}=\frac{1}{n}\sum_{i=1}^{n}\left|H(x-y)+\tilde{Q}{\rm diag}(a_{i})\tilde{Q}^{T}(x-y)\right|^{4},
\end{align*}
\begin{align*}
\nabla P(x)=Hx+\frac{1}{n}\sum_{i=1}^{n}\tilde{Q}{\rm diag}(a_{i})\tilde{Q}^{T}x,
\end{align*}
\begin{align*}
\Sigma(x,y)=&\frac{1}{n}\sum_{i=1}^{n}\Big(\tilde{Q}[D+{\rm diag}(a_{i})]\tilde{Q}^{T}(x-y)\Big)\Big(\tilde{Q}[D+{\rm diag}(a_{i})]\tilde{Q}^{T}(x-y)\Big)^{T}\\
&-\Big(\frac{1}{n}\sum_{i=1}^{n}\tilde{Q}[D+{\rm diag}(a_{i})]\tilde{Q}^{T}(x-y)\Big)\Big(\frac{1}{n}\sum_{i=1}^{n}\tilde{Q}[D+{\rm diag}(a_{i})]\tilde{Q}^{T}(x-y)\Big)^{T}.
\end{align*}
and $Q_{\eta,\delta}(x,y)=\big(\Sigma(x,y)+\frac{\delta}{\eta}I_{d}\big)^{\frac{1}{2}}$.

Inspired from the law of large numbers, we first give the following lemma.

\begin{lemma}\label{highp1}
For any $\epsilon>0$ and unit vectors $v,v_{1},v_{2}\in\mathbb{R}^{d}$, we have
\begin{align}\label{ineq21}
\mathbb{P}\left(\left|\frac{1}{n}\sum_{i=1}^{n}\tilde{Q}{\rm diag}(a_{i})\tilde{Q}^{T}v\right|>\epsilon\right)\leq\frac{d}{n\epsilon^{2}},
\end{align}
\begin{align}\label{ineq22}
\mathbb{P}\left(\left|\frac{1}{n}\sum_{i=1}^{n}|a_{i}|^{4}-\mathbb{E}|a_{1}|^{4}\right|>\epsilon\right)\leq\frac{\mathbb{E}|a_{1}|^{8}}{n\epsilon^{2}}
\end{align}
and
\begin{align}\label{ineq23}
\mathbb{P}\left(\left|\frac{1}{n}\sum_{i=1}^{n}\tilde{Q}{\rm diag}(a_{i})\tilde{Q}^{T}v_{1}\left(\tilde{Q}{\rm diag}(a_{i})\tilde{Q}^{T}v_{2}\right)^{T}v-\tilde{Q}{\rm diag}(\tilde{Q}v_{1}){\rm diag}(\tilde{Q}v_{2})\tilde{Q}^{T}v\right|>\epsilon\right)\leq\frac{6d^{6}}{n\epsilon^{2}}.
\end{align}
\end{lemma}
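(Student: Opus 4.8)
The plan is to note that each of \eqref{ineq21}, \eqref{ineq22} and \eqref{ineq23} is nothing but Chebyshev's inequality applied to an average of i.i.d.\ random variables (a scalar in \eqref{ineq22}, a vector in \eqref{ineq21} and \eqref{ineq23}); the only real work is to identify the mean and to control the second moment. Throughout I would pass to the orthonormal basis diagonalising $\tilde{Q}$, writing $w=\tilde{Q}^{T}v$, $w_{1}=\tilde{Q}^{T}v_{1}$, $w_{2}=\tilde{Q}^{T}v_{2}$, which are again unit vectors, and repeatedly use that for $a\sim\mathcal{N}(0,I_{d})$ the coordinates $a^{(j)}$ are independent standard Gaussians, so that $\mathbb{E}[a^{(j)}a^{(k)}]=\delta_{jk}$ and all higher moments are explicit.

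For \eqref{ineq21}, put $Y_{i}=\tilde{Q}\,{\rm diag}(a_{i})\tilde{Q}^{T}v$. Since $\mathbb{E}a_{i}=0$ we have $\mathbb{E}Y_{i}=0$, and by orthogonality of $\tilde{Q}$ together with $|{\rm diag}(a_{1})w|^{2}\le|a_{1}|^{2}$ we get $\mathbb{E}|Y_{1}|^{2}=\mathbb{E}|{\rm diag}(a_{1})w|^{2}\le\mathbb{E}|a_{1}|^{2}=d$. As the $Y_{i}$ are i.i.d.\ and centred, $\mathbb{E}\big|\tfrac1n\sum_{i=1}^{n}Y_{i}\big|^{2}=\tfrac1n\mathbb{E}|Y_{1}|^{2}\le\tfrac dn$, and the vector Chebyshev inequality gives \eqref{ineq21}. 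For \eqref{ineq22}, set $Z_{i}=|a_{i}|^{4}$; these are i.i.d.\ with ${\rm Var}(Z_{1})\le\mathbb{E}[Z_{1}^{2}]=\mathbb{E}|a_{1}|^{8}$, so Chebyshev's inequality applied to $\tfrac1n\sum_{i=1}^{n}Z_{i}$ yields \eqref{ineq22} at once.

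For \eqref{ineq23} I would first identify the limit. Writing $\xi_{i}=\big(v^{T}\tilde{Q}\,{\rm diag}(a_{i})\tilde{Q}^{T}v_{2}\big)\,\tilde{Q}\,{\rm diag}(a_{i})\tilde{Q}^{T}v_{1}$ and using the symmetry of $\tilde{Q}\,{\rm diag}(a_{i})\tilde{Q}^{T}$, the $k$-th coordinate of $\tilde{Q}^{T}\xi_{i}$ is $\big(\sum_{j}a_{i}^{(j)}w^{(j)}w_{2}^{(j)}\big)a_{i}^{(k)}w_{1}^{(k)}$, whose expectation is $w^{(k)}w_{1}^{(k)}w_{2}^{(k)}$; rotating back, $\mathbb{E}\xi_{i}$ is precisely the vector subtracted on the left-hand side of \eqref{ineq23} (the three ${\rm diag}$ factors commute, so the apparent asymmetry among $v,v_{1},v_{2}$ is immaterial). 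Since the $\xi_{i}$ are i.i.d., Chebyshev gives $\mathbb{P}(|\bar{\xi}-\mathbb{E}\xi_{1}|>\epsilon)\le\mathbb{E}|\xi_{1}-\mathbb{E}\xi_{1}|^{2}/(n\epsilon^{2})\le\mathbb{E}|\xi_{1}|^{2}/(n\epsilon^{2})$, and it remains to bound $\mathbb{E}|\xi_{1}|^{2}=\mathbb{E}\big[(\sum_{j}a_{1}^{(j)}w^{(j)}w_{2}^{(j)})^{2}\sum_{k}(a_{1}^{(k)})^{2}(w_{1}^{(k)})^{2}\big]$. Using $(w_{1}^{(k)})^{2}\le1$ and Cauchy--Schwarz, this is at most $\sqrt{\mathbb{E}(\sum_{j}a_{1}^{(j)}w^{(j)}w_{2}^{(j)})^{4}}\,\sqrt{\mathbb{E}|a_{1}|^{8}}$; since $\sum_{j}(w^{(j)}w_{2}^{(j)})^{2}\le1$ the first factor is the fourth moment of a centred Gaussian of variance at most $1$, hence $\le\sqrt{3}$, and a crude bound on $\mathbb{E}|a_{1}|^{8}$ then gives $\mathbb{E}|\xi_{1}|^{2}\le6d^{6}$ (the displayed constant is deliberately generous; any fixed polynomial bound would do), which is \eqref{ineq23}.

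Everything here is elementary, so I do not expect a genuine obstacle; the only point demanding a little care is \eqref{ineq23}, where one must correctly compute the mean and keep track of which Gaussian coordinates pair up when expanding the second moment — working in the $\tilde{Q}$-rotated coordinates is what makes this bookkeeping transparent.
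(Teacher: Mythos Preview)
Your proposal is correct and follows essentially the same route as the paper: all three bounds are obtained by Chebyshev's inequality applied to an average of i.i.d.\ terms, after identifying the mean and bounding the second moment. The only cosmetic difference is in \eqref{ineq23}, where you compute moments in the $\tilde{Q}$-rotated coordinates while the paper bounds $|\xi_i|$ more crudely via $\|\tilde{Q}\|_{\rm HS}=\sqrt{d}$ to reach the same constant $6d^{6}$; either estimate is well within the (deliberately loose) stated bound.
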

\begin{proof}
By Chebyshev's inequality, we have
\begin{align*}
\mathbb{P}\left(\left|\frac{1}{n}\sum_{i=1}^{n}\tilde{Q}{\rm diag}(a_{i})\tilde{Q}^{T}v\right|>\epsilon\right)
\leq\frac{\sum_{i=1}^{n}\mathbb{E}|a_{i}|^{2}}{n^{2}\epsilon^{2}}=\frac{d}{n\epsilon^{2}}
\end{align*}
and
\begin{align*}
\mathbb{P}\left(\left|\frac{1}{n}\sum_{i=1}^{n}|a_{i}|^{4}-\mathbb{E}|a_{1}|^{4}\right|>\epsilon\right)
\leq&\frac{\sum_{i=1}^{n}\mathbb{E}\left||a_{i}|^{4}-\mathbb{E}|a_{1}|^{4}\right|^{2}}{n^{2}\epsilon^{2}}\leq\frac{2\mathbb{E}|a_{1}|^{8}}{n\epsilon^{2}}.
\end{align*}
Notice that $\mathbb{E}\left[\tilde{Q}{\rm diag}(a_{i})\tilde{Q}^{T}v_{1}\left(\tilde{Q}{\rm diag}(a_{i})\tilde{Q}^{T}v_{2}\right)^{T}v\right]=\tilde{Q}{\rm diag}(\tilde{Q}v_{1}){\rm diag}(\tilde{Q}v_{2})\tilde{Q}^{T}v$, $\|\tilde{Q}\|_{{\rm{HS}}}=\sqrt{d}$ and $\mathbb{E}|a_{1}|^{4}=3d^{2}$, by Chebyshev's inequality, we have
\begin{align*}
&\mathbb{P}\left(\left|\frac{1}{n}\sum_{i=1}^{n}\tilde{Q}{\rm diag}(a_{i})\tilde{Q}^{T}v_{1}\left(\tilde{Q}{\rm diag}(a_{i})\tilde{Q}^{T}v_{2}\right)^{T}v-\tilde{Q}{\rm diag}(\tilde{Q}v_{1}){\rm diag}(\tilde{Q}v_{2})\tilde{Q}^{T}v\right|>\epsilon\right)\\
\leq&\sum_{i=1}^{n}\mathbb{E}\left|\tilde{Q}{\rm diag}(a_{i})\tilde{Q}^{T}v_{1}\left(\tilde{Q}{\rm diag}(a_{i})\tilde{Q}^{T}v_{2}\right)^{T}v-\tilde{Q}{\rm diag}(\tilde{Q}v_{1}){\rm diag}(\tilde{Q}v_{2})\tilde{Q}^{T}v\right|^{2}\Big/(n\epsilon)^{2}\\
\leq&\frac{2\|\tilde{Q}\|_{{\rm HS}}^{8}\mathbb{E}|a_{1}|^{4}}{n\epsilon^{2}}=\frac{6d^{6}}{n\epsilon^{2}}.
\end{align*}
\end{proof}

Then, we first verify the Assumptions \ref{assum1} and \ref{assum3}.
\begin{lemma}
Denote the smallest eigenvalue of $H$ by $\lambda_{\min}$. When $n$ is large enough, for any $x,y \in \R^{d}$ and unit vectors $v\in\mathbb{R}^{d}$, the following inequalities
\begin{align*}
\langle\nabla^{2} P(x)v,v\rangle\geq\lambda_{\min}|v|^{2}
\end{align*}
and
\begin{align*}
\mathbb{E}|\nabla\psi_{I}(x)-\nabla\psi_{I}(y)|^{4}\leq8(\|H\|_{{\rm HS}}^{4}+3d^{6})|x-y|^{4}
\end{align*}
holds with high probability.
\end{lemma}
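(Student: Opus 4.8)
The plan is to verify the two displayed inequalities separately, each by combining the explicit formulas for $\nabla^2 P$ and $\Sigma$ in Example \ref{ex2} with the high-probability concentration bounds of Lemma \ref{highp1}. Throughout I fix the (large) sample size $n$, choose a threshold $\epsilon$ that makes the right-hand sides of \eqref{ineq21}--\eqref{ineq23} small, say each $\le 1/3$ or summing to a quantity $<1$, and work on the intersection of the corresponding good events; on that intersection both inequalities will hold, and that intersection has probability close to $1$ when $n$ is large.

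For the Hessian bound, recall from Example \ref{ex2} that $\nabla P(x)=Hx+\frac1n\sum_{i=1}^n \tilde Q\,{\rm diag}(a_i)\tilde Q^{\rm T}x$, so $\nabla^2 P(x)=H+\frac1n\sum_{i=1}^n \tilde Q\,{\rm diag}(a_i)\tilde Q^{\rm T}$, which is in fact independent of $x$. Hence for a unit vector $v$,
\begin{align*}
\langle \nabla^2 P(x)v,v\rangle = \langle Hv,v\rangle + \Big\langle \tfrac1n\sum_{i=1}^n \tilde Q\,{\rm diag}(a_i)\tilde Q^{\rm T}v,\,v\Big\rangle \ \ge\ \lambda_{\min}|v|^2 - \Big|\tfrac1n\sum_{i=1}^n \tilde Q\,{\rm diag}(a_i)\tilde Q^{\rm T}v\Big|.
\end{align*}
Here I would want this for \emph{all} unit $v$ simultaneously, so I would first get it for a fixed $v$ via \eqref{ineq21} with a small $\epsilon$, then pass to a uniform statement by a standard $\epsilon$-net argument on the unit sphere of $\R^d$ (the map $v\mapsto \frac1n\sum_i\tilde Q\,{\rm diag}(a_i)\tilde Q^{\rm T}v$ is linear, so control of its operator norm follows from control on a finite net up to a constant factor). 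Taking $\epsilon$ a suitably small multiple of $\lambda_{\min}$ and absorbing the net cardinality into the failure probability (still $\to 0$ as $n\to\infty$), one gets $\langle\nabla^2 P(x)v,v\rangle\ge \frac12\lambda_{\min}|v|^2$ on the good event; if the paper really wants the clean constant $\lambda_{\min}$ rather than $\frac12\lambda_{\min}$, one simply takes $\epsilon\to 0$ as $n\to\infty$, which is legitimate since the bound in \eqref{ineq21} is $d/(n\epsilon^2)$ and we may let $\epsilon=\epsilon_n\downarrow0$ slowly enough, e.g. $\epsilon_n=n^{-1/4}$.

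For the fourth-moment/Lipschitz bound, from Example \ref{ex2},
$$\mathbb{E}|\nabla\psi_I(x)-\nabla\psi_I(y)|^4=\frac1n\sum_{i=1}^n\big|H(x-y)+\tilde Q\,{\rm diag}(a_i)\tilde Q^{\rm T}(x-y)\big|^4.$$
Using $|u+w|^4\le 8(|u|^4+|w|^4)$ gives $\mathbb{E}|\nabla\psi_I(x)-\nabla\psi_I(y)|^4\le 8\|H\|_{\rm op}^4|x-y|^4+\frac{8}{n}\sum_{i=1}^n|\tilde Q\,{\rm diag}(a_i)\tilde Q^{\rm T}(x-y)|^4$, and since $\tilde Q$ is orthogonal, $|\tilde Q\,{\rm diag}(a_i)\tilde Q^{\rm T}w|\le |a_i|_\infty|w|\le |a_i|\,|w|$, so the sum is at most $\frac{8}{n}\sum_i|a_i|^4\,|x-y|^4$. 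By \eqref{ineq22} with a small $\epsilon$, on the good event $\frac1n\sum_i|a_i|^4\le \mathbb{E}|a_1|^4+\epsilon=3d^2+\epsilon\le 3d^6$ (using $\mathbb{E}|a_1|^4=3d^2\le 3d^6$ and absorbing $\epsilon$, or again letting $\epsilon\to 0$). Combining, $\mathbb{E}|\nabla\psi_I(x)-\nabla\psi_I(y)|^4\le 8(\|H\|_{\rm HS}^4+3d^6)|x-y|^4$ on the good event, using $\|H\|_{\rm op}\le\|H\|_{\rm HS}$. Finally I intersect the two good events (plus the net events) and note the total failure probability is $O(d\cdot N_\epsilon/(n\epsilon^2)+\mathbb{E}|a_1|^8/(n\epsilon^2))\to 0$, which is what "holds with high probability" means here.

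The main obstacle is the uniformity in $v$ (and the implicit uniformity in $x,y$, which is actually free since both quantities are $x$-independent or depend only on $x-y$ after scaling): Lemma \ref{highp1} only gives pointwise-in-$v$ concentration, so one needs the $\epsilon$-net discretization of the sphere to upgrade to a statement about operator norms, and one must check that the net cardinality $N_\epsilon\le (3/\epsilon)^d$ does not overwhelm the $1/(n\epsilon^2)$ decay — it does not, provided $n$ grows faster than $\epsilon^{-(d+2)}$, which for fixed $\epsilon$ (or slowly vanishing $\epsilon_n$) is automatic as $n\to\infty$. Everything else is a routine application of $|u+w|^p\lesssim |u|^p+|w|^p$, orthogonality of $\tilde Q$, and the moment identities $\mathbb{E}|a_1|^4=3d^2$.
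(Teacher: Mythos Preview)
Your argument is correct and, for the second inequality, essentially identical to the paper's: both split via $|u+w|^4\le 8(|u|^4+|w|^4)$, bound the random term by $|a_i|^4|x-y|^4$ (the paper uses the crude HS inequality $|\tilde Q\,{\rm diag}(a_i)\tilde Q^{\rm T}w|\le \|\tilde Q\|_{\rm HS}^2|a_i|\,|w|$, you use orthogonality for a sharper intermediate bound), and then invoke \eqref{ineq22} to control $\frac1n\sum_i|a_i|^4$.

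For the first inequality your treatment differs. The paper simply asserts that with high probability $\nabla P(x)=Hx$, hence $\nabla^2 P(x)=H$, and the eigenvalue bound follows; it does not address uniformity in $v$ at all. You take this concern seriously and propose an $\epsilon$-net on the sphere, which works but is heavier than necessary. The cleaner route is to observe that the random perturbation factors as
\[
\frac1n\sum_{i=1}^n \tilde Q\,{\rm diag}(a_i)\,\tilde Q^{\rm T}=\tilde Q\,{\rm diag}(\bar a)\,\tilde Q^{\rm T},\qquad \bar a:=\frac1n\sum_{i=1}^n a_i,
\]
so its operator norm equals $\|\bar a\|_\infty\le |\bar a|$. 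A single application of Chebyshev to $|\bar a|$ (exactly the computation behind \eqref{ineq21} with $v$ replaced by each coordinate vector, or directly $\mathbb E|\bar a|^2=d/n$) then gives $\|\nabla^2 P(x)-H\|_{\rm op}\le\epsilon$ on one good event, uniformly in $v$, with no net needed. This is presumably what the paper has in mind when it writes ``$\nabla P(x)=Hx$ with high probability''; your version is more explicit but can be simplified along these lines.
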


\begin{proof}
For any $x\in\mathbb{R}^{d}$, by (\ref{ineq21}), it is easy to verify that with high probability, we have
\begin{align*}
\nabla P(x)=Hx,
\end{align*}
which implies
\begin{align*}
\langle\nabla^{2} P(x)v,v\rangle\geq\lambda_{\min}|v|^{2}.
\end{align*}
For any $x,y\in\mathbb{R}^{d}$, we have
\begin{align*}
\mathbb{E}|\nabla\psi_{I}(x)-\nabla\psi_{I}(y)|^{4}=&\frac{1}{n}\sum_{i=1}^{n}\left|H(x-y)+\tilde{Q}{\rm diag}(a_{i})\tilde{Q}^{T}(x-y)\right|^{4}\\
\leq&8\|H\|_{{\rm HS}}^{4}|x-y|^{4}+\frac{8}{n}\|\tilde{Q}\|_{{\rm HS}}^{8}\sum_{i=1}^{n}|a_{i}|^{4}|x-y|^{4},
\end{align*}
by (\ref{ineq22}), with high probability, we further have
\begin{align*}
\mathbb{E}|\nabla\psi_{I}(x)-\nabla\psi_{I}(y)|^{4}\leq8(\|H\|_{{\rm HS}}^{4}+3d^{6})|x-y|^{4}.
\end{align*}
\end{proof}

Next, with the help of Lemma \ref{highp1}, we will calculate the matrix $Q_{\eta,\delta}(x,y)$ directly.

\begin{lemma}\label{direct}
When $n$ is large enough, for any $x,y\in\mathbb{R}^{d}$, the following equality
\begin{align*}
Q_{\eta,\delta}(x,y)=\tilde{Q}\left[{\rm diag}(\tilde{Q}(x-y))^{2}+\frac{\delta}{\eta}I_{d}\right]^{\frac{1}{2}}\tilde{Q}^{T}
\end{align*}
holds with high probability.
\end{lemma}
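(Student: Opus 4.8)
The plan is to first pin down $\Sigma(x,y)$ and then take its square root using the orthogonality of $\tilde{Q}$. Write $w=x-y$, $b_{i}=\tilde{Q}{\rm diag}(a_{i})\tilde{Q}^{T}w$ and $\bar{b}=\frac{1}{n}\sum_{i=1}^{n}b_{i}$, and note that $\tilde{Q}[D+{\rm diag}(a_{i})]\tilde{Q}^{T}w=Hw+b_{i}$ since $H=\tilde{Q}D\tilde{Q}^{T}$. Substituting this into the definition of $\Sigma(x,y)$ in Example \ref{ex2} and expanding the two outer products, the terms $(Hw)(Hw)^{T}$, $(Hw)\bar{b}^{T}$ and $\bar{b}(Hw)^{T}$ occur identically in the first sum and in the subtracted second term, hence cancel, leaving the exact identity
\begin{align*}
\Sigma(x,y)=\frac{1}{n}\sum_{i=1}^{n}b_{i}b_{i}^{T}-\bar{b}\,\bar{b}^{T};
\end{align*}
that is, $\Sigma(x,y)$ is precisely the empirical covariance of $b_{1},\dots,b_{n}$.

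Next I would show that, with high probability (in the same sense as the preceding lemmas), $\Sigma(x,y)=\tilde{Q}\,{\rm diag}(\tilde{Q}w)^{2}\tilde{Q}^{T}$. If $w=0$ both sides vanish identically, so assume $w\neq0$ and put $\hat{w}=w/|w|$. Applying \eqref{ineq21} to the unit vector $\hat{w}$ shows that $\bar{b}=|w|\,\frac{1}{n}\sum_{i}\tilde{Q}{\rm diag}(a_{i})\tilde{Q}^{T}\hat{w}$ is small with high probability, so the rank-one correction $\bar{b}\,\bar{b}^{T}$ drops out. For the leading term, $\frac{1}{n}\sum_{i}b_{i}b_{i}^{T}=|w|^{2}\,\frac{1}{n}\sum_{i}\big(\tilde{Q}{\rm diag}(a_{i})\tilde{Q}^{T}\hat{w}\big)\big(\tilde{Q}{\rm diag}(a_{i})\tilde{Q}^{T}\hat{w}\big)^{T}$, and \eqref{ineq23} with $v_{1}=v_{2}=\hat{w}$ and $v$ ranging over the standard basis $e_{1},\dots,e_{d}$ identifies each column of this matrix, with high probability, with the corresponding column of $|w|^{2}\,\tilde{Q}\,{\rm diag}(\tilde{Q}\hat{w})^{2}\tilde{Q}^{T}=\tilde{Q}\,{\rm diag}(\tilde{Q}w)^{2}\tilde{Q}^{T}$. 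A union bound over these $d+1$ applications of Lemma \ref{highp1} keeps the total failure probability of order $d^{7}/(n\epsilon^{2})$, which is below $1$ once $n$ is large.

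Finally, since $\tilde{Q}$ is orthogonal,
\begin{align*}
\Sigma(x,y)+\tfrac{\delta}{\eta}I_{d}=\tilde{Q}\Big[\,{\rm diag}(\tilde{Q}(x-y))^{2}+\tfrac{\delta}{\eta}I_{d}\,\Big]\tilde{Q}^{T},
\end{align*}
and the matrix in brackets is diagonal with strictly positive diagonal entries; conjugation by an orthogonal matrix commutes with taking the unique positive definite square root, so
\begin{align*}
Q_{\eta,\delta}(x,y)=\Big(\Sigma(x,y)+\tfrac{\delta}{\eta}I_{d}\Big)^{1/2}=\tilde{Q}\Big[\,{\rm diag}(\tilde{Q}(x-y))^{2}+\tfrac{\delta}{\eta}I_{d}\,\Big]^{1/2}\tilde{Q}^{T},
\end{align*}
which is the claim. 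The only genuinely delicate point is the step producing $\Sigma(x,y)=\tilde{Q}{\rm diag}(\tilde{Q}w)^{2}\tilde{Q}^{T}$: the estimates of Lemma \ref{highp1} are pointwise in the direction, but since the statement concerns a \emph{fixed} pair $(x,y)$, after pulling the scalar $|w|^{2}$ out one only needs them along the single unit vector $\hat{w}$ and the $d$ basis directions, so no continuum union bound is required; the cancellation and the square-root bookkeeping are otherwise routine.
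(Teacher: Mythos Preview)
Your proposal is correct and follows essentially the same route as the paper's proof: both establish $\Sigma(x,y)=\tilde{Q}\,{\rm diag}(\tilde{Q}(x-y))^{2}\tilde{Q}^{T}$ with high probability via Lemma~\ref{highp1}, and then pass to the square root using the orthogonality of $\tilde{Q}$. Your version is in fact more explicit than the paper's on two points: you spell out the exact algebraic cancellation of the $Hw$-terms (yielding $\Sigma(x,y)=\tfrac{1}{n}\sum_i b_ib_i^{T}-\bar b\,\bar b^{T}$), which the paper leaves implicit, and you articulate the union bound over the $d+1$ directions needed to turn the vector-wise estimate \eqref{ineq23} into a matrix identity, where the paper simply writes ``by integration and \eqref{ineq23}''.
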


\begin{proof}
Since $\frac{1}{n}\sum_{i=1}^{n}\Big(\tilde{Q}[D+{\rm diag}(a_{i})]\tilde{Q}^{T}0\Big)\Big(\tilde{Q}[D+{\rm diag}(a_{i})]\tilde{Q}^{T}0\Big)^{T}0=0$, by integration and (\ref{ineq23}), the following equality
\begin{align*}
\frac{1}{n}\sum_{i=1}^{n}\tilde{Q}{\rm diag}(a_{i})\tilde{Q}^{T}(x-y)\left(\tilde{Q}{\rm diag}(a_{i})\tilde{Q}^{T}(x-y)\right)^{T}=\tilde{Q}{\rm diag}(\tilde{Q}(x-y))^{2}\tilde{Q}^{T}.
\end{align*}
By (\ref{ineq21}), with high probability, we have $\frac{1}{n}\sum_{i=1}^{n}\tilde{Q}{\rm diag}(a_{i})\tilde{Q}^{T}x=0.$ These imply that with high probability,
\begin{align*}
\Sigma(x,y)=&\frac{1}{n}\sum_{i=1}^{n}\Big(\tilde{Q}[D+{\rm diag}(a_{i})]\tilde{Q}^{T}(x-y)\Big)\Big(\tilde{Q}[D+{\rm diag}(a_{i})]\tilde{Q}^{T}(x-y)\Big)^{T}\\
&-\Big(\frac{1}{n}\sum_{i=1}^{n}\tilde{Q}[D+{\rm diag}(a_{i})]\tilde{Q}^{T}(x-y)\Big)\Big(\frac{1}{n}\sum_{i=1}^{n}\tilde{Q}[D+{\rm diag}(a_{i})]\tilde{Q}^{T}(x-y)\Big)^{T}\\
=&\tilde{Q}{\rm diag}(\tilde{Q}(x-y))^{2}\tilde{Q}^{T}.
\end{align*}
Hence, we have
\begin{align*}
Q_{\eta,\delta}(x,y)=\left(\Sigma(x,y)+\frac{\delta}{\eta}I_{d}\right)^{\frac{1}{2}}=\tilde{Q}\left[{\rm diag}(\tilde{Q}(x-y))^{2}+\frac{\delta}{\eta}I_{d}\right]^{\frac{1}{2}}\tilde{Q}^{T}.
\end{align*}
\end{proof}

With the above results, we will verify Assumption \ref{assum4}.

\begin{lemma}
When $n$ is large enough, for any $x,y \in \R^{d}$ and unit vectors $v_{i}\in\mathbb{R}^{d}$, $i=1,2,3$, the following inequalities
\begin{align*}
|\nabla_{v_2}\nabla_{v_1} \nabla P(x)|=0,\quad
|\nabla_{v_3}\nabla_{v_2}\nabla_{v_1} \nabla P(x)|=0,
\end{align*}
\begin{align*}
\|\nabla_{1,v_{1}} Q_{\eta,\delta}(x,y)\|_{\rm HS} \ \le d^{2},\quad \|\nabla_{2,v_{1}} Q_{\eta,\delta}(x,y)\|_{\rm HS}\ \le d^{2}
\end{align*}
and
\begin{align*}
\|\nabla_{1,v_{1}} \nabla_{1,v_{2}}Q_{\eta,\delta}(x,y)\|_{\rm HS} \ \le(\frac{\eta}{\delta})^{\frac{1}{2}}d^{2}, \quad \|\nabla_{1,v_{1}} \nabla_{1,v_{2}} \nabla_{1,v_{3}}Q_{\eta,\delta}(x,y)\|_{\rm HS} \ \le 3\frac{\eta}{\delta}d^{3}.
\end{align*}
hold with high probability.
\end{lemma}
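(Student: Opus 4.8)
The plan is to deduce all five inequalities from the preceding lemmas of this subsection by conditioning on a single high-probability event. First I would let $\Omega_n$ be the event (of probability tending to $1$ as $n\to\infty$, in the sense already used in this subsection) on which both $\nabla P(x)=Hx$ and the identity of Lemma \ref{direct} hold; it then suffices to verify the five bounds on $\Omega_n$. On $\Omega_n$ the map $x\mapsto\nabla P(x)=Hx$ is affine, so $\nabla_{v_2}\nabla_{v_1}\nabla P(x)=0$ and $\nabla_{v_3}\nabla_{v_2}\nabla_{v_1}\nabla P(x)=0$ identically, which settles the first two bounds.

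For the three bounds on $Q_{\eta,\delta}$, I would differentiate the explicit formula of Lemma \ref{direct}. Put $c=\delta/\eta>0$ (positive thanks to $0<\delta\le1$ and $\eta\le\delta$) and $g(s)=(s^{2}+c)^{1/2}$, a smooth function on $\R$; on $\Omega_n$ one then has $Q_{\eta,\delta}(x,y)=\tilde Q\,{\rm diag}(g(w_1),\dots,g(w_d))\,\tilde Q^{\rm T}$ with $w=\tilde Q(x-y)$. Since $\tilde Q$ is orthogonal, conjugation by $\tilde Q$ preserves the Hilbert--Schmidt norm, so it is enough to control the diagonal part. Using $\nabla_{1,v}w=\tilde Q v$, $\nabla_{2,v}w=-\tilde Q v$ together with the chain rule, each of $\nabla_{1,v_1}Q_{\eta,\delta}$, $\nabla_{2,v_1}Q_{\eta,\delta}$, $\nabla_{1,v_1}\nabla_{1,v_2}Q_{\eta,\delta}$ and $\nabla_{1,v_1}\nabla_{1,v_2}\nabla_{1,v_3}Q_{\eta,\delta}$ is of the form $\tilde Q\,{\rm diag}(\cdot)\,\tilde Q^{\rm T}$ with $i$-th diagonal entry $g^{(k)}(w_i)\prod_{l=1}^{k}(\tilde Q v_l)_i$, where $k=1,1,2,3$ respectively and the $\nabla_2$ case differs only by an overall sign.

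The last step is a short elementary estimate. Since each $\tilde Q v_l$ is a unit vector, $\sum_i\prod_{l=1}^{k}(\tilde Q v_l)_i^{2}\le\sum_i(\tilde Q v_1)_i^{2}=1$, so the Hilbert--Schmidt norm of such a diagonal matrix is at most $\max_i|g^{(k)}(w_i)|$. Computing $g'(s)=s(s^{2}+c)^{-1/2}$, $g''(s)=c(s^{2}+c)^{-3/2}$, $g'''(s)=-3cs(s^{2}+c)^{-5/2}$ and using $|s|\le(s^{2}+c)^{1/2}$ gives $|g'|\le1$, $|g''|\le c^{-1/2}=(\eta/\delta)^{1/2}$ and $|g'''|\le 3c^{-1}=3\eta/\delta$ everywhere; together with $d\ge1$ these yield exactly the claimed inequalities (indeed with room to spare). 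I do not expect a real obstacle: the only care needed is in propagating the chain rule through $\tilde Q$ and recording that smoothness of $g$ relies on $\delta/\eta>0$, whereas the substantive work was already done in Lemma \ref{direct} and the law-of-large-numbers estimates of Lemma \ref{highp1}.
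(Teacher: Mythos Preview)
Your proposal is correct; the approach differs from the paper's in the $Q_{\eta,\delta}$ part and is in fact sharper.

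For $\nabla P$ both arguments are identical: on the high-probability event one has $\nabla P(x)=Hx$, so the second and third directional derivatives vanish.

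For the derivatives of $Q_{\eta,\delta}$ the paper does not exploit the orthogonality of $\tilde Q$ beyond computing the explicit diagonal form; it simply bounds each occurrence of $\tilde Q$ by $\|\tilde Q\|_{\rm HS}=\sqrt d$, arriving at estimates of the type $\|\tilde Q\|_{\rm HS}^{3}\sqrt d=d^{2}$, $\|\tilde Q\|_{\rm HS}^{4}(\eta/\delta)^{1/2}\sqrt d=(\eta/\delta)^{1/2}d^{5/2}$ (note this actually overshoots the stated $d^{2}$), and $3\|\tilde Q\|_{\rm HS}^{5}(\eta/\delta)\sqrt d=3(\eta/\delta)d^{3}$. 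Your route is more structural: you use that conjugation by the orthogonal matrix $\tilde Q$ preserves the Hilbert--Schmidt norm, reduce to a diagonal matrix whose entries are $g^{(k)}(w_i)\prod_l(\tilde Q v_l)_i$, and then combine the elementary scalar bounds $|g'|\le1$, $|g''|\le(\eta/\delta)^{1/2}$, $|g'''|\le3\eta/\delta$ with $|\tilde Q v_l|=1$. This yields dimension-free constants $1$, $(\eta/\delta)^{1/2}$, $3\eta/\delta$, which of course imply the stated bounds with the extra powers of $d$ to spare. The trade-off is that the paper's argument is a one-line submultiplicativity estimate, whereas yours requires the (easy) observation that the derivatives remain in $\tilde Q\,{\rm diag}(\cdot)\,\tilde Q^{\rm T}$ form; but your method makes transparent exactly where each factor comes from and avoids the slight inconsistency in the paper's second-order bound.
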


\begin{proof}
For any $x\in\mathbb{R}^{d}$, by (\ref{ineq21}), the equality
\begin{align*}
\nabla P(x)=Hx
\end{align*}
holds with high probability, which implies
\begin{align*}
\nabla_{v_2}\nabla_{v_1} \nabla P(x)=0, \quad \nabla_{v_3}\nabla_{v_2}\nabla_{v_1} \nabla P(x)=0.
\end{align*}
By Lemma \ref{direct}, it is straightforward to calculate that with high probability, we have
\begin{align*}
\|\nabla_{1,v_{1}} Q_{\eta,\delta}(x,y)\|_{\rm HS}\leq\|\tilde{Q}\|_{{\rm HS}}^{3}\sqrt{d}=d^{2}, \quad \|\nabla_{2,v_{1}} Q_{\eta,\delta}(x,y)\|_{\rm HS}\leq d^{2},
\end{align*}
\begin{align*}
\|\nabla_{1,v_{1}} \nabla_{1,v_{2}}Q_{\eta,\delta}(x,y)\|_{\rm HS} \ \le\|\tilde{Q}\|_{{\rm HS}}^{4}(\frac{\eta}{\delta})^{\frac{1}{2}}\sqrt{d}=(\frac{\eta}{\delta})^{\frac{1}{2}}d^{\frac{5}{2}}
\end{align*}
and
\begin{align*}
\|\nabla_{1,v_{1}} \nabla_{1,v_{2}} \nabla_{1,v_{3}}Q_{\eta,\delta}(x,y)\|_{\rm HS} \ \le3\|\tilde{Q}\|_{{\rm HS}}^{5}\frac{\eta}{\delta}\sqrt{d}=3\frac{\eta}{\delta}d^{3}.
\end{align*}
\end{proof}

\subsection{Example \ref{ex1}}

Recall
\begin{align*}
\mathbb{E}|\nabla\psi_{I}(x)-\nabla\psi_{I}(y)|^{4}=\frac{1}{n}\sum_{i=1}^{n}\left|a_{i}(\frac{1}{1+e^{-a_{i}^{T}x}}-\frac{1}{1+e^{-a_{i}^{T}y}})+\lambda(x-y)\right|^{4},
\end{align*}
\begin{align*}
\nabla P(x)=\frac{1}{n}\sum_{i=1}^{n}a_{i}(\frac{1}{1+e^{-a_{i}^{T}x}}-b_{i})+\lambda x,
\end{align*}
\begin{align*}
\Sigma(x,y)=&\frac{1}{n}\sum_{i=1}^{n}\Big(a_{i}(\frac{1}{1+e^{-a_{i}^{T}x}}-\frac{1}{1+e^{-a_{i}^{T}y}})\Big)\Big(a_{i}(\frac{1}{1+e^{-a_{i}^{T}x}}-\frac{1}{1+e^{-a_{i}^{T}y}})\Big)^{T}\\
&-\Big(\frac{1}{n}\sum_{i=1}^{n}a_{i}(\frac{1}{1+e^{-a_{i}^{T}x}}-\frac{1}{1+e^{-a_{i}^{T}y}})\Big)\Big(\frac{1}{n}\sum_{i=1}^{n}a_{i}(\frac{1}{1+e^{-a_{i}^{T}x}}-\frac{1}{1+e^{-a_{i}^{T}y}})\Big)^{T}.
\end{align*}
and $Q_{\eta,\delta}(x,y)=\big(\Sigma(x,y)+\frac{\delta}{\eta}I_{d}\big)^{\frac{1}{2}}$.

Inspired from the law of large numbers, we immediately have the following lemma.

\begin{lemma}
In Example \ref{ex1}, suppose $\mathbb{E}|a_{i}|^{16}<\infty$, $i=1,\cdots,n$. Then, for any $\epsilon>0$ and $x,y\in \R^{d}$, we have
\begin{align}\label{ineq1}
\mathbb{P}\Big(\Big|\frac{1}{n}\sum_{i=1}^{n}\frac{a_{i}}{1+e^{-a_{i}^{T}x}}
-\mathbb{E}\big[\frac{a_{1}}{1+e^{-a_{1}^{T}x}}\big]\Big|>\epsilon\Big)\leq\frac{2\mathbb{E}|a_{1}|^{2}}{n\epsilon^{2}},
\end{align}
\begin{align}\label{ineq2}
\mathbb{P}\Big(\big|\frac{1}{n}\sum_{i=1}^{n}a_{i}b_{i}-\mathbb{E}[a_{i}b_{i}]\big|>\epsilon\Big)\leq\frac{2\mathbb{E}|a_{1}|^{2}}{n\epsilon^{2}},
\end{align}
\begin{align}\label{ineq3}
\mathbb{P}\left(\left|\frac{1}{n}\sum_{i=1}^{n}|a_{i}|^{8}-\mathbb{E}|a_{1}|^{8}\right|>\epsilon\right)\leq\frac{2\mathbb{E}|a_{1}|^{16}}{n\epsilon^{2}},
\end{align}
and
\begin{align}\label{ineq4}
&\mathbb{P}\Big(\Big\|\frac{1}{n}\sum_{i=1}^{n}\big(a_{i}(\frac{1}{1+e^{-a_{i}^{T}x}}-\frac{1}{1+e^{-a_{i}^{T}y}})\big)
\big(a_{i}(\frac{1}{1+e^{-a_{i}^{T}x}}-\frac{1}{1+e^{-a_{i}^{T}y}})\big)^{T}\nonumber\\
&\qquad-\mathbb{E}\big[\big(a_{1}(\frac{1}{1+e^{-a_{1}^{T}x}}-\frac{1}{1+e^{-a_{1}^{T}y}})\big)
\big(a_{1}(\frac{1}{1+e^{-a_{1}^{T}x}}-\frac{1}{1+e^{-a_{1}^{T}y}})\big)^{T}\big]\Big\|_{{\rm HS}}>\epsilon\Big)\nonumber\\
\leq&\frac{2\mathbb{E}|a_{1}|^{4}}{n\epsilon^{2}}.
\end{align}
\end{lemma}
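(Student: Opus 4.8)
The plan is to derive all four bounds as one-line applications of Chebyshev's inequality to empirical averages of i.i.d.\ summands, exactly as in the proof of Lemma~\ref{highp1}. The common mechanism is the elementary observation that if $\eta_1,\dots,\eta_n$ are i.i.d., centered, and valued in $\R^d$ or in $\R^{d\times d}$ (with the Hilbert--Schmidt norm), then independence and centering kill the cross terms, so $\E\big|\tfrac1n\sum_{i=1}^n\eta_i\big|^2=\tfrac1n\E|\eta_1|^2$, and Markov's inequality applied to $|\cdot|^2$ gives $\Pp\big(\big|\tfrac1n\sum_{i=1}^n\eta_i\big|>\epsilon\big)\le\frac{\E|\eta_1|^2}{n\epsilon^2}$. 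In each of the four cases I take $\eta_i=\xi_i-\E\xi_i$ for the relevant summand $\xi_i$ and bound $\E|\eta_1|^2=\E|\xi_1|^2-|\E\xi_1|^2\le 2\E|\xi_1|^2$.

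It then remains to control $\E|\xi_1|^2$ by a moment of $a_1$, and the only structural input is the boundedness of the logistic link: $\frac1{1+e^{-t}}\in(0,1)$ for all $t\in\R$, hence also $\big|\frac1{1+e^{-s}}-\frac1{1+e^{-t}}\big|\le 1$ for all $s,t\in\R$. For \eqref{ineq1}, $\xi_i=\frac{a_i}{1+e^{-a_i^{\mathrm T}x}}$ obeys $|\xi_i|\le|a_i|$, so $\E|\xi_1|^2\le\E|a_1|^2$; for \eqref{ineq2}, $b_i\in\{0,1\}$ gives $|a_ib_i|\le|a_i|$ and the same bound; for \eqref{ineq3}, $\xi_i=|a_i|^8$ is scalar with $\E\xi_1^2=\E|a_1|^{16}$; for \eqref{ineq4}, writing $g_i=\frac1{1+e^{-a_i^{\mathrm T}x}}-\frac1{1+e^{-a_i^{\mathrm T}y}}$ and $\xi_i=(a_ig_i)(a_ig_i)^{\mathrm T}=g_i^2\,a_ia_i^{\mathrm T}$, one has $\|\xi_i\|_{\mathrm{HS}}=g_i^2|a_i|^2\le|a_i|^2$, so $\E\|\xi_1\|_{\mathrm{HS}}^2\le\E|a_1|^4$. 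Feeding these into the Chebyshev bound above produces \eqref{ineq1}--\eqref{ineq4} with the stated constants, and the bounds are uniform in $x,y\in\R^d$ because the sigmoid estimates are.

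The only point that deserves a word is integrability: applying Chebyshev requires $\E|a_1|^2<\infty$ for \eqref{ineq1}--\eqref{ineq2}, $\E|a_1|^4<\infty$ for \eqref{ineq4}, and $\E|a_1|^{16}<\infty$ for \eqref{ineq3}, and all of these follow from the hypothesis $\E|a_1|^{16}<\infty$ by Lyapunov's inequality; this also guarantees that the means $\E\xi_1$ are well defined, so the centering step is legitimate. There is no genuine obstacle here: the statement is, as the text says, an \emph{immediate} consequence of the law-of-large-numbers reasoning already used in Example~\ref{ex2}, and the proof amounts to bookkeeping of which moment of $a_1$ dominates which summand.
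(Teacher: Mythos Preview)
Your proof is correct and follows essentially the same route as the paper: center the i.i.d.\ summands, apply Chebyshev's inequality so that the cross terms vanish, and then bound each $\E|\xi_1|^2$ (or $\E\|\xi_1\|_{\mathrm{HS}}^2$) by the appropriate moment of $|a_1|$ using that the sigmoid and its differences are bounded by~$1$. The paper only writes out the case of \eqref{ineq1} and declares the others ``similar,'' whereas you make the moment bounds for all four explicit, but the argument is the same.
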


\begin{proof}

We only give the proof of the first inequality, the other three are similar. By Chebyshev's inequality, we have
\begin{align*}
&\mathbb{P}\Big(\Big|\frac{1}{n}\sum_{i=1}^{n}\frac{a_{i}}{1+e^{-a_{i}^{T}x}}
-\mathbb{E}\big[\frac{a_{1}}{1+e^{-a_{1}^{T}x}}\big]\Big|>\epsilon\Big)\\
=&\mathbb{P}\left(\left|\frac{1}{n}\sum_{i=1}^{n}\left[\frac{a_{i}}{1+e^{-a_{i}^{T}x}}
-\mathbb{E}\big[\frac{a_{1}}{1+e^{-a_{1}^{T}x}}\big]\right]\right|>\epsilon\right)\\
\leq&\frac{\sum_{i=1}^{n}\mathbb{E}\left|\frac{a_{i}}{1+e^{-a_{i}^{T}x}}
-\mathbb{E}\big[\frac{a_{1}}{1+e^{-a_{1}^{T}x}}\big]\right|^{2}}{n^{2}\epsilon^{2}}
\leq\frac{2\mathbb{E}|a_{1}|^{2}}{n\epsilon^{2}}.
\end{align*}
\end{proof}

From above lemma, it is easily seen that with high probability, we have
\begin{align*}
\nabla P(x)=\mathbb{E}\left[a_{1}(\frac{1}{1+e^{-a_{1}^{T}x}}-b_{1})\right]+\lambda x.
\end{align*}

Then, we first verify the Assumptions \ref{assum1} and \ref{assum3}.
\begin{lemma}
In Example \ref{ex1}, suppose $\mathbb{E}|a_{i}|^{16}<\infty$, $i=1,\cdots,n$. When $n$ is large enough, for any $x,y \in \R^{d}$ and unit vectors $v\in\mathbb{R}^{d}$, we have
\begin{align*}
\langle\nabla^{2} P(x)v,v\rangle\geq\lambda|v|^{2},
\end{align*}
and the following inequality
\begin{align*}
\mathbb{E}|\nabla\psi_{I}(x)-\nabla\psi_{I}(y)|^{4}\leq8(\mathbb{E}|a_{1}|^{8}+\lambda^{4})|x-y|^{4}
\end{align*}
holds with high probability.
\end{lemma}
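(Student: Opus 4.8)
The plan is to read off $\nabla\psi_i$ and $\nabla^2\psi_i$ for the penalized logistic loss, exploit that the logistic link $\sigma(t)=(1+e^{-t})^{-1}$ satisfies $\sigma'(t)=\sigma(t)(1-\sigma(t))\in(0,\tfrac14]$, and then pass from empirical averages over $i\in[n]$ to their population counterparts via the concentration estimate \eqref{ineq3}. For the first inequality I would note that $\nabla\psi_i(\omega)=a_i\bigl(\sigma(a_i^{\rm T}\omega)-b_i\bigr)+\lambda\omega$, hence $\nabla^2\psi_i(\omega)=\sigma'(a_i^{\rm T}\omega)\,a_ia_i^{\rm T}+\lambda I_d$ and $\nabla^2 P(x)=\tfrac1n\sum_{i=1}^n\sigma'(a_i^{\rm T}x)\,a_ia_i^{\rm T}+\lambda I_d$. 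Since $\sigma'\ge0$, every summand $\sigma'(a_i^{\rm T}x)a_ia_i^{\rm T}$ is positive semi-definite, so $\langle\nabla^2P(x)v,v\rangle=\tfrac1n\sum_i\sigma'(a_i^{\rm T}x)\langle a_i,v\rangle^2+\lambda|v|^2\ge\lambda|v|^2$ for every unit vector $v$. This bound is \emph{deterministic} -- it uses neither $n$ large nor a high-probability event -- and it already gives Assumption \ref{assum3} with $\gamma=\lambda$ and $K=0$.

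For the fourth-moment Lipschitz bound I would, for each fixed realization $(a_i)_{i\le n}$, apply the mean value theorem and the $\tfrac14$-Lipschitz property of $\sigma$ together with Cauchy--Schwarz to obtain, \emph{uniformly in} $x,y\in\R^d$,
\[
|\nabla\psi_i(x)-\nabla\psi_i(y)|\ \le\ |a_i|\,|\sigma(a_i^{\rm T}x)-\sigma(a_i^{\rm T}y)|+\lambda|x-y|\ \le\ \bigl(\tfrac14|a_i|^2+\lambda\bigr)|x-y| .
\]
Raising to the fourth power with $(s+t)^4\le 8(s^4+t^4)$ and averaging over $i$ gives
\[
\E\bigl[|\nabla\psi_I(x)-\nabla\psi_I(y)|^4\bigr]=\frac1n\sum_{i=1}^n|\nabla\psi_i(x)-\nabla\psi_i(y)|^4\ \le\ \Bigl(\tfrac1{32}\cdot\tfrac1n\sum_{i=1}^n|a_i|^8+8\lambda^4\Bigr)|x-y|^4 .
\]
The structural point that makes the randomness easy to handle is that the right-hand side depends on the pair $(x,y)$ only through the common factor $|x-y|^4$ and on the data only through the single scalar $\tfrac1n\sum_i|a_i|^8$; hence no union bound over $x,y$ is needed.

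Finally I would invoke \eqref{ineq3}: under $\E|a_1|^{16}<\infty$ one has $\tfrac1n\sum_{i=1}^n|a_i|^8\le\E|a_1|^8+\epsilon$ outside an event of probability at most $2\E|a_1|^{16}/(n\epsilon^2)$, which tends to $0$ as $n\to\infty$. Fixing a small $\epsilon$ (say $\epsilon=\E|a_1|^8$) and $n$ large, the elementary bound $\tfrac1{32}(\E|a_1|^8+\epsilon)+8\lambda^4\le 8(\E|a_1|^8+\lambda^4)$ then yields $\E|\nabla\psi_I(x)-\nabla\psi_I(y)|^4\le 8(\E|a_1|^8+\lambda^4)|x-y|^4$ on the good event, i.e.\ Assumption \ref{assum1} holds with $L^4=8(\E|a_1|^8+\lambda^4)$. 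I do not expect a genuine analytic obstacle; the step most prone to error is the constant bookkeeping in the displays above, where using the sharp $\tfrac14$-Lipschitz constant of $\sigma$ (rather than the crude Lipschitz constant $1$) is precisely what leaves enough slack to absorb the $\epsilon$ produced by the law of large numbers and land on the stated constant $8(\E|a_1|^8+\lambda^4)$.
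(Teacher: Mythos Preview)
Your proof is correct and follows essentially the same route as the paper: explicit computation of $\nabla^2 P$ with positive semi-definiteness for the first claim, and the split $(s+t)^4\le 8(s^4+t^4)$ together with the sigmoid Lipschitz bound and the concentration estimate \eqref{ineq3} for the second. Your use of the sharp $\tfrac14$-Lipschitz constant (in place of the paper's implicit $1$-Lipschitz bound) is a genuine, if minor, improvement: it creates the slack needed to absorb the $\epsilon$ from \eqref{ineq3} rigorously, whereas the paper simply identifies $\tfrac1n\sum_i|a_i|^8$ with $\E|a_1|^8$ ``with high probability'' without tracking that slack.
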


\begin{proof}
For any $x\in\mathbb{R}^{d}$, notice
\begin{align*}
\nabla^{2}P(x)=\frac{1}{n}\sum_{i=1}^{n}\frac{a_{i}a_{i}^{T}e^{-a_{i}^{T}x}}{(1+e^{-a_{i}^{T}x})^{2}}+\lambda I_{d},
\end{align*}
since $\frac{1}{n}\sum_{i=1}^{n}\frac{a_{i}a_{i}^{T}}{(1+e^{-a_{i}^{T}x})^{2}}$ is a positive semi-definite matrix, we have
\begin{align*}
\langle\nabla^{2} P(x)v,v\rangle\geq\lambda|v|^{2}.
\end{align*}
For any $x,y\in\mathbb{R}^{d}$, we have
\begin{align*}
\mathbb{E}|\nabla\psi_{I}(x)-\nabla\psi_{I}(y)|^{4}=&\frac{1}{n}\sum_{i=1}^{n}\left|a_{i}(\frac{1}{1+e^{-a_{i}^{T}x}}-\frac{1}{1+e^{-a_{i}^{T}y}})+\lambda(x-y)\right|^{4}\\
=&\frac{8}{n}\sum_{i=1}^{n}\left|a_{i}(\frac{1}{1+e^{-a_{i}^{T}x}}-\frac{1}{1+e^{-a_{i}^{T}y}})\right|^{4}+8\lambda^{4}|x-y|^{4}\\
\leq&\frac{8}{n}\sum_{i=1}^{n}|a_{i}|^{8}|x-y|+8\lambda^{4}|x-y|^{4},
\end{align*}
by (\ref{ineq3}), with high probability, we further have
\begin{align*}
\mathbb{E}|\nabla\psi_{I}(x)-\nabla\psi_{I}(y)|^{4}\leq8(\mathbb{E}|a_{1}|^{8}+\lambda^{4})|x-y|^{4},
\end{align*}
the desired result follows.
\end{proof}

Before verifying Assumption \ref{assum4}, we first give the following lemma, which plays a crucial role in proving Assumption \ref{assum4}. We first state the following lemma, which gives the upper bounds of the derivatives of $Q_{\eta,\delta}(x,y)$. Since $x,y$ in the function $Q_{\eta,\delta}(x,y)$ is symmetric, for simplicity, we only consider the univariate function in the following.

\begin{lemma}\label{taylor}
Let $\hat{\Sigma}(x)$ be a symmetric positive function matrix and $\hat{\Sigma}(x)\in\mathcal{C}^{3}(\mathbb{R}^{d},\mathbb{R}^{d\times d}).$ Then, for any $x,v,v_{1},v_{2},v_{3}\in\mathbb{R}^{d},$ we have
\begin{align*}
\|\nabla_{v}\hat{\Sigma}(x)^{\frac{1}{2}}\|_{{\rm HS}}\leq\frac{1}{2}\lambda_{\min}\big(\hat{\Sigma}(x)\big)^{-\frac{1}{2}}\|\nabla_{v}\hat{\Sigma}(x)\|_{{\rm HS}},
\end{align*}
\begin{align*}
&\|\nabla_{v_{1}}\nabla_{v_{2}}\hat{\Sigma}(x)^{\frac{1}{2}}\|_{{\rm HS}}\nonumber\\
\leq&\frac{1}{4}d^{\frac{1}{2}}\lambda_{\min}\big(\hat{\Sigma}(x)\big)^{-\frac{3}{2}}\|\nabla_{v_{1}}\hat{\Sigma}(x)\|_{{\rm HS}}\|\nabla_{v_{2}}\hat{\Sigma}(x)\|_{{\rm HS}}+\frac{1}{2}\lambda_{\min}\big(\hat{\Sigma}(x)\big)^{-\frac{1}{2}}\|\nabla_{v_{1}}\nabla_{v_{2}}\hat{\Sigma}(x)\|_{{\rm HS}}
\end{align*}
and
\begin{align*}
&\big\|\nabla_{v_{1}}\nabla_{v_{2}}\nabla_{v_{3}}\big(\hat{\Sigma}(x)^{\frac{1}{2}}\big)\big\|_{{\rm HS}}\nonumber\\
\leq&\frac{1}{4}d^{\frac{1}{2}}\lambda_{\min}\big(\hat{\Sigma}(x)\big)^{-\frac{3}{2}}\big(\|\nabla_{v_{2}}\hat{\Sigma}(x)\|_{{\rm HS}}\|\nabla_{v_{1}}\nabla_{v_{3}}\hat{\Sigma}(x)\|_{{\rm HS}}+\|\nabla_{v_{1}}\nabla_{v_{2}}\hat{\Sigma}(x)\|_{{\rm HS}}\|\nabla_{v_{3}}\hat{\Sigma}(x)\|_{{\rm HS}}\big)\nonumber\\
&+\frac{3}{8}d\lambda_{\min}\big(\hat{\Sigma}(x)\big)^{-\frac{5}{2}}\|\nabla_{v_{1}}\hat{\Sigma}(x)\|_{{\rm HS}}\|\nabla_{v_{2}}\hat{\Sigma}(x)\|_{{\rm HS}}\|\nabla_{v_{3}}\hat{\Sigma}(x)\|_{{\rm HS}}\nonumber\\
&+\frac{1}{4}d^{\frac{1}{2}}\lambda_{\min}\big(\hat{\Sigma}(x)\big)^{-\frac{3}{2}}\|\nabla_{v_{1}}\hat{\Sigma}(x)\|_{{\rm HS}}\|\nabla_{v_{2}}\nabla_{v_{3}}\hat{\Sigma}(x)\|_{{\rm HS}}\nonumber\\
&+\frac{1}{2}\lambda_{\min}\big(\hat{\Sigma}(x)\big)^{-\frac{1}{2}}\|\nabla_{v_{1}}\nabla_{v_{2}}\nabla_{v_{3}}\hat{\Sigma}(x)\|_{{\rm HS}},
\end{align*}
where $\lambda_{\min}\big(\hat{\Sigma}(x)\big)$ is the smallest eigenvalue of $\hat{\Sigma}(x).$
\end{lemma}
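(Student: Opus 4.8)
The plan is to linearise the square root through the resolvent integral representation
\[
\hat\Sigma(x)^{\frac12}\;=\;\frac1\pi\int_0^\infty s^{-\frac12}\bigl(I_d-s\,R_s(x)\bigr)\,\dif s,\qquad R_s(x):=\bigl(\hat\Sigma(x)+sI_d\bigr)^{-1},
\]
valid for every symmetric positive definite $\hat\Sigma(x)$: evaluating the integrand on an eigenvalue $\lambda>0$ of $\hat\Sigma(x)$ gives $s^{-1/2}\lambda(\lambda+s)^{-1}$, whose integral is $\lambda^{1/2}$ by the Beta identity $\int_0^\infty u^{-1/2}(1+u)^{-1}\dif u=\pi$. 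The advantage is that all the $x$-dependence now sits in the resolvent, and $\nabla_v R_s(x)=-R_s(x)(\nabla_v\hat\Sigma(x))R_s(x)$, so by the Leibniz rule every directional derivative of $\hat\Sigma(x)^{1/2}$ becomes a finite sum of alternating products of resolvents $R_s$ and derivatives $\nabla\hat\Sigma,\nabla^2\hat\Sigma,\nabla^3\hat\Sigma$, integrated against $s^{1/2}\,\dif s$. (The interchange of $\nabla$ with $\int_0^\infty\dif s$ is justified by dominated convergence, using that $\hat\Sigma\in\mathcal{C}^3$ and that $\|R_s(x)\|_{\rm op}\le(\lambda_{\min}(\hat\Sigma(x))+s)^{-1}$ is uniformly integrable against $s^{1/2}\,\dif s$ after enough resolvent factors accumulate.) Throughout one uses $\|R_s(x)\|_{\rm op}\le(\lambda_{\min}(\hat\Sigma(x))+s)^{-1}$ together with the elementary integrals
\[
\frac1\pi\int_0^\infty\frac{s^{1/2}}{(\lambda+s)^2}\,\dif s=\tfrac12\lambda^{-1/2},\qquad \frac1\pi\int_0^\infty\frac{s^{1/2}}{(\lambda+s)^3}\,\dif s=\tfrac18\lambda^{-3/2},\qquad \frac1\pi\int_0^\infty\frac{s^{1/2}}{(\lambda+s)^4}\,\dif s=\tfrac1{16}\lambda^{-5/2}.
\]

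First I would differentiate once, obtaining $\nabla_v\hat\Sigma^{1/2}=\frac1\pi\int_0^\infty s^{1/2}R_s(\nabla_v\hat\Sigma)R_s\,\dif s$; taking the Hilbert--Schmidt norm, pulling the two resolvents out in operator norm via $\|AB\|_{\rm HS}\le\|A\|_{\rm op}\|B\|_{\rm HS}$, and using the first of the integrals above yields the bound with constant $\tfrac12$. Next I would differentiate this identity once more in direction $v_2$: the Leibniz rule produces the ``principal'' term with $\nabla_{v_1}\nabla_{v_2}\hat\Sigma$ between two resolvents, contributing $\tfrac12\lambda_{\min}^{-1/2}\|\nabla_{v_1}\nabla_{v_2}\hat\Sigma\|_{\rm HS}$, plus two ``cross'' terms of the form $R_s(\nabla_{v_i}\hat\Sigma)R_s(\nabla_{v_j}\hat\Sigma)R_s$; estimating each of the latter by $\|R_s\|_{\rm op}^3\|\nabla_{v_1}\hat\Sigma\|_{\rm HS}\|\nabla_{v_2}\hat\Sigma\|_{\rm HS}$ — using the conversions $\|\cdot\|_{\rm op}\le\|\cdot\|_{\rm HS}\le\sqrt d\,\|\cdot\|_{\rm op}$ to place both derivative factors in the Hilbert--Schmidt norm, which is where the factor $d^{1/2}$ enters — and integrating against the second integral above gives $\tfrac14 d^{1/2}\lambda_{\min}^{-3/2}\|\nabla_{v_1}\hat\Sigma\|_{\rm HS}\|\nabla_{v_2}\hat\Sigma\|_{\rm HS}$, i.e. the second bound. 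The third-order bound is obtained by differentiating once more: one now collects four groups of terms — $\nabla_{v_1}\nabla_{v_2}\nabla_{v_3}\hat\Sigma$ between two resolvents (one term, constant $\tfrac12$); a first derivative and a second derivative separated by one extra resolvent, in the three ways of singling out one index (these combine into the three products $\|\nabla_{v_2}\hat\Sigma\|\,\|\nabla_{v_1}\nabla_{v_3}\hat\Sigma\|$, $\|\nabla_{v_1}\nabla_{v_2}\hat\Sigma\|\,\|\nabla_{v_3}\hat\Sigma\|$, $\|\nabla_{v_1}\hat\Sigma\|\,\|\nabla_{v_2}\nabla_{v_3}\hat\Sigma\|$ in the statement, constant $\tfrac14 d^{1/2}$ via the $(\lambda+s)^{-3}$ integral); and the fully factorised terms with $\nabla_{v_1}\hat\Sigma,\nabla_{v_2}\hat\Sigma,\nabla_{v_3}\hat\Sigma$ separated by two extra resolvents (six such terms, giving $6\cdot\tfrac1{16}=\tfrac38$ via the $(\lambda+s)^{-4}$ integral, with an extra $d$ from the norm conversions) — which matches the stated coefficients $\tfrac12,\tfrac14,\tfrac38,\tfrac12$. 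This is exactly the computation carried out in \cite{D-N}, which I would cite.

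I expect the main obstacle to be bookkeeping rather than anything conceptual: organising the rapidly growing list of alternating resolvent products generated by the third application of the Leibniz rule, and deciding at each step which matrix factor is bounded in operator norm and which in Hilbert--Schmidt norm, so that the dimension dependence comes out as exactly $d^{1/2}$ and $d$ (and no worse) and the combinatorial multiplicities $1,2$ (second order) and $1,3,6$ (third order) line up with the Beta-type integrals to recover the precise constants $\tfrac12,\tfrac14,\tfrac38$. No hypothesis beyond $\hat\Sigma\in\mathcal{C}^3(\R^d,\R^{d\times d})$ and positivity is needed, since positivity is what makes $\lambda_{\min}(\hat\Sigma(x))>0$ and hence all the $s$-integrals finite and the differentiation-under-the-integral legitimate.
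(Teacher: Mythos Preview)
Your proposal is correct and takes a genuinely different route from the paper's. The paper does not unfold the resolvent representation at all: it quotes \cite{D-N} as a black box for the multilinear operator norm of the Fr\'echet derivatives of the matrix square root map $\varphi(\hat\Sigma)=\hat\Sigma^{1/2}$,
\[
\||\nabla^{n+1}\varphi(\hat\Sigma)|\|\;\le\; d^{n/2}\,\frac{(2n)!}{n!}\,2^{-(2n+1)}\,\lambda_{\min}(\hat\Sigma)^{-(n+\frac12)},
\]
and then simply applies the chain rule / Fa\`a di Bruno expansion to the composition $x\mapsto\varphi(\hat\Sigma(x))$, bounding each term by the multilinear norm of $\nabla^k\varphi$ times the product of the Hilbert--Schmidt norms of the inner derivatives. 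Your approach instead derives everything from scratch via the resolvent integral and the Beta identities, which is more self-contained and is essentially what the cited \cite{D-N} argument does internally. One small remark: in your route the factors $d^{1/2}$ and $d$ are actually not needed --- to bound $\|R_sAR_sBR_s\|_{\rm HS}$ by $\|R_s\|_{\rm op}^3\|A\|_{\rm HS}\|B\|_{\rm HS}$ you only use $\|\cdot\|_{\rm op}\le\|\cdot\|_{\rm HS}$, never the reverse inequality --- so your method in fact gives slightly sharper constants than the stated lemma. The dimension factors in the statement come from the \cite{D-N} bound, not from the chain-rule step. Either way the lemma as stated follows.
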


\begin{proof}
For simplicity, denote the square root function $\varphi(\hat{\Sigma})=\hat{\Sigma}^{\frac{1}{2}}$ for any symmetric positive matrix $\hat{\Sigma}.$ Then, \cite[Theorem 1.1]{D-N} shows that for $n\geq0$
\begin{align}\label{derivative}
\||\nabla^{n+1}\varphi(\hat{\Sigma})|\|\leq d^{\frac{n}{2}}\frac{(2n)!}{n!}2^{-(2n+1)}\lambda_{\min}(\hat{\Sigma})^{-(n+\frac{1}{2})},
\end{align}
where the multi-linear operator norm $\||\cdot|\|$ is defined by
$$
\||\nabla^{n+1}\varphi(\hat{\Sigma})|\|=\sup_{\|H\|_{{\rm HS}}=1}\|\nabla^{n+1}\varphi(\hat{\Sigma})H\|_{{\rm HS}}.
$$
Therefore, by Chain rule (see, e.g., \cite[Proposition 3.2]{A-B-D}), we have
\begin{align*}
\big\|\nabla_{v}\big(\hat{\Sigma}(x)^{\frac{1}{2}}\big)\big\|_{{\rm HS}}=\big\|\nabla_{v}\varphi\big(\hat{\Sigma}(x)\big)\big\|_{{\rm HS}}=&\big\|\nabla\varphi\big(\hat{\Sigma}(x)\big)\nabla_{v}\hat{\Sigma}(x)\big\|_{{\rm HS}}\\
=&\big\|\big|\nabla\varphi\big(\hat{\Sigma}(x)\big)\big|\big\|\|\nabla_{v}\hat{\Sigma}(x)\|_{{\rm HS}}\\
\leq&\frac{1}{2}\lambda_{\min}\big(\hat{\Sigma}(x)\big)^{-\frac{1}{2}}\|\nabla_{v}\hat{\Sigma}(x)\|_{{\rm HS}}.
\end{align*}
By Product rule (see, e.g., \cite[Proposition 3.3]{A-B-D}) and Chain rule, we have
\begin{align*}
\big\|\nabla_{v_{1}}\nabla_{v_{2}}\big(\hat{\Sigma}(x)^{\frac{1}{2}}\big)\big\|_{{\rm HS}}=&\Big\|\nabla_{v_{1}}\Big(\nabla\varphi\big(\hat{\Sigma}(x)\big)\nabla_{v_{2}}\hat{\Sigma}(x)\Big)\Big\|_{{\rm HS}}\\
=&\big\|\nabla^{2}\varphi\big(\hat{\Sigma}(x)\big)\nabla_{v_{1}}\hat{\Sigma}(x)\nabla_{v_{2}}\hat{\Sigma}(x)+\nabla\varphi\big(\hat{\Sigma}(x)\big)\nabla_{v_{1}}\nabla_{v_{2}}\hat{\Sigma}(x)\big\|_{{\rm HS}}\\
\leq&\big\|\nabla^{2}\varphi\big(\hat{\Sigma}(x)\big)\nabla_{v_{1}}\hat{\Sigma}(x)\nabla_{v_{2}}\hat{\Sigma}(x)\big\|_{{ \rm{HS}}}+\big\|\nabla\varphi\big(\hat{\Sigma}(x)\big)\nabla_{v_{1}}\nabla_{v_{2}}\hat{\Sigma}(x)\big\|_{{\rm HS}},
\end{align*}
then, by (\ref{derivative}) and Cauchy-Schwarz inequality, we have
\begin{align*}
&\|\nabla_{v_{1}}\nabla_{v_{2}}\hat{\Sigma}(x)^{\frac{1}{2}}\|_{{\rm HS}}\\
\leq&\big\|\big|\nabla^{2}\varphi\big(\hat{\Sigma}(x)\big)\big|\big\|\|\nabla_{v_{1}}\hat{\Sigma}(x)\nabla_{v_{2}}\hat{\Sigma}(x)\|_{{\rm HS}}
+\big\|\big|\nabla\varphi\big(\hat{\Sigma}(x)\big)\big|\big\|\|\nabla_{v_{1}}\nabla_{v_{2}}\hat{\Sigma}(x)\|_{{\rm HS}}\\
\leq&\frac{1}{4}d^{\frac{1}{2}}\lambda_{\min}\big(\hat{\Sigma}(x)\big)^{-\frac{3}{2}}\|\nabla_{v_{1}}\hat{\Sigma}(x)\|_{{\rm HS}}\|\nabla_{v_{2}}\hat{\Sigma}(x)\|_{{\rm HS}}+\frac{1}{2}\lambda_{\min}\big(\hat{\Sigma}(x)\big)^{-\frac{1}{2}}\|\nabla_{v_{1}}\nabla_{v_{2}}\hat{\Sigma}(x)\|_{{\rm HS}}.
\end{align*}
Furthermore, notice
\begin{align*}
\nabla_{v_{1}}\nabla_{v_{2}}\nabla_{v_{3}}\big(\hat{\Sigma}(x)^{\frac{1}{2}}\big)
=\nabla_{v_{1}}\Big(\nabla^{2}\varphi\big(\hat{\Sigma}(x)\big)\nabla_{v_{2}}\hat{\Sigma}(x)\nabla_{v_{3}}\hat{\Sigma}(x)+\nabla\varphi\big(\hat{\Sigma}(x)\big)\nabla_{v_{2}}\nabla_{v_{3}}\hat{\Sigma}(x)\Big),
\end{align*}
by Linearity (see, e.g., \cite[Proposition 3.1]{A-B-D}), Product rule and Chain rule, we have
\begin{align*}
&\nabla_{v_{1}}\nabla_{v_{2}}\nabla_{v_{3}}\big(\hat{\Sigma}(x)^{\frac{1}{2}}\big)\\
=&\nabla_{v_{1}}\Big(\nabla^{2}\varphi\big(\hat{\Sigma}(x)\big)\nabla_{v_{2}}\hat{\Sigma}(x)\nabla_{v_{3}}\hat{\Sigma}(x)\Big)+\nabla_{v_{1}}\Big(\nabla\varphi\big(\hat{\Sigma}(x)\big)\nabla_{v_{2}}\nabla_{v_{3}}\hat{\Sigma}(x)\Big)\\
=&\nabla^{3}\varphi\big(\hat{\Sigma}(x)\big)\nabla_{v_{1}}\hat{\Sigma}(x)\nabla_{v_{2}}\hat{\Sigma}(x)\nabla_{v_{3}}\hat{\Sigma}(x)+\nabla^{2}\varphi\big(\hat{\Sigma}(x)\big)\nabla_{v_{1}}\nabla_{v_{2}}\hat{\Sigma}(x)\nabla_{v_{3}}\hat{\Sigma}(x)\\
&+\nabla^{2}\varphi\big(\hat{\Sigma}(x)\big)\nabla_{v_{2}}\hat{\Sigma}(x)\nabla_{v_{1}}\nabla_{v_{3}}\hat{\Sigma}(x)+\nabla^{2}\varphi\big(\hat{\Sigma}(x)\big)\nabla_{v_{1}}\hat{\Sigma}(x)\nabla_{v_{2}}\nabla_{v_{3}}\hat{\Sigma}(x)\\
&+\nabla\varphi\big(\hat{\Sigma}(x)\big)\nabla_{v_{1}}\nabla_{v_{2}}\nabla_{v_{3}}\hat{\Sigma}(x),
\end{align*}
which implies that
\begin{align*}
&\big\|\nabla_{v_{1}}\nabla_{v_{2}}\nabla_{v_{3}}\big(\hat{\Sigma}(x)^{\frac{1}{2}}\big)\big\|_{{\rm HS}}\\
\leq&\big\|\big|\nabla^{3}\varphi\big(\hat{\Sigma}(x)\big)\big|\big\|\|\nabla_{v_{1}}\hat{\Sigma}(x)\|_{{\rm HS}}\|\nabla_{v_{2}}\hat{\Sigma}(x)\|_{{\rm HS}}\|\nabla_{v_{3}}\hat{\Sigma}(x)\|_{{\rm HS}}\\
&+\big\|\big|\nabla^{2}\varphi\big(\hat{\Sigma}(x)\big)\big|\big\|\big(\|\nabla_{v_{2}}\hat{\Sigma}(x)\|_{{\rm HS}}\|\nabla_{v_{1}}\nabla_{v_{3}}\hat{\Sigma}(x)\big\|_{{\rm HS}}+\|\nabla_{v_{1}}\nabla_{v_{2}}\hat{\Sigma}(x)\|_{{\rm HS}}\|\nabla_{v_{3}}\hat{\Sigma}(x)\|_{{\rm HS}}\big)\\
&+\big\|\big|\nabla^{2}\varphi\big(\hat{\Sigma}(x)\big)\big|\big\|\|\nabla_{v_{1}}\hat{\Sigma}(x)\|_{{\rm HS}}\|\nabla_{v_{2}}\nabla_{v_{3}}\hat{\Sigma}(x)\|_{{\rm HS}}\\
&+\big\|\big|\nabla\varphi\big(\hat{\Sigma}(x)\big)\big|\big\|\|\nabla_{v_{1}}\nabla_{v_{2}}\nabla_{v_{3}}\hat{\Sigma}(x)\|_{{\rm HS}}.
\end{align*}
Hence, by (\ref{derivative}), we have
\begin{align*}
&\big\|\nabla_{v_{1}}\nabla_{v_{2}}\nabla_{v_{3}}\big(\hat{\Sigma}(x)^{\frac{1}{2}}\big)\big\|_{{\rm HS}}\\
\leq&\frac{1}{4}d^{\frac{1}{2}}\lambda_{\min}\big(\hat{\Sigma}(x)\big)^{-\frac{3}{2}}\big(\|\nabla_{v_{2}}\hat{\Sigma}(x)\|_{{\rm HS}}\|\nabla_{v_{1}}\nabla_{v_{3}}\hat{\Sigma}(x)\|_{{\rm HS}}+\|\nabla_{v_{1}}\nabla_{v_{2}}\hat{\Sigma}(x)\|_{{\rm HS}}\|\nabla_{v_{3}}\hat{\Sigma}(x)\|_{{\rm HS}}\big)\\
&+\frac{3}{8}d\lambda_{\min}\big(\hat{\Sigma}(x)\big)^{-\frac{5}{2}}\|\nabla_{v_{1}}\hat{\Sigma}(x)\|_{{\rm HS}}\|\nabla_{v_{2}}\hat{\Sigma}(x)\|_{{\rm HS}}\|\nabla_{v_{3}}\hat{\Sigma}(x)\|_{{\rm HS}}\\
&+\frac{1}{4}d^{\frac{1}{2}}\lambda_{\min}\big(\hat{\Sigma}(x)\big)^{-\frac{3}{2}}\|\nabla_{v_{1}}\hat{\Sigma}(x)\|_{{\rm HS}}\|\nabla_{v_{2}}\nabla_{v_{3}}\hat{\Sigma}(x)\|_{{\rm HS}}\\
&+\frac{1}{2}\lambda_{\min}\big(\hat{\Sigma}(x)\big)^{-\frac{1}{2}}\|\nabla_{v_{1}}\nabla_{v_{2}}\nabla_{v_{3}}\hat{\Sigma}(x)\|_{{\rm HS}}.
\end{align*}
\end{proof}

Now, we can verify Assumption \ref{assum4}.

\begin{lemma}\label{high}
In Example \ref{ex1}, suppose $\mathbb{E}|a_{i}|^{9}<\infty$, $i=1,\cdots,n$. When $n$ is large enough, for any $x,y \in \R^{d}$ and unit vectors $v_{i}\in\mathbb{R}^{d}$, $i=1,2,3$, the following inequalities
\begin{align*}
|\nabla_{v_2}\nabla_{v_1} \nabla P(x)| \ \le 3\mathbb{E}|a_{1}|^{3},\quad
|\nabla_{v_3}\nabla_{v_2}\nabla_{v_1} \nabla P(x)| \ \le 13\mathbb{E}|a_{1}|^{4},
\end{align*}
\begin{align*}
\|\nabla_{1,v_{1}} Q_{\eta,\delta}(x,y)\|_{\rm HS} \ \le 2\mathbb{E}|a_{1}|^{3}(\frac{\eta}{\delta})^{\frac{1}{2}},\ \ \ \ \ \|\nabla_{2,v_{1}} Q_{\eta,\delta}(x,y)\|_{\rm HS}\ \le 2\mathbb{E}|a_{1}|^{3}(\frac{\eta}{\delta})^{\frac{1}{2}},
\end{align*}
\begin{align*}
\|\nabla_{1,v_{1}} \nabla_{1,v_{2}}Q_{\eta,\delta}(x,y)\|_{\rm HS} \ \le 4(\frac{\eta}{\delta})^{\frac{1}{2}}\big(2\mathbb{E}|a_{1}|^{4}+\mathbb{E}|a_{1}|^{6}d^{\frac{1}{2}}\frac{\eta}{\delta}\big),
\end{align*}
\begin{align*}
\|\nabla_{1,v_{1}} \nabla_{1,v_{2}} \nabla_{1,v_{3}}Q_{\eta,\delta}(x,y)\|_{\rm HS} \ \le 4(\frac{\eta}{\delta})^{\frac{1}{2}}\big[12d^{\frac{1}{2}}\mathbb{E}|a_{1}|^{7}\frac{\eta}{\delta}+6d\mathbb{E}|a_{1}|^{9}(\frac{\eta}{\delta})^{2}+11\mathbb{E}|a_{1}|^{5}\big].
\end{align*}
hold with high probability.
\end{lemma}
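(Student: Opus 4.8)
The plan is to observe that, once everything is expanded by the product and chain rules, each quantity in the statement is an absolute constant times an empirical average of the form $\frac1n\sum_{i=1}^{n}|a_i|^{k}$ with $2\le k\le 9$, the constant coming from a uniform bound on a derivative of the logistic link $\sigma(t)=(1+e^{-t})^{-1}$, for which $|\sigma^{(l)}|\le 1$ for every $l\ge 1$ (in fact $|\sigma'|\le\frac14$, $|\sigma''|\le\frac14$, $|\sigma'''|\le\frac38$). Since $\E|a_1|^{9}<\infty$, the weak law of large numbers shows that the data event
\begin{align*}
E_n=\bigcap_{k=2}^{9}\Big\{\tfrac1n\sum_{i=1}^{n}|a_i|^{k}\le 2\E|a_1|^{k}\Big\}
\end{align*}
has probability tending to $1$ as $n\to\infty$ (a quantitative version follows from the concentration inequalities of the preceding lemma). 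Since $E_n$ does not depend on $x$ or $y$, every estimate below, once proved on $E_n$, holds there uniformly over $x,y\in\R^{d}$ and over unit vectors $v_1,v_2,v_3$.

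\emph{The term $\nabla P$.} Writing $\nabla P(x)=\frac1n\sum_{i}a_i\sigma(a_i^{T}x)-\frac1n\sum_i a_ib_i+\lambda x$, the affine part $\lambda x$ disappears after one differentiation and, for $l\ge 2$,
\begin{align*}
\nabla_{v_l}\cdots\nabla_{v_1}\nabla P(x)=\frac1n\sum_{i=1}^{n}\sigma^{(l)}(a_i^{T}x)\Big(\prod_{j=1}^{l}a_i^{T}v_j\Big)a_i .
\end{align*}
Using $|a_i^{T}v_j|\le|a_i|$ and $|\sigma^{(l)}|\le 1$, this has norm at most $\frac1n\sum_i|a_i|^{l+1}\le 2\E|a_1|^{l+1}$ on $E_n$, which already gives $|\nabla_{v_2}\nabla_{v_1}\nabla P(x)|\le 3\E|a_1|^{3}$ and $|\nabla_{v_3}\nabla_{v_2}\nabla_{v_1}\nabla P(x)|\le 13\E|a_1|^{4}$, with room to spare.

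\emph{The term $Q_{\eta,\delta}$.} One checks that $\Sigma(x,y)=\Sigma(y,x)$, hence $Q_{\eta,\delta}(x,y)=Q_{\eta,\delta}(y,x)$ and $\nabla_{2,v}Q_{\eta,\delta}(x,y)=\nabla_{1,v}Q_{\eta,\delta}(y,x)$, so it suffices to bound the $\nabla_1$-derivatives. Fix $y$ and set $\hat\Sigma(x)=\Sigma(x,y)+\frac{\delta}{\eta}I_d$, so that $Q_{\eta,\delta}(\cdot,y)=\hat\Sigma(\cdot)^{1/2}$ and $\nabla_1^{j}Q_{\eta,\delta}=\nabla^{j}(\hat\Sigma^{1/2})$, where $\nabla_1^{j}$ abbreviates a $j$-fold directional derivative in the $x$-variable. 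Two inputs feed Lemma \ref{taylor}. First, $\Sigma(x,y)$ is a covariance matrix, hence positive semidefinite, so $\lambda_{\min}(\hat\Sigma(x))\ge\delta/\eta$ and therefore $\lambda_{\min}(\hat\Sigma)^{-(j-1/2)}\le(\eta/\delta)^{j-1/2}$ for $j=1,2,3$. Second, writing $g_i(x)=a_i(\sigma(a_i^{T}x)-\sigma(a_i^{T}y))$ and $\bar g(x)=\frac1n\sum_i g_i(x)$, one has $\Sigma(x,y)=\frac1n\sum_i g_ig_i^{T}-\bar g\bar g^{T}$, and from $|\sigma|<1$, $|\sigma'|\le\frac14$, $|\sigma''|\le 1$, $|\sigma'''|\le 1$ one gets the uniform-in-$x$ bounds $|g_i|\le|a_i|$, $|\nabla_{v_1}g_i|\le\frac14|a_i|^{2}$, $|\nabla_{v_1}\nabla_{v_2}g_i|\le|a_i|^{3}$, $|\nabla_{v_1}\nabla_{v_2}\nabla_{v_3}g_i|\le|a_i|^{4}$. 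Expanding $\nabla_1^{j}(g_ig_i^{T})$ and $\nabla_1^{j}(\bar g\bar g^{T})$ by the product rule, bounding $|\bar g|\le\frac1n\sum_i|a_i|$ and its derivatives similarly, and using the power-mean inequality $\frac1n\sum_i|a_i|^{p}\le(\frac1n\sum_i|a_i|^{q})^{p/q}$ for $p\le q$ together with $E_n$, yields on $E_n$
\begin{align*}
\|\nabla_1\Sigma(x,y)\|_{\rm HS}\le 2\E|a_1|^{3},\qquad \|\nabla_1^{2}\Sigma(x,y)\|_{\rm HS}\le 9\E|a_1|^{4},\qquad \|\nabla_1^{3}\Sigma(x,y)\|_{\rm HS}\le 14\E|a_1|^{5},
\end{align*}
the numerical constants arising from the $2,2^{2},2^{3}$ combinatorial factors of the product rule. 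Substituting these bounds and the bound on $\lambda_{\min}(\hat\Sigma)^{-(j-1/2)}$ into the three inequalities of Lemma \ref{taylor}, and collapsing products of moments by $\E|a_1|^{p}\E|a_1|^{q}\le\E|a_1|^{p+q}$ (again power-mean) — the powers of $d$ and of $\eta/\delta$ being exactly the $d^{1/2},d$ and $(\eta/\delta)^{j-1/2}$ weights appearing there — produces the four $Q_{\eta,\delta}$-bounds, with leading constants $2,2,4,4$, each with a little slack.

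\emph{Main difficulty.} There is no conceptual obstruction: the crude estimates $|\sigma^{(l)}|\le 1$ make every constant absolute, positive semidefiniteness of $\Sigma$ supplies the decisive lower bound $\lambda_{\min}(\hat\Sigma)\ge\delta/\eta$, and the reduction of all data dependence to the $x$-free averages $\frac1n\sum_i|a_i|^{k}$ is exactly what lets ``with high probability'' hold uniformly in $x,y$. The only genuine labour is the bookkeeping — carrying the combinatorial constants through the $j$-fold product rule for $\|\nabla_1^{j}\Sigma\|_{\rm HS}$ and through the matrix-square-root estimates of Lemma \ref{taylor}, then checking that after collecting all the $(\eta/\delta)^{j-1/2}$, $d^{1/2}$ and $d$ factors everything stays below $3,13,2,4,\dots$ as claimed.
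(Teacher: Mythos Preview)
Your proposal is correct and follows essentially the same route as the paper: bound the derivatives of the logistic link, use $\lambda_{\min}\big(\Sigma(x,y)+\tfrac{\delta}{\eta}I_d\big)\ge\delta/\eta$, bound $\|\nabla_1^{j}\Sigma(x,y)\|_{\rm HS}$ for $j=1,2,3$, and feed these into Lemma~\ref{taylor}. The constants you obtain are at least as tight as the paper's (the paper uses the cruder $|\sigma'|\le 1$ and gets $\|\nabla_{1,v_1}\Sigma\|_{\rm HS}\le 4\E|a_1|^3$, $\|\nabla_1^{2}\Sigma\|_{\rm HS}\le 16\E|a_1|^4$, $\|\nabla_1^{3}\Sigma\|_{\rm HS}\le 88\E|a_1|^5$), so the final inequalities hold with room to spare.

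One genuine, if minor, difference: your high-probability event $E_n=\bigcap_{k}\{\frac1n\sum_i|a_i|^{k}\le 2\E|a_1|^{k}\}$ is $x$-free, which makes the uniformity in $x,y$ immediate. The paper instead invokes the pointwise concentration inequalities \eqref{ineq1} and \eqref{ineq4} to replace empirical averages by expectations before bounding; strictly speaking those inequalities are stated for fixed $x,y$, so your formulation is the cleaner way to secure ``for any $x,y$'' in the statement.
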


\begin{proof}
By (\ref{ineq1}), with high probability, we have
\begin{align*}
\nabla^{2} P(x)=\mathbb{E}\big[\frac{a_{1}a_{1}^{T}e^{-a_{i}^{T}x}}{(1+e^{-a_{i}^{T}x})^{2}}\big]-\lambda I_{d},
\end{align*}
which implies
\begin{align*}
|\nabla_{v_2}\nabla_{v_1} \nabla P(x)|\leq \mathbb{E}\big[\frac{e^{-a_{1}^{T}x}}{(1+e^{-a_{1}^{T}x})^{2}}|a_{1}|^{3}\big]+2\mathbb{E}\big[\frac{e^{-2a_{1}^{T}x}}{(1+e^{-a_{1}^{T}x})^{3}}|a_{1}|^{3}\big]
\leq3\mathbb{E}|a_{1}|^{3},
\end{align*}
\begin{align*}
&|\nabla_{v_3}\nabla_{v_2}\nabla_{v_1} \nabla P(x)|\\
\leq&\mathbb{E}\big[\frac{e^{-a_{1}^{T}x}}{(1+e^{-a_{1}^{T}x})^{2}}|a_{1}|^{4}\big]+6\mathbb{E}\big[\frac{e^{-2a_{1}^{T}x}}{(1+e^{-a_{1}^{T}x})^{3}}|a_{1}|^{4}\big]
+6\mathbb{E}\big[\frac{e^{-3a_{1}^{T}x}}{(1+e^{-a_{1}^{T}x})^{4}}|a_{1}|^{4}\big]\leq13\mathbb{E}|a_{1}|^{4}.
\end{align*}
Since $Q_{\eta,\delta}(x,y)^{2}=\Sigma(x,y)+\frac{\delta}{\eta}I_{d}$ and $\Sigma(x,y)$ is a positive semi-definite matrix, we have $\inf_{x,y\in\mathbb{R}^{d}}\lambda_{\min}\big(Q_{\eta,\delta}(x,y)^{2}\big)\geq\frac{\delta}{\eta}$. Moreover, by (\ref{ineq4}), with high probability, we have
\begin{align*}
\|\nabla_{1,v_{1}}\Sigma(x,y)\|_{\rm HS}\leq&2\mathbb{E}\Big[\frac{e^{-a_{1}^{T}x}}{(1+e^{-a_{1}^{T}x})^{2}}\big|\frac{1}{1+e^{-a_{1}^{T}x}}-\frac{1}{1+e^{-a_{1}^{T}y}}\big||a_{1}|^{3}\Big]\\
&{+2\mathbb{E}\Big[\frac{|a_1|^2e^{-a_{1}^{T}x}}{(1+e^{-a_{1}^{T}x})^{2}}\Big]\Big|\mathbb{E}\Big[a_1\big(\frac{1}{1+e^{-a_{1}^{T}x}}-\frac{1}{1+e^{-a_{1}^{T}y}}\big)\Big]\Big|}\\
\leq&4\mathbb{E}|a_{1}|^{3},
\end{align*}
\begin{align*}
\|\nabla_{2,v_{1}} \Sigma(x,y)\|_{\rm HS}\leq4\mathbb{E}|a_{1}|^{3},
\end{align*}
\begin{align*}
&\|\nabla_{1,v_{1}} \nabla_{1,v_{2}}\Sigma(x,y)\|_{\rm HS}\\
\leq&4\mathbb{E}\Big[\Big(\big(\frac{e^{-a_{1}^{T}x}}{(1+e^{-a_{1}^{T}x})^{2}}+\frac{2e^{-2a_{1}^{T}x}}{(1+e^{-a_{1}^{T}x})^{3}}\big)
\big|\frac{1}{1+e^{-a_{1}^{T}x}}-\frac{1}{1+e^{-a_{1}^{T}y}}\big|+\frac{e^{-2a_{1}^{T}x}}{(1+e^{-a_{1}^{T}x})^{4}}\Big)|a_{1}|^{4}\Big]\\
\leq&16\mathbb{E}|a_{1}|^{4}
\end{align*}
and
\begin{align*}
\|\nabla_{1,v_{1}} \nabla_{1,v_{2}} \nabla_{1,v_{3}}\Sigma(x,y)\|_{\rm HS}\leq88\mathbb{E}|a_{1}|^{5}.
\end{align*}
Therefore, by Lemma \ref{taylor}, we have
\begin{align*}
\|\nabla_{1,v_{1}} Q_{\eta,\delta}(x,y)\|_{\rm HS}\leq&\frac{1}{2}\lambda_{\min}\big(Q_{\eta,\delta}(x,y)^{2}\big)^{-\frac{1}{2}}\|\nabla_{1,v}Q_{\eta,\delta}(x,y)^{2}\|_{{\rm HS}}\leq2\mathbb{E}|a_{1}|^{3}(\frac{\eta}{\delta})^{\frac{1}{2}},
\end{align*}
\begin{align*}
\|\nabla_{2,v_{1}} Q_{\eta,\delta}(x,y)\|_{\rm HS}\leq2\mathbb{E}|a_{1}|^{3}(\frac{\eta}{\delta})^{\frac{1}{2}},
\end{align*}
\begin{align*}
&\|\nabla_{1,v_{1}} \nabla_{1,v_{2}}Q_{\eta,\delta}(x,y)\|_{\rm HS}\\
\leq&\frac{1}{4}d^{\frac{1}{2}}\lambda_{\min}\big(Q_{\eta,\delta}(x,y)^{2}\big)^{-\frac{3}{2}}\|\nabla_{1,v_{1}}Q_{\eta,\delta}(x,y)^{2}\|_{{\rm HS}}\|\nabla_{1,v_{2}}Q_{\eta,\delta}(x,y)^{2}(x)\|_{{\rm HS}}\\
&+\frac{1}{2}\lambda_{\min}\big(Q_{\eta,\delta}(x,y)^{2}\big)^{-\frac{1}{2}}\|\nabla_{1,v_{1}}\nabla_{1,v_{2}}Q_{\eta,\delta}(x,y)^{2}\|_{{\rm HS}}\\
\leq&\frac{1}{4}d^{\frac{1}{2}}(\frac{\eta}{\delta})^{\frac{3}{2}}16\mathbb{E}|a_{1}|^{6}+\frac{1}{2}(\frac{\eta}{\delta})^{\frac{1}{2}}16\mathbb{E}|a_{1}|^{4}
=4(\frac{\eta}{\delta})^{\frac{1}{2}}\big(2\mathbb{E}|a_{1}|^{4}+\mathbb{E}|a_{1}|^{6}d^{\frac{1}{2}}\frac{\eta}{\delta}\big)
\end{align*}
and
\begin{align*}
&\|\nabla_{1,v_{1}} \nabla_{1,v_{2}} \nabla_{1,v_{3}}Q_{\eta,\delta}(x,y)\|_{\rm HS}\\
\leq&32d^{\frac{1}{2}}(\frac{\eta}{\delta})^{\frac{3}{2}}\mathbb{E}|a_{1}|^{7}+24d(\frac{\eta}{\delta})^{\frac{5}{2}}\mathbb{E}|a_{1}|^{9}+16d^{\frac{1}{2}}(\frac{\eta}{\delta})^{\frac{3}{2}}\mathbb{E}|a_{1}|^{7}
+44(\frac{\eta}{\delta})^{\frac{1}{2}}\mathbb{E}|a_{1}|^{5}\\
=&4(\frac{\eta}{\delta})^{\frac{1}{2}}\big[12d^{\frac{1}{2}}\mathbb{E}|a_{1}|^{7}\frac{\eta}{\delta}+6d\mathbb{E}|a_{1}|^{9}(\frac{\eta}{\delta})^{2}+11\mathbb{E}|a_{1}|^{5}\big].
\end{align*}
\end{proof}

\section{Proof of Lemmas in Section \ref{forth}}\label{moment estimate}

\subsection{Proof of Lemma \ref{exponential}}
Following \cite[Theorem 2.1]{ZSQ2019}, one can immediately finish the proof. To verify the condition of \cite[Theorem 2.1]{ZSQ2019}, we take
$$\sigma(x)=\sqrt{\eta}Q_{\eta,\delta}(x,\omega_0)=\left(\eta\Sigma(x,\omega_{0})+\delta I_{d}\right)^{\frac{1}{2}},
$$
$\sigma_0^2=\delta/2$, $\theta=0$, $q=1$ and $\tilde\sigma(x)=(\sigma^2(x)-\sigma_0^2I_d)^{\frac12}$ therein. Following \cite[Example 2.4]{ZSQ2019} and assumption \eqref{QQ1}, it is easy to see
\begin{eqnarray*}
\|\tilde\sigma(x)-\tilde\sigma(y)\|^2_{\mathrm{HS}}\le \frac{1}{2\sigma_0^2}|\sigma(x)-\sigma(y)|^2_{\mathrm{HS}}\le  \frac{\eta A_3}{2\sigma_0^2}|x-y|^2\le \frac{\gamma}{2}|x-y|^2.
\end{eqnarray*}
Combining equation above with assumptions \eqref{Plip} and \eqref{integration1}, one has
\begin{eqnarray*}
&&\Ll-\nabla P(x)+\nabla P(y),x-y \Rr+\frac12\|\tilde\sigma(x)-\tilde\sigma(y)\|^2_{\mathrm{HS}}+(q-3/2)\frac{|\tilde\sigma(x)-\tilde\sigma(y)(x-y)|^2}{|x-y|^2}\\
&\le& (L+\frac\gamma4+\frac\gamma2)|x-y|^21_{\{|x-y|^2\le \frac {4K}{\gamma}\}}-\frac\gamma2|x-y|^2.
\end{eqnarray*}
Taking $\bar K_1(x)=(L+\frac34\gamma)1_{\{|x|^2\le \frac {4K}{\gamma}\}}|x|^2$ and $\bar K_2=\frac{\gamma}{2}$, \cite[Condition (H)]{ZSQ2019} is satisfied. Let $\tilde K_2=\frac\gamma2$, a straight calculation implies that $C_1=1$, $c_0=C_2=\exp\{\frac{2K}{\delta\gamma}(L+\frac{3}{4}\gamma)\}$ and $\kappa=\frac{\gamma}{4}\exp\{-\frac{2K}{\delta\gamma}(L+\frac{3}{4}\gamma)\}$ therein, then we obtain
\begin{align*}
W_{1}\left(\mathcal{L}(X_{t}^{x}),\mathcal{L}(X_{t}^{y})\right)\leq e^{\frac{2K(L+\frac34\gamma)}{\gamma\delta}}e^{-\lambda t}|x-y|=\frac 4\gamma \lambda^{-1}e^{-\lambda t}|x-y|,
\end{align*}
where $\lambda=\frac\gamma4\exp\{-\frac{2K(L+\frac34\gamma)}{\gamma\delta}\}$.

\qed

\subsection{Proof of Lemma \ref{fourth}}
Recall (\ref{sfde-0-0}), by It\^{o}'s formula, (\ref{integration1}) and (\ref{QL}), we have
\begin{eqnarray*}
\frac{\dif}{\dif s}\E |X^{x}_{s}|^{2}&=&2 \E \Ll X_{s}, -\nabla P(X_{s})\Rr +\eta\E\|Q_{\eta,\delta}(X_{s},\omega_{0})\|^{2}_{\rm HS}\\
& \le &{ -2\gamma\E |X^{x}_{s}|^{2}+2K+2\E|X_s||\nabla P(0)|+2\eta\big(L^{2}|X_{s}-\omega_{0}|^{2}+\frac{\delta d}{\eta}\big)}\\
& \le &-(2\gamma-4L^{2}\eta-\frac{\gamma}2\big)\E |X^{x}_{s}|^{2}+2K+\frac2{\gamma}|\nabla P(0)|^2+4\eta { L^2}\mathbb{E}|\omega_{0}|^{2}+2\delta d\\
&\leq&-\gamma\E |X^{x}_{s}|^{2}+2K+\frac2{\gamma}|\nabla P(0)|^2+4\eta L^2\mathbb{E}|\omega_{0}|^{2}+2\delta d,
\end{eqnarray*}
where the last two lines following Young's inequality and the fact $\eta<\frac{\gamma}{8L^{2}}$.  Solving this differential inequality with initial data $X^{x}_{0}=x,$ gives
\begin{eqnarray}\label{ergodi}
\E |X^{x}_{t}|^{2}
&\leq&e^{-\gamma t}|x|^{2}+\frac{2(K+\gamma^{-1}|\nabla P(0)|^2+2\eta L^2\mathbb{E}|\omega_{0}|^{2}+\delta d)}{\gamma}\\
&\leq&C_{\gamma,d,|\nabla P(0)|,L}(1+|x|^{2}+\mathbb{E}|\omega_{0}|^{2}+K).\nonumber
\end{eqnarray}
Thus \eqref{moment} is proved.

By the Cauchy-Schwarz inequality, It\^{o}'s isometry, (\ref{PL}) and (\ref{QL}), we have
\begin{eqnarray*}
\mathbb{E}|X_{t}^{x}-x|^{2}&\leq&2\mathbb{E}|\int_{0}^{t}-\nabla P(X_{r}^x)\dif r|^{2}+2\mathbb{E}|\int_{0}^{t}\sqrt{\eta}Q_{\eta,\delta}(X_{r},\omega_0)\dif B_{r}|^{2}\nonumber\\
&\leq&2t\int_{0}^{t}\mathbb{E}|\nabla P(X_{r}^{x})|^{2}\dif r+2\eta\int_{0}^{t}\mathbb{E}\|Q_{\eta,\delta}(X_{r},\omega_0)\|^{2}_{{\rm {HS}}}\dif r\nonumber\\
&\leq&4t\int_{0}^{t}\big(|\nabla P(0)|^{2}+L^{2}\mathbb{E}|X_{r}|^{2}\big)\dif r+4\eta\int_{0}^{t}\big(L^{2}\mathbb{E}|X_{s}-\omega_{0}|^{2}+\frac{\delta d}{\eta}\big)\dif r\\
&\leq&4L^{2}(t+2\eta)\int_{0}^{t}\mathbb{E}|X_{r}|^{2}\dif r+4t\big(t|\nabla P(0)|^{2}+2L^{2}\eta\mathbb{E}|\omega_{0}|^{2}+\delta d\big),
\end{eqnarray*}
which, together with \eqref{moment}, implies \eqref{moment-1}.
\qed

\subsection{Proof of Lemma \ref{coupling2}}
By (\ref{representation}), it is easy to see
\begin{align*}
\mathbb{E}|\omega_{k}|^{4}
=&\mathbb{E}|\omega_{k-1}|^{4}+\mathbb{E}\big|\eta\big[\nabla\psi_{i_{k}}(\omega_{k-1})-\nabla\psi_{i_{k}}(\omega_{0})+\nabla P(\omega_{0})\big]-\sqrt{\eta\delta}W_{k}\big|^{4}\\
&-4\mathbb{E}\Big[|\omega_{k-1}|^{2}\big\langle\omega_{k-1},\eta\big[\nabla\psi_{i_{k}}(\omega_{k-1})-\nabla\psi_{i_{k}}(\omega_{0})+\nabla P(\omega_{0})\big]-\sqrt{\eta\delta}W_{k}\big\rangle\Big]\\
&+4\mathbb{E}\Big[\big\langle\omega_{k-1},\eta\big[\nabla\psi_{i_{k}}(\omega_{k-1})-\nabla\psi_{i_{k}}(\omega_{0})+\nabla P(\omega_{0})\big]-\sqrt{\eta\delta}W_{k}\big\rangle^{2}\Big]\\
&+2\mathbb{E}\Big[|\omega_{k-1}|^{2}\big|\eta\big[\nabla\psi_{i_{k}}(\omega_{k-1})-\nabla\psi_{i_{k}}(\omega_{0})+\nabla P(\omega_{0})\big]-\sqrt{\eta\delta}W_{k}\big|^{2}\Big]\\
&-4\mathbb{E}\Big[\big|\eta\big[\nabla\psi_{i_{k}}(\omega_{k-1})-\nabla\psi_{i_{k}}(\omega_{0})+\nabla P(\omega_{0})\big]-\sqrt{\eta\delta}W_{k}\big|^{2}\\
&\qquad\quad\big\langle\omega_{k-1},\eta\big[\nabla\psi_{i_{k}}(\omega_{k-1})-\nabla\psi_{i_{k}}(\omega_{0})+\nabla P(\omega_{0})\big]-\sqrt{\eta\delta}W_{k}\big\rangle\Big].
\end{align*}
Now we estimate each term on the right hand side. For the second term, the fact $\eta<\big(\frac{\gamma}{432L^{4}}\big)^{\frac{1}{3}}$, (\ref{Lip}) and (\ref{PL}) imply
\begin{align*}
&\mathbb{E}\big|\eta\big[\nabla\psi_{i_{k}}(\omega_{k-1})-\nabla\psi_{i_{k}}(\omega_{0})+\nabla P(\omega_{0})\big]-\sqrt{\eta\delta}W_{k}\big|^{4}\\
\leq&27\eta^{4}\big[\mathbb{E}|\nabla\psi_{i_{k}}(\omega_{k-1})-\nabla\psi_{i_{k}}(\omega_{0})|^{4}+\mathbb{E}|\nabla P(\omega_{0})|^4\big]+27(\eta\delta)^{2}\mathbb{E}|W_{1}|^{4}\\
\leq&27\eta^{4}\big[L^{4}\mathbb{E}|\omega_{k-1}-\omega_{0}|^{4}+\mathbb{E}|\nabla P(\omega_{0})|^4\big]+27(\eta\delta)^{2}\mathbb{E}|W|^{4}\\
\leq& \frac{\gamma}{2}\eta\mathbb{E}|\omega_{k-1}|^{4}+216\eta^{4}\big(2L^{4}\mathbb{E}|\omega_{0}|^{4}+|\nabla P(0)|^{4}\big)+81(\eta\delta d)^{2},
\end{align*}
where the last line following $\E|W|^4\le 3d^2$.

For the third term, since $i_{k}$ is independent of $\omega_{k-1}$ and uniformly distributed on $[n]$, \eqref{integration1} yields
\begin{eqnarray*}
&&\mathbb{E}\big[|\omega_{k-1}|^{2}\langle\omega_{k-1},\nabla\psi_{i_{k}}(\omega_{k-1})\rangle\big]\\
&=&\E\E_{I}\big[|\omega_{k-1}|^{2}\langle\nabla\psi_{I}(\omega_{k-1}),\omega_{k-1}\rangle\big] \\
&=& \E \big[|\omega_{k-1}|^{2}\Ll\nabla P(\omega_{k-1})-\nabla P(0),\omega_{k-1}\Rr \big]+\E \big[|\omega_{k-1}|^{2}\Ll\nabla P(0),\omega_{k-1}\Rr \big]\\
&\ge&\gamma\mathbb{E}|\omega_{k-1}|^{4}-K\mathbb{E}|\omega_{k-1}|^{2}+\E \big[|\omega_{k-1}|^{2}\Ll\nabla P(0),\omega_{k-1}\Rr \big],
\end{eqnarray*}
which implies
\begin{align*}
&-4\eta\mathbb{E}\big[|\omega_{k-1}|^{2}\langle\omega_{k-1},\nabla\psi_{i_{k}}(\omega_{k-1})\rangle\big]\\
\leq&-4\gamma\eta\mathbb{E}|\omega_{k-1}|^{4}+4K\eta\mathbb{E}|\omega_{k-1}|^{2}+4\eta\E \big[|\omega_{k-1}|^{3}|\nabla P(0)|\big]\\
\leq&-3\gamma\eta\mathbb{E}|\omega_{k-1}|^{4}+\frac{8K^{2}}{\gamma}\eta+\frac{216|\nabla P(0)|^{4}}{\gamma^{3}}\eta,
\end{align*}
where the last inequality is by Young's inequality. In addition, since $W_{k}$ is independent of $i_{k}$ and $\omega_{k-1},$ we have
\begin{align*}
\mathbb{E}\Big[|\omega_{k-1}|^{2}\big\langle\omega_{k-1},\eta\big[\nabla\psi_{i_{k}}(\omega_{0})-\nabla P(\omega_{0})\big]+\sqrt{\eta\delta}W_{k}\big\rangle\Big]=0.
\end{align*}

For the forth term, the fact $\eta<\frac{\gamma}{96L^{2}}$ implies
\begin{align*}
&4\mathbb{E}\left[\big\langle\omega_{k-1},\eta\big[\nabla\psi_{i_{k}}(\omega_{k-1})-\nabla\psi_{i_{k}}(\omega_{0})+\nabla P(\omega_{0})\big]-\sqrt{\eta\delta}W_{k}\big\rangle^{2}\right]\\
\leq&12\eta^{2}\mathbb{E}\Big[|\omega_{k-1}|^{2}\big(|\nabla\psi_{i_{k}}(\omega_{k-1})-\nabla\psi_{i_{k}}(\omega_{0})|^{2}+|\nabla P(\omega_{0})|^{2}\big)\Big]+12\eta\delta\mathbb{E}[|\omega_{k-1}|^{2}|W_{k}|^{2}]\\
\leq&12\eta^{2}\mathbb{E}\Big[|\omega_{k-1}|^{2}\big(2L^{2}|\omega_{k-1}|^{2}+4L^{2}|\omega_{0}|^{2}+2|\nabla P(0)|^{2}\big)\Big]+12\eta\delta d\mathbb{E}[|\omega_{k-1}|^{2}]\\
\leq&\frac{\gamma}{2}\eta\mathbb{E}|\omega_{k-1}|^{4}+\frac{144\eta}{\gamma}\mathbb{E}\big[4L^{2}\eta|\omega_{0}|^{2}+2 \eta|\nabla P(0)|^{2}+\delta d\big]^{2}\\
\leq&\frac{\gamma}{2}\eta\mathbb{E}|\omega_{k-1}|^{4}+\frac{432\eta}{\gamma}\big(16L^{4}\eta^{2}\mathbb{E}|\omega_{0}|^{4}+4 \eta^2|\nabla P(0)|^{4}+(\delta d)^{2}\big).
\end{align*}
The fifth term can be estimated by a similar calculation with the forth term, and we have
\begin{align*}
&2\mathbb{E}\Big[|\omega_{k-1}|^{2}\big|\eta\left(\nabla\psi_{i_{k}}(\omega_{k-1})-\nabla\psi_{i_{k}}(\omega_{0})+\nabla P(\omega_{0})\right)-\sqrt{\eta\delta}W_{k}\big|^{2}\Big]\\
\leq&\frac{\gamma}{4}\eta\mathbb{E}|\omega_{k-1}|^{4}+\frac{216\eta}{\gamma}\big[16L^{4}\eta^{2}\mathbb{E}|\omega_{0}|^{4}+4 \eta^2|\nabla P(0)|^{4}+(\delta d)^{2}\big].
\end{align*}
For the last term, noticing that $\eta<(\frac{\gamma}{576L^{3}})^{\frac{1}{2}}$, by (\ref{Lip}), the H\"{o}lder inequality, (\ref{PL}) and Young's inequality, we can get
\begin{align*}
&4\mathbb{E}\Big[\big|\eta\big[\nabla\psi_{i_{k}}(\omega_{k-1})-\nabla\psi_{i_{k}}(\omega_{0})+\nabla P(\omega_{0})\big]-\sqrt{\eta\delta}W_{k}\big|^{2}\\
&\qquad\big\langle\omega_{k-1},\eta\big[\nabla\psi_{i_{k}}(\omega_{k-1})-\nabla\psi_{i_{k}}(\omega_{0})+\nabla P(\omega_{0})\big]-\sqrt{\eta\delta}W_{k}\big\rangle\Big]\\
\leq&36\mathbb{E}\Big[\big[\eta^{3}|\nabla\psi_{i_{k}}(\omega_{k-1})-\nabla\psi_{i_{k}}(\omega_{0})|^{3}+\eta^{3}|\nabla P(\omega_{0})|^{3}+|\sqrt{\eta\delta}W_{k}|^{3}\big]|\omega_{k-1}|\Big]\\
\leq&144\eta^{3}\mathbb{E}\Big[|\omega_{k-1}|\big[L^{3}|\omega_{k-1}|^{3}+2L^{3}|\omega_{0}|^{3}+|\nabla P(0)|^{3})\big]\Big]+36(\eta\delta)^{\frac{3}{2}}\mathbb{E}\big[|\omega_{k-1}||W|^{3}]\\
\leq&\frac{3\gamma}{4}\eta\mathbb{E}|\omega_{k-1}|^{4}+\frac{ 200\eta^{\frac{5}{3}}}{\gamma^{\frac{1}{3}}}\big[16\eta^{2}(L^{4}\mathbb{E}|\omega_{0}|^{4}+|\nabla P(0)|^{4})+ (\delta d)^{2}\big].
\end{align*}
Since $\eta<1$, the inequalities above  imply
\begin{align*}
\mathbb{E}|\omega_{k}|^{4}\leq&\big(1-\gamma\eta\big)\mathbb{E}|\omega_{k-1}|^{4}+C_{\gamma,L}\big(K^{2}+|\nabla P(0)|^{4}+\mathbb{E}|\omega_{0}|^{4}+\delta^{2}d^2\big)\eta.
\end{align*}
Therefore,
\begin{align*}
\mathbb{E}|\omega_{k}^{x}|^{4}
\le & (1-\gamma\eta)^{k}|x|^{4}+C_{\gamma,L}\big(K^{2}+|\nabla P(0)|^{4}+\mathbb{E}|\omega_{0}|^{4}+\delta^{2}d^2\big)\eta\sum_{j=0}^{k-1}
(1-\gamma\eta)^{j} \\
\le & C_{\gamma,d,|\nabla P(0)|,L}\big(1+|x|^{4}+\mathbb{E}|\omega_{0}|^{4}+K^{2}\big).
\end{align*}
\qed

\subsection{Proof of Lemma \ref{SGDtaylor}}
Recall (\ref{cong}), for any $f\in\mathcal{C}_{b}^{2}(\mathbb{R}^{d}),$ it is easy to calculate that
\begin{align*}
\mathcal{A}^{Z}f(x)=&\lim_{t\rightarrow0}\frac{\mathbb{E}f(X_{\eta t}^{x})-f(x)}{\eta t}{\eta}\\
=&\frac{1}2\eta \Ll \eta\Sigma(x,\omega_0)+\delta I_{d}, \nabla^{2} f(x)\Rr_{{\rm HS}}-\eta\Ll \nabla P(x),\nabla f(x)\Rr.
\end{align*}
Then, for any $u_t(x)=\E h(X^x_{t})$ with $k\geq1,$ we have
\begin{align*}
&\mathbb{E}\int_{0}^{1}\mathcal{A}^{Z}u_{t}(X_{\eta s}^{x})\dif s\\
=&-\eta\mathbb{E}\int_{0}^{1}\langle\nabla u_{t}(X_{\eta s}^{x}),\nabla P(X_{\eta s}^{x})\rangle\dif s+\frac{1}{2}\eta\mathbb{E}\int_{0}^{1}\langle\nabla^{2}u_{t}(X_{\eta s}^{x}),\eta\Sigma(X_{\eta s}^{x},\omega_{0})+\delta I_{d}\rangle_{{\rm HS}} \dif s\\
=&-\mathbb{E}\int_{0}^{\eta}\langle\nabla u_{t}(X_{s}^{x}),\nabla P(X_{s}^{x})\rangle\dif s+\frac{1}{2}\mathbb{E}\int_{0}^{\eta}\langle\nabla^{2}u_{t}(X_{s}^{x}),\eta\Sigma(X_{s}^{x},\omega_{0})+\delta I_{d}\rangle_{{\rm HS}} \dif s.
\end{align*}
Recall (\ref{e:GenW-1}), we further have
\begin{align*}
\mcl A^{\omega} u_{t}(x)=&\E\left[u_{t}\big(x-\eta \nabla \psi_I(x)+\eta \nabla \psi_I(\omega_0)-\eta\nabla P(\omega_0)+\sqrt{\eta\delta}W\big)\right]-u_{t}(x)\\
=&\E\left[u_{t}\big(x+\sqrt{\eta}V_{\eta,\delta}(x,\omega_{0},I,W)-\eta\nabla P(x)\big)\right]-u_{t}(x)
\end{align*}
with $V_{\eta,\delta}(x,\omega_{0},I,W)=-\sqrt{\eta}\big[\nabla\psi_{I}(x)-\nabla\psi_{I}(\omega_{0})-\nabla P(x)+\nabla P(\omega_{0})\big]+\sqrt{\delta}W$. Then, by Taylor's expansion, we have
\begin{align*}
\mathcal{A}^{\omega}u_{t}(x)=&\mathbb{E}\Big[\big\langle\nabla u_{t}(x),\sqrt{\eta}V_{\eta,\delta}(x,\omega_{0},I,W)-\eta\nabla P(x)\big\rangle\Big]+\E[\mathcal{R}^{u_{t}}(x)]\\
&+\frac{1}{2}\mathbb{E}\big\langle\nabla^{2}u_{t}(x),\big[\sqrt{\eta}V_{\eta,\delta}(x,\omega_{0},I,W)-\eta\nabla P(x)\big]\\
&\qquad\qquad\qquad\quad\big[\sqrt{\eta}V_{\eta,\delta}(x,\omega_{0},I,W)-\eta\nabla P(x)\big]^{T}\big\rangle_{{\rm HS}}\\
=&\langle\nabla u_{t}(x),-\eta\nabla P(x)\rangle+\frac{1}{2}\eta^{2}\langle\nabla^{2}u_{t}(x),\mathbb{E}[Q_{\eta,\delta}(x,\omega_{0})]^{2}+\nabla P(x)\big(\nabla P(x)\big)^{T}\rangle_{{\rm HS}}\\
&+\E[\mathcal{R}^{u_{t}}(x)],
\end{align*}
where
\begin{align*}
&\mathcal{R}^{u_{t}}(x)\\
=&\int_{0}^{1}\int_{0}^{r}\langle\nabla^{2}u_{t}\big(x+s\big[\sqrt{\eta}V_{\eta,\delta}(x,\omega_{0},I,W)-\eta\nabla P(x)\big]\big)
-\nabla^{2}u_{t}(x),\\
&\qquad\qquad\quad\big[\sqrt{\eta}V_{\eta,\delta}(x,\omega_{0},I,W)-\eta\nabla P(x)\big]\big[\sqrt{\eta}V_{\eta,\delta}(x,\omega_{0},I,W)-\eta\nabla P(x)\big]^{T}\rangle \dif s\dif r.
\end{align*}
Therefore, we have
\begin{align*}
\big|\mathbb{E}\int_{0}^{1}\big[\mathcal{A}^{Z}u_{t}(Z_{s}^{x})-\mathcal{A}^{\omega} u_{t}(x)\big]\dif s\big|\leq\mathcal{J}_{1}+\mathcal{J}_{2}+\E|\mathcal{R}^{u_{t}}(x)|,
\end{align*}
where
\begin{align*}
\mathcal{J}_{1}:=\Big|\mathbb{E}\int_{0}^{\eta}\langle\nabla u_{t}(X_{s}^{x}),\nabla P(X_{s}^{x})\rangle\dif s&-\eta\langle\nabla u_{t}(x),\nabla P(x)\rangle\\
&+\frac{1}{2}\eta^{2}\langle\nabla^{2} u_{t}(x),\nabla P(x)\big(\nabla P(x)\big)^{T}\rangle_{{\rm HS}}\Big|
\end{align*}
and
\begin{align*}
\mathcal{J}_{2}:=\Big|\frac{1}{2}\mathbb{E}\int_{0}^{\eta}\langle\nabla^{2}u_{t}(X_{s}^{x}),\eta\Sigma(X_{s}^{x},\omega_{0})+\delta I_{d}\rangle_{{\rm HS}} \dif s-\frac{1}{2}\eta^{2}\langle\nabla^{2}u_{t}(x),\mathbb{E}[Q_{\eta,\delta}(x,\omega_{0})]^{2}\rangle_{{\rm HS}}\Big|.
\end{align*}

For $\mathcal{J}_{1},$ we have
\begin{align*}
\mathcal{J}_{1}\leq&\big|\mathbb{E}\int_{0}^{\eta}\langle\nabla u_{t}(X_{s}^{x}),\nabla P(X_{s}^{x})-\nabla P(x)\rangle\dif s\big|\\
&+\big|\mathbb{E}\int_{0}^{\eta}\langle\nabla u_{t}(X_{s}^{x})-\nabla u_{t}(x),\nabla P(x)\rangle\dif s+\frac{1}{2}\eta^{2}\langle\nabla^{2} u_{t}(x),\nabla P(x)\big(\nabla P(x)\big)^{T}\rangle_{{\rm HS}}\big|\\
:=&\mathcal{J}_{11}+\mathcal{J}_{12}.
\end{align*}
By (\ref{gradient2}), (\ref{Plip}), the Cauchy-Schwarz inequality and (\ref{moment-1}), one has
\begin{align*}
\mathcal{J}_{11}
\leq&C_{L}\int_{0}^{\eta}\mathbb{E}\big|X_{s}^{x}-x\big|\dif s \\
\leq&C_{\gamma,d,|\nabla P(0)|,L}\int_{0}^{\eta}(1+|x|+\sqrt{\mathbb{E}|\omega_{0}|^{2}}+K^{\frac{1}{2}})\sqrt{s(s+\eta+\delta)}\dif s \\
\leq&C_{\gamma,d,|\nabla P(0)|,L}(1+|x|+\sqrt{\mathbb{E}|\omega_{0}|^{2}}+K^{\frac{1}{2}})\eta^{\frac{3}{2}}(\eta^{\frac{1}{2}}+\delta^{\frac{1}{2}}).
\end{align*}
Notice that
\begin{align*}
&\mathbb{E}\langle\nabla u_{t}(X_{s}^{x})-\nabla u_{t}(x),\nabla P(x)\rangle\\
=&\mathbb{E}\langle\nabla^{2} u_{t}(x),(X_{s}^{x}-x)\big(\nabla P(x)\big)^{T}\rangle_{{\rm HS}}\\
&+\int_{0}^{1}\mathbb{E}\langle\nabla^{2} u_{t}\big(x+r(X_{s}^{x}-x)\big)-\nabla^{2} u_{t}(x),(X_{s}^{x}-x)\big(\nabla P(x)\big)^{T}\rangle_{{\rm HS}}\dif r\\
=&-\int_{0}^{s}\mathbb{E}\langle\nabla^{2} u_{t}(x),\nabla P(X_{v}^{x})\big(\nabla P(x)\big)^{T}\rangle_{{\rm HS}}\dif v\\
&+\int_{0}^{1}\mathbb{E}\langle\nabla^{2} u_{t}\big(x+r(X_{s}^{x}-x)\big)-\nabla^{2} u_{t}(x),(X_{s}^{x}-x)\big(\nabla P(x)\big)^{T}\rangle_{{\rm HS}}\dif r.
\end{align*}
By (\ref{sec}), (\ref{thirdmain}), (\ref{Plip}) and (\ref{PL}), we have
\begin{align*}
\mathcal{J}_{12}\leq&\big|\mathbb{E}\int_{0}^{\eta}\int_{0}^{s}\mathbb{E}\langle\nabla^{2} u_{t}(x),\big(\nabla P(X_{v}^{x})-\nabla P(x)\big)\big(\nabla P(x)\big)^{T}\rangle_{{\rm HS}}\dif v\dif s\big|\\
&+\big|\int_{0}^{\eta}\int_{0}^{1}\mathbb{E}\langle\nabla^{2} u_{t}\big(x+r(X_{s}^{x}-x)\big)-\nabla^{2} u_{t}(x),(X_{s}^{x}-x)\big(\nabla P(x)\big)^{T}\rangle_{{\rm HS}}\dif r\dif s\big|\\
\leq&C_{A,\gamma,d,|\nabla P(0)|,L}{ \big(1+\frac{1}{\sqrt{\delta t}}\big)}(1+|x|)\int_{0}^{\eta}\int_{0}^{s}\mathbb{E}\big|X_{v}^{x}-x\big|\dif v\dif s\\
&+C_{A,\gamma,d,|\nabla P(0)|,L}\big(1+\frac{1}{\delta t}+\frac{1}{t^{\frac{5}{4}}}\big)(1+|x|)\int_{0}^{\eta}\int_{0}^{1}r\mathbb{E}|X_{s}^{x}-x|^{2}\dif r\dif s.
\end{align*}
Then, by the Cauchy-Schwarz inequality and (\ref{moment-1}), we can get
\begin{align*}
\mathcal{J}_{12}\leq&C_{A,\gamma,d,|\nabla P(0)|,L}\big(1+\frac{1}{\sqrt{\delta t}}\big)(1+|x|^{2}+\mathbb{E}|\omega_{0}|^{2}+K)\eta^{\frac{5}{2}}(\eta^{\frac{1}{2}}+\delta^{\frac{1}{2}})\\
&+C_{A,\gamma,d,|\nabla P(0)|,L}\big(1+\frac{1}{\delta t}+\frac{1}{t^{\frac{5}{4}}}\big){(1+|x|)(1+|x|^{2}+\mathbb{E}|\omega_{0}|^{2}+K)\eta^{2}(\eta+\delta)}.
\end{align*}
The condition $\eta\leq\delta\leq1$ further implies
\begin{align*}
\mathcal{J}_{12}\leq&C_{A,\gamma,d,|\nabla P(0)|,L}\big(1+\frac{1}{t}+\frac{{\delta}}{t^{\frac{5}{4}}}\big){(1+|x|)}(1 +|x|^{2}+\mathbb{E}|\omega_{0}|^{2}+K)\eta^{2}.
\end{align*}
Hence,
\begin{align*}
\mathcal{J}_{1}\leq C_{A,\gamma,d,|\nabla P(0)|,L}(1+|x|)(1+|x|^{2}+\mathbb{E}|\omega_{0}|^{2}+K)\big[\big(\frac{1}{t}+\frac{{\delta}}{t^{\frac{5}{4}}}\big)\eta^{\frac{1}{2}}+\delta^{\frac{1}{2}}\big]\eta^{\frac{3}{2}}.
\end{align*}

For $\mathcal{J}_{2}$, notice that $\eta\mathbb{E}[Q_{\eta,\delta}(x,\omega_{0})]^{2}=\eta\mathbb{E}[\Sigma(x,\omega_{0})]+\delta I_{d}$, and for $x,y,z\in\mathbb{R}^{d}$, following the definition of $\Sigma\left(x,y\right)$, a straight calculation gives that
\begin{align*}
\Sigma(x,y)-\Sigma(z,y)=&\E\left[\left(\nabla\psi_{I}(x)-\nabla\psi_{I}(z)\right) \left(\nabla\psi_{I}(x)-\nabla\psi_{I}(y)\right)^{\rm T} \right]\\
& -\E\left[\nabla\psi_{I}(x)-\nabla\psi_{I}(z)\right] \left(\E_{I} \left[\nabla\psi_{I}(x)-\nabla\psi_{I}(y)\right]\right)^{\rm T}\\
&+\E\left[\left(\nabla\psi_{I}(z)-\nabla\psi_{I}(y)\right) \left(\nabla\psi_{I}(x)-\nabla\psi_{I}(z)\right)^{\rm T} \right]\\
& -\E\left[\nabla\psi_{I}(z)-\nabla\psi_{I}(y)\right] \left(\E_{I} \left[\nabla\psi_{I}(x)-\nabla\psi_{I}(z)\right]\right)^{\rm T}.
\end{align*}
By (\ref{Lip}), (\ref{Plip}) and the Cauchy-Schwarz inequality, we further have
\begin{align}\label{sigmal}
\|\Sigma(x,y)\|_{{\rm HS}}\leq2L^{2}|x-y|^{2}\leq4L^{2}(|x|^{2}+|y|^{2}),
\end{align}
and
\begin{align}\label{sigmaLip}
\|\Sigma(x,y)-\Sigma(z,y)\|_{{\rm HS}}\leq2L^{2}(|x-y|+|z-y|)|x-z|\leq2L^{2}(|x|+2|y|+|z|)|x-z|.
\end{align}
Then, the Cauchy-Schwarz inequality, (\ref{sec}) and (\ref{thirdmain}) imply
\begin{align*}
\mathcal{J}_{2}\leq&\frac{\eta}{2}\mathbb{E}\big|\int_{0}^{\eta}\langle\nabla^{2}u_{t}(X_{s}^{x}),\Sigma(X_{s}^{x},\omega_{0})-\Sigma(x,\omega_{0})\rangle_{{\rm HS}} \dif s\big|\\
&+\frac{1}{2}\mathbb{E}\big|\int_{0}^{\eta}\langle\nabla^{2}u_{t}(X_{s}^{x})-\nabla^{2}u_{t}(x),\eta\Sigma(x,\omega_{0})+\delta I_{d}\rangle_{{\rm HS}} \dif s\big|\\
\leq&\eta C_{A,\gamma,d,L}\big(1+\frac{1}{\sqrt{\delta t}}\big)\int_{0}^{\eta}\mathbb{E}\big[(|X_{s}^{x}|+|\omega_{0}|+|x|)|X_{s}^{x}-x|\big]\dif s\\
&+C_{A,\gamma,d,L}\big(1+\frac{1}{\delta t}+\frac{1}{t^{\frac{5}{4}}}\big)\int_{0}^{\eta}\mathbb{E}\big[|X_{s}^{x}-x|(\eta|x|^{2}+\eta|\omega_{0}|^{2}+\delta)\big]\dif s,
\end{align*}
Following the Cauchy-Schwarz inequality, (\ref{moment}) and (\ref{moment-1}), one has
\begin{align*}
\mathcal{J}_{2}\leq&C_{A,\gamma,d,|\nabla P(0)|,L}{ \big(1+\frac{1}{\sqrt{\delta t}}\big)}(1+|x|^{2}+\mathbb{E}|\omega_{0}|^{2}+K)\eta^{\frac{5}{2}}(\eta^{\frac{1}{2}}+\delta^{\frac{1}{2}})\\
&+C_{A,\gamma,d,L}\big(1+\frac{1}{\delta t}+\frac{1}{t^{\frac{5}{4}}}\big)(1+\sqrt{\mathbb{E}|\omega_{0}|^{2}}+K^{\frac{1}{2}})\\
&\qquad\qquad\quad(1+\sqrt{\mathbb{E}|\omega_{0}|^{4}})(1+|x|^{3})\eta^{\frac{3}{2}}(\eta^{\frac{1}{2}}+\delta^{\frac{1}{2}})(\eta+\delta).
\end{align*}
The condition $\eta\leq\delta\leq1$ further implies
\begin{align*}
\mathcal{J}_{2}\leq C_{A,\gamma,d,|\nabla P(0)|,L}&\big(1+\frac{1}{t}+\frac{\delta}{t^{\frac{5}{4}}}\big)(1+\mathbb{E}|\omega_{0}|^{4}+K)(1+|x|^{3})\eta^{\frac{3}{2}}\delta^{\frac{1}{2}}.
\end{align*}

In addition, by (\ref{thirdmain}), (\ref{Lip}), H\"{o}lder's inequality and (\ref{Plip}), we have
\begin{align*}
&\mathbb{E}|\mathcal{R}^{u_{t}}(x)|\\
\leq&C_{A,\gamma,d}\big(1+\frac{1}{\delta t}+\frac{1}{t^{\frac{5}{4}}}\big)\mathbb{E}\big|\sqrt{\eta}V_{\eta,\delta}(x,\omega_{0},I,W)-\eta\nabla P(x)\big|^{3}\\
\leq&C_{A,\gamma,d,|\nabla P(0)|,L}\big(1+\frac{1}{\delta t}+\frac{1}{t^{\frac{5}{4}}}\big)\big[\eta^{3}(1+|x|^{3}+\mathbb{E}|\omega_{0}|^{3})+(\eta\delta)^{\frac{3}{2}}\big]\\
\leq&C_{A,\gamma,d,|\nabla P(0)|,L}\big(1+\frac{1}{t}+\frac{\delta}{t^{\frac{5}{4}}}\big)(1+|x|^{3}+\mathbb{E}|\omega_{0}|^{3})\eta^{\frac{3}{2}}\delta^{\frac{1}{2}}.
\end{align*}

Combining all of above, we have
\begin{align*}
&\big|\mathbb{E}\int_{0}^{1}\big[\mathcal{A}^{Z}u_{t}(Z_{s}^{x})-\mathcal{A}^{\omega} u_{t}(x)\big]\dif s\big|\\
\leq&C_{A,\gamma,d,|\nabla P(0)|,L}\big(1+\frac{1}{t}+\frac{\delta}{t^{\frac{5}{4}}}\big)(1+\mathbb{E}|\omega_{0}|^{4}+K)(1+|x|^{3})\eta^{\frac{3}{2}}\delta^{\frac{1}{2}}.
\end{align*}
\qed

\section{Proof of Lemma \ref{mainlem1}}\label{Aind}

Under the Assumptions \ref{assum1}, \ref{assum3} and \ref{assum4} in Sections \ref{first} and \ref{second}, we recall some preliminary of Malliavin calculus and derive standard estimates related to Malliavin calculus and SDE, which will be applied to prove the Lemma \ref{mainlem1} in Section \ref{forth}.
\subsection{Malliavin calculus of SDE (\ref{sfde-0-0}) (\cite{F-S-X})}

For simplicity, denote $B(x):=-\nabla P(x)$ and $\sigma_{\eta,\delta,\omega_{0}}(x)=Q_{\eta,\delta}(x,\omega_{0}).$ If there is no ambiguity, we abbreviate it as $\sigma(x)=\sigma_{\eta,\delta,\omega_{0}}(x).$ Then, the SDE (\ref{sfde-0-0}) can be written as the following form:
\begin{align}\label{SDE}
\dif X_{t}=B(X_{t})\dif t+\sqrt{\eta}\sigma(X_{t})\dif B_{t},\quad { X_{0}}=x,
\end{align}
where $B_{t}$ is a standard $d-$dimensional Brownian motion. Moreover, the Assumptions \ref{assum1} and \ref{assum4} can be rewritten as the following form:

{\bf Assumption A} There exist constants $L\geq0,$ $A_{i}\geq0$ with $i=1,2,...,5,$ such that for any $x,y\in\mathbb{R}^{d}$ and unit vectors $v,v_{1},v_{2},v_{3} \in \R^{d},$ we have
\begin{align}\label{P1}
|\nabla_{v}B(x)|\leq L, \quad |\nabla_{v_2}\nabla_{v_1}B(x)| \ \le \ A_{1},
\end{align}
\begin{align}\label{Q1}
|\nabla_{v_3}\nabla_{v_2}\nabla_{v_1} B(x)| \ \le \ A_{2}, \quad \|\nabla_{v_{1}} \sigma(x)\|_{\rm HS}^{2} \ \le \ A_3
\end{align}
\begin{align}\label{Q2}
\|\nabla_{v_{1}} \nabla_{v_{2}}\sigma(x)\|_{\rm HS}^{2} \ \le \ A_4, \quad \|\nabla_{v_{1}} \nabla_{v_{2}} \nabla_{v_{3}}\sigma(x)\|_{\rm HS}^{2} \ \le \ A_5.
\end{align}

\begin{remark}
Since $S(x)=\sigma(x)\sigma(x)^{T}=\Sigma(x)+\frac{\delta}{\eta} I_{d}$ and $\Sigma(x)$ is semi-positive definite, for any $0\neq\xi\in\mathbb{R}^{d},$ we have
\begin{align}\label{A1}
\xi^{T}S(x)\xi\geq\frac{\delta}{\eta}\xi^{T}I_{d}\xi=\frac{\delta}{\eta}|\xi|^{2}.
\end{align}
\end{remark}

Under the {\bf Assumption A}, there exists a unique solution to the SDE (\ref{SDE}) and the SDE (\ref{SDE}) has a unique non-degenerate invariant measure (see, e.g., \cite{B-R,C1,E,K,P}).

Next, we briefly recall Bismut's approach to Malliavin calculus, which is crucial to prove Lemma \ref{mainlem1}.

We first consider the derivative of $X_{t}^{x}$ with respect to initial value $x$, which is called the Jacobian flow. Let $v\in\mathbb{R}^{d},$ the Jacobian flow $\nabla_{v}X_{t}^{x}$ along the direction $u$ is defined by
\begin{align*}
\nabla_{v}X_{t}^{x}=\lim_{\epsilon\rightarrow0}\frac{X_{t}^{x+\epsilon v}-X_{t}^{x}}{\epsilon},\quad t\geq0.
\end{align*}
The above limit exists and satisfies
\begin{align}\label{gradient SDE}
d\nabla_{v}X_{t}^{x}=\nabla B(X_{t}^{x})\nabla_{v}X_{t}^{x}\dif t+\sqrt{\eta}\nabla\sigma(X_{t}^{x})\nabla_{v}X_{t}^{x}\dif B_{t}, \quad \nabla_{v}X_{0}^{x}=v.
\end{align}
Then, we use the notations $J_{s,t}^{x}$ with $0\leq s\leq t<\infty$ for the stochastic flow between time $s$ and $t,$ that is,
\begin{align*}
\nabla_{v}X_{t}^{x}=J_{0,t}^{x}v.
\end{align*}
Note that we have the important cocycle property $J_{0,s}^{x}J_{s,t}^{x}=J_{0,t}^{x}$ for all $0\leq s\leq t<\infty$. For a more thorough discussion on stochastic flow, we refer the reader to \cite{K84,K97,B99,H-M} and the references therein.

For $v_{1},v_{2}\in\mathbb{R}^{d},$ we can define $\nabla_{v_{2}}\nabla_{v_{1}}X_{t}^{x},$ which satisfies
\begin{align}\label{secequation}
d\nabla_{v_{2}}\nabla_{v_{1}}X_{t}^{x}=&\nabla B(X_{t}^{x})\nabla_{v_{2}}\nabla_{v_{1}}X_{t}^{x}\dif t+\nabla^{2}B(X_{t}^{x})\nabla_{v_{2}}X_{t}^{x}\nabla_{v_{1}}X_{t}^{x}\dif t\nonumber\\
&+\sqrt{\eta}\nabla\sigma(X_{t}^{x})\nabla_{v_{2}}\nabla_{v_{1}}X_{t}^{x}\dif B_{t}+\sqrt{\eta}\nabla^{2}\sigma(X_{t}^{x})\nabla_{v_{2}}X_{t}^{x}\nabla_{v_{1}}X_{t}^{x}\dif B_{t},
\end{align}
with $\nabla_{v_{2}}\nabla_{v_{1}}X_{0}^{x}=0.$

For $v_{1},v_{2},v_{3}\in\mathbb{R}^{d},$ we can define $\nabla_{v_{3}}\nabla_{v_{2}}\nabla_{v_{1}}X_{t}^{x},$ which satisfies
\begin{align*}
d\nabla_{v_{3}}\nabla_{v_{2}}\nabla_{v_{1}}X_{t}^{x}=&\nabla^{2} B(X_{t}^{x})\nabla_{v_{3}}X_{t}^{x}\nabla_{v_{2}}\nabla_{v_{1}}X_{t}^{x}\dif t+\nabla B(X_{t}^{x})\nabla_{v_{3}}\nabla_{v_{2}}\nabla_{v_{1}}X_{t}^{x}\dif t\\
&+\nabla^{3}B(X_{t}^{x})\nabla_{v_{3}}X_{t}^{x}\nabla_{v_{2}}X_{t}^{x}\nabla_{v_{1}}X_{t}^{x}\dif t
+\nabla^{2}B(X_{t}^{x})\nabla_{v_{3}}\nabla_{v_{2}}X_{t}^{x}\nabla_{v_{1}}X_{t}^{x}\dif t\\
&+\nabla^{2}B(X_{t}^{x})\nabla_{v_{2}}X_{t}^{x}\nabla_{v_{3}}\nabla_{v_{1}}X_{t}^{x}\dif t
+\eta^{\frac{1}{2}}\nabla^{2}\sigma(X_{t}^{x})\nabla_{v_{3}}X_{t}^{x}\nabla_{v_{2}}\nabla_{v_{1}}X_{t}^{x}\dif B_{t}\\
&+\eta^{\frac{1}{2}}\big[\nabla\sigma(X_{t}^{x})\nabla_{v_{3}}\nabla_{v_{2}}\nabla_{v_{1}}X_{t}^{x}
+\nabla^{3}\sigma(X_{t}^{x})\nabla_{v_{3}}X_{t}^{x}\nabla_{v_{2}}X_{t}^{x}\nabla_{v_{1}}X_{t}^{x}\big]\dif B_{t}\\
&+\eta^{\frac{1}{2}}\big[\nabla^{2}\sigma(X_{t}^{x})\nabla_{v_{3}}\nabla_{v_{2}}X_{t}^{x}\nabla_{v_{1}}X_{t}^{x}
+\nabla^{2}\sigma(X_{t}^{x})\nabla_{v_{2}}X_{t}^{x}\nabla_{v_{3}}\nabla_{v_{1}}X_{t}^{x}\big]\dif B_{t},
\end{align*}
with $\nabla_{v_{3}}\nabla_{v_{2}}\nabla_{v_{1}}X_{0}^{x}=0.$

Then, we have the following estimate:

\begin{lemma}
For all  $x\in\mathbb{R}^{d}$ and $v,v_{1},v_{2},v_{3}\in\mathbb{R}^{d}$, as $\eta\in(0,1)$ and $t\in(0,1]$, we have
\begin{align}\label{grad3}
\mathbb{E}|\nabla_{v}X_{t}^{x}|^{8}\leq e^{8L+28A_3}|v|^{8},
\end{align}
\begin{align}\label{gradsec}
\mathbb{E}|\nabla_{v_{2}}\nabla_{v_{1}}X_{t}^{x}|^{4}
\leq 2(A_{1}+6A_{4})e^{12L+3A_{1}+12A_{3}+6A_{4}+28}|v_{1}|^{4}|v_{2}|^{4},
\end{align}
and
\begin{align}\label{gradthird}
\mathbb{E}|\nabla_{v_{3}}\nabla_{v_{2}}\nabla_{v_{1}}X_{t}^{x}|^{2}
\leq C_{A,L}|v_{1}|^{2}|v_{2}|^{2}|v_{3}|^{2}.
\end{align}
\end{lemma}
\begin{proof}
Recalling (\ref{gradient SDE}), by It\^{o}'s formula, (\ref{P1}), (\ref{Q1}) and (\ref{e:RelEigHS}), we have
\begin{align*}
\frac{\dif}{\dif s}\mathbb{E}|\nabla_{v}X_{s}^{x}|^{8}=&8\mathbb{E}[|\nabla_{v}X_{s}^{x}|^{6}\langle\nabla B(X_{s}^{x})\nabla_{v}X_{s}^{x},\nabla_{v}X_{s}^{x}\rangle]+4\eta\mathbb{E}[|\nabla_{v}X_{s}^{x}|^{6}\|\nabla\sigma(X_{s}^{x})\nabla_{v}X_{s}^{x}\|_{{\rm HS}}^{2}]\\
&+24\eta\mathbb{E}[|\nabla_{v}X_{s}^{x}|^{4}|\nabla\sigma(X_{s}^{x})\nabla_{v}X_{s}^{x}\nabla_{v}X_{s}^{x}|^{2}]\\
\leq&(8L+28\eta A_3)\mathbb{E}|\nabla_{v}X_{s}^{x}|^{8}.
\end{align*}
This inequality, together with $\nabla_{v}X_{0}^{x}=v$ and Gronwall inequality, implies
\begin{align*}
\mathbb{E}|\nabla_{v}X_{t}^{x}|^{8}\leq |v|^{8}e^{(8L+28\eta A_3)t}\leq e^{8L+28A_3}|v|^{8}.
\end{align*}

Using It\^{o}'s formula to $\varsigma_{1}(t)=\nabla_{v_{2}}\nabla_{v_{1}}X_{t}^{x}$, by \eqref{secequation}, the Cauchy-Schwarz inequality, and {\bf Assumption A}, we have
\begin{align*}
\frac{\dif}{\dif s}\mathbb{E}|\varsigma_{1}(s)|^{4}=&4\mathbb{E}\big[|\varsigma_{1}(s)|^{2}\langle\nabla B(X_{s}^{x})\varsigma_{1}(s)+\nabla^{2}B(X_{s}^{x})\nabla_{v_{2}}X_{s}^{x}\nabla_{v_{1}}X_{s}^{x},\varsigma_{1}(s)\rangle\big]\\
&+2\eta\mathbb{E}\big[|\varsigma_{1}(s)|^{2}\|\nabla\sigma(X_{s}^{x})\varsigma_{1}(s)+\nabla^{2}\sigma(X_{s}^{x})\nabla_{v_{2}}X_{s}^{x}\nabla_{v_{1}}X_{s}^{x}\|_{{\rm HS}}^{2}\big]\\
&+4\eta\mathbb{E}\big[|(\nabla\sigma(X_{s}^{x})\varsigma_{1}(s)+\nabla^{2}\sigma(X_{s}^{x})\nabla_{v_{2}}X_{s}^{x}\nabla_{v_{1}}X_{s}^{x})\varsigma_{1}(s)|^{2}\big]\\
\leq&4L\mathbb{E}|\varsigma_{1}(s)|^{4}+4A_{1}\mathbb{E}[|\nabla_{v_{2}}X_{s}^{x}||\nabla_{v_{1}}X_{s}^{x}||\varsigma_{1}(s)|^{3}]\\
&+12\eta\mathbb{E}\big[|\varsigma_{1}(s)|^{2}\big(\|\nabla\sigma(X_{s}^{x})\varsigma_{1}(s)\|_{{\rm HS}}^{2}+\|\nabla^{2}\sigma(X_{s}^{x})\nabla_{v_{2}}X_{s}^{x}\nabla_{v_{1}}X_{s}^{x}\|_{{\rm HS}}^{2}\big)\big]\\
\leq&4(L+3A_{3}\eta)\mathbb{E}|\varsigma_{1}(s)|^{4}+4A_{1}\mathbb{E}[|\nabla_{v_{2}}X_{s}^{x}||\nabla_{v_{1}}X_{s}^{x}||\varsigma_{1}(s)|^{3}]\\
&+12A_{4}\eta\mathbb{E}\big[|\varsigma_{1}(s)|^{2}|\nabla_{v_{2}}X_{s}^{x}|^{2}|\nabla_{v_{1}}X_{s}^{x}|^{2}\big],
\end{align*}
By Young's inequality and (\ref{grad3}), we further have
\begin{align*}
\frac{\dif}{\dif s}\mathbb{E}|\varsigma_{1}(s)|^{4}\leq&(4L+3A_{1}+12A_{3}+6A_{4})\mathbb{E}|\varsigma_{1}(s)|^{4}+(A_{1}+6A_{4})\mathbb{E}[|\nabla_{v_{2}}X_{s}^{x}|^{4}|\nabla_{v_{1}}X_{s}^{x}|^{4}]\\
\leq&(4L+3A_{1}+12A_{3}+6A_{4})\mathbb{E}|\varsigma_{1}(s)|^{4}+(A_{1}+6A_{4})e^{8L+28A_3}|v_{1}|^{4}|v_{2}|^{4}.
\end{align*}
This inequality, together with $\varsigma_{1}(0)=0$ and Gronwall's inequality, implies
\begin{align*}
\mathbb{E}|\varsigma_{1}(t)|^{4}\leq&(A_{1}+6A_{4})e^{8L+28A_3}|v_{1}|^{4}|v_{2}|^{4}t+(A_{1}+6A_{4})e^{8L+28A_3}|v_{1}|^{4}|v_{2}|^{4}\\
&\qquad\qquad\qquad\qquad\qquad(4L+3A_{1}+12A_{3}+6A_{4})\int_{0}^{t}se^{(4L+3A_{1}+12A_{3}+6A_{4})(t-s)}\dif s\\
\leq&2(A_{1}+6A_{4})e^{12L+3A_{1}+12A_{3}+6A_{4}+28A_3}|v_{1}|^{4}|v_{2}|^{4}.
\end{align*}

Writing $\varsigma_{2}(t)=\nabla_{v_{3}}\nabla_{v_{2}}\nabla_{v_{1}}X_{t}^{x},$ by It\^{o}'s formula, one has
\begin{align*}
&\frac{\dif}{\dif s}\mathbb{E}|\varsigma_{2}(s)|^{2}\\
=&2\mathbb{E}\big[\langle\nabla B(X_{s}^{x})\varsigma_{2}(s)+\nabla^{2}B(X_{s}^{x})\nabla_{v_{3}}X_{s}^{x}\nabla_{v_{2}}\nabla_{v_{1}}X_{s}^{x},\varsigma_{2}(s)\rangle\big]\\
&+2\mathbb{E}\big[\langle\nabla^{2}B(X_{s}^{x})\nabla_{v_{3}}\nabla_{v_{2}}X_{s}^{x}\nabla_{v_{1}}X_{s}^{x}
+\nabla^{2}B(X_{s}^{x})\nabla_{v_{2}}X_{s}^{x}\nabla_{v_{3}}\nabla_{v_{1}}X_{s}^{x}
,\varsigma_{2}(s)\rangle\big]\\
&+2\mathbb{E}\big[\langle\nabla^{3} B(X_{s}^{x})\nabla_{v_{3}}X_{s}^{x}\nabla_{v_{2}}X_{s}^{x}\nabla_{v_{1}}X_{s}^{x},\varsigma_{2}(s)\rangle\big]\\
&+\eta\mathbb{E}\big[\|\nabla\sigma(X_{s}^{x})\varsigma_{2}(s)+\nabla^{2}\sigma(X_{s}^{x})\big(\nabla_{v_{3}}X_{s}^{x}\nabla_{v_{2}}\nabla_{v_{1}}X_{s}^{x}
+\nabla_{v_{3}}\nabla_{v_{2}}X_{s}^{x}\nabla_{v_{1}}X_{s}^{x}\big)\\
&\qquad\quad+\nabla^{3}\sigma(X_{s}^{x})\nabla_{v_{3}}X_{s}^{x}\nabla_{v_{2}}X_{s}^{x}\nabla_{v_{1}}X_{s}^{x}
+\nabla^{2}\sigma(X_{s}^{x})\nabla_{v_{2}}X_{s}^{x}\nabla_{v_{3}}\nabla_{v_{1}}X_{s}^{x}\|_{{\rm HS}}^{2}\big].
\end{align*}
By (\ref{P1}), (\ref{Q1}) and (\ref{Q2}), we further have
\begin{align*}
\frac{\dif}{\dif s}\mathbb{E}|\varsigma_{2}(s)|^{2}\leq&2L\mathbb{E}|\varsigma_{2}(s)|^{2}+2A_{2}\mathbb{E}\big[|\nabla_{v_{3}}X_{s}^{x}||\nabla_{v_{2}}X_{s}^{x}|
|\nabla_{v_{1}}X_{s}^{x}||\varsigma_{2}(s)|\big]\\
&+2A_{1}\mathbb{E}\big[\big(|\nabla_{v_{3}}X_{s}^{x}||\nabla_{v_{2}}\nabla_{v_{1}}X_{s}^{x}|
+|\nabla_{v_{2}}X_{s}^{x}||\nabla_{v_{3}}\nabla_{v_{1}}X_{s}^{x}|\big)|\varsigma_{2}(s)|\big]\\
&+2A_{1}\mathbb{E}\big[|\nabla_{v_{1}}X_{s}^{x}||\nabla_{v_{3}}\nabla_{v_{2}}X_{s}^{x}||\varsigma_{2}(s)|\big]+5A_{3}\eta\mathbb{E}|\varsigma_{2}(s)|^{2}\\
&+5\eta\big(A_{5}\mathbb{E}\big[|\nabla_{v_{3}}X_{s}^{x}||\nabla_{v_{2}}X_{s}^{x}||\nabla_{v_{1}}X_{s}^{x}|\big]^{2}
+A_{4}\mathbb{E}\big[|\nabla_{v_{3}}X_{s}^{x}||\nabla_{v_{2}}\nabla_{v_{1}}X_{s}^{x}|\big]^{2}\big)\\
&+5A_{4}\eta\mathbb{E}\big[|\nabla_{v_{2}}X_{s}^{x}|^{2}|\nabla_{v_{3}}\nabla_{v_{1}}X_{s}^{x}|^{2}
+|\nabla_{v_{1}}X_{s}^{x}|^{2}|\nabla_{v_{3}}\nabla_{v_{2}}X_{s}^{x}|^{2}\big].
\end{align*}
Using Young's inequality, (\ref{grad3}) and (\ref{gradsec}), we can get
\begin{align*}
\frac{\dif}{\dif s}\mathbb{E}|\varsigma_{2}(s)|^{2}\leq&(2L+A_{2}+3A_{1}+5A_{3})\mathbb{E}|\varsigma_{2}(s)|^{2}+C_{A,L}|v_{1}|^{2}|v_{2}|^{2}|v_{3}|^{2}.
\end{align*}
This inequality, together with $\varsigma_{2}(0)=0$ and Gronwall inequality, implies
\begin{align*}
\mathbb{E}|\varsigma_{2}(t)|^{2}\leq&C_{A,L}|v_{1}|^{2}|v_{2}|^{2}|v_{3}|^{2}.
\end{align*}
\end{proof}

Next, we use Bismut's approach to Malliavin calculus for SDE (\ref{SDE})(\cite{Nor86}). Let $u\in L_{loc}^{2}([0,\infty)\times(\Omega,\mathcal{F},\mathbb{P});\mathbb{R}^{d}),$ i.e., $\mathbb{E}\int_{0}^{t}|u(s)|^{2}\dif s<\infty$ for all $t>0.$ Further assume that $u$ is adapted to the filtration $(\mathcal{F}_{t})_{t\geq0}$ with $\mathcal{F}_{t}:=\sigma(B_{s}:0\leq s\leq t);$ i.e., $u(t)$ is $\mathcal{F}_{t}$ measurable for $t\geq0.$ Define
\begin{align}\label{bismutf}
U_{t}=\int_{0}^{t}u(s)\dif s,\quad t\geq0.
\end{align}
For a $t>0,$ let $F_{t}:\mathcal{C}([0,t],\mathbb{R}^{d})\rightarrow\mathbb{R}$ be a $\mathcal{F}_{t}$ measurable map. If the following limit exists
\begin{align*}
D_{U}F_{t}(B)=\lim_{\epsilon\rightarrow0}\frac{F_{t}(B+\epsilon U)-F_{t}(B)}{\epsilon}
\end{align*}
in $L^{2}((\Omega,\mathcal{F},\mathbb{P});\mathbb{R}),$ then $F_{t}(B)$ is said to be {\it Malliavin differentiable} and $D_{U}F_{t}(B)$ is called the Malliavin derivative of $F_{t}(B)$ in the direction $u$.

Let $F_{t}(B)$ and $G_{t}(B)$ both be Malliavin differentiable, then the following product rule holds:
\begin{align}\label{chain}
D_{U}(F_{t}(B)G_{t}(B))=F_{t}(B)D_{U}G_{t}(B)+G_{t}(B)D_{U}F_{t}(B).
\end{align}
When
\begin{align*}
F_{t}(B)=\int_{0}^{t}\langle a(s),\dif B(s)\rangle,
\end{align*}
where $a(s)=(a_{1}(s),\cdots,a_{d}(s))$ is a deterministic function such that $\int_{0}^{t}|a(s)|^{2}\dif s<\infty$ for all $t>0,$ and it is easy to verify that
\begin{align*}
D_{U}F_{t}(B)=\int_{0}^{t}\langle a(s),u(s)\rangle \dif s.
\end{align*}
Moreover, if $a(s)$ is a stochastic process adapted to the filtration $\mathcal{F}_{s}$ such that $\mathbb{E}\int_{0}^{t}|a(s)|^{2}\dif s<\infty$ for all $t>0$, it is easy to check that
\begin{align}\label{chain2}
D_{U}F_{t}(B)=\int_{0}^{t}\langle a(s),u(s)\rangle \dif s+\int_{0}^{t}\langle D_{U}a(s),\dif B_{s}\rangle.
\end{align}

Then, we consider the following integration by parts formula, which is called Bismut's formula. For Malliavin differentiable $F_{t}(B)$ such that $F_{t}(B),$ $D_{U}F_{t}(B)\in L^{2}((\Omega,\mathcal{F},\mathbb{P});\mathbb{R}),$ we have
\begin{align}\label{bismut}
\mathbb{E}[D_{U}F_{t}(B)]=\mathbb{E}\big[F_{t}(B)\int_{0}^{t}\langle u(s),\dif B_{s}\rangle\big].
\end{align}

Let $\phi\in {\rm Lip}(1)$ and let $F_{t}(B)=(F_{t}^{1}(B),\cdots,F_{t}^{d}(B))$ be a $d$-dimensional Malliavin differentiable functional. The following chain rule holds:
\begin{align*}
D_{U}\phi(F_{t}(B))=\langle\nabla\phi(F_{t}(B)),D_{U}F_{t}(B)\rangle=\sum_{i=1}^{d}\partial_{i}\phi(F_{t}(B))D_{U}F_{t}^{i}(B).
\end{align*}

Now, we come back to the SDE (\ref{SDE}). Fixing $t\geq0$ and $x\in\mathbb{R}^{d},$ the solution $X_{t}^{x}$ is a $d$-dimensional functional of Brownian motion $(B_{s})_{0\leq s\leq t}.$

The following Malliavin derivative of $X_{t}^{x}$ along the direction $U$ exists in $L^{2}((\Omega,\mathcal{F},\mathbb{P});\mathbb{R}^{d})$ and is defined by
\begin{align*}
D_{U}X_{t}^{x}(B)=\lim_{\epsilon\rightarrow0}\frac{X_{t}^{x}(B+\epsilon U)-X_{t}^{x}(B)}{\epsilon}.
\end{align*}
We drop the $B$ in $D_{U}X_{t}^{x}(B)$ and write $D_{U}X_{t}^{x}=D_{U}X_{t}^{x}(B)$ for simplicity. It satisfies the equation
\begin{align*}
\dif D_{U}X_{t}^{x}=\nabla B(X_{t}^{x})D_{U}X_{t}^{x}\dif t+\eta^{\frac{1}{2}}\nabla\sigma(X_{t}^{x})D_{U}X_{t}^{x}\dif B_{t}+\eta^{\frac{1}{2}}\sigma(X_{t}^{x})u(t)\dif t, \quad D_{U}X_{0}^{x}=0,
\end{align*}
and the equation has a unique solution:
\begin{align*}
D_{U}X_{t}^{x}=\int_{0}^{t}J_{r,t}^{x}\eta^{\frac{1}{2}}\sigma(X_{r}^{x})u(r)\dif r.
\end{align*}
Noticing that $\nabla_{v}X_{t}^{x}=J_{0,t}^{x}v,$ if we take
\begin{align}\label{malliavin1}
u(s)=\frac{1}{t}\eta^{-\frac{1}{2}}\sigma(X_{s}^{x})^{-1}\nabla_{v}X_{s}^{x},\quad 0\leq s\leq t,
\end{align}
then (\ref{A1}) and (\ref{grad3}) imply $u\in L_{loc}^{2}([0,\infty)\times(\Omega,\mathcal{F},\mathbb{P});\mathbb{R}^{d}).$ Since $\nabla_{v}X_{r}^{x}=J_{0,r}^{x}v$ and $J_{0,r}^{x}J_{r,t}^{x}=J_{0,t}^{x},$ for all $0\leq r\leq t,$ we have
\begin{align}\label{malliavin2}
D_{U}X_{t}^{x}=\nabla_{v}X_{t}^{x}
\end{align}
and
\begin{align}\label{ma1}
D_{U}X_{s}^{x}=\frac{s}{t}\nabla_{v}X_{s}^{x},\quad 0\leq s\leq t.
\end{align}
Let $v_{1},v_{2}\in\mathbb{R}^{d},$ and define $u_{i}$ and $U_{i}$ as (\ref{malliavin1}) and (\ref{bismutf}), respectively, for $i=1,2.$ We can similarly define $D_{U_{2}}\nabla_{v_{1}}X_{s}^{x},$ which satisfies the following equation: for $s\in[0,t],$
\begin{align}\label{malsec}
&\dif D_{U_{2}}\nabla_{v_{1}}X_{s}^{x}\nonumber\\
=&\big[\nabla B(X_{s}^{x})D_{U_{2}}\nabla_{v_{1}}X_{s}^{x}+\nabla^{2}B(X_{s}^{x})D_{U_{2}}X_{s}^{x}\nabla_{v_{1}}X_{s}^{x}
+\eta^{\frac{1}{2}}\nabla\sigma(X_{s}^{x})\nabla_{v_{1}}X_{s}^{x}u_{2}(s)\big]\dif s\nonumber\\
&+\eta^{\frac{1}{2}}\big[\nabla\sigma(X_{s}^{x})D_{U_{2}}\nabla_{v_{1}}X_{s}^{x}
+\nabla^{2}\sigma(X_{s}^{x})D_{U_{2}}X_{s}^{x}\nabla_{v_{1}}X_{s}^{x}\big]\dif B_{s}\nonumber\\
=&\big[\nabla B(X_{s}^{x})D_{U_{2}}\nabla_{v_{1}}X_{s}^{x}+\frac{s}{t}\nabla^{2}B(X_{s}^{x})\nabla_{v_{2}}X_{s}^{x}\nabla_{v_{1}}X_{s}^{x}\nonumber\\
&\qquad\qquad\qquad\qquad\qquad+\frac{1}{t}\nabla\sigma(X_{s}^{x})\nabla_{v_{1}}X_{s}^{x}\sigma(X_{s}^{x})^{-1}\nabla_{v_{2}}X_{s}^{x}\big]\dif s\nonumber\\
&+\eta^{\frac{1}{2}}\big[\nabla\sigma(X_{s}^{x})D_{U_{2}}\nabla_{v_{1}}X_{s}^{x}
+\frac{s}{t}\nabla^{2}\sigma(X_{s}^{x})\nabla_{v_{2}}X_{s}^{x}\nabla_{v_{1}}X_{s}^{x}\big]\dif B_{s},
\end{align}
with $D_{U_{2}}\nabla_{v_{1}}X_{0}^{x}=0,$ where the second equality is by (\ref{malliavin1}) and (\ref{ma1}).

For further use, we define
\begin{align*}
\mathcal{I}_{v_{1}}^{x}(t):=\frac{1}{t}\int_{0}^{t}\langle \eta^{-\frac{1}{2}}\sigma(X_{s}^{x})^{-1}\nabla_{v_{1}}X_{s}^{x},\dif B_{s}\rangle
\end{align*}
and
\begin{align*}
\mathcal{R}_{v_{1},v_{2}}^{x}(t):=\nabla_{v_{2}}\nabla_{v_{1}}X_{t}^{x}-D_{U_{2}}\nabla_{v_{1}}X_{t}^{x}.
\end{align*}

Then, we have the following upper bounds on Malliavin derivatives.

\begin{lemma}
Let $v_{1},v_{2}\in\mathbb{R}^{d}$ and
\begin{align*}
U_{i,s}=\int_{0}^{s}u_{i}(r)\dif r, \quad 0\leq s\leq t,
\end{align*}
where $u_{i}(r)=\frac{1}{t}\eta^{-\frac{1}{2}}\sigma(X_{r}^{x})^{-1}\nabla_{v_{i}}X_{r}^{x}$ for $0\leq r\leq t$ and $i=1,2.$ Then, as $\eta\in(0,\delta]$ and $t\in(0,1)$, we have
\begin{align}\label{malgrad1}
\mathbb{E}|D_{U_{2}}\nabla_{v_{1}}X_{s}^{x}|^{2}\leq C_{A,L,d}(1+\frac{1}{t})|v_{1}|^{2}|v_{2}|^{2}
\end{align}
and
\begin{align}\label{malgrad2}
\mathbb{E}|D_{U_{2}}\nabla_{v_{1}}X_{s}^{x}|^{4}\leq C_{A,L,d}(1+\frac{1}{t^{3}})|v_{1}|^{4}|v_{2}|^{4}.
\end{align}
\end{lemma}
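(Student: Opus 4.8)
The plan is to establish (\ref{malgrad1}) and (\ref{malgrad2}) by the same kind of moment/energy estimates (It\^{o}'s formula followed by Gronwall's lemma) used earlier for the Jacobian flows, applied now to the linear stochastic equation (\ref{malsec}) satisfied by $\Xi(s):=D_{U_2}\nabla_{v_1}X_s^x$. Using the identity $D_{U_2}X_s^x=\tfrac st\nabla_{v_2}X_s^x$ of (\ref{ma1}), the drift of $\Xi$ is $\nabla B(X_s^x)\Xi(s)$ plus the two forcing terms $\tfrac st\nabla^2B(X_s^x)\nabla_{v_2}X_s^x\nabla_{v_1}X_s^x$ and $\tfrac1t\nabla\sigma(X_s^x)\nabla_{v_1}X_s^x\,\sigma(X_s^x)^{-1}\nabla_{v_2}X_s^x$, its diffusion coefficient is $\eta^{1/2}\big[\nabla\sigma(X_s^x)\Xi(s)+\tfrac st\nabla^2\sigma(X_s^x)\nabla_{v_2}X_s^x\nabla_{v_1}X_s^x\big]$, and $\Xi(0)=0$. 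The structural ingredients are: (i) the one-sided bound $\langle\nabla B(x)v,v\rangle\le-\gamma|v|^2+K$ from (\ref{diss}), which handles the linear part, $2\mathbb{E}\langle\nabla B(X_s^x)\Xi(s),\Xi(s)\rangle\le-2\gamma\,\mathbb{E}|\Xi(s)|^2+2K$; (ii) the uniform bounds $|\nabla^2B|\le A_1$, $\|\nabla_v\sigma\|_{\rm HS}^2\le A_3$, $\|\nabla_{v_1}\nabla_{v_2}\sigma\|_{\rm HS}^2\le A_4$ of \textbf{Assumption A}; (iii) the ellipticity bound (\ref{A1}), i.e. $\xi^{\rm T}\sigma(x)\sigma(x)^{\rm T}\xi\ge\tfrac\delta\eta|\xi|^2$, which gives $\|\sigma(x)^{-1}\|_{\rm op}\le(\eta/\delta)^{1/2}\le1$ since $\eta\le\delta$; and (iv) the already proved Jacobian-flow moment bound (\ref{grad3}). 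Throughout, $\tfrac st\le1$ because $0\le s\le t$ (the estimates below should, strictly speaking, be run up to a stopping time and then passed to the limit, exactly as in the earlier lemmas).

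First I would apply It\^{o}'s formula to $|\Xi(s)|^2$ and take expectations. The linear term contributes $-2\gamma\,\mathbb{E}|\Xi(s)|^2+2K$; each cross term between $\Xi(s)$ and a forcing term is bounded by Cauchy--Schwarz together with (ii)--(iii) and then split by Young's inequality into $\tfrac\gamma c\,\mathbb{E}|\Xi(s)|^2$ and a remainder, the one coming from the singular forcing being of the form $\tfrac{C_{A,\gamma}}{t^{2}}\,\mathbb{E}\big[|\nabla_{v_1}X_s^x|^2|\nabla_{v_2}X_s^x|^2\big]$; and the It\^{o} correction from the diffusion coefficient contributes $2\eta A_3\,\mathbb{E}|\Xi(s)|^2$ plus a bounded remainder. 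Taking $c$ large and using $\eta\le\gamma/(48A_3)$ to absorb the $\eta A_3$-term leaves a net dissipative coefficient $\le-\tfrac\gamma4$, so that
\[
\frac{\dif}{\dif s}\mathbb{E}|\Xi(s)|^2\ \le\ -\frac\gamma4\,\mathbb{E}|\Xi(s)|^2+2K+\frac{C_{A,\gamma,d}}{t^{2}}\,\mathbb{E}\big[|\nabla_{v_1}X_s^x|^2|\nabla_{v_2}X_s^x|^2\big].
\]
By Cauchy--Schwarz and (\ref{grad3}) (which gives $\mathbb{E}|\nabla_vX_s^x|^4\le e^{-\gamma s/2}+\sqrt{10}\,(K/\gamma)^2$) the last expectation is $\le C_\gamma(e^{-\gamma s/2}+K^2)$. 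Solving the differential inequality with $\Xi(0)=0$ yields $\mathbb{E}|\Xi(s)|^2\le\int_0^s e^{-\gamma(s-r)/4}\big[2K+\tfrac{C_{A,\gamma,d}}{t^{2}}(e^{-\gamma r/2}+K^2)\big]\dif r$, and here the point is to estimate $\int_0^s e^{-\gamma(s-r)/4}\dif r\le s\le t$ rather than by $4/\gamma$, so that the $1/t^2$-factor becomes $1/t$; this gives $\mathbb{E}|\Xi(s)|^2\le C_{A,\gamma,d}(1+\tfrac1t)[e^{-\gamma s/4}+K(K+1)]$, which is (\ref{malgrad1}). For (\ref{malgrad2}) I would repeat the argument with $|\Xi(s)|^4$ in place of $|\Xi(s)|^2$: It\^{o}'s formula produces the terms $4|\Xi|^2\langle\Xi,\text{drift}\rangle$, $2\eta|\Xi|^2\|\text{diff}\|_{\rm HS}^2$ and $4\eta|(\text{diff})\Xi|^2$, all handled as above with (\ref{grad3}) (the forcings now enter at the fourth power, where $|W|^4\le A_3^2|\nabla_{v_1}X_s^x|^4|\nabla_{v_2}X_s^x|^4$ is controlled by eighth moments from (\ref{grad3})), so that the singular forcing contributes a $1/t^4$-remainder; the same Gronwall estimate, again using $\int_0^s e^{-\gamma(s-r)/4}\dif r\le t$, turns it into $1/t^3$ and closes the estimate.

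The routine part is the bookkeeping: keeping track of the several Young splittings and of which previously proved moment bound controls each remainder. The main obstacle is the forcing term $\tfrac1t\nabla\sigma(X_s^x)\nabla_{v_1}X_s^x\,\sigma(X_s^x)^{-1}\nabla_{v_2}X_s^x$, which carries the singular factor $1/t$ coming from the normalisation of the Malliavin direction $u_2$ and is responsible for the $1/t$ (respectively $1/t^3$) prefactors: one must verify that after Young's inequality the coefficient in front of $\mathbb{E}|\Xi(s)|^2$ stays strictly dissipative uniformly in $t$ — which is exactly where the smallness constraint $\eta\le\min\{\delta,\gamma/(48A_3)\}$ is used — and that the inverse $\sigma(X_s^x)^{-1}$ is kept under control solely through the non-degeneracy bound (\ref{A1}), the one place where the ellipticity furnished by $\delta>0$ is essential.
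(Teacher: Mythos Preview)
Your proposal is correct and follows essentially the same route as the paper: It\^{o}'s formula applied to $|\Xi(s)|^2$ (resp.\ $|\Xi(s)|^4$), dissipativity (\ref{diss}) for the linear part, Young's inequality on the forcing cross terms, absorption of the $\eta A_3$ diffusion contribution via $\eta\le\gamma/(48A_3)$, (\ref{grad3}) to control the Jacobian moments, and finally Gronwall with the observation $\int_0^s e^{-\gamma(s-r)/4}\dif r\le s\le t$ on the singular $1/t^2$ (resp.\ $1/t^4$) term to gain one power of $t$. The only cosmetic gap is that your displayed differential inequality drops the non-singular remainder from the $\nabla^2B$ and $\nabla^2\sigma$ forcings (which you do mention in words); this term is what produces the ``$1$'' in the $(1+\tfrac1t)$ prefactor, so it should appear explicitly as $C_{A,\gamma}\,\mathbb{E}[|\nabla_{v_1}X_s^x|^2|\nabla_{v_2}X_s^x|^2]$ alongside the $C_{A,\gamma,d}/t^2$ piece.
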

\begin{proof}
Writing $\zeta(s)=D_{U_{2}}\nabla_{v_{1}}X_{s}^{x}$, by It\^{o}'s formula, Cauchy-Schwarz inequality, (\ref{P1}), (\ref{Q1}) and (\ref{Q2}), we have
\begin{align*}
\frac{\dif}{\dif r}\mathbb{E}|\zeta(r)|^{2}=&2\mathbb{E}[\langle\nabla B(X_{r}^{x})\zeta(r)+\frac{r}{t}\nabla^{2}B(X_{r}^x)\nabla_{v_{2}}X_{r}^{x}\nabla_{v_{1}}X_{r}^{x},\zeta_{r}\rangle]\\
&+2\mathbb{E}[\langle\frac{1}{t}\nabla\sigma(X_{r}^{x})\nabla_{v_{1}}X_{r}^{x}\sigma(X_{r}^{x})^{-1}\nabla_{v_{2}}X_{r}^{x},\zeta_{r}\rangle]\\
&+\eta\mathbb{E}[\|\nabla\sigma(X_{r}^{x})\zeta(r)+\frac{r}{t}\nabla^{2}\sigma(X_{r}^{x})\nabla_{v_{2}}X_{r}^{x}\nabla_{v_{1}}X_{r}^{x}\|_{\rm{HS}}^{2}]\\
\leq&2L\mathbb{E}|\zeta(r)|^{2}+2A_{1}\frac{r}{t}\mathbb{E}[|\zeta(r)||\nabla_{v_{1}}X_{r}^{x}||\nabla_{v_{2}}X_{r}^{x}|]\\
&+2\sqrt{A_{3}}\frac{1}{t}\mathbb{E}[|\zeta(r)||\nabla_{v_{1}}X_{r}^{x}||\nabla_{v_{2}}X_{r}^{x}|\|\sigma(X_{r}^{x})^{-1}\|_{\rm{HS}}]\\
&+2\eta\mathbb{E}\big[A_{3}|\zeta(r)|^{2}+A_{4}\frac{r^{2}}{t^{2}}|\nabla_{v_{1}}X_{r}^{x}|^{2}|\nabla_{v_{2}}X_{r}^{x}|^{2}\big].
\end{align*}
By Young's inequality, (\ref{A1}), Cauchy's inequality, (\ref{grad3}) and the assumption $\eta\leq \delta$, we further have
\begin{align*}
\frac{\dif}{\dif r}\mathbb{E}|\zeta(r)|^{2}\leq&(2L+A_{1}+3A_{3})\mathbb{E}|\zeta(r)|^{2}\\
&+\mathbb{E}\left[\left(A_{1}+2A_{4}+\frac{1}{t^{2}}\|\sigma(X_{r}^{x})^{-1}\|_{\rm{HS}}^{2}\right)|\nabla_{v_{1}}X_{r}^{x}|^{2}|\nabla_{v_{2}}X_{r}^{x}|^{2}\right]\\
\leq&(2L+A_{1}+3A_{3})\mathbb{E}|\zeta(r)|^{2}+\left(A_{1}+2A_{4}+\frac{\eta d}{\delta}\frac{1}{t^{2}}\right)e^{4L+14A_3}|v_{1}|^{2}|v_{2}|^{2}\\
\leq&(2L+A_{1}+3A_{3})\mathbb{E}|\zeta(r)|^{2}+\left(A_{1}+2A_{4}+\frac{d}{t^{2}}\right)e^{4L+14A_3}|v_{1}|^{2}|v_{2}|^{2}.
\end{align*}
This inequality, together with $\zeta(0)=0$ and Gronwall inequality, implies
\begin{align*}
\mathbb{E}|\zeta(s)|^{2}
\leq&\left(A_{1}+2A_{4}+\frac{d}{t^{2}}\right)e^{4L+14A_3}|v_{1}|^{2}|v_{2}|^{2}\left[s
+\int_{0}^{s}r(2L+A_{1}+3A_{3})e^{(2L+A_{1}+3A_{3})(s-r)}\dif r\right]\\
\leq&C_{A,L,d}(1+\frac{1}{t})|v_{1}|^{2}|v_{2}|^{2}.
\end{align*}

For \eqref{malgrad2}, It\^{o}'s formula, the Cauchy-Schwarz inequality and {\bf Assumption A} yield
\begin{align*}
\frac{\dif}{\dif r}\mathbb{E}|\zeta(r)|^{4}=&4\mathbb{E}[|\zeta(r)|^{2}\langle\nabla B(X_{r}^{x})\zeta(r)+\frac{r}{t}\nabla^{2}B(X_{r})\nabla_{v_{2}}X_{r}^{x}\nabla_{v_{1}}X_{r}^{x},\zeta_{r}\rangle]\\
&+4\mathbb{E}[|\zeta(r)|^{2}\langle\frac{1}{t}\nabla\sigma(X_{r}^{x})\nabla_{v_{1}}X_{r}^{x}\sigma(X_{r}^{x})^{-1}\nabla_{v_{2}}X_{r}^{x},\zeta_{r}\rangle]\\
&+2\eta\mathbb{E}[|\zeta(r)|^{2}\|\nabla\sigma(X_{r}^{x})\zeta(r)+\frac{r}{t}\nabla^{2}\sigma(X_{r}^{x})\nabla_{v_{2}}X_{r}^{x}\nabla_{v_{1}}X_{r}^{x}\|_{\rm{HS}}^{2}]\\
&+4\eta\mathbb{E}[|(\nabla\sigma(X_{r}^{x})\zeta(r)+\frac{r}{t}\nabla^{2}\sigma(X_{r}^{x})\nabla_{v_{2}}X_{r}^{x}\nabla_{v_{1}}X_{r}^{x})\zeta(r)|^{2}]\\
\leq&4L\mathbb{E}|\zeta(r)|^{4}+4A_{1}\mathbb{E}[|\zeta(r)|^{3}|\nabla_{v_{1}}X_{r}^{x}||\nabla_{v_{2}}X_{r}^{x}|]\\
&+4\sqrt{A_{3}}\frac{1}{t}\mathbb{E}[|\zeta(r)|^{3}|\nabla_{v_{1}}X_{r}^{x}||\nabla_{v_{2}}X_{r}^{x}|\|\sigma(X_{r}^{x})^{-1}\|_{\rm{HS}}]\\
&+12\eta\mathbb{E}\big[|\zeta(r)|^{2}\big(A_{3}|\zeta(r)|^{2}+A_{4}|\nabla_{v_{1}}X_{r}^{x}|^{2}|\nabla_{v_{2}}X_{r}^{x}|^{2}\big)\big].
\end{align*}
Then, by Young's inequality, (\ref{A1}), the assumption $\eta\leq\delta$ and (\ref{grad3}), we have
\begin{align*}
\frac{\dif}{\dif r}\mathbb{E}|\zeta(r)|^{4}\leq&(4L+3A_{1}+3A_{3}^{\frac{2}{3}}+12A_{3}+6A_{4})\mathbb{E}|\zeta(r)|^{4}\\
&+\mathbb{E}\left[\left(A_{1}+6A_{4}+\frac{1}{t^{4}}\|\sigma(X_{r}^{x})^{-1}\|_{\rm{HS}}^{4}\right)|\nabla_{v_{1}}X_{r}^{x}|^{4}|\nabla_{v_{2}}X_{r}^{x}|^{4}\right]\\
\leq&(4L+3A_{1}+3A_{3}^{\frac{2}{3}}+12A_{3}+6A_{4})\mathbb{E}|\zeta(r)|^{4}\\
&+\left(A_{1}+6A_{4}+\frac{d^{2}}{t^{4}}\right)e^{8L+28A_3}|v_{1}|^{4}|v_{2}|^{4}.
\end{align*}
This inequality, together with $\zeta(0)=0$ and Gronwall inequality, implies
\begin{align*}
\mathbb{E}|\zeta(s)|^{4}
\leq&C_{A,L,d}(1+\frac{1}{t^{3}})|v_{1}|^{4}|v_{2}|^{4}.
\end{align*}
\end{proof}

Based on the results above, we have the following two lemmas:

\begin{lemma}
Let $v_{1},v_{2}\in\mathbb{R}^{d}$ and $x\in\mathbb{R}^{d}.$ Then, for all $\eta\leq(0,\delta]$ and $t\in(0,1]$, we have
\begin{align}\label{1}
\mathbb{E}|\mathcal{I}_{v_{1}}^{x}(t)|^{4}\leq\frac{10}{t^{2}}\frac{d^{2}}{\delta^{2}}e^{4L+14A_3}|v_{1}|^{4},
\end{align}
\begin{align}\label{2}
\mathbb{E}|\nabla_{v_{2}}\mathcal{I}_{v_{1}}^{x}(t)|^{2}\leq \frac{C_{A,L,d}}{t\delta}|v_{1}|^{2}|v_{2}|^{2},
\end{align}
\begin{align}\label{3}
\mathbb{E}|D_{U_{2}}\mathcal{I}_{v_{1}}^{x}(t)|^{2}\leq C_{A,L,d}\frac{1}{t\delta}(1+\frac{1}{t\delta})|v_{1}|^{2}|v_{2}|^{2}.
\end{align}
\end{lemma}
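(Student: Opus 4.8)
The plan is to exploit that $\mathcal{I}_{v_1}^x(t)=\int_0^t\langle u_1(s),\dif B_s\rangle$ with $u_1(s)=\frac1t\eta^{-1/2}\sigma(X_s^x)^{-1}\nabla_{v_1}X_s^x$ (recall \eqref{malliavin1}) is itself an $(\mathcal F_t)$-martingale, so all three bounds reduce, via It\^o's isometry and the Burkholder--Davis--Gundy inequality, to space--time integrals of moments of the integrand and, after differentiating, of its spatial and Malliavin derivatives. The only pointwise ingredients are the non-degeneracy bound $\|\sigma(x)^{-1}\|_{{\rm HS}}^2\le \eta d/\delta$ for every $x$ (immediate from \eqref{A1}) and $\|\nabla_w\sigma(x)\|_{{\rm HS}}\le\sqrt{A_3}\,|w|$ (from \eqref{Q1}); together with the identity $\nabla(\sigma^{-1})=-\sigma^{-1}(\nabla\sigma)\sigma^{-1}$, which holds verbatim for $D_{U_2}$ in place of $\nabla$, these let me express $\nabla_{v_2}u_1$ and $D_{U_2}u_1$ through the Jacobian flows $\nabla_v X_s^x$, $\nabla_{v_2}\nabla_{v_1}X_s^x$ and the Malliavin derivative $D_{U_2}\nabla_{v_1}X_s^x$, whose moments are already controlled by \eqref{grad3}, \eqref{gradsec} and \eqref{malgrad1}. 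Crucially each of these moment bounds is, uniformly in $s$, of size $C(1+K^2)$ (times, for the Malliavin derivative, a harmless extra factor $1+t^{-1}$), so integrating over $[0,t]$ contributes a factor $t$ that absorbs one power of the prefactor $t^{-1}$.

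\textbf{Estimates \eqref{1} and \eqref{2}.} For \eqref{1} I apply the fourth-moment Burkholder--Davis--Gundy inequality, $\E|\mathcal{I}_{v_1}^x(t)|^4\le 36\,\E\big(\int_0^t|u_1(s)|^2\dif s\big)^2$; the pointwise bound $|u_1(s)|^2\le\frac{d}{t^2\delta}|\nabla_{v_1}X_s^x|^2$, followed by Cauchy--Schwarz to pass to $\frac{d^2}{t^3\delta^2}\int_0^t|\nabla_{v_1}X_s^x|^4\dif s$, and finally $\E|\nabla_{v_1}X_s^x|^4\le e^{-\gamma s/2}+\sqrt{10}(K/\gamma)^2$ (Jensen applied to \eqref{grad3}) integrated over $[0,t]$, yield exactly the asserted $\frac{1}{t^2}\frac{d^2}{\delta^2}[1+(K/\gamma)^2]$ up to the numerical constant. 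For \eqref{2} I differentiate under the stochastic integral, $\nabla_{v_2}\mathcal{I}_{v_1}^x(t)=\frac1t\int_0^t\langle\eta^{-1/2}\nabla_{v_2}\!\big(\sigma(X_s^x)^{-1}\nabla_{v_1}X_s^x\big),\dif B_s\rangle$, and expand $\nabla_{v_2}\big(\sigma(X_s^x)^{-1}\nabla_{v_1}X_s^x\big)=-\sigma^{-1}\big(\nabla_{\nabla_{v_2}X_s^x}\sigma\big)\sigma^{-1}\nabla_{v_1}X_s^x+\sigma^{-1}\nabla_{v_2}\nabla_{v_1}X_s^x$; by It\^o's isometry the first summand contributes at most $\frac{A_3\eta}{t^2\delta^2}\int_0^t\E\big(|\nabla_{v_2}X_s^x|^2|\nabla_{v_1}X_s^x|^2\big)\dif s$ and the second $\frac{d}{t^2\delta}\int_0^t\E|\nabla_{v_2}\nabla_{v_1}X_s^x|^2\dif s$, and since both integrands are $O(1+K^2)$ uniformly (via \eqref{grad3} and \eqref{gradsec}) and $\eta\le\delta$, both are $\le C_{A,\gamma,d}\frac{1+K^2}{t\delta}$.

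\textbf{Estimate \eqref{3} and the main difficulty.} Here I invoke \eqref{chain2} with $a=u_1$, $u=u_2$, which gives $D_{U_2}\mathcal{I}_{v_1}^x(t)=\int_0^t\langle u_1(s),u_2(s)\rangle\dif s+\int_0^t\langle D_{U_2}u_1(s),\dif B_s\rangle$. The Lebesgue term obeys $|\langle u_1(s),u_2(s)\rangle|\le\frac{1}{t^2\delta}|\nabla_{v_1}X_s^x||\nabla_{v_2}X_s^x|$, so by Cauchy--Schwarz and \eqref{grad3} its square has expectation $\le C(1+K^2)/(t^2\delta^2)$, producing the $\frac{1}{t^2\delta^2}$ shape. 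For the martingale term I use \eqref{ma1} to write $D_{U_2}\sigma(X_s^x)=\tfrac st\,\nabla_{\nabla_{v_2}X_s^x}\sigma(X_s^x)$, whence $D_{U_2}u_1(s)=\frac1t\eta^{-1/2}\big[-\sigma^{-1}\big(D_{U_2}\sigma(X_s^x)\big)\sigma^{-1}\nabla_{v_1}X_s^x+\sigma^{-1}D_{U_2}\nabla_{v_1}X_s^x\big]$, and It\^o's isometry together with \eqref{A1}, \eqref{Q1}, $\eta\le\delta$ reduces its second moment to $\frac{C_{A,\gamma,d}}{t^2\delta}\int_0^t\big(\E(|\nabla_{v_2}X_s^x|^2|\nabla_{v_1}X_s^x|^2)+\E|D_{U_2}\nabla_{v_1}X_s^x|^2\big)\dif s$, which by \eqref{grad3} and \eqref{malgrad1} is $\le C_{A,\gamma,d}(1+K^2)\frac1{t\delta}(1+\frac1t)$; combining with the Lebesgue term and using $\delta\le1$ gives $C_{A,\gamma,d}\frac{1}{t\delta}(1+\frac1{t\delta})(1+K^2)$. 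I expect the main obstacle to be organizational rather than analytic: one must justify differentiating the It\^o integral inside the Malliavin/Jacobian framework, keep the non-commutative factor $-\sigma^{-1}(\nabla\sigma)\sigma^{-1}$ (and its Malliavin analogue) straight, and carry the reductions $\eta\le\delta\le1$ carefully so that the powers of $t$ and $\delta$ land precisely as stated; no estimate beyond the lemmas already proved in this subsection is needed.
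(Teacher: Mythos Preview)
Your proposal is correct and follows essentially the same route as the paper: Burkholder--Davis--Gundy for \eqref{1}, differentiation under the It\^o integral with the identity $\nabla(\sigma^{-1})=-\sigma^{-1}(\nabla\sigma)\sigma^{-1}$ and It\^o isometry for \eqref{2}, and the decomposition \eqref{chain2} combined with \eqref{ma1} for \eqref{3}, all reduced to the moment bounds \eqref{grad3}, \eqref{gradsec}, \eqref{malgrad1} and the non-degeneracy estimate \eqref{A1}. The only differences are cosmetic (your BDG constant $36$ versus the paper's $4\sqrt{2}$ from \cite{Ren08}, and a slightly different bookkeeping of the dimension factors), and your explicit use of $\eta\le\delta\le 1$ is exactly what the paper does in the final simplification.
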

\begin{proof}
By Burkholder's inequality \cite[Theorem 2]{Ren08}, (\ref{A1}) and (\ref{grad3}), we have
\begin{align*}
\mathbb{E}|\mathcal{I}_{v_{1}}^{x}(t)|^{4}=&\mathbb{E}|\frac{1}{t}\int_{0}^{t}\langle\eta^{-\frac{1}{2}}\sigma(X_{s}^{x})^{-1}\nabla_{v_{1}}X_{s}^{x},\dif B_{s}\rangle|^{4}\\
\leq&\frac{4\sqrt{2}}{t^{4}}\eta^{-2}\mathbb{E}\big(\int_{0}^{t}|\sigma(X_{s}^{x})^{-1}\nabla_{v_{1}}X_{s}^{x}|^{2}\dif s\big)^{2}\\
\leq&\frac{10}{t^{4}}\frac{d^{2}}{\delta^{2}}\mathbb{E}\big(\int_{0}^{t}|\nabla_{v_{1}}X_{s}^{x}|^{2}\dif s\big)^{2}\\
\leq&\frac{10}{t^{3}}\frac{d^{2}}{\delta^{2}}\int_{0}^{t}\mathbb{E}|\nabla_{v_{1}}X_{s}^{x}|^{4}\dif s
\leq\frac{10}{t^{2}}\frac{d^{2}}{\delta^{2}}e^{4L+14A_3}|v_{1}|^{4}.
\end{align*}

For \eqref{2}, the definition of $\mathcal{I}_{v_{1}}^{x}(t)$ yields
\begin{align*}
\nabla_{v_{2}}\mathcal{I}_{v_{1}}^{x}(t)=&\frac{1}{t}\int_{0}^{t}\nabla_{v_{2}}\langle \eta^{-\frac{1}{2}}\sigma(X_{s}^{x})^{-1}\nabla_{v_{1}}X_{s}^{x},\dif B_{s}\rangle\\
=&\frac{1}{t}\eta^{-\frac{1}{2}}\int_{0}^{t}\langle -\sigma(X_{s}^{x})^{-1}\nabla_{v_{2}}\sigma(X_{s}^{x})\sigma(X_{s}^{x})^{-1}\nabla_{v_{1}}X_{s}^{x}+\sigma(X_{s}^{x})^{-1}\nabla_{v_{2}}\nabla_{v_{1}}X_{s}^{x},\dif B_{s}\rangle.
\end{align*}
Then, by It\^{o} isometry, (\ref{Q1}) and (\ref{A1}), we have
\begin{align*}
&\mathbb{E}|\nabla_{v_{2}}\mathcal{I}_{v_{1}}^{x}(t)|^{2}\\
=&\frac{1}{t^{2}}\eta^{-1}\int_{0}^{t}\mathbb{E}|\sigma(X_{s}^{x})^{-1}\nabla_{v_{2}}
\sigma(X_{s}^{x})\sigma(X_{s}^{x})^{-1}\nabla_{v_{1}}X_{s}^{x}-\sigma(X_{s}^{x})^{-1}\nabla_{v_{2}}\nabla_{v_{1}}X_{s}^{x}|^{2}\dif s\\
\leq&\frac{2}{t^{2}}\eta^{-1}\int_{0}^{t}\mathbb{E}[|\sigma(X_{s}^{x})^{-1}\nabla_{v_{2}}
\sigma(X_{s}^{x})\sigma(X_{s}^{x})^{-1}\nabla_{v_{1}}X_{s}^{x}|^{2}+|\sigma(X_{s}^{x})^{-1}\nabla_{v_{2}}\nabla_{v_{1}}X_{s}^{x}|^{2}]\dif s\\
\leq&\frac{2}{t^{2}}\eta^{-1}\int_{0}^{t}\mathbb{E}[A_{3}\|\sigma(X_{s}^{x})^{-1}\|_{{\rm HS}}^{4}|\nabla_{v_{2}}X_{s}^{x}|^{2}|\nabla_{v_{1}}X_{s}^{x}|^{2}
+\|\sigma(X_{s}^{x})^{-1}\|_{{\rm HS}}^{2}|\nabla_{v_{2}}\nabla_{v_{1}}X_{s}^{x}|^{2}]\dif s\\
\leq&\frac{2}{t^{2}}\eta^{-1}\int_{0}^{t}\mathbb{E}[A_{3}\frac{\eta^{2}d^{2}}{\delta^{2}}|\nabla_{v_{2}}X_{s}^{x}|^{2}|\nabla_{v_{1}}X_{s}^{x}|^{2}
+\frac{\eta d}{\delta}|\nabla_{v_{2}}\nabla_{v_{1}}X_{s}^{x}|^{2}]\dif s.
\end{align*}
By Cauchy-Schwarz inequality, (\ref{grad3}) and (\ref{gradsec}), we further have
\begin{align*}
\mathbb{E}|\nabla_{v_{2}}\mathcal{I}_{v_{1}}^{x}(t)|^{2}\leq&
C_{A,L,d}\frac{1}{t^{2}}\eta^{-1}\int_{0}^{t}\left[\frac{\eta^{2}}{\delta^{2}}+\frac{\eta}{\delta}\right]|v_{1}|^{2}|v_{2}|^{2}\dif s\leq \frac{C_{A,L,d}}{t\delta}|v_{1}|^{2}|v_{2}|^{2}.
\end{align*}

Recall (\ref{malliavin1}) and (\ref{ma1}), it is easy to see that $D_{U_{2}}\mathcal{I}_{v_{1}}^{x}(t)$ can be computed by (\ref{chain2}) as
\begin{align*}
&D_{U_{2}}\mathcal{I}_{v_{1}}^{x}(t)\\
=&\frac{1}{t}\int_{0}^{t}\langle \eta^{-\frac{1}{2}}\sigma(X_{s}^{x})^{-1}\nabla_{v_{1}}X_{s}^{x},u_{2}(s)\rangle \dif s+\frac{1}{t}\int_{0}^{t}\langle \eta^{-\frac{1}{2}}\sigma(X_{s}^{x})^{-1}D_{U_{2}}\nabla_{v_{1}}X_{s}^{x},\dif B_{s}\rangle\\
&-\frac{1}{t}\int_{0}^{t}\eta^{-\frac{1}{2}}\langle \sigma(X_{s}^{x})^{-1}\nabla\sigma(X_{s}^{x})\sigma(X_{s}^{x})^{-1}D_{U_{2}}X_{s}^{x}\nabla_{v_{1}}X_{s}^{x},\dif B_{s}\rangle\\
=&\frac{\eta^{-1}}{t^{2}}\int_{0}^{t}\langle \sigma(X_{s}^{x})^{-1}\nabla_{v_{1}}X_{s}^{x},\sigma(X_{s}^{x})^{-1}\nabla_{v_{2}}X_{s}^{x}\rangle \dif s+\frac{\eta^{-\frac{1}{2}}}{t}\int_{0}^{t}\langle \sigma(X_{s}^{x})^{-1}D_{U_{2}}\nabla_{v_{1}}X_{s}^{x},\dif B_{s}\rangle\\
&-\frac{1}{t}\eta^{-\frac{1}{2}}\int_{0}^{t}\langle \sigma(X_{s}^{x})^{-1}\nabla\sigma(X_{s}^{x})\sigma(X_{s}^{x})^{-1}\frac{s}{t}\nabla_{v_{2}}X_{s}^{x}\nabla_{v_{1}}X_{s}^{x},\dif B_{s}\rangle.
\end{align*}
Then, by the Cauchy-Schwarz inequality, It\^{o} isometry and (\ref{Q1}), we have
\begin{align*}
&\mathbb{E}|D_{U_{2}}\mathcal{I}_{v_{1}}^{x}(t)|^{2}\\
\leq&\frac{3\eta^{-2}}{t^{3}}\int_{0}^{t}\mathbb{E}|\langle \sigma(X_{s}^{x})^{-1}\nabla_{v_{1}}X_{s}^{x},\sigma(X_{s}^{x})^{-1}\nabla_{v_{2}}X_{s}^{x}\rangle|^{2} \dif s+\frac{3\eta^{-1}}{t^{2}}\int_{0}^{t}\mathbb{E}|\sigma(X_{s}^{x})^{-1}D_{U_{2}}\nabla_{v_{1}}X_{s}^{x}|^{2}\dif s\\
&+\frac{3\eta^{-1}}{t^{2}}\int_{0}^{t}\mathbb{E}|\sigma(X_{s}^{x})^{-1}\nabla\sigma(X_{s}^{x})\sigma(X_{s}^{x})^{-1}\frac{s}{t}\nabla_{v_{2}}X_{s}^{x}\nabla_{v_{1}}X_{s}^{x}|^{2}\dif s\\
\leq&\frac{3\eta^{-2}}{t^{3}}\int_{0}^{t}\mathbb{E}[\|\sigma(X_{s}^{x})^{-1}\|_{{\rm HS}}^{4}|\nabla_{v_{1}}X_{s}^{x}|^{2}|\nabla_{v_{2}}X_{s}^{x}|^{2}] \dif s+\frac{3\eta^{-1}}{t^{2}}\int_{0}^{t}\mathbb{E}[\|\sigma(X_{s}^{x})^{-1}\|_{{\rm HS}}^{2}|D_{U_{2}}\nabla_{v_{1}}X_{s}^{x}|^{2}]\dif s\\
&+A_{3}\frac{3\eta^{-1}}{t^{2}}\int_{0}^{t}\mathbb{E}\big[\|\sigma(X_{s}^{x})^{-1}\|_{{\rm HS}}^{4}|\nabla_{v_{2}}X_{s}^{x}|^{2}|\nabla_{v_{1}}X_{s}^{x}|^{2}\big]\dif s.
\end{align*}
It follows from (\ref{A1}), the Cauchy-Schwarz inequality, (\ref{grad3}) and (\ref{malgrad1}) that
\begin{align*}
&\mathbb{E}|D_{U_{2}}\mathcal{I}_{v_{1}}^{x}(t)|^{2}\\
\leq&\frac{3\eta^{-1}}{t^{2}}(\frac{\eta^{-1}}{t}+A_{3})\int_{0}^{t}\frac{\eta^{2}d^{2}}{\delta^{2}}\mathbb{E}[|\nabla_{v_{1}}X_{s}^{x}|^{2}|\nabla_{v_{2}}X_{s}^{x}|^{2}] \dif s+\frac{3\eta^{-1}}{t^{2}}\int_{0}^{t}\frac{\eta d}{\delta}\mathbb{E}|D_{U_{2}}\nabla_{v_{1}}X_{s}^{x}|^{2}\dif s\\
\leq&C_{A,L,d}\frac{1}{t\delta}(1+\frac{1}{t\delta})|v_{1}|^{2}|v_{2}|^{2}.
\end{align*}
\end{proof}

Furthermore, let $v_{1},v_{2},v_{3}\in\mathbb{R}^{d},$ and define $u_{i}$ and $U_{i}$ as (\ref{malliavin1}) and (\ref{bismutf}), respectively, for $i=1,2,3.$ From (\ref{malsec}), we can similarly define $\nabla_{v_{3}}D_{U_{2}}\nabla_{v_{1}}X_{s}^{x},$ which satisfies the following equation: for $s\in[0,t],$
\begin{align*}
&\dif \nabla_{v_{3}}D_{U_{2}}\nabla_{v_{1}}X_{s}^{x}\\
=&\big[\nabla^{2} B(X_{s}^{x})\nabla_{v_{3}}X_{s}^{x}D_{U_{2}}\nabla_{v_{1}}X_{s}^{x}+\nabla B(X_{s}^{x})\nabla_{v_{3}}D_{U_{2}}\nabla_{v_{1}}X_{s}^{x}\\
&+\frac{s}{t}\nabla^{3}B(X_{s}^{x})\nabla_{v_{3}}X_{s}^{x}\nabla_{v_{2}}X_{s}^{x}\nabla_{v_{1}}X_{s}^{x}
+\frac{s}{t}\nabla^{2}B(X_{s}^{x})\nabla_{v_{3}}\nabla_{v_{2}}X_{s}^{x}\nabla_{v_{1}}X_{s}^{x}\\
&+\frac{s}{t}\nabla^{2}B(X_{s}^{x})\nabla_{v_{2}}X_{s}^{x}\nabla_{v_{3}}\nabla_{v_{1}}X_{s}^{x}
+\frac{1}{t}\nabla^{2}\sigma(X_{s}^{x})\nabla_{v_{3}}X_{s}^{x}\nabla_{v_{1}}X_{s}^{x}\sigma(X_{s}^{x})^{-1}\nabla_{v_{2}}X_{s}^{x}\\
&+\frac{1}{t}\nabla\sigma(X_{s}^{x})\big(\nabla_{v_{3}}\nabla_{v_{1}}X_{s}^{x}\sigma(X_{s}^{x})^{-1}\nabla_{v_{2}}X_{s}^{x}
+\nabla_{v_{1}}X_{s}^{x}\sigma(X_{s}^{x})^{-1}\nabla_{v_{3}}\nabla_{v_{2}}X_{s}^{x}\big)\\
&-\frac{1}{t}\nabla\sigma(X_{s}^{x})\nabla_{v_{1}}X_{s}^{x}\sigma(X_{s}^{x})^{-1}\nabla\sigma(X_{s}^{x})\nabla_{v_{3}}X_{s}^{x}
\sigma(X_{s}^{x})^{-1}\nabla_{v_{2}}X_{s}^{x}\big]\dif s\\
&+\eta^{\frac{1}{2}}\big[\nabla^{2}\sigma(X_{s}^{x})\nabla_{v_{3}}X_{s}^{x}D_{U_{2}}\nabla_{v_{1}}X_{s}^{x}
+\nabla\sigma(X_{s}^{x})\nabla_{v_{3}}D_{U_{2}}\nabla_{v_{1}}X_{s}^{x}\\
&\qquad\quad+\frac{s}{t}\nabla^{3}\sigma(X_{s}^{x})\nabla_{v_{3}}X_{s}^{x}\nabla_{v_{2}}X_{s}^{x}\nabla_{v_{1}}X_{s}^{x}
+\frac{s}{t}\nabla^{2}\sigma(X_{s}^{x})\nabla_{v_{3}}\nabla_{v_{2}}X_{s}^{x}\nabla_{v_{1}}X_{s}^{x}\\
&\qquad\quad+\frac{s}{t}\nabla^{2}\sigma(X_{s}^{x})\nabla_{v_{2}}X_{s}^{x}\nabla_{v_{3}}\nabla_{v_{1}}X_{s}^{x}\big]\dif B_{s}
\end{align*}
with $\nabla_{v_{3}}D_{U_{2}}\nabla_{v_{1}}X_{0}^{x}=0$.

Then, we have the following upper bounds on Malliavin derivatives.

\begin{lemma}
Let $v_{i}\in\mathbb{R}^{d}$ for $i=1,2,3,$ and let
\begin{align*}
U_{i,s}=\int_{0}^{s}u_{i}(r)\dif r, \quad 0\leq s\leq t,
\end{align*}
where $u_{i}(r)=\frac{1}{t}\eta^{-\frac{1}{2}}\sigma(X_{r}^{x})^{-1}\nabla_{v_{i}}X_{r}^{x}$ for $0\leq r\leq t.$ Then, for all $\eta\in(0,\delta]$ and $t\in(0,1]$, we have
\begin{align}\label{malthird}
\mathbb{E}|\nabla_{v_{3}}D_{U_{2}}\nabla_{v_{1}}X_{s}^{x}|^{2}\leq C_{A,L,d}(1+\frac{1}{t})|v_{1}|^{2}|v_{2}|^{2}|v_{3}|^{2}.
\end{align}
\end{lemma}
\begin{proof}
Writing $\tau_{1}(s)=\nabla_{v_{3}}D_{U_{2}}\nabla_{v_{1}}X_{s}^{x},$ by It\^{o}'s formula, we have
\begin{align*}
&\frac{\dif}{\dif r}\mathbb{E}|\tau_{1}(r)|^{2}\\
=&2\mathbb{E}\langle\nabla^{2} B(X_{r}^{x})\nabla_{v_{3}}X_{r}^{x}D_{U_{2}}\nabla_{v_{1}}X_{r}^{x}+\nabla B(X_{r}^{x})\tau_{1}(r)\\
&\qquad\quad+\frac{r}{t}\nabla^{3}B(X_{r})\nabla_{v_{3}}X_{r}^{x}\nabla_{v_{2}}X_{r}^{x}\nabla_{v_{1}}X_{r}^{x}
+\frac{r}{t}\nabla^{2}B(X_{r})\nabla_{v_{3}}\nabla_{v_{2}}X_{r}^{x}\nabla_{v_{1}}X_{r}^{x}\\
&\qquad\quad+\frac{r}{t}\nabla^{2}B(X_{r})\nabla_{v_{2}}X_{r}^{x}\nabla_{v_{3}}\nabla_{v_{1}}X_{r}^{x}
+\frac{1}{t}\nabla^{2}\sigma(X_{r}^{x})\nabla_{v_{3}}X_{r}^{x}\nabla_{v_{1}}X_{r}^{x}\sigma(X_{r}^{x})^{-1}\nabla_{v_{2}}X_{r}^{x}\\
&\qquad\quad+\frac{1}{t}\nabla\sigma(X_{r}^{x})\big(\nabla_{v_{3}}\nabla_{v_{1}}X_{r}^{x}\sigma(X_{r}^{x})^{-1}\nabla_{v_{2}}X_{r}^{x}
+\nabla_{v_{1}}X_{r}^{x}\sigma(X_{r}^{x})^{-1}\nabla_{v_{3}}\nabla_{v_{2}}X_{r}^{x}\big)\\
&\qquad\quad-\frac{1}{t}\nabla\sigma(X_{r}^{x})\nabla_{v_{1}}X_{r}^{x}\sigma(X_{r}^{x})^{-1}\nabla\sigma(X_{r}^{x})\nabla_{v_{3}}X_{r}^{x}
\sigma(X_{r}^{x})^{-1}\nabla_{v_{2}}X_{r}^{x},\tau_{1}(r)\rangle\\
&+\eta\mathbb{E}\|\nabla^{2}\sigma(X_{r}^{x})\nabla_{v_{3}}X_{r}^{x}D_{U_{2}}\nabla_{v_{1}}X_{r}^{x}
+\nabla\sigma(X_{r}^{x})\tau_{1}(r)+\frac{r}{t}\nabla^{2}\sigma(X_{r}^{x})\nabla_{v_{2}}X_{r}^{x}\nabla_{v_{3}}\nabla_{v_{1}}X_{r}^{x}\\
&\qquad\qquad+\frac{r}{t}\nabla^{3}\sigma(X_{r}^{x})\nabla_{v_{3}}X_{r}^{x}\nabla_{v_{2}}X_{r}^{x}\nabla_{v_{1}}X_{r}^{x}
+\frac{r}{t}\nabla^{2}\sigma(X_{r}^{x})\nabla_{v_{3}}\nabla_{v_{2}}X_{r}^{x}\nabla_{v_{1}}X_{r}^{x}\|_{{\rm HS}}^{2}.
\end{align*}
It follows from {\bf Assumption A} and the Cauchy-Schwarz inequality that
\begin{align*}
&\frac{\dif}{\dif r}\mathbb{E}|\tau_{1}(r)|^{2}\\
\leq&2L\mathbb{E}|\tau_{1}(r)|^{2}+2A_{1}\mathbb{E}\big[|\nabla_{v_{3}}X_{r}^{x}||D_{U_{2}}\nabla_{v_{1}}X_{r}^{x}||\tau_{1}(r)|\big]\\
&+2\mathbb{E}\big[A_{2}|\nabla_{v_{3}}X_{r}^{x}||\nabla_{v_{2}}X_{r}^{x}||\nabla_{v_{1}}X_{r}^{x}||\tau_{1}(r)|
+A_{1}|\nabla_{v_{3}}\nabla_{v_{2}}X_{r}^{x}||\nabla_{v_{1}}X_{r}^{x}||\tau_{1}(r)|\big]\\
&+2\mathbb{E}\big[\big(A_{1}|\nabla_{v_{3}}\nabla_{v_{1}}X_{r}^{x}|
+\frac{\sqrt{A_{4}}}{t}|\nabla_{v_{3}}X_{r}^{x}||\nabla_{v_{1}}X_{r}^{x}|\|\sigma(X_{r}^{x})^{-1}\|_{{\rm HS}}\big)|\nabla_{v_{2}}X_{r}^{x}||\tau_{1}(r)|\big]\\
&+2\sqrt{A_{3}}\frac{1}{t}\mathbb{E}\big[\big(|\nabla_{v_{3}}\nabla_{v_{1}}X_{r}^{x}||\nabla_{v_{2}}X_{r}^{x}|
+|\nabla_{v_{1}}X_{r}^{x}||\nabla_{v_{3}}\nabla_{v_{2}}X_{r}^{x}|\big)\|\sigma(X_{r}^{x})^{-1}\|_{{\rm HS}}|\tau_{1}(r)|\big]\\
&+2A_{3}\frac{1}{t}\mathbb{E}\big[|\nabla_{v_{1}}X_{r}^{x}|\|\sigma(X_{r}^{x})^{-1}\|_{{\rm HS}}^{2}|\nabla_{v_{3}}X_{r}^{x}||\nabla_{v_{2}}X_{r}^{x}||\tau_{1}(r)|\big]\\
&+5A_{4}\eta\mathbb{E}\big[|\nabla_{v_{3}}X_{r}^{x}|^{2}|D_{U_{2}}\nabla_{v_{1}}X_{r}^{x}|^{2}
+|\nabla_{v_{2}}X_{r}^{x}|^{2}|\nabla_{v_{3}}\nabla_{v_{1}}X_{r}^{x}|^{2}+|\nabla_{v_{3}}\nabla_{v_{2}}X_{r}^{x}|^{2}|\nabla_{v_{1}}X_{r}^{x}|^{2}\big]\\
&+5\eta\mathbb{E}\big[A_{3}|\tau_{1}(r)|^{2}+A_{5}|\nabla_{v_{3}}X_{r}^{x}|^{2}|\nabla_{v_{2}}X_{r}^{x}|^{2}|\nabla_{v_{1}}X_{r}^{x}|^{2}\big].
\end{align*}
Then, by (\ref{A1}) and Young's inequality, we have
\begin{align*}
&\frac{\dif}{\dif r}\mathbb{E}|\tau_{1}(r)|^{2}\\
\leq&(2L+3A_{1}+A_{2}+8A_{3}+A_{4})\mathbb{E}|\tau_{1}(r)|^{2}\\
&+\left(A_{2}+\frac{1}{t^{2}}\frac{\eta d}{\delta}+\frac{A_{3}}{t^{2}}\frac{\eta^{2}d^{2}}{\delta^{2}}+5A_{5}\right)\mathbb{E}[|\nabla_{v_{1}}X_{r}^{x}|^{2}|\nabla_{v_{2}}X_{r}^{x}|^{2}|\nabla_{v_{3}}X_{r}^{x}|^{2}]\\
&+A_{1}\mathbb{E}\left[|\nabla_{v_{3}}X_{r}^{x}|^{2}|D_{U_{2}}\nabla_{v_{1}}X_{r}^{x}|^{2}+|\nabla_{v_{3}}\nabla_{v_{2}}X_{r}^{x}|^{2}|\nabla_{v_{1}}X_{r}^{x}|^{2}+|\nabla_{v_{3}}\nabla_{v_{1}}X_{r}^{x}|^{2}|\nabla_{v_{2}}X_{r}^{x}|^{2}\right]\\
&+\frac{1}{t^{2}}\frac{\eta d}{\delta}\mathbb{E}\big[|\nabla_{v_{3}}\nabla_{v_{1}}X_{r}^{x}|^{2}|\nabla_{v_{2}}X_{r}^{x}|^{2}
+|\nabla_{v_{1}}X_{r}^{x}|^{2}|\nabla_{v_{3}}\nabla_{v_{2}}X_{r}^{x}|^{2}\big]\\
&+5A_{4}\eta\mathbb{E}\big[|\nabla_{v_{3}}X_{r}^{x}|^{2}|D_{U_{2}}\nabla_{v_{1}}X_{r}^{x}|^{2}
+|\nabla_{v_{2}}X_{r}^{x}|^{2}|\nabla_{v_{3}}\nabla_{v_{1}}X_{r}^{x}|^{2}+|\nabla_{v_{3}}\nabla_{v_{2}}X_{r}^{x}|^{2}|\nabla_{v_{1}}X_{r}^{x}|^{2}\big],
\end{align*}
By the Cauchy-Schwarz inequality, (\ref{grad3}), (\ref{malgrad2}) and (\ref{gradsec}), we further have
\begin{align*}
\frac{\dif}{\dif r}\mathbb{E}|\tau_{1}(r)|^{2}\leq&(2L+3A_{1}+A_{2}+8A_{3}+A_{4})\mathbb{E}|\tau_{1}(r)|^{2}+C_{A,L,d}(1+\frac{1}{t^{2}})|v_{1}|^{2}|v_{2}|^{2}|v_{3}|^{2}.
\end{align*}
This inequality, together with $\tau_{1}(0)=0$ and Gronwall inequality, implies
\begin{align*}
\mathbb{E}|\tau_{1}(s)|^{2}\leq&C_{A,L,d}(1+\frac{1}{t})|v_{1}|^{2}|v_{2}|^{2}|v_{3}|^{2}.
\end{align*}
\end{proof}

Let $v_{1},v_{2},v_{3}\in\mathbb{R}^{d},$ and define $u_{i}$ and $U_{i}$ as (\ref{malliavin1}) and (\ref{bismutf}), respectively, for $i=1,2,3.$ From (\ref{secequation}), we can similarly define $D_{U_{3}}\nabla_{v_{2}}\nabla_{v_{1}}X_{s}^{x},$ which satisfies the following equation: for $s\in[0,t],$
\begin{align*}
&\dif D_{U_{3}}\nabla_{v_{2}}\nabla_{v_{1}}X_{s}^{x}\\
=&\big[\nabla B(X_{s}^{x})D_{U_{3}}\nabla_{v_{2}}\nabla_{v_{1}}X_{s}^{x}+\nabla^{3}B(X_{s}^{x})D_{U_{3}}X_{s}^{x}\nabla_{v_{2}}X_{s}^{x}\nabla_{v_{1}}X_{s}^{x}\big]\dif s\\
&+\nabla^{2}B(X_{s}^{x})\big[D_{U_{3}}X_{s}^{x}\nabla_{v_{2}}\nabla_{v_{1}}X_{s}^{x}
+D_{U_{3}}\nabla_{v_{2}}X_{s}^{x}\nabla_{v_{1}}X_{s}^{x}+\nabla_{v_{2}}X_{s}^{x}D_{U_{3}}\nabla_{v_{1}}X_{s}^{x}\big]\dif s\\
&+\eta^{\frac{1}{2}}\big[\nabla\sigma(X_{s}^{x})\nabla_{v_{2}}\nabla_{v_{1}}X_{s}^{x}
+\nabla^{2}\sigma(X_{s}^{x})\nabla_{v_{2}}X_{s}^{x}\nabla_{v_{1}}X_{s}^{x}\big]u_{3}(s)\dif s\\ &+\eta^{\frac{1}{2}}\big[\nabla\sigma(X_{s}^{x})D_{U_{3}}\nabla_{v_{2}}\nabla_{v_{1}}X_{s}^{x}
+\nabla^{3}\sigma(X_{s}^{x})D_{U_{3}}X_{s}^{x}\nabla_{v_{2}}X_{s}^{x}\nabla_{v_{1}}X_{s}^{x}\big]\dif B_{s}\\
&+\eta^{\frac{1}{2}}\nabla^{2}\sigma(X_{s}^{x})\big[D_{U_{3}}X_{s}^{x}\nabla_{v_{2}}\nabla_{v_{1}}X_{s}^{x}
+D_{U_{3}}\nabla_{v_{2}}X_{s}^{x}\nabla_{v_{1}}X_{s}^{x}+\nabla_{v_{2}}X_{s}^{x}D_{U_{3}}\nabla_{v_{1}}X_{s}^{x}\big]\dif B_{s}\\
=&\big[\nabla B(X_{s}^{x})D_{U_{3}}\nabla_{v_{2}}\nabla_{v_{1}}X_{s}^{x}+\nabla^{3}B(X_{s}^{x})\frac{s}{t}\nabla_{v_{3}}X_{s}^{x}\nabla_{v_{2}}X_{s}^{x}\nabla_{v_{1}}X_{s}^{x}\big]\dif s\\
&+\nabla^{2}B(X_{s}^{x})\big[\frac{s}{t}\nabla_{v_{3}}X_{s}^{x}\nabla_{v_{2}}\nabla_{v_{1}}X_{s}^{x}
+D_{U_{3}}\nabla_{v_{2}}X_{s}^{x}\nabla_{v_{1}}X_{s}^{x}+\nabla_{v_{2}}X_{s}^{x}D_{U_{3}}\nabla_{v_{1}}X_{s}^{x}\big]\dif s\\
&+\big[\nabla\sigma(X_{s}^{x})\nabla_{v_{2}}\nabla_{v_{1}}X_{s}^{x}
+\nabla^{2}\sigma(X_{s}^{x})\nabla_{v_{2}}X_{s}^{x}\nabla_{v_{1}}X_{s}^{x}\big]\frac{1}{t}\sigma(X_{s}^{x})^{-1}\nabla_{v_{3}}X_{s}^{x}\dif s\\ &+\eta^{\frac{1}{2}}\big[\nabla\sigma(X_{s}^{x})D_{U_{3}}\nabla_{v_{2}}\nabla_{v_{1}}X_{s}^{x}
+\nabla^{3}\sigma(X_{s}^{x})\frac{s}{t}\nabla_{v_{3}}X_{s}^{x}\nabla_{v_{2}}X_{s}^{x}\nabla_{v_{1}}X_{s}^{x}\big]\dif B_{s}\\
&+\eta^{\frac{1}{2}}\nabla^{2}\sigma(X_{s}^{x})\big[\frac{s}{t}\nabla_{v_{3}}X_{s}^{x}\nabla_{v_{2}}\nabla_{v_{1}}X_{s}^{x}
+D_{U_{3}}\nabla_{v_{2}}X_{s}^{x}\nabla_{v_{1}}X_{s}^{x}+\nabla_{v_{2}}X_{s}^{x}D_{U_{3}}\nabla_{v_{1}}X_{s}^{x}\big]\dif B_{s}
\end{align*}
with $D_{U_{3}}\nabla_{v_{2}}\nabla_{v_{1}}X_{0}^{x}=0$, where the second equality is by (\ref{malliavin1}) and (\ref{ma1}).

Then, we have the following upper bounds on Malliavin derivatives.

\begin{lemma}
Let $v_{i}\in\mathbb{R}^{d}$ for $i=1,2,3,$ and let
\begin{align*}
U_{i,s}=\int_{0}^{s}u_{i}(r)\dif r, \quad 0\leq s\leq t,
\end{align*}
where $u_{i}(r)=\frac{1}{t}\sigma(X_{r}^{x})^{-1}\nabla_{v_{i}}X_{r}^{x}$ for $0\leq r\leq t.$ Then, for all $\eta\in(0,\delta]$ and $t\in(0,1]$, we have
\begin{align}\label{malthird2}
\mathbb{E}|D_{U_{3}}\nabla_{v_{2}}\nabla_{v_{1}}X_{s}^{x}|^{2}\leq C_{A,L,d}(1+\frac{1}{t})|v_{1}|^{2}|v_{2}|^{2}|v_{3}|^{3}.
\end{align}
\end{lemma}
\begin{proof}
Writing $\tau_{2}(s)=D_{U_{3}}\nabla_{v_{2}}\nabla_{v_{1}}X_{s}^{x},$ by It\^{o}'s formula, we have
\begin{align*}
&\frac{\dif}{\dif r}\mathbb{E}|\tau_{2}(r)|^{2}\\
=&2\mathbb{E}\langle\nabla B(X_{r}^{x})\tau_{2}(r)+\nabla^{3}B(X_{r}^{x})\frac{r}{t}\nabla_{v_{3}}X_{r}^{x}\nabla_{v_{2}}X_{r}^{x}\nabla_{v_{1}}X_{r}^{x},
\tau_{2}(r)\rangle\\
&+2\mathbb{E}\langle\nabla^{2}B(X_{r}^{x})\big[\frac{r}{t}\nabla_{v_{3}}X_{r}^{x}\nabla_{v_{2}}\nabla_{v_{1}}X_{r}^{x}
+D_{U_{3}}\nabla_{v_{2}}X_{r}^{x}\nabla_{v_{1}}X_{r}^{x}+\nabla_{v_{2}}X_{r}^{x}D_{U_{3}}\nabla_{v_{1}}X_{r}^{x}\big],\tau_{2}(r)\rangle\\
&+2\mathbb{E}\langle\big[\nabla\sigma(X_{r}^{x})\nabla_{v_{2}}\nabla_{v_{1}}X_{r}^{x}
+\nabla^{2}\sigma(X_{r}^{x})\nabla_{v_{2}}X_{r}^{x}\nabla_{v_{1}}X_{r}^{x}\big]\frac{1}{t}\sigma(X_{r}^{x})^{-1}\nabla_{v_{3}}X_{r}^{x},
\tau_{2}(r)\rangle\\
&+\eta\mathbb{E}\|\nabla\sigma(X_{r}^{x})\tau_{2}(r)
+\nabla^{3}\sigma(X_{r}^{x})\frac{r}{t}\nabla_{v_{3}}X_{r}^{x}\nabla_{v_{2}}X_{r}^{x}\nabla_{v_{1}}X_{r}^{x}\\
&\qquad\qquad+\nabla^{2}\sigma(X_{r}^{x})\big[\frac{r}{t}\nabla_{v_{3}}X_{r}^{x}\nabla_{v_{2}}\nabla_{v_{1}}X_{r}^{x}
+D_{U_{3}}\nabla_{v_{2}}X_{r}^{x}\nabla_{v_{1}}X_{r}^{x}+\nabla_{v_{2}}X_{r}^{x}D_{U_{3}}\nabla_{v_{1}}X_{r}^{x}\big]\|_{{\rm HS}}^{2}.
\end{align*}
The Cauchy-Schwarz inequality and {\bf Assumption A} imply
\begin{align*}
&\frac{\dif}{\dif r}\mathbb{E}|\tau_{2}(r)|^{2}\\
\leq&2L\mathbb{E}|\tau_{2}(r)|^{2}+2A_{2}\mathbb{E}\big[|\nabla_{v_{3}}X_{r}^{x}||\nabla_{v_{2}}X_{r}^{x}||\nabla_{v_{1}}X_{r}^{x}||\tau_{2}(r)|\big]\\
&+2A_{1}\mathbb{E}\big[\big(|\nabla_{v_{3}}X_{r}^{x}||\nabla_{v_{2}}\nabla_{v_{1}}X_{r}^{x}|
+|D_{U_{3}}\nabla_{v_{2}}X_{r}^{x}||\nabla_{v_{1}}X_{r}^{x}|+|\nabla_{v_{2}}X_{r}^{x}||D_{U_{3}}\nabla_{v_{1}}X_{r}^{x}|\big)|\tau_{2}(r)|\big]\\
&+2\mathbb{E}\big[\big(\sqrt{A_{3}}|\nabla_{v_{2}}\nabla_{v_{1}}X_{r}^{x}|
+\sqrt{A_{4}}|\nabla_{v_{2}}X_{r}^{x}||\nabla_{v_{1}}X_{r}^{x}|\big)\frac{1}{t}\|\sigma(X_{r}^{x})^{-1}\|_{{\rm HS}}|\nabla_{v_{3}}X_{r}^{x}||\tau_{2}(r)|\big]\\
&+5\eta\mathbb{E}\big[A_{3}|\tau_{2}(r)|^{2}
+A_{5}|\nabla_{v_{3}}X_{r}^{x}|^{2}|\nabla_{v_{2}}X_{r}^{x}|^{2}|\nabla_{v_{1}}X_{r}^{x}|^{2}\big]\\
&+5A_{4}\eta\mathbb{E}\big[|\nabla_{v_{3}}X_{r}^{x}|^{2}|\nabla_{v_{2}}\nabla_{v_{1}}X_{r}^{x}|^{2}
+|D_{U_{3}}\nabla_{v_{2}}X_{r}^{x}|^{2}|\nabla_{v_{1}}X_{r}^{x}|^{2}+|\nabla_{v_{2}}X_{r}^{x}|^{2}|D_{U_{3}}\nabla_{v_{1}}X_{r}^{x}|^{2}\big].
\end{align*}
Then, by (\ref{A1}) and Young's inequality, we have
\begin{align*}
&\frac{\dif}{\dif r}\mathbb{E}|\tau_{2}(r)|^{2}\\
\leq&(2L+3A_{1}+A_{2}+6A_{3}+A_{4})\mathbb{E}|\tau_{2}(r)|^{2}+(A_{2}+5A_{5})\mathbb{E}[|\nabla_{v_{3}}X_{r}^{x}|^{2}|\nabla_{v_{2}}X_{r}^{x}|^{2}|\nabla_{v_{1}}X_{r}^{x}|^{2}]\\
&+A_{1}\mathbb{E}\big[|\nabla_{v_{3}}X_{r}^{x}|^{2}|\nabla_{v_{2}}\nabla_{v_{1}}X_{r}^{x}|^{2}
+|D_{U_{3}}\nabla_{v_{2}}X_{r}^{x}|^{2}|\nabla_{v_{1}}X_{r}^{x}|^{2}+|\nabla_{v_{2}}X_{r}^{x}|^{2}|D_{U_{3}}\nabla_{v_{1}}X_{r}^{x}|^{2}\big]\\
&+\mathbb{E}\big[\big(|\nabla_{v_{2}}\nabla_{v_{1}}X_{r}^{x}|^{2}
+|\nabla_{v_{2}}X_{r}^{x}|^{2}|\nabla_{v_{1}}X_{r}^{x}|^{2}\big)\frac{1}{t^{2}}\frac{\eta d}{\delta}|\nabla_{v_{3}}X_{r}^{x}|^{2}\big]\\
&+5A_{4}\eta\mathbb{E}\big[|\nabla_{v_{3}}X_{r}^{x}|^{2}|\nabla_{v_{2}}\nabla_{v_{1}}X_{r}^{x}|^{2}
+|D_{U_{3}}\nabla_{v_{2}}X_{r}^{x}|^{2}|\nabla_{v_{1}}X_{r}^{x}|^{2}+|\nabla_{v_{2}}X_{r}^{x}|^{2}|D_{U_{3}}\nabla_{v_{1}}X_{r}^{x}|^{2}\big].
\end{align*}
It follows the Cauchy-Schwarz inequality, (\ref{grad3}), (\ref{gradsec}) and (\ref{malgrad2}) that
\begin{align*}
\frac{\dif}{\dif r}\mathbb{E}|\tau_{2}(r)|^{2}
\leq&(2L+3A_{1}+A_{2}+6A_{3}+A_{4})\mathbb{E}|\tau_{2}(r)|^{2}+C_{A,L,d}(1+\frac{1}{t^{2}})|v_{1}|^{2}|v_{2}|^{2}|v_{3}|^{3}.
\end{align*}
This inequality, together with $\tau_{2}(0)=0$ and Gronwall inequality, implies
\begin{align*}
\mathbb{E}|\tau_{2}(s)|^{2}\leq&C_{A,L,d}(1+\frac{1}{t})|v_{1}|^{2}|v_{2}|^{2}|v_{3}|^{3}.
\end{align*}
\end{proof}

Let $v_{1},v_{2},v_{3}\in\mathbb{R}^{d},$ and define $u_{i}$ and $U_{i}$ as (\ref{malliavin1}) and (\ref{bismutf}), respectively, for $i=1,2,3.$ From (\ref{malsec}), we can similarly define $D_{U_{3}}D_{U_{2}}\nabla_{v_{1}}X_{s}^{x},$ which satisfies the following equation: for $s\in[0,t],$
\begin{align*}
&\dif D_{U_{3}}D_{U_{2}}\nabla_{v_{1}}X_{s}^{x}\\
=&\big[\frac{s}{t}\nabla^{3}B(X_{s}^{x})D_{U_{3}}X_{s}^{x}\nabla_{v_{2}}X_{s}^{x}\nabla_{v_{1}}X_{s}^{x}
+\nabla B(X_{s}^{x})D_{U_{3}}D_{U_{2}}\nabla_{v_{1}}X_{s}^{x}\big]\dif s\\
&+\nabla^{2}B(X_{s}^{x})\big[D_{U_{3}}X_{s}^{x}D_{U_{2}}\nabla_{v_{1}}X_{s}^{x}
+\frac{s}{t}D_{U_{3}}\nabla_{v_{2}}X_{s}^{x}\nabla_{v_{1}}X_{s}^{x}
+\frac{s}{t}\nabla_{v_{2}}X_{s}^{x}D_{U_{3}}\nabla_{v_{1}}X_{s}^{x}\big]\dif s\\
&+\frac{1}{t}\big[\nabla^{2}\sigma(X_{s}^{x})D_{U_{3}}X_{s}^{x}\nabla_{v_{1}}X_{s}^{x}\sigma(X_{s}^{x})^{-1}\nabla_{v_{2}}X_{s}^{x}
+\nabla\sigma(X_{s}^{x})D_{U_{3}}\nabla_{v_{1}}X_{s}^{x}\sigma(X_{s}^{x})^{-1}\nabla_{v_{2}}X_{s}^{x}\big]\dif s\\
&+\frac{1}{t}\nabla\sigma(X_{s}^{x})\nabla_{v_{1}}X_{s}^{x}\sigma(X_{s}^{x})^{-1}D_{U_{3}}\nabla_{v_{2}}X_{s}^{x}\dif s\\
&-\frac{1}{t}\nabla\sigma(X_{s}^{x})\nabla_{v_{1}}X_{s}^{x}\sigma(X_{s}^{x})^{-1}\nabla\sigma(X_{s}^{x})D_{U_{3}}X_{s}^{x}\sigma(X_{s}^{x})^{-1}\nabla_{v_{2}}X_{s}^{x}\dif s\\
&+\eta^{\frac{1}{2}}\big[\nabla\sigma(X_{s}^{x})D_{U_{2}}\nabla_{v_{1}}X_{s}^{x}+\frac{s}{t}\nabla^{2}\sigma(X_{s}^{x})
\nabla_{v_{2}}X_{s}^{x}\nabla_{v_{1}}X_{s}^{x}\big]u_{3}(s)\dif s\\
&+\eta^{\frac{1}{2}}\nabla^{2}\sigma(X_{s}^{x})\big[D_{U_{3}}X_{s}^{x}D_{U_{2}}\nabla_{v_{1}}X_{s}^{x}
+\frac{s}{t}D_{U_{3}}\nabla_{v_{2}}X_{s}^{x}\nabla_{v_{1}}X_{s}^{x}
+\frac{s}{t}\nabla_{v_{2}}X_{s}^{x}D_{U_{3}}\nabla_{v_{1}}X_{s}^{x}\big]\dif B_{s}\\
&+\eta^{\frac{1}{2}}\big[\frac{s}{t}\nabla^{3}\sigma(X_{s}^{x})D_{U_{3}}X_{s}^{x}\nabla_{v_{2}}X_{s}^{x}\nabla_{v_{1}}X_{s}^{x}
+\nabla\sigma(X_{s}^{x})D_{U_{3}}D_{U_{2}}\nabla_{v_{1}}X_{s}^{x}\big]\dif B_{s}
\end{align*}
with $D_{U_{3}}D_{U_{2}}\nabla_{v_{1}}X_{0}^{x}=0$. Furthermore, by (\ref{malliavin1}) and (\ref{ma1}), we have
\begin{align*}
&\dif D_{U_{3}}D_{U_{2}}\nabla_{v_{1}}X_{s}^{x}\\
=&\big[\frac{s^{2}}{t^{2}}\nabla^{3}B(X_{s}^{x})\nabla_{v_{3}}X_{s}^{x}\nabla_{v_{2}}X_{s}^{x}\nabla_{v_{1}}X_{s}^{x}
+\nabla B(X_{s}^{x})D_{U_{3}}D_{U_{2}}\nabla_{v_{1}}X_{s}^{x}\big]\dif s\\
&+\frac{s}{t}\nabla^{2}B(X_{s}^{x})\big[\nabla_{v_{3}}X_{s}^{x}D_{U_{2}}\nabla_{v_{1}}X_{s}^{x}
+D_{U_{3}}\nabla_{v_{2}}X_{s}^{x}\nabla_{v_{1}}X_{s}^{x}
+\nabla_{v_{2}}X_{s}^{x}D_{U_{3}}\nabla_{v_{1}}X_{s}^{x}\big]\dif s\\
&+\frac{1}{t}\big[\nabla^{2}\sigma(X_{s}^{x})\frac{s}{t}\nabla_{v_{3}}X_{s}^{x}\nabla_{v_{1}}X_{s}^{x}\sigma(X_{s}^{x})^{-1}\nabla_{v_{2}}X_{s}^{x}
+\nabla\sigma(X_{s}^{x})D_{U_{3}}\nabla_{v_{1}}X_{s}^{x}\sigma(X_{s}^{x})^{-1}\nabla_{v_{2}}X_{s}^{x}\big]\dif s\\
&+\frac{1}{t}\nabla\sigma(X_{s}^{x})\nabla_{v_{1}}X_{s}^{x}\sigma(X_{s}^{x})^{-1}D_{U_{3}}\nabla_{v_{2}}X_{s}^{x}\dif s\\
&-\frac{s}{t^{2}}\nabla\sigma(X_{s}^{x})\nabla_{v_{1}}X_{s}^{x}\sigma(X_{s}^{x})^{-1}\nabla\sigma(X_{s}^{x})\nabla_{v_{3}}X_{s}^{x}\sigma(X_{s}^{x})^{-1}\nabla_{v_{2}}X_{s}^{x}\dif s\\
&+\big[\nabla\sigma(X_{s}^{x})D_{U_{2}}\nabla_{v_{1}}X_{s}^{x}+\frac{s}{t}\nabla^{2}\sigma(X_{s}^{x})
\nabla_{v_{2}}X_{s}^{x}\nabla_{v_{1}}X_{s}^{x}\big]\frac{1}{t}\sigma(X_{s}^{x})^{-1}\nabla_{v_{3}}X_{s}^{x}\dif s\\
&+\eta^{\frac{1}{2}}\frac{s}{t}\nabla^{2}\sigma(X_{s}^{x})\big[\nabla_{v_{3}}X_{s}^{x}D_{U_{2}}\nabla_{v_{1}}X_{s}^{x}
+D_{U_{3}}\nabla_{v_{2}}X_{s}^{x}\nabla_{v_{1}}X_{s}^{x}
+\nabla_{v_{2}}X_{s}^{x}D_{U_{3}}\nabla_{v_{1}}X_{s}^{x}\big]\dif B_{s}\\
&+\eta^{\frac{1}{2}}\big[\frac{s^{2}}{t^{2}}\nabla^{3}\sigma(X_{s}^{x})\nabla_{v_{3}}X_{s}^{x}\nabla_{v_{2}}X_{s}^{x}\nabla_{v_{1}}X_{s}^{x}
+\nabla\sigma(X_{s}^{x})D_{U_{3}}D_{U_{2}}\nabla_{v_{1}}X_{s}^{x}\big]\dif B_{s}.
\end{align*}

Then, we have the following upper bounds on Malliavin derivatives.

\begin{lemma}
Let $v_{i}\in\mathbb{R}^{d}$ for $i=1,2,3,$ and let
\begin{align*}
U_{i,s}=\int_{0}^{s}u_{i}(r)\dif r, \quad 0\leq s\leq t,
\end{align*}
where $u_{i}(r)=\frac{1}{t}\sigma(X_{r}^{x})^{-1}\nabla_{v_{i}}X_{r}^{x}$ for $0\leq r\leq t.$ Then, for all $\eta\in(0,\delta]$ and $t\in(0,1]$, we have
\begin{align}\label{malthird3}
\mathbb{E}|D_{U_{3}}D_{U_{2}}\nabla_{v_{1}}X_{s}^{x}|^{2}\leq C_{A,L,d}(1+\frac{1}{t^\frac{5}{2}})|v_{1}|^{2}|v_{2}|^{2}|v_{3}|^{2}.
\end{align}
\end{lemma}
\begin{proof}
Writing $\tau_{3}(s)=D_{U_{3}}D_{U_{2}}\nabla_{v_{1}}X_{s}^{x},$ by It\^{o}'s formula, we have
\begin{align*}
&\frac{\dif}{\dif r}\mathbb{E}|\tau_{3}(r)|^{2}\\
=&2\mathbb{E}\langle\frac{r^{2}}{t^{2}}\nabla^{3}B(X_{r}^x)\nabla_{v_{3}}X_{r}^{x}\nabla_{v_{2}}X_{r}^{x}\nabla_{v_{1}}X_{r}^{x}
+\nabla B(X_{r}^{x})\tau_{3}(r),\tau_{3}(r)\rangle\\
&+2\mathbb{E}\langle\frac{r}{t}\nabla^{2}B(X_{r}^{x})\big[\nabla_{v_{3}}X_{r}^{x}D_{U_{2}}\nabla_{v_{1}}X_{r}^{x}
+D_{U_{3}}\nabla_{v_{2}}X_{r}^{x}\nabla_{v_{1}}X_{r}^{x}
+\nabla_{v_{2}}X_{r}^{x}D_{U_{3}}\nabla_{v_{1}}X_{r}^{x}\big],\tau_{3}(r)\rangle\\
&+2\mathbb{E}\langle\frac{1}{t}\big[\nabla^{2}\sigma(X_{r}^{x})\frac{r}{t}\nabla_{v_{3}}X_{r}^{x}\nabla_{v_{1}}X_{r}^{x}
+\nabla\sigma(X_{r}^{x})D_{U_{3}}\nabla_{v_{1}}X_{r}^{x}\big]\sigma(X_{r}^{x})^{-1}\nabla_{v_{2}}X_{r}^{x},\tau_{3}(r)\rangle\\
&+2\mathbb{E}\langle\frac{1}{t}\nabla\sigma(X_{r}^{x})\nabla_{v_{1}}X_{r}^{x}\sigma(X_{r}^{x})^{-1}D_{U_{3}}\nabla_{v_{2}}X_{r}^{x},
\tau_{3}(r)\rangle\\
&-2\mathbb{E}\langle\frac{r}{t^{2}}\nabla\sigma(X_{r}^{x})\nabla_{v_{1}}X_{r}^{x}\sigma(X_{r}^{x})^{-1}\nabla\sigma(X_{r}^{x})
\nabla_{v_{3}}X_{r}^{x}\sigma(X_{r}^{x})^{-1}\nabla_{v_{2}}X_{r}^{x},\tau_{3}(r)\rangle\\
&+2\mathbb{E}\langle\big[\nabla\sigma(X_{r}^{x})D_{U_{2}}\nabla_{v_{1}}X_{r}^{x}+\frac{r}{t}\nabla^{2}\sigma(X_{r}^{x})
\nabla_{v_{2}}X_{r}^{x}\nabla_{v_{1}}X_{r}^{x}\big]\frac{1}{t}\sigma(X_{r}^{x})^{-1}\nabla_{v_{3}}X_{r}^{x},\tau_{3}(r)\rangle\\
&+\eta\mathbb{E}\|\frac{r}{t}\nabla^{2}\sigma(X_{r}^{x})\big[\nabla_{v_{3}}X_{r}^{x}D_{U_{2}}\nabla_{v_{1}}X_{r}^{x}
+D_{U_{3}}\nabla_{v_{2}}X_{r}^{x}\nabla_{v_{1}}X_{r}^{x}
+\nabla_{v_{2}}X_{r}^{x}D_{U_{3}}\nabla_{v_{1}}X_{r}^{x}\big]\\
&\qquad\qquad\quad+\frac{r^{2}}{t^{2}}\nabla^{3}\sigma(X_{r}^{x})\nabla_{v_{3}}X_{r}^{x}\nabla_{v_{2}}X_{r}^{x}\nabla_{v_{1}}X_{r}^{x}
+\nabla\sigma(X_{r}^{x})\tau_{3}(r)\|_{{\rm HS}}^{2},
\end{align*}
Following  the Cauchy-Schwarz inequality and {\bf Assumption A }, one has
\begin{align*}
&\frac{\dif}{\dif r}\mathbb{E}|\tau_{3}(r)|^{2}\\
\leq&2L\mathbb{E}|\tau_{3}(r)|^{2}
+2A_{2}\mathbb{E}\big[|\nabla_{v_{3}}X_{r}^{x}||\nabla_{v_{2}}X_{r}^{x}||\nabla_{v_{1}}X_{r}^{x}||\tau_{3}(r)|\big]\\
&+2A_{1}\mathbb{E}\big[\big(|\nabla_{v_{3}}X_{r}^{x}||D_{U_{2}}\nabla_{v_{1}}X_{r}^{x}|
+|D_{U_{3}}\nabla_{v_{2}}X_{r}^{x}||\nabla_{v_{1}}X_{r}^{x}|
+|\nabla_{v_{2}}X_{r}^{x}||D_{U_{3}}\nabla_{v_{1}}X_{r}^{x}|\big)|\tau_{3}(r)|\big]\\
&+\frac{2}{t}\mathbb{E}\big[\big(\sqrt{A_{4}}|\nabla_{v_{3}}X_{r}^{x}||\nabla_{v_{1}}X_{r}^{x}|
+\sqrt{A_{3}}|D_{U_{3}}\nabla_{v_{1}}X_{r}^{x}|\big)\|\sigma(X_{r}^{x})^{-1}\|_{{\rm HS}}|\nabla_{v_{2}}X_{r}^{x}||\tau_{3}(r)|\big]\\
&+\frac{2}{t}\sqrt{A_{3}}\mathbb{E}\big[|\nabla_{v_{1}}X_{r}^{x}|\|\sigma(X_{r}^{x})^{-1}\|_{{\rm HS}}|D_{U_{3}}\nabla_{v_{2}}X_{r}^{x}||\tau_{3}(r)|\big]\\
&+\frac{2}{t}A_{3}\mathbb{E}\big[|\nabla_{v_{1}}X_{r}^{x}|\|\sigma(X_{r}^{x})^{-1}\|_{{\rm HS}}^{2}|\nabla_{v_{3}}X_{r}^{x}||\nabla_{v_{2}}X_{r}^{x}||\tau_{3}(r)|\big]\\
&+\frac{2}{t}\mathbb{E}\big[\big(\sqrt{A_{3}}|D_{U_{2}}\nabla_{v_{1}}X_{r}^{x}|+\sqrt{A_{4}}|\nabla_{v_{2}}X_{r}^{x}||\nabla_{v_{1}}X_{r}^{x}|\big)
\|\sigma(X_{r}^{x})^{-1}\|_{{\rm HS}}|\nabla_{v_{3}}X_{r}^{x}||\tau_{3}(r)|\big]\\
&+5A_{4}\eta\mathbb{E}\big[|\nabla_{v_{3}}X_{r}^{x}|^{2}|D_{U_{2}}\nabla_{v_{1}}X_{r}^{x}|^{2}
+|D_{U_{3}}\nabla_{v_{2}}X_{r}^{x}|^{2}|\nabla_{v_{1}}X_{r}^{x}|^{2}
+|\nabla_{v_{2}}X_{r}^{x}|^{2}|D_{U_{3}}\nabla_{v_{1}}X_{r}^{x}|^{2}\big]\\
&+5\eta\mathbb{E}\big[A_{5}|\nabla_{v_{3}}X_{r}^{x}|^{2}|\nabla_{v_{2}}X_{r}^{x}|^{2}|\nabla_{v_{1}}X_{r}^{x}|^{2}
+A_{3}|\tau_{3}(r)|^{2}\big].
\end{align*}
By (\ref{A1}) and Young's inequality, we have
\begin{align*}
&\frac{\dif}{\dif r}\mathbb{E}|\tau_{3}(r)|^{2}\\
\leq&(2L+3A_{1}+A_{2}+9A_{3}+2A_{4})\mathbb{E}|\tau_{3}(r)|^{2}
+(A_{2}+5A_{5})\mathbb{E}[|\nabla_{v_{1}}X_{r}^{x}|^{2}|\nabla_{v_{2}}X_{r}^{x}|^{2}|\nabla_{v_{3}}X_{r}^{x}|^{2}]\\
&+A_{1}\mathbb{E}\big[|\nabla_{v_{3}}X_{r}^{x}|^{2}|D_{U_{2}}\nabla_{v_{1}}X_{r}^{x}|^{2}
+|D_{U_{3}}\nabla_{v_{2}}X_{r}^{x}|^{2}|\nabla_{v_{1}}X_{r}^{x}|^{2}
+|\nabla_{v_{2}}X_{r}^{x}|^{2}|D_{U_{3}}\nabla_{v_{1}}X_{r}^{x}|^{2}\big]\\
&+\frac{1}{t^{2}}\mathbb{E}\big[\big(|\nabla_{v_{3}}X_{r}^{x}|^{2}|\nabla_{v_{1}}X_{r}^{x}|^{2}
+|D_{U_{3}}\nabla_{v_{1}}X_{r}^{x}|^{2}\big)\frac{\eta d}{\delta}|\nabla_{v_{2}}X_{r}^{x}|^{2}\big]\\
&+\frac{1}{t^{2}}\mathbb{E}\big[|\nabla_{v_{1}}X_{r}^{x}|^{2}|D_{U_{3}}\nabla_{v_{2}}X_{r}^{x}|^{2}\frac{\eta d}{\delta}+A_{3}\frac{\eta^{2}d^{2}}{\delta^{2}}|\nabla_{v_{1}}X_{r}^{x}|^{2}|\nabla_{v_{2}}X_{r}^{x}|^{2}|\nabla_{v_{3}}X_{r}^{x}|^{2}\big]\\
&+\frac{1}{t^{2}}\mathbb{E}\big[\big(|D_{U_{2}}\nabla_{v_{1}}X_{r}^{x}|^{2}+|\nabla_{v_{2}}X_{r}^{x}|^{2}|\nabla_{v_{1}}X_{r}^{x}|^{2}\big)
|\nabla_{v_{3}}X_{r}^{x}|^{2}\frac{\eta d}{\delta}\big]\\
&+5A_{4}\eta\mathbb{E}\big[|\nabla_{v_{3}}X_{r}^{x}|^{2}|D_{U_{2}}\nabla_{v_{1}}X_{r}^{x}|^{2}
+|D_{U_{3}}\nabla_{v_{2}}X_{r}^{x}|^{2}|\nabla_{v_{1}}X_{r}^{x}|^{2}
+|\nabla_{v_{2}}X_{r}^{x}|^{2}|D_{U_{3}}\nabla_{v_{1}}X_{r}^{x}|^{2}\big].
\end{align*}
Then, by the Cauchy-Schwarz inequality, (\ref{grad3}) and (\ref{malgrad2}), we have
\begin{align*}
\frac{\dif}{\dif r}\mathbb{E}|\tau_{3}(r)|^{2}
\leq&(2L+3A_{1}+A_{2}+9A_{3}+2A_{4})\mathbb{E}|\tau_{3}(r)|^{2}+C_{A,L,d}(1+\frac{1}{t^{\frac{7}{2}}})|v_{1}|^{2}|v_{2}|^{2}|v_{3}|^{2}.
\end{align*}
This inequality, together with $\tau_{3}(0)=0$ and Gronwall inequality, implies
\begin{align*}
\mathbb{E}|\tau_{3}(s)|^{2}\leq&C_{A,L,d}(1+\frac{1}{t^\frac{5}{2}})|v_{1}|^{2}|v_{2}|^{2}|v_{3}|^{2}.
\end{align*}
\end{proof}

With the above results, we have the following estimates:

\begin{lemma}
Let $v_{1},v_{2},v_{3}\in\mathbb{R}^{d}$ and $x\in\mathbb{R}^{d}.$ Then, as $\eta\in(0,\delta]$ and $t\in(0,1]$, we have
\begin{align}\label{third1}
\mathbb{E}|\mathcal{R}_{v_{1},v_{2}}^{x}(t)|^{2}\leq C_{A,L,d}(1+\frac{1}{t})|v_{1}|^{2}|v_{2}|^{2},
\end{align}
\begin{align}\label{third2}
\mathbb{E}|\nabla_{v_{3}}\mathcal{R}_{v_{1},v_{2}}^{x}(t)|^{2}
\leq C_{A,L,d}(1+\frac{1}{t})|v_{1}|^{2}|v_{2}|^{2}|v_{3}|^{2}
\end{align}
and
\begin{align}\label{third3}
\mathbb{E}|D_{U_{3}}\mathcal{R}_{v_{1},v_{2}}^{x}(t)|^{2}
\leq C_{A,L,d}(1+\frac{1}{t^\frac{5}{2}})|v_{1}|^{2}|v_{2}|^{2}|v_{3}|^{2}.
\end{align}
\end{lemma}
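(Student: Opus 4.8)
The plan is to read off all three bounds directly from the definition $\mathcal{R}_{v_1,v_2}^x(t)=\nabla_{v_2}\nabla_{v_1}X_t^x-D_{U_2}\nabla_{v_1}X_t^x$, using the linearity of both $\nabla_{v_3}$ (the derivative in the initial datum) and $D_{U_3}$ (the Malliavin derivative in direction $U_3$) to distribute them over this difference, and then invoking the moment estimates already proved in the appendix. First I would record that, under the standing convention $\eta\le\delta$ together with $\eta\le\gamma/(48A_3)\le 17\gamma/(60A_3)$, all of \eqref{gradsec}, \eqref{gradthird}, \eqref{malgrad1}, \eqref{malgrad2}, \eqref{malthird}, \eqref{malthird2}, \eqref{malthird3} are applicable.

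For \eqref{third1}, using $|a-b|^2\le 2|a|^2+2|b|^2$,
\begin{align*}
\mathbb{E}|\mathcal{R}_{v_1,v_2}^x(t)|^2\ \le\ 2\,\mathbb{E}|\nabla_{v_2}\nabla_{v_1}X_t^x|^2+2\,\mathbb{E}|D_{U_2}\nabla_{v_1}X_t^x|^2 .
\end{align*}
The second term is bounded by \eqref{malgrad1}. For the first, the Cauchy--Schwarz inequality and \eqref{gradsec} give $\mathbb{E}|\nabla_{v_2}\nabla_{v_1}X_t^x|^2\le(\mathbb{E}|\nabla_{v_2}\nabla_{v_1}X_t^x|^4)^{1/2}\le C_{A,\gamma}\big(e^{-\gamma t/4}+K(1+K)\big)$, where we used $\sqrt{a+b}\le\sqrt a+\sqrt b$ and $(1+K^2)^{1/2}\le 1+K$. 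Summing yields the stated bound, with prefactor $1+\tfrac1t$ inherited from \eqref{malgrad1}.

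For \eqref{third2}, by linearity $\nabla_{v_3}\mathcal{R}_{v_1,v_2}^x(t)=\nabla_{v_3}\nabla_{v_2}\nabla_{v_1}X_t^x-\nabla_{v_3}D_{U_2}\nabla_{v_1}X_t^x$, so
\begin{align*}
\mathbb{E}|\nabla_{v_3}\mathcal{R}_{v_1,v_2}^x(t)|^2\ \le\ 2\,\mathbb{E}|\nabla_{v_3}\nabla_{v_2}\nabla_{v_1}X_t^x|^2+2\,\mathbb{E}|\nabla_{v_3}D_{U_2}\nabla_{v_1}X_t^x|^2 ,
\end{align*}
and the two terms are controlled by \eqref{gradthird} and \eqref{malthird} respectively, both of order $(1+\tfrac1t)\big[e^{-\gamma t/4}+K(1+K^2)\big]$. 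Likewise for \eqref{third3} one writes $D_{U_3}\mathcal{R}_{v_1,v_2}^x(t)=D_{U_3}\nabla_{v_2}\nabla_{v_1}X_t^x-D_{U_3}D_{U_2}\nabla_{v_1}X_t^x$ and bounds the two pieces by \eqref{malthird2} and \eqref{malthird3}; the worst prefactor $1+\tfrac1{t^{5/2}}$ comes from \eqref{malthird3}. In each of the three cases the decay rate $e^{-\gamma t/4}$ and the $K$-growth on the right-hand side are exactly the maxima of those of the two summands, so nothing further is required.

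The substantive difficulty — namely tracking how the singular factors $\|\sigma(X_r^x)^{-1}\|_{\rm HS}\le(\eta d/\delta)^{1/2}$ and the weights $1/t$ entering the directions $u_i$ accumulate through the linearised and Malliavin SDEs, producing the powers $1/t$ and $1/t^{5/2}$ — has already been absorbed into the preceding lemmas, so here the work is purely the assembly above. As an alternative one could instead subtract the SDEs satisfied by $\nabla_{v_2}\nabla_{v_1}X^x$ and $D_{U_2}\nabla_{v_1}X^x$: the terms linear in $\nabla B(X^x)$ and in $\nabla\sigma(X^x)$ cancel, so $\mathcal{R}_{v_1,v_2}^x$ solves a dissipative SDE whose source terms carry factors $(1-s/t)$ and $1/t$, and one then runs It\^o's formula together with \eqref{diss}, \eqref{A1} and Gronwall exactly as in the rest of the appendix; this route exhibits the cancellation explicitly but is not needed to obtain \eqref{third1}--\eqref{third3}.
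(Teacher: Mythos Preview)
Your proposal is correct and follows essentially the same route as the paper's proof: in each of the three cases the paper also uses $|a-b|^{2}\le 2|a|^{2}+2|b|^{2}$ on the definition of $\mathcal{R}_{v_{1},v_{2}}^{x}(t)$ (and its $\nabla_{v_{3}}$- and $D_{U_{3}}$-derivatives) and then invokes exactly the same pairs of prior estimates \eqref{gradsec}--\eqref{malgrad1}, \eqref{gradthird}--\eqref{malthird}, \eqref{malthird2}--\eqref{malthird3}, including the Cauchy--Schwarz step $\mathbb{E}|\nabla_{v_{2}}\nabla_{v_{1}}X_{t}^{x}|^{2}\le(\mathbb{E}|\nabla_{v_{2}}\nabla_{v_{1}}X_{t}^{x}|^{4})^{1/2}$ for \eqref{third1}. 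The only difference is cosmetic: the paper inserts the remark $t\le\frac{4}{\gamma}e^{\gamma t/4}$ before \eqref{third2}, which is not actually needed since the bound from \eqref{gradthird} is already dominated by $(1+\tfrac{1}{t})[\,e^{-\gamma t/4}+K(1+K^{2})\,]$.
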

\begin{proof}
By the Cauchy-Schwarz inequality, (\ref{gradsec}) and (\ref{malgrad1}), we have
\begin{align*}
\mathbb{E}|\mathcal{R}_{v_{1},v_{2}}^{x}(t)|^{2}=&\mathbb{E}|\nabla_{v_{2}}\nabla_{v_{1}}X_{t}^{x}-D_{U_{2}}\nabla_{v_{1}}X_{t}^{x}|^{2}\\
\leq&2\mathbb{E}|\nabla_{v_{2}}\nabla_{v_{1}}X_{t}^{x}|^{2}+2\mathbb{E}|D_{U_{2}}\nabla_{v_{1}}X_{t}^{x}|^{2}\\
\leq&2\sqrt{\mathbb{E}|\nabla_{v_{2}}\nabla_{v_{1}}X_{t}^{x}|^{4}}+2\mathbb{E}|D_{U_{2}}\nabla_{v_{1}}X_{t}^{x}|^{2}\\
\leq&C_{A,L,d}(1+\frac{1}{t})|v_{1}|^{2}|v_{2}|^{2}.
\end{align*}

Noticing that $t\leq\frac{4}{\gamma}e^{\frac{\gamma}{4}t},$ by (\ref{gradthird}) and (\ref{malthird}), we have
\begin{align*}
\mathbb{E}|\nabla_{v_{3}}\mathcal{R}_{v_{1},v_{2}}^{x}(t)|^{2}=&\mathbb{E}|\nabla_{v_{3}}\nabla_{v_{2}}\nabla_{v_{1}}X_{t}^{x}-\nabla_{v_{3}}D_{U_{2}}\nabla_{v_{1}}X_{t}^{x}|^{2}\\
\leq&2\mathbb{E}|\nabla_{v_{3}}\nabla_{v_{2}}\nabla_{v_{1}}X_{t}^{x}|^{2}+2\mathbb{E}|\nabla_{v_{3}}D_{U_{2}}\nabla_{v_{1}}X_{t}^{x}|^{2}\\
\leq&C_{A,L,d}(1+\frac{1}{t})|v_{1}|^{2}|v_{2}|^{2}|v_{3}|^{2}.
\end{align*}

By (\ref{malthird2}) and (\ref{malthird3}), we have
\begin{align*}
\mathbb{E}|D_{U_{3}}\mathcal{R}_{v_{1},v_{2}}^{x}(t)|^{2}=&\mathbb{E}|D_{U_{3}}\nabla_{v_{2}}\nabla_{v_{1}}X_{t}^{x}-D_{U_{3}}D_{U_{2}}\nabla_{v_{1}}X_{t}^{x}|^{2}\\
\leq&2\mathbb{E}|D_{U_{3}}\nabla_{v_{2}}\nabla_{v_{1}}X_{t}^{x}|^{2}+2\mathbb{E}|D_{U_{3}}D_{U_{2}}\nabla_{v_{1}}X_{t}^{x}|^{2}\\
\leq&C_{A,L,d}(1+\frac{1}{t^\frac{5}{2}})|v_{1}|^{2}|v_{2}|^{2}|v_{3}|^{2}.
\end{align*}
\end{proof}

\subsection{Proof of Lemma \ref{mainlem1}}

Recall $P_{t}h(x)=\mathbb{E}[h(X_{t}^{x})]$ for $h\in Lip(1),$ by Lebesgue's dominated convergence theorem, the Cauchy-Schwarz inequality and (\ref{grad3}), we have
\begin{align*}
|\nabla_{v}\mathbb{E}[h(X_{t}^{x})]|=|\mathbb{E}[\nabla h(X_{t}^{x})\nabla_{v}X_{t}^{x}]|
\leq\|\nabla h\|\mathbb{E}|\nabla_{v}X_{t}^{x}|\leq e^{L+4},
\end{align*}
(\ref{gradient2}) is proved.

Denote
\begin{align}\label{ma2}
h_{\epsilon}(x)=\int_{\mathbb{R}^{d}}f_{\epsilon}(y)h(x-y)\dif y,
\end{align}
with $\epsilon>0$ and $f_{\epsilon}$ is the density of the normal distribution $N(0,\epsilon^{2}I_{d}).$ It is easy to see that $h_{\epsilon}$ is smooth, $\lim_{\epsilon\rightarrow0}h_{\epsilon}(x)=h(x),$ $\lim_{\epsilon\rightarrow0}\nabla h_{\epsilon}(x)=\nabla h(x)$ and $|h_{\epsilon}(x)|\leq C(1+|x|)$ for all $x\in\mathbb{R}^{d}$ and some $C>0.$ Moreover, $\|\nabla h_{\epsilon}\|\leq\|\nabla h\|\leq1.$ Then, by Lebesgue's dominated convergence theorem, we have
\begin{align*}
\nabla_{v_{2}}\nabla_{v_{1}}\mathbb{E}\big[h_{\epsilon}(X_{t}^{x})\big]=\mathbb{E}\big[\nabla^{2}h_{\epsilon}(X_{t}^{x})\nabla_{v_{2}}X_{t}^{x}
\nabla_{v_{1}}X_{t}^{x}\big]+\mathbb{E}\big[\nabla h_{\epsilon}(X_{t}^{x})\nabla_{v_{2}}\nabla_{v_{1}}X_{t}^{x}\big],
\end{align*}
by (\ref{malliavin1}) and (\ref{malliavin2}), we further have
\begin{align*}
\mathbb{E}\big[\nabla^{2}h_{\epsilon}(X_{t}^{x})\nabla_{v_{2}}X_{t}^{x}
\nabla_{v_{1}}X_{t}^{x}\big]=&\mathbb{E}\big[\nabla^{2}h_{\epsilon}(X_{t}^{x})D_{U_{2}}X_{t}^{x}
\nabla_{v_{1}}X_{t}^{x}\big]\\
=&\mathbb{E}\big[D_{U_{2}}\big(\nabla h_{\epsilon}(X_{t}^{x})\big)\nabla_{v_{1}}X_{t}^{x}\big]\\
=&\mathbb{E}\big[D_{U_{2}}\big(\nabla h_{\epsilon}(X_{t}^{x})\nabla_{v_{1}}X_{t}^{x}\big)\big]-\mathbb{E}\big[\nabla h_{\epsilon}(X_{t}^{x})D_{U_{2}}\nabla_{v_{1}}X_{t}^{x}\big]\\
=&\mathbb{E}\big[\nabla h_{\epsilon}(X_{t}^{x})\nabla_{v_{1}}X_{t}^{x}\mathcal{I}_{v_{2}}^{x}(t)\big]-\mathbb{E}\big[\nabla h_{\epsilon}(X_{t}^{x})D_{U_{2}}\nabla_{v_{1}}X_{t}^{x}\big].
\end{align*}
where the last equality is by Bismut's formula (\ref{bismut}). These imply
\begin{align}\label{seccrucial}
\nabla_{v_{2}}\nabla_{v_{1}}\mathbb{E}\big[h_{\epsilon}(X_{t}^{x})\big]=\mathbb{E}\big[\nabla h_{\epsilon}(X_{t}^{x})\nabla_{v_{1}}X_{t}^{x}\mathcal{I}_{v_{2}}^{x}(t)\big]+\mathbb{E}\big[\nabla h_{\epsilon}(X_{t}^{x})\mathcal{R}_{v_{1},v_{2}}^{x}(t)\big]
\end{align}
Therefore, by Lebesgue's dominated convergence theorem, the Cauchy-Schwarz inequality, (\ref{grad3}), (\ref{1}) and (\ref{third1}), we have
\begin{align*}
|\nabla_{v_{2}}\nabla_{v_{1}}\mathbb{E}[h(X_{t}^{x})]|=&|\lim_{\epsilon\rightarrow0}\nabla_{v_{2}}\nabla_{v_{1}}\mathbb{E}[h_{\epsilon}(X_{t}^{x})]|\\
\leq&\mathbb{E}|\nabla h_{\epsilon}(X_{t}^{x})\nabla_{v_{1}}X_{t}^{x}\mathcal{I}_{v_{2}}^{x}(t)|+\mathbb{E}|\mathcal{R}_{v_{1},v_{2}}^{x}(t)|\\
\leq&\sqrt{\mathbb{E}|\nabla_{v_{1}}X_{t}^{x}|^{2}\mathbb{E}|\mathcal{I}_{v_{2}}^{x}(t)|^{2}}+\sqrt{\mathbb{E}|\mathcal{R}_{v_{1},v_{2}}^{x}(t)|^{2}}
\leq C_{A,L,d}\frac{1}{\sqrt{\delta t}},
\end{align*}
(\ref{sec}) is proved.

By (\ref{seccrucial}) and Lebesgue's dominated convergence theorem, we have
\begin{align*}
\nabla_{v_{3}}\nabla_{v_{2}}\nabla_{v_{1}}\mathbb{E}\big[h_{\epsilon}(X_{t}^{x})\big]=&\nabla_{v_{3}}\mathbb{E}\big[\nabla h_{\epsilon}(X_{t}^{x})\nabla_{v_{1}}X_{t}^{x}\mathcal{I}_{v_{2}}^{x}(t)\big]+\nabla_{v_{3}}\mathbb{E}\big[{\nabla h_{\epsilon}(X_{t}^{x})}\mathcal{R}_{v_{1},v_{2}}^{x}(t)\big]\\
=&\mathbb{E}\big[\nabla^{2} h_{\epsilon}(X_{t}^{x})\nabla_{v_{3}}X_{t}^{x}\nabla_{v_{1}}X_{t}^{x}\mathcal{I}_{v_{2}}^{x}(t)\big]+\mathbb{E}\big[\nabla h_{\epsilon}(X_{t}^{x})\nabla_{v_{3}}\nabla_{v_{1}}X_{t}^{x}\mathcal{I}_{v_{2}}^{x}(t)\big]\\
&+\mathbb{E}\big[\nabla h_{\epsilon}(X_{t}^{x})\nabla_{v_{1}}X_{t}^{x}\nabla_{v_{3}}\mathcal{I}_{v_{2}}^{x}(t)\big]+\mathbb{E}\big[\nabla^{2} h_{\epsilon}(X_{t}^{x})\nabla_{v_{3}}X_{t}^{x}\mathcal{R}_{v_{1},v_{2}}^{x}(t)\big]\\
&+\mathbb{E}\big[\nabla h_{\epsilon}(X_{t}^{x})\nabla_{v_{3}}\mathcal{R}_{v_{1},v_{2}}^{x}(t)\big],
\end{align*}
by (\ref{malliavin1}), (\ref{malliavin2}) and (\ref{bismut}), we further have
\begin{align*}
&\mathbb{E}\big[\nabla^{2} h_{\epsilon}(X_{t}^{x})\nabla_{v_{3}}X_{t}^{x}\nabla_{v_{1}}X_{t}^{x}\mathcal{I}_{v_{2}}^{x}(t)\big]\\
=&\mathbb{E}\big[\nabla^{2} h_{\epsilon}(X_{t}^{x})D_{U_{3}}X_{t}^{x}\nabla_{v_{1}}X_{t}^{x}\mathcal{I}_{v_{2}}^{x}(t)\big]\\
=&\mathbb{E}\big[D_{U_{3}}\big(\nabla h_{\epsilon}(X_{t}^{x})\big)\nabla_{v_{1}}X_{t}^{x}\mathcal{I}_{v_{2}}^{x}(t)\big]\\
=&\mathbb{E}\big[D_{U_{3}}\big(\nabla h_{\epsilon}(X_{t}^{x})\nabla_{v_{1}}X_{t}^{x}\mathcal{I}_{v_{2}}^{x}(t)\big)\big]-\mathbb{E}\big[\nabla h_{\epsilon}(X_{t}^{x})D_{U_{3}}\nabla_{v_{1}}X_{t}^{x}\mathcal{I}_{v_{2}}^{x}(t)\big]\\
&-\mathbb{E}\big[\nabla h_{\epsilon}(X_{t}^{x})\nabla_{v_{1}}X_{t}^{x}D_{U_{3}}\mathcal{I}_{v_{2}}^{x}(t)\big]\\
=&\mathbb{E}\big[\nabla h_{\epsilon}(X_{t}^{x})\nabla_{v_{1}}X_{t}^{x}\mathcal{I}_{v_{2}}^{x}(t)\mathcal{I}_{v_{3}}^{x}(t)\big]-\mathbb{E}\big[\nabla h_{\epsilon}(X_{t}^{x})D_{U_{3}}\nabla_{v_{1}}X_{t}^{x}\mathcal{I}_{v_{2}}^{x}(t)\big]\\
&-\mathbb{E}\big[\nabla h_{\epsilon}(X_{t}^{x})\nabla_{v_{1}}X_{t}^{x}D_{U_{3}}\mathcal{I}_{v_{2}}^{x}(t)\big]
\end{align*}
and
\begin{align*}
&\mathbb{E}\big[\nabla^{2} h_{\epsilon}(X_{t}^{x})\nabla_{v_{3}}X_{t}^{x}\mathcal{R}_{v_{1},v_{2}}^{x}(t)\big]\\
=&\mathbb{E}\big[\nabla^{2} h_{\epsilon}(X_{t}^{x})D_{U_{3}}X_{t}^{x}\mathcal{R}_{v_{1},v_{2}}^{x}(t)\big]\\
=&\mathbb{E}\big[D_{U_{3}}\big(\nabla h_{\epsilon}(X_{t}^{x})\big)\mathcal{R}_{v_{1},v_{2}}^{x}(t)\big]\\
=&\mathbb{E}\big[D_{U_{3}}\big(\nabla h_{\epsilon}(X_{t}^{x})\mathcal{R}_{v_{1},v_{2}}^{x}(t)\big)\big]-\mathbb{E}\big[\nabla h_{\epsilon}(X_{t}^{x})D_{U_{3}}\mathcal{R}_{v_{1},v_{2}}^{x}(t)\big]\\
=&\mathbb{E}\big[\nabla h_{\epsilon}(X_{t}^{x})\mathcal{R}_{v_{1},v_{2}}^{x}(t)\mathcal{I}_{v_{3}}^{x}(t)\big]-\mathbb{E}\big[\nabla h_{\epsilon}(X_{t}^{x})D_{U_{3}}\mathcal{R}_{v_{1},v_{2}}^{x}(t)\big].
\end{align*}
These imply
\begin{align*}
\nabla_{v_{3}}\nabla_{v_{2}}\nabla_{v_{1}}\mathbb{E}\big[h_{\epsilon}(X_{t}^{x})\big]
=&\mathbb{E}\big[\nabla h_{\epsilon}(X_{t}^{x})\nabla_{v_{1}}X_{t}^{x}\mathcal{I}_{v_{2}}^{x}(t)\mathcal{I}_{v_{3}}^{x}(t)\big]-\mathbb{E}\big[\nabla h_{\epsilon}(X_{t}^{x})D_{U_{3}}\nabla_{v_{1}}X_{t}^{x}\mathcal{I}_{v_{2}}^{x}(t)\big]\nonumber\\
&-\mathbb{E}\big[\nabla h_{\epsilon}(X_{t}^{x})\nabla_{v_{1}}X_{t}^{x}D_{U_{3}}\mathcal{I}_{v_{2}}^{x}(t)\big]+\mathbb{E}\big[\nabla h_{\epsilon}(X_{t}^{x})\nabla_{v_{3}}\nabla_{v_{1}}X_{t}^{x}\mathcal{I}_{v_{2}}^{x}(t)\big]\nonumber\\
&+\mathbb{E}\big[\nabla h_{\epsilon}(X_{t}^{x})\nabla_{v_{1}}X_{t}^{x}\nabla_{v_{3}}\mathcal{I}_{v_{2}}^{x}(t)\big]+\mathbb{E}\big[\nabla h_{\epsilon}(X_{t}^{x})\mathcal{R}_{v_{1},v_{2}}^{x}(t)\mathcal{I}_{v_{3}}^{x}(t)\big]\nonumber\\
&-\mathbb{E}\big[\nabla h_{\epsilon}(X_{t}^{x})D_{U_{3}}\mathcal{R}_{v_{1},v_{2}}^{x}(t)\big]+\mathbb{E}\big[\nabla h_{\epsilon}(X_{t}^{x})\nabla_{v_{3}}\mathcal{R}_{v_{1},v_{2}}^{x}(t)\big].
\end{align*}
Therefore, by Lebesgue's dominated convergence theorem, we have
\begin{align*}
|\nabla_{v_{3}}\nabla_{v_{2}}\nabla_{v_{1}}\mathbb{E}[h(X_{t}^{x})]|=&|\lim_{\epsilon\rightarrow0}\nabla_{v_{3}}\nabla_{v_{2}}\nabla_{v_{1}}\mathbb{E}[h_{\epsilon}(X_{t}^{x})]|\\
\leq&\mathbb{E}|\nabla_{v_{1}}X_{t}^{x}\mathcal{I}_{v_{2}}^{x}(t)\mathcal{I}_{v_{3}}^{x}(t)|+\mathbb{E}|D_{U_{3}}\nabla_{v_{1}}X_{t}^{x}\mathcal{I}_{v_{2}}^{x}(t)|\\
&+\mathbb{E}|\nabla_{v_{1}}X_{t}^{x}D_{U_{3}}\mathcal{I}_{v_{2}}^{x}(t)|+\mathbb{E}|\nabla_{v_{3}}\nabla_{v_{1}}X_{t}^{x}\mathcal{I}_{v_{2}}^{x}(t)|\\
&+\mathbb{E}|\nabla_{v_{1}}X_{t}^{x}\nabla_{v_{3}}\mathcal{I}_{v_{2}}^{x}(t)|+\mathbb{E}|\mathcal{R}_{v_{1},v_{2}}^{x}(t)\mathcal{I}_{v_{3}}^{x}(t)|\\
&+\mathbb{E}|D_{U_{3}}\mathcal{R}_{v_{1},v_{2}}^{x}(t)|+\mathbb{E}|\nabla_{v_{3}}\mathcal{R}_{v_{1},v_{2}}^{x}(t)|.
\end{align*}
Then, by the Cauchy-Schwarz inequality, (\ref{grad3}) and (\ref{1}), we have
\begin{align*}
\mathbb{E}|\nabla_{v_{1}}X_{t}^{x}\mathcal{I}_{v_{2}}^{x}(t)\mathcal{I}_{v_{3}}^{x}(t)|\leq C_{L,d}\frac{1}{\delta t}.
\end{align*}
By (\ref{malgrad1}) and (\ref{1}), we have
\begin{align*}
\mathbb{E}|D_{U_{3}}\nabla_{v_{1}}X_{t}^{x}\mathcal{I}_{v_{2}}^{x}(t)|\leq& C_{A,L,d}\frac{1}{\sqrt{\delta t}}(1+\frac{1}{\sqrt{t}}).
\end{align*}
By (\ref{grad3}) and (\ref{3}), we have
\begin{align*}
\mathbb{E}|\nabla_{v_{1}}X_{t}^{x}D_{U_{3}}\mathcal{I}_{v_{2}}^{x}(t)|\leq&C_{A,L,d}\frac{1}{\sqrt{\delta t}}(1+\frac{1}{\sqrt{\delta t}}).
\end{align*}
By (\ref{gradsec}) and (\ref{1}), we have
\begin{align*}
\mathbb{E}|\nabla_{v_{3}}\nabla_{v_{1}}X_{t}^{x}\mathcal{I}_{v_{2}}^{x}(t)|\leq&C_{A,L,d}\frac{1}{\sqrt{\delta t}}.
\end{align*}
By (\ref{grad3}) and (\ref{2}), we have
\begin{align*}
\mathbb{E}|\nabla_{v_{1}}X_{t}^{x}\nabla_{v_{3}}\mathcal{I}_{v_{2}}^{x}(t)|\leq \frac{C_{A,L,d}}{\sqrt{\delta t}}.
\end{align*}
By (\ref{third1}) and (\ref{1}), we have
\begin{align*}
\mathbb{E}|\mathcal{R}_{v_{1},v_{2}}^{x}(t)\mathcal{I}_{v_{3}}^{x}(t)|\leq&C_{A,L,d}\frac{1}{\sqrt{\delta t}}(1+\frac{1}{\sqrt{t}}).
\end{align*}
By (\ref{third3}), we have
\begin{align*}
\mathbb{E}|D_{U_{3}}\mathcal{R}_{v_{1},v_{2}}^{x}(t)|\leq C_{A,L,d}(1+\frac{1}{t^\frac{5}{4}}).
\end{align*}
By (\ref{third2}), we have
\begin{align*}
\mathbb{E}|\nabla_{v_{3}}\mathcal{R}_{v_{1},v_{2}}^{x}(t)|\leq C_{A,L,d}(1+\frac{1}{\sqrt{t}}).
\end{align*}
These imply
\begin{align*}
|\nabla_{v_{3}}\nabla_{v_{2}}\nabla_{v_{1}}\mathbb{E}[h(X_{t}^{x})]|\leq C_{A,L,d}\left(1+\frac{1}{\delta t}+\frac{1}{t^{\frac{5}{4}}}\right).
\end{align*}
\qed

\section{the distribution of the solution of the SDDE (\ref{sfde})}\label{invariant measure}

In this section, we will prove that when $K$ in Assumption \ref{assum2} and $\delta$ in the SDDE (\ref{sfde}) both are small enough, the distribution of the solution $\tilde{X}_{s}$ of the SDDE (\ref{sfde}) is close to the minimizer $\omega^*$ as $s\rightarrow\infty$.

\begin{lemma}  \label{l:XtEst}
Under the Assumptions \ref{assum1} and \ref{assum2}. Let $b:=e^{-2(\gamma-L^{2}\eta)m\eta}+\frac{\eta L^{2}}{\gamma-L^{2}\eta}$. For any $s\in\mathbb{N}$, as $\eta\leq\frac{\gamma}{3L^{2}}$ and $b<1$, we have
\begin{eqnarray*}
\E|\tilde X_{s}-\omega^{*}|^{2}
& \le & b^s |x-\omega^{*}|^{2}+\frac{2K+\delta d}{2(\gamma-L^{2}\eta)}\frac{1}{1-b}.
\end{eqnarray*}	
\end{lemma}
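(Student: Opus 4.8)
The plan is to exploit the ``frozen delay'' structure of \eqref{sfde} window by window and reduce everything to a scalar one-step recursion for $F(s):=\E|\tilde X_s-\omega^*|^2=\E|X_{sm\eta}-\omega^*|^2$. On a window $[sm\eta,(s+1)m\eta]$, conditioning on $\mathcal F_{sm\eta}$ the equation \eqref{sfde} reads $\dif X_t=-\nabla P(X_t)\dif t+\sqrt\eta\,Q_{\eta,\delta}(X_t,X_{sm\eta})\dif B_t$ with the delay argument $X_{sm\eta}$ frozen, exactly as in \eqref{sfde-0-0}; since $\nabla P$ and $Q_{\eta,\delta}$ have linear growth (by \eqref{PL} and \eqref{QL}) the relevant moments are finite and It\^{o}'s formula applies. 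First I would apply It\^{o} to $t\mapsto|X_t-\omega^*|^2$ and take expectations, obtaining
\[
\frac{\dif}{\dif t}\,\E|X_t-\omega^*|^2 \;=\; -2\,\E\langle X_t-\omega^*,\nabla P(X_t)\rangle \;+\; \eta\,\E\|Q_{\eta,\delta}(X_t,X_{sm\eta})\|_{\rm HS}^2 .
\]

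For the drift term I would use $\nabla P(\omega^*)=0$ together with the dissipativity (Assumption \ref{assum2}, in the form \eqref{integration1} centred at $y=\omega^*$) to get $-2\E\langle X_t-\omega^*,\nabla P(X_t)\rangle\le -2\gamma\,\E|X_t-\omega^*|^2+2K$. For the diffusion term, \eqref{e:Tr=L2Norm} gives $\eta\|Q_{\eta,\delta}(X_t,X_{sm\eta})\|_{\rm HS}^2=\eta\,{\rm tr}\,\Sigma(X_t,X_{sm\eta})+\delta d\le L^2\eta|X_t-X_{sm\eta}|^2+\delta d$, and then $|X_t-X_{sm\eta}|^2\le 2|X_t-\omega^*|^2+2|X_{sm\eta}-\omega^*|^2$. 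Combining these, and writing $\varphi(t):=\E|X_t-\omega^*|^2$, I obtain on the window the linear differential inequality
\[
\varphi'(t)\;\le\; -2(\gamma-L^2\eta)\,\varphi(t)\;+\;2L^2\eta\,\varphi(sm\eta)\;+\;2K+\delta d ,
\]
where $\gamma-L^2\eta\ge\tfrac{2\gamma}{3}>0$ by the hypothesis $\eta\le\gamma/(3L^2)$, so the coefficient in front of $\varphi(t)$ is a genuine negative constant.

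The remaining work is routine bookkeeping: since the last two terms on the right are constant in $t$ on the window, a Gronwall argument over an interval of length $m\eta$ gives $\varphi((s+1)m\eta)\le b\,\varphi(sm\eta)+c$ with $b=e^{-2(\gamma-L^2\eta)m\eta}+\frac{\eta L^2}{\gamma-L^2\eta}$ and $c=\frac{2K+\delta d}{2(\gamma-L^2\eta)}$, i.e.\ exactly the one-step recursion $F(s+1)\le bF(s)+c$. Iterating from $F(0)=|x-\omega^*|^2$ and using the standing hypothesis $b<1$ yields $F(s)\le b^s|x-\omega^*|^2+c\sum_{j=0}^{s-1}b^j\le b^s|x-\omega^*|^2+\frac{c}{1-b}$, which is the assertion since $\tilde X_s=X_{sm\eta}$.

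The only genuinely delicate point is the drift estimate. The lemma is quoted under Assumptions \ref{assum1}--\ref{assum2}, but $\langle\nabla P(x),x\rangle\ge\gamma|x|^2-K$ is dissipativity centred at the origin, whereas the argument needs it centred at the minimizer $\omega^*$; this is immediate from \eqref{integration1} (hence under Assumption \ref{assum3}) via $\nabla P(\omega^*)=0$, and under Assumption \ref{assum2} alone one must argue a little more carefully, which is where I would spend the most care. Everything else — the Markov reduction on each window, finiteness of moments justifying It\^{o}'s formula, and the Gronwall/geometric-series step — is standard.
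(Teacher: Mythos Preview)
Your proposal is correct and follows essentially the same route as the paper: It\^{o}'s formula on each window, the drift bounded via $\nabla P(\omega^*)=0$ and \eqref{integration1}, the diffusion bounded via \eqref{e:Tr=L2Norm} and the split $|X_t-X_{sm\eta}|^2\le 2|X_t-\omega^*|^2+2|X_{sm\eta}-\omega^*|^2$, then Gronwall on the window to produce the one-step recursion $F(s+1)\le bF(s)+c$, and finally iteration. Your closing remark is also on target: the lemma is stated under Assumptions \ref{assum1}--\ref{assum2}, but both your argument and the paper's actually invoke \eqref{integration1}, which is the consequence of Assumption \ref{assum3}; the paper does not address this discrepancy.
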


\begin{proof}
By It\^{o}'s formula, for any $s,t \in [0,m\eta]$ with $s \le t$ we have
\begin{align*}
\E|X_{t}-\omega^{*}|^{2}=&\mathbb{E}|X_s-\omega^{*}|^{2}-2\int_{s}^{t} \E \langle X_{r}-\omega^{*}, \nabla P(X_{r})\rangle \dif r\\
&+\eta \int_{s}^{t} \E {\rm tr} \left[\Sigma(X_{r},x)\right] \dif r+\delta d(t-s).
\end{align*}
Since $\nabla P(\omega^{*})=0$ for the minimizer $\omega^{*}$, (\ref{integration1}) implies
\begin{align*}
\langle X_{r}-\omega^{*}, \nabla P(X_{r})\rangle=\langle X_{r}-\omega^{*}, \nabla P(X_{r})-\nabla P(\omega^{*})\rangle\ge \gamma |X_{r}-\omega^{*}|^{2}-K.
\end{align*}
By (\ref{e:Tr=L2Norm}), one has
\begin{align*}
\E {\rm tr} \left[\Sigma(X_{r},x)\right]\le L^{2} \E|X_{r}-x|^{2}\le 2L^{2}\E |X_{r}-\omega^{*}|^{2}+2L^{2}|x-\omega^{*}|^{2}.
\end{align*}
Therefore,
\begin{align*}
&\E|X_{t}-\omega^{*}|^{2}-|X_s-\omega^{*}|^{2}\\
\le&-2 (\gamma-L^{2}\eta) \int_{s}^{t} \E |X_{r}-\omega^{*}|^{2}\dif r+(2K+2\eta L^{2}|x-\omega^{*}|^{2}+\delta d)(t-s).
	\end{align*}
 Denote $F(t)=\E|X_{t}-\omega^{*}|^{2}$ for $t>0$, then for any $0 \le s \le t \le m\eta$,
 \begin{align}  \label{e:F(t)-F(s)}
 	F(t)-F(s)\le-2 (\gamma-L^{2}\eta) \int_{s}^{t} F(r)\dif r+(2K+2\eta L^{2}|x-\omega^{*}|^{2}+\delta d)(t-s).
 \end{align}
Let $s=0$ and $\eta<\frac{\gamma}{L^2}$, the relation above yields
 \begin{eqnarray*}
	F(t)& \le &F(0)+(2K+2\eta L^{2}|x-\omega^{*}|^{2}+\delta d) m\eta, \ \ \ \quad t \in [0,m\eta],
\end{eqnarray*}
which, together with \eqref{e:F(t)-F(s)}, further implies $F(t)$ is absolutely continuous on $[0,m\eta]$. So $F(t)$ is differentiable on $[0,\eta]$ a.e.. Moreover, it is easy to see that the following differential inequality holds by \eqref{e:F(t)-F(s)},
\begin{eqnarray*}
	F'(t)& \le & -2(\gamma-L^{2}\eta) F(t)+2K+2\eta L^{2}|x-\omega^{*}|^{2}+\delta d \ \ \ \ \quad t \in (0,m\eta].
	\end{eqnarray*}	
Solving this differential inequality further gives
\begin{eqnarray*}
	F(t) \ \le \  e^{-2(\gamma-L^{2}\eta)t}  F(0)+\frac{2K+2\eta L^{2}|x-\omega^{*}|^{2}+\delta d}{2(\gamma-L^{2}\eta)},
\end{eqnarray*}	
that is,
\begin{eqnarray*}
	\E|X_{t}-\omega^{*}|^{2}\
	&\le &   \left[e^{-2(\gamma-L^{2}\eta)t}+\frac{\eta L^{2}}{\gamma-L^{2}\eta}\right]|x-\omega^{*}|^2+\frac{2K+\delta d}{2(\gamma-L^{2}\eta)}, \quad \quad t \in [0, m\eta].
\end{eqnarray*}	
Inductively, for any $s \ge 1$ we have
\begin{eqnarray*}
		\E|\tilde X_s-\omega^{*}|^{2}&=& \E \left[\E\left[|\tilde X_s-\omega^{*}|^{2}|\tilde X_{s-1}\right]\right] \\
		& \le & b   \E |X_{(s-1)m\eta}-\omega^{*}|^{2}+\frac{2K+\delta d}{2(\gamma-L^{2}\eta)}\\
		& \le & b^{s}|x-\omega^{*}|^{2}+\frac{2K+\delta d}{2(\gamma-L^{2}\eta)} \sum_{k=0}^{s-1} b^{k}  \\
		& \le & b^s|x-\omega^{*}|^{2}+\frac{2K+\delta d}{2(\gamma-L^{2}\eta)} \frac{1}{1-b}.
	\end{eqnarray*}	
The case $s=0$ holds obviously and we finish the proof.
\end{proof}
\end{appendix}

\bibliographystyle{amsplain}

\end{document}